\newtheorem{thrm}{Theorem}[section]
\newtheorem{cor}[thrm]{Corollary}
\newtheorem{lem}[thrm]{Lemma}
\newtheorem{prop}[thrm]{Proposition}
\theoremstyle{definition}
\newtheorem{defn}[thrm]{Definition}
\newtheorem{rem}[thrm]{Remark}
\renewcommand{\iff}{\Leftrightarrow}
\newcommand{\impl}%
{%
	\Rightarrow
}
\newcommand{\id}{\mathrm{id}}
\newcommand{\cI}[1]%
{%
	\mathcal I{(#1)}%
}
\newcommand{\cIui}[1]%
{%
	{\mathcal I}_{ui}{(#1)}%
}
\newcommand{\cS}[1]%
{%
	\mathcal S{(#1)}%
}
\newcommand{\cD}%
{%
	\mathcal D%
}
\newcommand{\cU}[1]%
{%
	\mathcal U{(#1)}%
}
\newcommand{\cG}[1]%
{%
	\mathcal G{(#1)}%
}
\newcommand{\cM}[1]%
{%
	\mathcal M{(#1)}%
}
\newcommand{\dom}[1]%
{%
	\operatorname{\mathrm{dom}}{#1}%
}
\newcommand{\ran}[1]%
{%
	\operatorname{\mathrm{ran}}{#1}%
}
\newcommand{\End}[1]%
{%
	\operatorname{\mathrm{End}}{#1}%
}
\newcommand{\iend}[1]%
{%
	\operatorname{\mathit{end}}{#1}%
}
\newcommand{\iin}[1]%
{%
	\operatorname{\mathit{in}}{#1}%
}
\newcommand{\Ker}[1]%
{%
	\operatorname{\mathrm{Ker}}{#1}%
}
\newcommand{\iker}[1]%
{%
	\operatorname{\mathit{ker}}{#1}%
}
\newcommand{\m}{{}^{-1}}
\newcommand{\0}{\theta}
\newcommand{\e}{\varepsilon}
\begin{document}

\title[Twisted partial actions and extensions]{Twisted partial actions and extensions of semilattices of groups by groups}

\author{Mikhailo Dokuchaev}
\address{Insituto de Matem\'atica e Estat\'istica, Universidade de S\~ao Paulo,  Rua do Mat\~ao, 1010, S\~ao Paulo, SP,  CEP: 05508--090, Brazil}
\email{dokucha@gmail.com}

\author{Mykola Khrypchenko}
\address{Departamento de Matem\'atica, Universidade Federal de Santa Catarina, Campus Reitor Jo\~ao David Ferreira Lima, Florian\'opolis, SC,  CEP: 88040--900, Brazil}
\email{nskhripchenko@gmail.com}

\subjclass[2010]{Primary 20M30; Secondary 20M18, 16S35, 16W22.}
\keywords{Twisted partial action, extension, semilattice of groups, inverse semigroup}

\thanks{The first author was partially supported by  CNPq of Brazil,  Proc. 305975/2013--7, and by FAPESP of Brazil, Proc. 2015/09162--9. The second author was supported by FAPESP of Brazil, Proc. 2012/01554--7.}

\begin{abstract}
 We introduce the concept of an extension of a semilattice of groups $A$ by a group $G$ and describe all the extensions of this type which are equivalent to the crossed products $A*_\Theta G$ by twisted partial actions $\Theta$ of $G$ on $A$. As a consequence, we establish a one-to-one correspondence, up to an isomorphism, between twisted partial actions of groups on semilattices of groups and  so-called Sieben twisted modules %(see~\cite{Buss-Exel11,Lausch,Sieben98}) 
over $E$-unitary inverse semigroups.
\end{abstract}

\maketitle

\norefnames
\nocitenames

\section*{Introduction}%\label{sec-intro}

R. Exel's  pioneering papers  \cite{E-1,E-2,E-3,E0,E1} on applications of partial actions and related notions to $C^*$-algebras influenced an intensive  research both in the theory of operator algebras and  in algebra (see \cite{D2}).  Among the most recent advances  we point out the use of partial actions and partial representations to dynamical systems of type $(m,n)$ in \cite{AraEKa},  to Carlsen-Matsumoto algebras of subshifts  \cite{DE2} and  to algebras related to separated graphs  \cite{AraE}. In the latter paper, as a byproduct,  a remarkable application to paradoxical decompositions was given.

The concept of a twisted partial group action, introduced in \cite{E0} and studied from a purely algebraic point of view in 
\cite{DES1} and \cite{DES2}, involves a function, which satisfies the $2$-cocycle identity in some restricted sense. This rose the problem of clarifying the kind of cohomology suiting it. Furthermore,  the notion of a partial homomorphism as a certain map from a group to a monoid,  on the one hand, and the theory of projective representations of semigroups developed by B. Novikov in \cite{Nov1} and \cite{Nov2}, on the other hand, inspired the idea of partial projective group representations, whose foundations where established in \cite{DN}, \cite{DN2} and \cite{DoNoPi}.   

In \cite{E1}  R. Exel defined an inverse semigroup $\cS G$ by means of generators and relations, which  governs  partial representations and partial actions of a group $G$.  Later  in \cite{KL}, using a fact by M. Szendrei \cite{Szendrei89},  J. Kellendonk and M. Lawson proved that $\cS G$ is isomorphic to the Birget-Rhodes prefix expansion   of $G$ \cite{BR1,BR2} (see also \cite{LawMarSte}).  The semigroup $\cS G $ is an essential  technical ingredient of the theory of partial projective group representations and it is   also crucial to deal with   the group cohomology based on partial actions. The latter was introduced and studied  in  \cite{DK}, the basic idea being to replace a $G$-module, i.\,e. a (global) action of a group $G$ on an abelian group,  by a partial $G$-module, which is a unital partial action of $G$ on a commutative semigroup $A.$ It follows from the definition that one may assume that $A$ is inverse, i.\,e. $A$ is a semilattice of abelian groups. Thus, when passing from partial actions of 
$G$ to actions  of $\cS G ,$ we obtain a relation between partial $G$-modules and  modules over inverse semigroups, and this way the cohomology of inverse semigroups comes into the picture. The latter was introduced in the pioneering paper by H. Lausch  \cite{Lausch}, with a more general approach given   by M.~Loganathan  in \cite{Loganathan81}, where H. Lausch's cohomology was derived  as the cohomology of a suitable  small category. Moreover, in the context of regular monoids M.~Loganathan's cohomology can be seen as  the cohomology of  J.~Leech \cite{Leech75} (see  \cite[Theorem 2.9]{Loganathan81}). M.~Loganathan's view, amongst other advantages,   allows  one to use   general facts from categories and homological algebra, in particular, when dealing with functorial properties of cohomology groups. Nevertheless, it turned out that the  category of partial $G$-modules is not abelian, so that  technically it is more appropriate for us to deal with  the down-to-earth  H.~Lausch's method.  In fact, our 
category is made up  of abelian ``pieces'', each of which is isomorphic to a category of modules over some inverse semigroup. These inverse semigroups run over  those  homomorphic images of  $\cS G,$ which are the max-generated $F$-inverse monoids $S$ (see \cite[p. 196]{Lawson2002})  whose maximum group image is $G.$ For these  monoids  \cite{DK} gives a rather  comprehensive  connection between the Lausch-Leech-Loganathan cohomology of $S$   and the partial cohomology of $G.$ Moreover, it was also clarified in \cite{DK} that the partial Schur multiplier of a group $G,$  which appeared in the theory of partial projective group representations, is a disjoint union of second cohomology groups with values in partial   (in general, non-trivial) $G$-modules. Influenced  by H.~Lausch's construction of a free module  \cite{Lausch},  it was possible to define free partial $G$-modules and obtain the partial cohomology groups via free resolutions \cite{DK}.

The present paper is stimulated by the desire to interpret the second partial cohomology groups in terms of extensions. In doing this it became clear, at some point, that we may abandon the restriction on a partial action to be unital, imposed by cohomology theory in \cite{DK}, covering thus a more general situation. The starting point is the concept of an extension of a semilattice of groups $A$ by a group $G$ (see Section~\ref{subsec-ext-A-G}) and the key example of an extension is the crossed product   $A\ast _\Theta G$ by a twisted partial action $\Theta$ of $G$ on $A$ (Section~\ref{subsec-tw-pact}). These are  related to the notion of an extension 
of $A$ by an inverse semigroup $S,$  that of a twisted $S$-module and the corresponding crossed product given in \cite{Lausch} (see  Sections~\ref{Extensions-of-A-by-S} and ~\ref{subsec-tw-S-mod}). In particular, given an extension $A \to U \to G$ we show in Proposition~\ref{prop-ext-A-G-to-ext-A-S} that there is a refinement $A \to U \to  S \to G$ such that $A\to U \to S$ is an extension of $A$ by $S,$ where $S$ is an $E$-unitary semigroup. In the case of crossed products the refinement of the extension $A \to A*_\Theta G \to G$ is  $A \to A*_\Theta G \to E(A)*_\0 G  \to G,$ where $E(A)$ is the semilattice of  idempotents of $A,$ and $\0 $ is the (non-twisted) partial action of $G$ on $E(A)$ obtained by the restriction of $\Theta $ (see Proposition~\ref{prop-A*_Theta-G-is-ext}). The epimorphism $ A*_\Theta G \to E(A)*_\0 G$ is easily seen to have  an order preserving transversal   $E(A)*_\0 G \to  A*_\Theta G.$ This is a  crucial feature  in our treatment, and the extensions   $A \to U \to G,$ whose 
refinements have this property, are called admissible.  Moreover, the extensions $A \to U \to  S $ of $A$ by an inverse semigroup $S,$ such that   $ U \to  S $ possesses  an order preserving transversal $S\to U,$ produce twisted $S$-modules structures on $A,$ whose twistings satisfy a normality condition, considered by N.~Sieben in \cite{Sieben98}, which is stronger than the one imposed by H.~Lausch in \cite{Lausch}. For this reason we call them Sieben twisted modules.

Given a  Sieben twisted $S$-module structure $\Lambda $ on $A,$ we construct in Section~\ref{sec-Theta^Lambda} a twisted partial action $\Theta $ of $ \cG S $ on $A,$ such that the crossed products $A*_\Lambda S$ and $A*_\Theta\cG S$ are equivalent extensions of $A$ by $\cG S$. Here  $ \cG S $ is the maximal group image of $S.$ This is used to prove in Theorem~\ref{thrm-adm-ext-equiv-crossed-prod} that  any admissible extension $A \to U \to G$ is equi\-va\-lent to some crossed product  extension  $A \to A*_\Theta G \to G.$

The passage from   twisted partial actions to Sieben twisted modules is done in Section~\ref{sec-Lambda^Theta}, resulting in the final fact Theorem~\ref{thrm-Theta<->Lambda}, which establishes  an equi\-va\-len\-ce preserving  one-to-one correspondence between twisted partial actions of groups on $A$ and Sieben twisted module structures  on $A$ over $E$-unitary inverse semigroups.

Being originated by the concept of a twisted partial group action on a $C^*$-algebra,  our cohomology theory and its interpretations may stimulate further algebraic investigations, as well as  $C^*$-algebraic developments, due to the fact that  group cohomology appears in the theory of operator algebras in various contexts, in particular,  when dealing with  projective isometric group representations, group actions on topological spaces and $C^*$-algebras, crossed products, locally principal ${\mathcal T}$-bundles related to locally compact transformation groups etc.  More specifically, one may expect Galois theoretic and $C^*$-theoretic  interpretations of low dimensional partial cohomology groups  in the sense of the exact sequences involving Picard groups and Brauer groups obtained in \cite{CHR} and  \cite{CroRaeWil}.  In addition, for the general case of locally compact groups instead of ordinary group cohomology the authors of  \cite{CroRaeWil} use the theory developed by C. C. Moore \cite{Moore} for 
actions on Polish modules. Thus it would be natural to extend  Moore's cohomology  of locally compact groups to the context of partial actions, and, most likely,  our treatment of  Sieben twisted modules in interaction with twisted partial actions   would find a natural application in the form of  appropriate analogues.

\section{Inverse semigroups and congruences on them}\label{sec-inv-sem}

Recall that a semigroup $S$ is called {\it regular}, if for any $s\in S$ there exists $s\m\in S$ (an {\it inverse of} $s$), such that $ss\m s=s$ and $s\m ss\m=s\m$. A regular semigroup $S$ is said to be {\it inverse}, if each $s\in S$ has a unique $s\m$. Inverse semigroups are precisely those regular semigroups, whose idempotents commute~\cite[Theorem 1.17]{Clifford-Preston-1}. Each inverse semigroup admits the natural partial order defined by $s\le t\iff s=et$ for some idempotent $e$ (or, equivalently, $s=tf$ for some idempotent $f$). The set of idempotents of $S$, for which we use the standard notation $E(S)$, is a (meet) semilattice under $\le$, more precisely, the meet $e\wedge f$ of $e,f\in E(S)$ is their product $ef\in E(S)$.

The binary relation $\sigma$ with $(s,t)\in\sigma\iff\exists u\le s,t$ (equivalently, $es=et$ for some $e\in E(S)$) is a congruence on an inverse semigroup $S$. It is the {\it minimum group congruence} on $S$ in the sense that $S/\sigma$ is a group and $\sigma\subseteq\rho$ for any congruence $\rho$, such that $S/\rho$ is a group. The quotient $\cG S=S/\sigma$ is called the {\it maximum group image} of $S$. An inverse semigroup is said to be {\it $E$-unitary}, whenever $(e,s)\in\sigma$ and $e\in E(S)$ imply that $s\in E(S)$ (equivalently, $e\le s\impl s\in E(S)$). $E$-unitary inverse semigroups can also be characterized by the property that $(s,t)\in\sigma\iff s\m t,st\m\in E(S)$ (see~\cite[Theorem 2.4.6]{Lawson}). An {\it $F$-inverse monoid} is an inverse semigroup, in which each $\sigma$-class possesses a maximum element (under $\le$). Each $F$-inverse monoid is $E$-unitary (see~\cite[Proposition 7.1.3]{Lawson}).

A {\it semilattice of groups} is a semigroup $A$ which can be represented as a disjoint union $A=\bigsqcup_{l\in L} A_l$, where $L$ is a semilattice, $A_l$ is a subgroup of $A$ and $A_l A_m\subseteq A_{l\wedge m}$ for all $l,m\in L$. Note that in this situation $A$ is inverse (see~\cite[Theorem~7.52]{Clifford-Preston-2}), $L$ can be chosen to be $E(A)$ and each $A_e$, $e\in E(A)$, is the maximal subgroup of $A$ containing $e$, i.\,e. $A_e=\{a\in A\mid aa\m=a\m a=e\}$. It follows that 
\begin{enumerate}
 \item $aa\m=a\m a$ for all $a\in A$,
 \item $E(A)\subseteq C(A),$\footnote{Regular semigroups with   property (ii)  are called {\it Clifford} semigroups (see~\cite[IV.2]{Howie} and~\cite[5.2]{Lawson}).}
\end{enumerate}
where $C(A)$ stands for the center of  $A.$ The first property is immediate, since both $aa\m$ and $a\m a$ are equal to the identity of the group component containing $a$. The second one is~\cite[Lemma 4.8]{Clifford-Preston-1}. 

It turns out that for an inverse semigroup $A$ each one of the conditions (i)--(ii) is equivalent to the fact that $A$ is a semilattice of groups. Indeed, (i) is simply the statement that $A$ is a disjoint union of groups and hence by the ``only if'' part of~\cite[Theorem 4.11]{Clifford-Preston-1} a semilattice of groups, while (ii) is (A) of~\cite[Theorem IV.2.1]{Howie} (see also~\cite[Theorem 5.2.12]{Lawson}). In particular, any commutative inverse semigroup is a semilattice of (abelian) groups.

We shall recall some basic facts about congruences on inverse semigroups, which one can find in~\cite{Clifford-Preston-2}. Given a congruence $\rho$ on an inverse semigroup $S$, the {\it kernel}~\cite[p. 60]{Clifford-Preston-2} of $\rho$ is $\iker\rho=E(S/\rho)$. Equivalently, a $\rho$-class belongs to $\iker\rho$ if and only if it contains an idempotent of $S$. By Theorem~7.38 from~\cite{Clifford-Preston-2} each congruence is uniquely determined by its kernel\footnote{Some authors by the Kernel of $\rho$ (denoted by $\Ker\rho$) mean the union of the $\rho$-classes from $\iker\rho$. Such a kernel itself does not determine $\rho$ uniquely, and for this one needs to know additionally the restriction of $\rho$ to the idempotents of $S$, the so-called {\it trace} of $\rho$ (see~\cite[5.1]{Lawson}).}. 

In particular, for a homomorphism $\varphi:S\to S'$ of inverse semigroups the congruence $\ker\varphi=\{(s,t)\in S^2\mid\varphi(s)=\varphi(t)\}$ (the ``usual'' kernel of $\varphi$) can be recovered from the set of classes of elements of $S$ which are mapped to idempotents under $\varphi$. The latter set is called the {\it kernel} of $\varphi$ in the context of inverse semigroups and denoted by $\iker\varphi$ to distinguish it from the congruence $\ker\varphi$. Thus, $\iker\varphi=\iker{(\ker\varphi)}$  and $\iker\rho=\iker{\rho^\natural}$, where $\rho^\natural$ is the natural epimorphism induced by the congruence $\rho$. 

A collection of disjoint subsets of $S$, which can be realized as the kernel of some (uniquely determined) congruence on $S$, is called a {\it kernel normal system}. There is a characterization of the kernel normal systems of an arbitrary inverse semigroup (see~\cite[p. 60]{Clifford-Preston-2}). 

We shall be interested in the kernel normal systems which are {\it kernels} of idempotent-separating (i.\,e. injective on idempotents) epimorphisms of inverse semigroups. Given such an epimorphism $\pi:S\to S'$ and $e'\in E(S')$, the pre\-ima\-ge $\pi\m(e')$ is an (inverse) subsemigroup of $S$ containing exactly one idempotent $e$, where $\pi(e)=e'$. Hence it is a group which we denote by $N_e$. Thus, $\iker\pi$ is the collection $\{N_e\}_{e\in E(S)}$ of subgroups of $S$, so that $\bigsqcup_{e\in E(S)}N_e=\pi\m(E(S'))$ is an inverse subsemigroup of $S$ and hence a semilattice of groups. Notice also that by $\pi\m(E(S'))$ one uniquely restores $\iker\pi$ as the family of group components of $\pi\m(E(S'))$.

Conversely, let $S$ be an arbitrary inverse semigroup. For each $e\in E(S)$ fix a subgroup $N_e$ of $S$ with identity $e$ and consider the collection $\mathcal N=\{N_e\}_{e\in E(S)}$. It turns out that $\mathcal N$ is a kernel normal system of $S$ exactly when $N=\bigsqcup_{e\in E(S)}N_e$ is a subsemigroup of $S$ satisfying $sNs\m\subseteq N$ for all $s\in S$ (see~\cite[Theorem 7.54]{Clifford-Preston-2}). Such kernel normal systems are called {\it group kernel normal systems}~\cite[p. 66]{Clifford-Preston-2}. In this case the corresponding natural epimorphism is idempotent-separating (and its {\it kernel} is $\mathcal N$). Indeed, the class of the induced congruence $\rho_{\mathcal N}$ containing $s\in S$ is $sN_{s\m s}=N_{ss\m}s$ (see~\cite[Theorem 7.55]{Clifford-Preston-2}). In particular, $\rho_{\mathcal N}^\natural(e)=N_e$ for $e\in E(S)$. More precisely, $(s,t)\in\rho_{\mathcal N}$ if and only if $t=us$ for some $u\in N_{ss\m}=N_{tt\m}$, or, equivalently, $t=sv$ for some $v\in N_{s\m s}=N_{t\m t}$. 
Moreover, such $u$ and $v$ are unique. For if $t=us$, then $u=uu\m u=u\cdot uu\m=uss\m=ts\m$. Similarly $t=sv$ implies $v=vv\m v=v\m v\cdot v=s\m sv=s\m t$. 

\section{\texorpdfstring{Extensions of $A$ by $S$}{Extensions of A by S}}\label{Extensions-of-A-by-S}
Let $A$ be a semilattice of groups and $S$ an inverse semigroup. An extension of $A$ by $S$ (in the sense of Lausch~\cite[p. 283]{Lausch}) is an inverse semigroup $U$ with a monomorphism $i:A\to U$ and an idempotent-separating epimorphism $j:U\to S$, such that $i(A)=j\m(E(S))$.\footnote{In other words $\iker j=\{i(A)_e\}_{e\in E(U)}$.} Notice that in this case $j|_{E(U)}$ is an isomorphism between $E(U)$ and $E(S)$. It is also easily seen that $i$ maps isomorphically $E(A)$ onto $E(U)$.

Recall from~\cite[p. 283]{Lausch} that any two extensions $A\overset{i}{\to}U\overset{j}{\to}S$ and $A\overset{i'}{\to}U'\overset{j'}{\to} S$ of $A$ by $S$ are called {\it equivalent} if there is a homomorphism $\mu:U\to U'$ such that the following diagram
 	\begin{align}\label{eq-comm-diag-equiv-ext}
     	\begin{tikzpicture}[node distance=1.5cm, auto]
     		\node (A) {$A$};
     		\node (U) [right of=A] {$U$};
     		\node (S) [right of=U] {$S$};
     		\node (A') [below of=A]{$A$};
     		\node (U') [below of=U] {$U'$};
     		\node (S') [below of=S] {$S$};
     		\draw[->] (A) to node {$i$} (U);
     		\draw[->] (U) to node {$j$} (S);
     		\draw[->] (A') to node {$i'$} (U');
     		\draw[->] (U') to node {$j'$} (S');
     		\draw[-,double distance=2pt] (A) to node {} (A');
     		\draw[->] (U) to node {$\mu$} (U');
     		\draw[-,double distance=2pt] (S) to node {} (S');
     	\end{tikzpicture}
 	\end{align}
commutes. Note that reflexivity and transitivity of the equivalence of extensions are obvious, but, {\it a priori}, it is not clear that this relation is symmetric. Its symmetry follows from the next lemma.

    \begin{lem}\label{lem-mu-is-iso}
        If~\eqref{eq-comm-diag-equiv-ext} commutes, then $\mu$ is an isomorphism. 
    \end{lem}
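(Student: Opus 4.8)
The plan is to prove that $\mu$ is a bijection; since the set-theoretic inverse of a bijective semigroup homomorphism is again a homomorphism, this immediately upgrades $\mu$ to an isomorphism. The two structural inputs I would set up first are the following. Commutativity of~\eqref{eq-comm-diag-equiv-ext} gives $\mu\circ i=i'$ and $j'\circ\mu=j$. Moreover, since $j$ is an idempotent-separating epimorphism with $\iker j=\{i(A)_e\}_{e\in E(U)}$, its kernel is the group kernel normal system $\{N_e\}_{e\in E(U)}$, where $N_e$ is the group component of $i(A)=j\m(E(S))$ with identity $e$ (equivalently $N_e=j\m(j(e))$); by the description of $\rho_{\mathcal N}$ recalled in Section~\ref{sec-inv-sem}, $j(u)=j(v)$ holds if and only if $v=wu$ for a unique $w\in N_{uu\m}\subseteq i(A)$, say $w=i(a)$. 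The same statements hold for $j'$, $i'$, with group components $N'_{e'}$ of $i'(A)$.

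For surjectivity, take $u'\in U'$. Since $j$ is onto, choose $u\in U$ with $j(u)=j'(u')$; then $j'(\mu(u))=j(u)=j'(u')$, so $\mu(u)$ and $u'$ lie in the same $\ker j'$-class. By the kernel description for $j'$ this means $u'=i'(b)\,\mu(u)$ for some $b\in A$ with $i'(b)\in N'_{\mu(u)\mu(u)\m}$. Putting $u_0=i(b)u\in U$ and using $\mu\circ i=i'$, I compute $\mu(u_0)=\mu(i(b))\mu(u)=i'(b)\mu(u)=u'$, so $\mu$ is surjective.

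For injectivity, suppose $\mu(u_1)=\mu(u_2)$. Applying $j'$ and using $j'\circ\mu=j$ gives $j(u_1)=j(u_2)$, hence $u_2=i(a)u_1$ with $i(a)\in N_{u_1u_1\m}$, i.e. $i(a)i(a)\m=u_1u_1\m$. Applying $\mu$ yields $i'(a)\mu(u_1)=\mu(u_2)=\mu(u_1)$. Writing $e'=\mu(u_1u_1\m)=\mu(u_1)\mu(u_1)\m$ and multiplying on the right by $\mu(u_1)\m$ gives $i'(a)e'=e'$; on the other hand $i'(a)i'(a)\m=\mu(i(a)i(a)\m)=\mu(u_1u_1\m)=e'$, so $i'(a)$ lies in the group $N'_{e'}$ with identity $e'$, whence $i'(a)e'=i'(a)$. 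Combining, $i'(a)=e'=i'(aa\m)$, and injectivity of $i'$ forces $a=aa\m$ to be idempotent; then $i(a)=i(a)i(a)\m=u_1u_1\m$ and $u_2=i(a)u_1=u_1u_1\m u_1=u_1$. I expect surjectivity to be a routine chase once the kernel normal system is in hand; the delicate step is this injectivity argument, specifically the claim that the element $a\in A$ relating the two preimages is forced to be idempotent, which is exactly where the injectivity of $i'$, the group structure of $N'_{e'}$, and the intertwining $\mu\circ i=i'$ are all used together.
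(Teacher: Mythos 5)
Your proof is correct and follows essentially the same route as the paper's: both arguments prove bijectivity directly from the commutativity relations $\mu\circ i=i'$, $j'\circ\mu=j$ together with the facts that $i(A)=j\m(E(S))$, $i'(A)=j'\m(E(S))$ and that $i'$ is injective. The only differences are organizational: for injectivity the paper computes with $uv\m$ and $vv\m$ to get $uv\m=vv\m$ and $u\m u=v\m v$, whereas you show that the connecting element $a$ with $u_2=i(a)u_1$ is forced to be idempotent, and for surjectivity you correct $\mu(u)$ on the left instead of the right --- both variants are sound.
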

    \begin{proof}
        Suppose that $\mu(u)=\mu(v)$ for some $u,v\in U$. Then 
	\begin{equation}\label{eq-mu(uv^(-1))=mu(vv^(-1))}
		\mu(uv\m)=\mu(vv\m)
	\end{equation}
and hence, applying $j'$ and using $j=j'\circ\mu$ from~\eqref{eq-comm-diag-equiv-ext}, we get $j(uv\m)=j(vv\m)\in E(S)$. Therefore, $uv\m=i(a)$ and $vv\m=i(b)$ for some $a,b\in A$. Since $i'=\mu\circ i$, we obtain $\mu(uv\m)=i'(a)$ and $\mu(vv\m)=i'(b)$. In view of~\eqref{eq-mu(uv^(-1))=mu(vv^(-1))} and injectivity of $i'$ we conclude that $a=b$ and thus 
	\begin{equation}\label{eq-uv^(-1)=vv^(-1)}
		uv\m=vv\m. 
	\end{equation}
Similarly\footnote{or immediately from the facts that $j$ is idempotent-separating and $j(u)=j(v)$}
	\begin{equation}\label{eq-u^(-1)u=v^(-1)v}
		u\m u=v\m v.
	\end{equation}
Finally, using~\eqref{eq-uv^(-1)=vv^(-1)} and~\eqref{eq-u^(-1)u=v^(-1)v} we see that
	$$
		u=u(u\m u)=(uv\m) v=vv\m v=v.
	$$
This proves the injectivity of $\mu$. 

For the surjectivity take an arbitrary $u'\in U'$ and find $u\in U$ such that $j'(u')=j(u)=j'(\mu(u))$. Then
	\begin{equation}\label{eq-u'u'^(-1)=mu(uu^(-1))}
		u'u'\m=\mu(uu\m)
	\end{equation} 
due to the fact that $j'$ is idempotent-separating. Moreover, there exists $a\in A$ such that 
	\begin{equation}\label{eq-mu(u)u'=i'(a)}
		\mu(u)\m u'=i'(a)=\mu(i(a)), 
	\end{equation}  as $ \mu (u)\m u' \in j' \m (E(S)).$
Set $v=ui(a)$ and note from~\eqref{eq-u'u'^(-1)=mu(uu^(-1))} and~\eqref{eq-mu(u)u'=i'(a)} that
	$$
		\mu(v)=\mu(ui(a))=\mu(u)i'(a)=\mu(u)\mu(u)\m u'=\mu(uu\m)u'=u'u'\m u'=u'.
	$$
    \end{proof}

    \section{\texorpdfstring{Twisted $S$-modules}{Twisted S-modules}}\label{subsec-tw-S-mod}
    
    We recall some terminology and facts from~\cite{Lausch}. 

    \begin{defn}\label{defn-rel_inv}
        Let $A$ be a semilattice of groups. An endomorphism $\varphi$ of $A$ is called {\it relatively invertible}, whenever there exist $\bar\varphi\in\End A$ and $e_\varphi\in E(A)$ satisfying
	\begin{enumerate}
		\item $\bar\varphi\circ\varphi(a)=e_\varphi a$ and $\varphi\circ\bar\varphi(a)=\varphi(e_\varphi)a$ for any $a\in A$; 
		\item $e_\varphi$ is the identity of $\bar\varphi(A)$ and $\varphi(e_\varphi)$ is the identity of $\varphi(A)$.
	\end{enumerate}
    \end{defn}
    
   It is immediately seen directly from the definition that $\bar\varphi$ is relatively invertible with $\bar{\bar\varphi}=\varphi$ and $e_{\bar\varphi}=\varphi(e_\varphi)$.  The set of relatively invertible endomorphisms is denoted by $\iend A$.
    
	\begin{rem}\label{rem-rel_inv_weakened_(ii)}
	    It is enough to require in (ii) of Definition~\ref{defn-rel_inv} that $\varphi(e_\varphi)$ is the identity of $\varphi(A)$.
	\end{rem}
    \noindent  For suppose that $\varphi\in\End A$ satisfies (i) of Definition~\ref{defn-rel_inv} for some $\bar\varphi\in\End A$ and $e_\varphi\in E(A)$, such that $\varphi(e_\varphi a)=\varphi(a)$  for all $a\in A$. Then define $\tilde\varphi=e_\varphi\bar\varphi\in\End A$ and note that
    \begin{align*}
	       \tilde\varphi\circ\varphi(a)&=e_\varphi\bar\varphi\circ\varphi(a)=e_\varphi\cdot e_\varphi a=e_\varphi a,\\
	       \varphi\circ\tilde\varphi(a)&=\varphi(e_\varphi\bar\varphi(a))=\varphi(e_\varphi)\cdot\varphi(e_\varphi)a=\varphi(e_\varphi)a.
	\end{align*}
	Moreover, (ii) is obviously fulfilled with $\bar\varphi$ replaced by $\tilde\varphi$.
	
	\begin{rem}\label{rem-rel_inv_(ii)}
	    If $\varphi\in\End A$ satisfies (i) of Definition~\ref{defn-rel_inv} for some $\bar\varphi\in\End A$ and $e_\varphi\in E(A)$, then $\varphi'=\varphi(e_\varphi)\varphi\in\iend A$.
	\end{rem}
	\noindent Indeed, we see from (i) that
	    \begin{align*}
	       \bar\varphi\circ\varphi'(a)&=\bar\varphi\circ\varphi(e_\varphi a)=e_\varphi\cdot e_\varphi a=e_\varphi a,\\
	       \varphi'\circ\bar\varphi(a)&  = \varphi(e_\varphi)\cdot\varphi(e_\varphi)a=  \varphi '  (e_\varphi)a,
	    \end{align*}
so $\varphi'$ also satisfies (i) with $\bar\varphi'=\bar\varphi$ and $e_{\varphi'}=e_\varphi$. By Remark~\ref{rem-rel_inv_weakened_(ii)} it remains to verify that $\varphi'(e_\varphi)=\varphi(e_\varphi)\varphi(e_\varphi)=\varphi(e_\varphi)$ is the identity of $\varphi'(A)$, which is trivial.\\

Denote by $\cIui A$ the inverse semigroup of isomorphisms between unital ideals of a semigroup $A$ (see~\cite{DK}).

    \begin{prop}\label{prop-iend<->I_ui}
        The set $\iend A$ forms an (inverse) subsemigroup of $\End A$ isomorphic to $\cIui A$.
    \end{prop}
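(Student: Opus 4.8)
The plan is to build an explicit bijection between $\iend A$ and $\cIui A$ and then check that it respects composition. First I would record the structure of unital ideals: since $E(A)\subseteq C(A)$, every unital ideal of $A$ equals $Ae$ for a unique $e\in E(A)$, namely its identity, and $Ae=\bigsqcup_{f\le e}A_f$. Indeed, if $I$ is an ideal with identity $u$, then $u\in E(A)$ and $a=au\in Au$ for $a\in I$, while $Au\subseteq AI\subseteq I$, so $I=Au$. Thus $\cIui A$ consists of the isomorphisms $\psi\colon Ae\to Af$ between such ideals, multiplied as partial bijections (first apply the right factor), matching the convention that the product in $\End A$ is composition.

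Next I would define the two maps. Given $\varphi\in\iend A$ with data $\bar\varphi,e_\varphi$, I would first observe that $e_\varphi$ is determined by $\varphi$: if $(\bar\varphi,e_\varphi)$ and $(\bar\varphi',e_\varphi')$ both work, then $\varphi(e_\varphi)=\varphi(e_\varphi')$ is the identity of $\varphi(A)$, and applying $\bar\varphi$ gives $e_\varphi=e_\varphi e_\varphi'=e_\varphi'$. Using (i) I would show $\bar\varphi(A)=Ae_\varphi$ and $\varphi(A)=A\varphi(e_\varphi)$, the inclusion $A\varphi(e_\varphi)\subseteq\varphi(A)$ coming from $x\varphi(e_\varphi)=\varphi\circ\bar\varphi(x)$, and that $\varphi|_{Ae_\varphi}\colon Ae_\varphi\to A\varphi(e_\varphi)$ is an isomorphism of unital ideals with inverse $\bar\varphi|_{A\varphi(e_\varphi)}$ (injectivity is $\bar\varphi\circ\varphi(a)=e_\varphi a=a$ on $Ae_\varphi$). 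This defines $\Phi(\varphi)=\varphi|_{Ae_\varphi}\in\cIui A$. Conversely, for $\psi\colon Ae\to Af$ I would set $\hat\psi(a)=\psi(ea)$; centrality of $e$ yields $e(ab)=(ea)(eb)$, so $\hat\psi\in\End A$, and a direct check gives $\hat\psi\in\iend A$ with $e_{\hat\psi}=e$ and $\overline{\hat\psi}=\widehat{\psi^{-1}}$. This gives $\Psi(\psi)=\hat\psi$.

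I would then verify that $\Phi$ and $\Psi$ are mutually inverse: $\Phi(\hat\psi)=\hat\psi|_{Ae}=\psi$ since $ea=a$ on $Ae$, while $\Psi(\Phi(\varphi))(a)=\varphi(e_\varphi a)=\varphi(e_\varphi)\varphi(a)=\varphi(a)$ because $\varphi(e_\varphi)$ is the identity of $\varphi(A)$. In particular both are bijections.

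The decisive step is the homomorphism identity $\widehat{\psi\circ\chi}=\hat\psi\circ\hat\chi$, which simultaneously shows that $\iend A$ is closed under composition (being the image of $\Psi$) and that $\Psi$ is an isomorphism. Writing $\chi\colon Ag\to Ah$ and $\psi\colon Ae\to Af$, the product $\psi\circ\chi$ in $\cIui A$ has domain $\chi^{-1}(A(he))$ with identity $k=\chi^{-1}(he)$. The crux is the identity $\chi(ka)=e\chi(ga)$ for all $a$: since $k\le g$ and idempotents are central, $ka=k(ga)$ with $k,ga\in Ag$, so $\chi(ka)=\chi(k)\chi(ga)=(he)\chi(ga)=e\chi(ga)$, using $h\chi(ga)=\chi(ga)$. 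Applying the injective map $\psi$ gives $\widehat{\psi\circ\chi}(a)=(\psi\circ\chi)(ka)=\psi(\chi(ka))=\psi(e\chi(ga))=\hat\psi(\hat\chi(a))$. Hence $\Psi$ is an isomorphism of $\cIui A$ onto the subsemigroup $\iend A$ of $\End A$, and since $\cIui A$ is inverse, so is $\iend A$. I expect the only genuine obstacle to be the bookkeeping of domains, identities and the composition convention in this last step; everything else is routine once the central-idempotent identity $\chi(ka)=e\chi(ga)$ is in hand.
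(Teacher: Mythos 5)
Your proposal is correct and follows essentially the same route as the paper: restricting $\varphi$ to $e_\varphi A$ in one direction, sending $\psi\colon eA\to fA$ to $a\mapsto\psi(ea)$ in the other, checking the two maps are mutually inverse, and establishing the homomorphism property of $\psi\mapsto\hat\psi$ via the same domain computation $\dom(\psi\circ\chi)=\chi^{-1}(he)A$ and the same central-idempotent manipulation. The only cosmetic differences are your explicit remark that $e_\varphi$ is uniquely determined by $\varphi$ (which the paper leaves implicit in its bijectivity check) and the left/right ideal convention, which is immaterial since $E(A)\subseteq C(A)$.
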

    \begin{proof}
        Given $\varphi\in\iend A$, we prove that the restriction $\varphi|_{e_\varphi A}$ maps isomorphically $e_\varphi A$ to $\varphi(e_\varphi)A$ with $\bar\varphi|_{\varphi(e_\varphi)A}$ being its inverse. In fact, we shall use only (i) of Definition~\ref{defn-rel_inv}. Indeed, $\varphi(e_\varphi A)\subseteq\varphi(e_\varphi)A$, $\bar\varphi(\varphi(e_\varphi)A)\subseteq e_\varphi^2A=e_\varphi A$ and
        \begin{align*}
	       \bar\varphi\circ\varphi(e_\varphi a)&=e_\varphi\cdot e_\varphi a=e_\varphi a,\\
	       \varphi\circ\bar\varphi(\varphi(e_\varphi)a)&=\varphi(e_\varphi)\cdot\varphi(e_\varphi)a=\varphi(e_\varphi)a
	    \end{align*}
for any $a\in A$.

    Conversely, let $\psi$ be an isomorphism between $eA$ and $fA$ for $e,f\in E(A)$. Defining $\varphi_\psi(a)=\psi(ea)$ and $\overline{\varphi_\psi}(a)=\psi\m(fa)$, we make sure that
        \begin{align*}
	       \overline{\varphi_\psi}\circ\varphi_\psi(a)&=\psi\m(f\psi(ea))=\psi\m\circ\psi(ea)=ea,\\
	       \varphi_\psi\circ\overline{\varphi_\psi}(a)&=\psi(e\psi\m(fa))=\psi\circ\psi\m(fa)=fa,
	    \end{align*}
    and that $\varphi_\psi(e)=\psi(e)=f$ is the identity of $\varphi_\psi(A)=\psi(eA)=fA$. In view of Remark~\ref{rem-rel_inv_weakened_(ii)} this proves that $\varphi_\psi\in\iend A$.
    
    Now   writing  $\phi = \varphi|_{e_\varphi A} $    for  $\varphi\in\iend A,$ we have  for all $a\in A$:
    $$
         \varphi_{\phi }(a)=\phi(e_\varphi a) =\varphi(e_\varphi a),
    $$
    which is $\varphi(a)$ by (ii) of Definition~\ref{defn-rel_inv}. Furthermore, given an isomorphism $\psi:eA\to fA$, we have $e_{\varphi_\psi}=e$, as showed above. Moreover, for arbitrary $a\in eA$:
    $$
        \varphi_\psi|_{eA}(a)=\psi(ea)=\psi(a).
    $$
    Thus, the correspondence $\varphi   \mapsto   \varphi|_{e_\varphi A}$ is one-to-one. It will be convenient to us to prove that its inverse 
    $\psi  \mapsto   \varphi_\psi$ is a homomorphism.
    
    Taking  isomorphisms $\psi:eA\to fA$ and $\psi':e'A\to f'A$, we know that
    \begin{align*}
        \dom{(\psi\circ\psi')}&=\psi'\m(\ran{\psi'}\cap\dom\psi)=\psi'\m(f'A\cap eA)=\psi'\m(ef'A)\\
        &=\psi'\m(ef')\psi'\m(f'A)=\psi'\m(ef')e'A=\psi'\m(ef')A.
    \end{align*}
    Therefore, 
    \begin{align*}
        \varphi_{\psi\circ\psi'}(a)=\psi\circ\psi'(\psi'\m(ef')a)&=\psi\circ\psi'(\psi'\m(ef')\cdot e'a)=\psi(ef'\psi'(e'a))\\
        &=\psi(e\psi'(e'a))=\varphi_\psi\circ\varphi_{\psi'}(a)
    \end{align*}  for any $a\in A.$
    \end{proof}
    
    \begin{cor}[Proposition 8.1 from \cite{Lausch}]\label{cor-inv_and_idemp_of_iend_A}
    \leavevmode
    \begin{enumerate}
        \item The inverse of $\varphi\in\iend A$ is $\bar\varphi\in\iend A$;
        \item each $\epsilon\in E(\iend A)$ has the form $\epsilon(a)=ea$ for a unique $e\in E(A)$.
    \end{enumerate}
    \end{cor}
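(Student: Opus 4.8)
The plan is to transport both statements across the isomorphism $\Phi\colon\iend A\to\cIui A$, $\varphi\mapsto\varphi|_{e_\varphi A}$, furnished by Proposition~\ref{prop-iend<->I_ui}, and then read off the answers from the well-known description of inverses and idempotents in a semigroup of partial bijections. Recall that $\cIui A$ sits inside the symmetric inverse semigroup on $A$, so that the inverse of an isomorphism $\psi\colon eA\to fA$ is the inverse bijection $\psi\m\colon fA\to eA$, and the idempotents of $\cIui A$ are precisely the restricted identities $\id_{eA}$ with $e\in E(A)$. Since $\iend A$ is an inverse semigroup, its inverses are unique, and a homomorphism of inverse semigroups necessarily preserves the unary inversion; these are the only structural facts the argument needs.

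For part (i), I would argue that $\Phi(\varphi\m)=\Phi(\varphi)\m=(\varphi|_{e_\varphi A})\m$, using that $\Phi$ commutes with inversion. Inside the proof of Proposition~\ref{prop-iend<->I_ui} it was already shown that $\varphi|_{e_\varphi A}$ is an isomorphism $e_\varphi A\to\varphi(e_\varphi)A$ whose inverse bijection is $\bar\varphi|_{\varphi(e_\varphi)A}$. Combining this with the identity $e_{\bar\varphi}=\varphi(e_\varphi)$ noted right after Definition~\ref{defn-rel_inv}, we get $(\varphi|_{e_\varphi A})\m=\bar\varphi|_{\varphi(e_\varphi)A}=\bar\varphi|_{e_{\bar\varphi}A}=\Phi(\bar\varphi)$. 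Thus $\Phi(\varphi\m)=\Phi(\bar\varphi)$, and injectivity of $\Phi$ forces $\varphi\m=\bar\varphi$.

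For part (ii), I would pull the idempotents of $\cIui A$ back through $\Phi\m\colon\psi\mapsto\varphi_\psi$: for $e\in E(A)$ one computes $\varphi_{\id_{eA}}(a)=\id_{eA}(ea)=ea$, so every idempotent of $\iend A$ has the stated form $a\mapsto ea$, and conversely each such map is an idempotent since it corresponds to $\id_{eA}$. Uniqueness of $e$ is immediate: if $ea=e'a$ for all $a\in A$ with $e,e'\in E(A)$, then evaluating at $a=e$ and at $a=e'$ and using that idempotents of $A$ commute gives $e=ee'=e'$. The only points demanding genuine care are the identification $(\varphi|_{e_\varphi A})\m=\bar\varphi|_{\varphi(e_\varphi)A}$ and the matching of base idempotents $e_{\bar\varphi}=\varphi(e_\varphi)$; once these are in hand the rest is formal bookkeeping across $\Phi$. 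A reader who prefers to avoid the passage to $\cIui A$ altogether can verify $\bar\varphi\circ\varphi\circ\bar\varphi=\bar\varphi$ and $\varphi\circ\bar\varphi\circ\varphi=\varphi$ directly from Definition~\ref{defn-rel_inv} and then invoke uniqueness of inverses in $\iend A$, but this merely repeats the computations already absorbed into Proposition~\ref{prop-iend<->I_ui}.
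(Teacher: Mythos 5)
Your proposal is correct and follows essentially the same route as the paper: both transport the statement across the isomorphism $\iend A\cong\cIui A$ of Proposition~\ref{prop-iend<->I_ui}, identify $(\varphi|_{e_\varphi A})\m$ with $\bar\varphi|_{\varphi(e_\varphi)A}$ using $e_{\bar\varphi}=\varphi(e_\varphi)$, and pull the idempotents $\id_{eA}$ of $\cIui A$ back to the maps $a\mapsto ea$. Your uniqueness argument in (ii) (evaluating at $e$ and $e'$) is just a spelled-out version of the paper's appeal to $eA=fA\iff e=f$.
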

    \noindent For the  partial isomorphism corresponding to $\bar{\varphi } $ is $\bar\varphi|_{\varphi(e_\varphi)}:\varphi(e_\varphi)A\to\bar\varphi\circ\varphi(e_\varphi)A=e_\varphi A$, which is inverse to $\varphi|_{e_\varphi A}$ in $\cIui A$, as we have seen in the proof of Proposition~\ref{prop-iend<->I_ui}. The second part follows from the straightforward facts that the idempotents of $\cIui A$ are the maps $\id_{eA}$, $e\in E(A)$, and that $eA=fA\iff e=f$.

\begin{rem}\label{lambda-iso}
Each  $\varphi \in \iend A$ respects the decompositions of the ideals $ e_{\varphi} A$ and $ \varphi(e_{\varphi}) A$ into 
the group components. More precisely, for any $f\in E(A),$ such that  
$f \leq e_{\varphi},$ one has that   $\varphi  (A_{ f}) =A_{\varphi (f)}.$
\end{rem}

\noindent Indeed, $\varphi (A_f) \subseteq A_{\varphi (f)}$ and $\bar\varphi (A_{\varphi(f)})\subseteq A_{\bar\varphi \circ\varphi(f)}=A_f, $ as    $\varphi$ and     $\bar\varphi $  are   endomorphisms.\\

% Notice that since $E(A)\subseteq C(A)$, for each fixed $b\in A$ the map $\xi_b(a)=bab\m$ is an endomorphism called {\it inner}. Moreover, it is relatively invertible with $\overline{\xi_b}=\xi_{b\m}$ and $e_{\xi_b}=b\m b=bb\m$. The subsemigroup of inner endomorphisms is denoted by $\iin A$. Observe from (ii) of Corollary~\ref{cor-inv_and_idemp_of_iend_A} that $E(\iend A)\subseteq\iin A$, so $E(\iin A)=E(\iend A)\subseteq C(\iin A)$, and hence $\iin A$ is a semilattice of groups.

Let $A\overset{i}{\to}U\overset{j}{\to}S$ be an extension of a semilattice of groups $A$ by an inverse semigroup $S$. Fix a map $\rho:S\to U$ with $j\circ\rho=\id_S$. Following Lausch, we call such a map a {\it transversal} of $j$. Then our observation about kernel normal systems yields
    \begin{lem}\label{lem-u=i(a)rho(s)}\footnote{This is Lemma~7.1 from~\cite{Lausch}. For its proof Lausch refers to~\cite{Preston54}, however, ~\cite{Preston54} does not contain this fact explicitly; the proof which we give here is obtained using ideas from~\cite{Preston54}.} 
        For any $u\in U$ there is a unique pair $(a,s)$ of $a\in A$ and $s\in S$, such that
        \begin{enumerate}
            \item $u=i(a)\rho(s)$;
            \item $i(aa\m)=\rho(s)\rho(s)\m$.
        \end{enumerate}
    \end{lem}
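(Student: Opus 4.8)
The plan is to show that both coordinates of the pair are forced by conditions (i) and (ii), which simultaneously identifies the only possible candidate and settles uniqueness; existence is then a direct verification. First I would apply $j$ to (i): since $i(A)=j\m(E(S))$, the element $j(i(a))$ is idempotent, and combining this with (ii) and $j\circ\rho=\id_S$ gives $j(i(a))=j(i(aa\m))=j(\rho(s)\rho(s)\m)=ss\m$, whence $j(u)=j(i(a))\,s=ss\m s=s$. Thus necessarily $s=j(u)$, pinning down the second coordinate. For the first, multiplying (i) on the right by $\rho(s)\m$ and then invoking (ii) yields $u\rho(s)\m=i(a)\rho(s)\rho(s)\m=i(a)i(aa\m)=i(a)$, so $a$ must be the preimage $i\m(u\rho(s)\m)$, which is unique by injectivity of $i$. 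Hence any solution $(a,s)$ is determined by $u$, giving uniqueness.

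For existence I would set $s=j(u)$ and $a=i\m(u\rho(s)\m)$, the latter being well defined because $j(u\rho(s)\m)=j(u)\,j(\rho(s))\m=ss\m\in E(S)$ places $u\rho(s)\m$ in $j\m(E(S))=i(A)$. Condition (i) then amounts to $u\rho(s)\m\rho(s)=u$; since $j$ is idempotent-separating and $j(u\m u)=s\m s=j(\rho(s)\m\rho(s))$, the two idempotents $u\m u$ and $\rho(s)\m\rho(s)$ coincide, so $u\rho(s)\m\rho(s)=u\,u\m u=u$. Condition (ii) follows in the same spirit: computing $i(aa\m)=(u\rho(s)\m)(u\rho(s)\m)\m=u\,\rho(s)\m\rho(s)\,u\m=u\,u\m u\,u\m=uu\m$, and applying the idempotent-separating property to $j(uu\m)=ss\m=j(\rho(s)\rho(s)\m)$ gives $uu\m=\rho(s)\rho(s)\m$, as required.

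The main obstacle, and the place where the hypotheses on the extension genuinely work, is the step $u\rho(s)\m\rho(s)=u$ verifying (i): \emph{a priori} $\rho(s)\m\rho(s)$ is merely some idempotent of $U$ lying over $s\m s$, and there is no reason for right multiplication by it to fix $u$ unless one knows it equals $u\m u$. This is precisely what the idempotent-separating property (injectivity of $j$ on $E(U)$) delivers, and the same observation forces $uu\m=\rho(s)\rho(s)\m$ in (ii). I would therefore foreground the idempotent-separating hypothesis as the engine of the proof, with the semilattice-of-groups identities $a\cdot aa\m=a$ and $i(aa\m)=i(a)i(a)\m$ playing only a supporting, bookkeeping role.
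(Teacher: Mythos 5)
Your proof is correct and follows essentially the same route as the paper's: uniqueness is obtained exactly as there, by applying $j$ to (i)--(ii) to force $s=j(u)$, and your existence argument with $a=i\m(u\rho(s)\m)$ merely unwinds, via the idempotent-separating property, the description of the $\ker j$-class of $\rho(s)$ that the paper imports from its discussion of group kernel normal systems. No gaps.
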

    \begin{proof}
        Taking $s=j(u)$, we see that $j(u)=j(\rho(s))$, i.\,e. $(u,\rho(s))\in \ker j$, so $u=v\rho(s)$ for some (uniquely defined) $v\in(\iker j)_{\rho(s)\rho(s)\m}=i(A)_{\rho(s)\rho(s)\m}$.  Let $a\in A$ be such that $i(a)=v.$ 
        
        It is enough to prove the uniqueness of $s$, because $a$ depends only on $u$ and $s.$ Assuming (i) and (ii) for some $a\in A$ and $s\in S$, we note that $j(i(a))$, being an idempotent, coincides with $j(i(a))j(i(a))\m=j(i(a)i(a)\m)=j(\rho(s)\rho(s)\m)=ss\m$. Therefore, 
$s=ss\m s=j(i(a))j(\rho(s))=j(u)$.
    \end{proof}

    From now on we shall assume as in~\cite{Lausch} that any transversal $\rho$ maps $E(S)$ to a subset of $E(U)$ (this can always be arranged, because $j\m(e)$ contains an idempotent for any $e\in E(S)$). In fact, this easily implies that $\rho|_{E(S)}$ is the isomorphism $(j|_{E(U)})\m$ and hence 
    \begin{align}\label{eq-alpha=i^(-1)rho_E(S)}
     \alpha=i\m\circ\rho|_{E(S)} 
    \end{align}
    maps isomorphically $E(S)$ onto $E(A)$. Notice also that
    \begin{align}\label{eq-rho(s)rho(s^(-1))}
     \rho(s)\rho(s)\m=\rho(ss\m)
    \end{align}
for any $s\in S$, as both $\rho(s)\rho(s)\m$ and $\rho(ss\m)$ are idempotents of $U$ which are mapped to the same $ss\m\in E(S)$ by $j$. 

Since $j(\rho(s)\rho(t))=j(\rho(st))$, there exists a unique $f(s,t)\in A$ such that
\begin{align}\label{eq-rho(s)rho(t)}
 \rho(s)\rho(t)=i(f(s,t))\rho(st)
\end{align}
and $i(f(s,t))\in i(A)_{\rho(st)\rho(st)\m}$ (see our observation about congruences given in Section~\ref{sec-inv-sem}). But $\rho(st)\rho(st)\m = \rho(stt\m s\m)$ by~\eqref{eq-rho(s)rho(s^(-1))}. Thus, we obtain a map $f:S^2\to A$ with $f(s,t)\in A_{\alpha(stt\m s\m)}$.
    
Without going into details we recall from~\cite[p. 291]{Lausch} that there is an idempotent-separating homomorphism $\nu:U\to\iend A$, $u\mapsto\nu_u$, defined by 
    \begin{align}\label{eq-defn_of_nu_u}
        \nu_u(a)=i\m(ui(a)u\m),
    \end{align}
$a\in A$. Then we set 
\begin{align}\label{eq-lambda=nu-rho}
 \lambda=\nu\circ\rho:S\to\iend A,\ s\mapsto\lambda_s.  
\end{align}

The following properties of the triple $(\alpha,\lambda,f)$ were established in~\cite{Lausch}:
	\begin{enumerate}
		\item $\lambda_e(a)=\alpha(e)a$ for all $e\in E(S)$ and $a\in A$;
		\item $\lambda_s(\alpha(e))=\alpha(ses\m)$ for all $s\in S$ and $e\in E(S)$;
		\item $\lambda_s\circ\lambda_t(a)=f(s,t)\lambda_{st}(a)f(s,t)\m$ for all $s,t\in S$ and $a\in A$;
		\item $f(se,e)=\alpha(ses\m)$ and $f(e,es)=\alpha(ess\m)$ for all $s\in S$ and $e\in E(S)$;
		\item $\lambda_s(f(t,u))f(s,tu)=f(s,t)f(st,u)$ for all $s,t,u\in S$.
	\end{enumerate}

Indeed, $\lambda_e$ is $\nu_{\rho(e)}$ by~\eqref{eq-lambda=nu-rho}, which is the multiplication by $i\m(\rho(e))$ in view of~\eqref{eq-defn_of_nu_u}. It remains to apply~\eqref{eq-alpha=i^(-1)rho_E(S)} to get (i). 

For (ii) observe that $\rho(s)\rho(e)\rho(s)\m=\rho(ses\m)$ as idempotents of $U$, whose images under $j$ coincide. Then by~\eqref{eq-alpha=i^(-1)rho_E(S)} and~\eqref{eq-defn_of_nu_u}--\eqref{eq-lambda=nu-rho}
    $$
        \lambda_s(\alpha(e))=\nu_{\rho(s)}(i\m(\rho(e)))=i\m(\rho(s)\rho(e)\rho(s)\m)=i\m(\rho(ses\m))=\alpha(ses\m).
    $$
    
To prove (iii), calculate using~\eqref{eq-rho(s)rho(t)}--\eqref{eq-lambda=nu-rho}
    $$
        \lambda_s\circ\lambda_t=\nu_{\rho(s)}\circ\nu_{\rho(t)}=\nu_{\rho(s)\rho(t)}=\nu_{i(f(s,t))\rho(st)}=\nu_{i(f(s,t))}\circ\nu_{\rho(st)}=\nu_{i(f(s,t))}\circ\lambda_{st}
    $$
and note that $\nu_{i(a)}$ is the conjugation by $a$ for any $a\in A$. 

It follows from $\rho(s)\rho(e)=\rho(s)\rho(e)\cdot\rho(e)$ and~\eqref{eq-rho(s)rho(t)} together with the uniqueness part of  Lemma~\ref{lem-u=i(a)rho(s)} that $f(s,e)=f(s,e)f(se,e)$. Since $f(s,e),f(se,e)\in A_{\alpha(ses\m)}$, the latter yields $f(se,e)=\alpha(ses\m)$. The second part of (iv) is proved similarly.

The proof of (v) is standard: we calculate $\rho(s)\rho(t)\rho(u)$ in two ways using the associativity of $U$ and~\eqref{eq-rho(s)rho(t)}--\eqref{eq-lambda=nu-rho}. On the one hand this equals
    $$
        \rho(s)\rho(t)\cdot\rho(u)=i(f(s,t))\rho(st)\rho(u)=i(f(s,t)f(st,u))\rho(stu),
    $$
and on the other hand this is
    \begin{align*}
        \rho(s)\cdot\rho(t)\rho(u)&=\rho(s)i(f(t,u))\rho(tu)\\
        &=\rho(s)\cdot\rho(s)\m\rho(s)\cdot i(f(t,u))\rho(tu)\\
        &=\rho(s)i(f(t,u))\rho(s)\m\cdot\rho(s)\rho(tu)\\
        &=i(\nu_{\rho(s)}(f(t,u))f(s,tu))\rho(stu)\\
        &=i(\lambda_s(f(t,u))f(s,tu))\rho(stu).
    \end{align*}
Here $\rho(s)\m\rho(s)$, being an idempotent, belongs to $i(A)$ and hence to $E(i(A))=i(E(A))\subseteq i(C(A))$, so it commutes with $i(f(t,u))$. Observing that $$f(s,t) f(st,u), \lambda _s(f(t,u)) f(s,tu) \in A_{\alpha (stuu\m t\m s\m )}=A_{\rho (stu) \rho(stu)\m},  $$ we obtain (v)  from the uniqueness part of  Lemma~\ref{lem-u=i(a)rho(s)}.

\begin{defn}\label{defn-twisted_S-mod}
 Let $S$ be an inverse semigroup. A {\it twisted $S$-module} is a semilattice of groups $A$ together with a triple $\Lambda=(\alpha,\lambda,f)$, where $\lambda$ is a map $S\to\iend A$, $\alpha$ is an isomorphism $E(S)\to E(A)$ and $f:S^2\to A$ is a map with $f(s,t)\in A_{\alpha(stt\m s\m)}$ satisfying the properties (i)--(v) listed above.
\end{defn}

\begin{rem}\label{rem-lambda_s(e_lambda_s)}
 Let $(\alpha,\lambda,f)$ be a twisted $S$-module structure on $A$. Then 
 \begin{align}\label{eq-lambda_s(e_lambda_s)}
  \alpha(ss\m)\lambda_s(a)=\lambda_s(a)
 \end{align}
 for all $s\in S$ and $a\in A$. 
\end{rem}
\noindent Indeed, since $\lambda_s$ is relatively invertible, then by (ii) of Definition~\ref{defn-rel_inv} there exists $e_{\lambda_s}\in E(A)$, such that $\lambda_s(e_{\lambda_s}a)=\lambda_s(a)$ for arbitrary $s\in S$ and $a\in A$. Taking $a=\alpha(s\m s)$, we get by (ii) of Definition~\ref{defn-twisted_S-mod}
\begin{align*}
 \alpha(ss\m)&=\lambda_s(e_{\lambda_s}\alpha(s\m s))=\lambda_s(\alpha(\alpha\m(e_{\lambda_s})s\m s))\\
 &=\alpha(s\alpha\m(e_{\lambda_s})s\m ss\m)=\alpha(s\alpha\m(e_{\lambda_s})s\m)=\lambda_s(e_{\lambda_s}).\\ 
\end{align*}

Notice that since $E(A)\subseteq C(A)$, then for each fixed $b\in A$ the map $\xi_b(a)=bab\m$ is an endomorphism called {\it inner}. Moreover, it is relatively invertible with $\overline{\xi_b}=\xi_{b\m}$ and $e_{\xi_b}=b\m b=bb\m$.

\begin{prop}\label{prop-equiv-tw-S-mod}
 Let $A\overset{i}{\to}U\overset{j}{\to}S$ be an extension, $\rho,\rho'$ transversals of $j$ and $\Lambda=(\alpha,\lambda,f)$, $\Lambda'=(\alpha',\lambda',f')$ the corresponding twisted $S$-module structures on $A$. Then
 \begin{enumerate}
  \item $\alpha'=\alpha$;
  \item $\lambda'_s=\xi_{g(s)}\circ\lambda_s$;
  \item $f'(s,t)g(st)=g(s)\lambda_s(g(t))f(s,t)$
 \end{enumerate}
for some function $g:S\to A$ with $g(s)\in A_{\alpha(ss\m)}$.
\end{prop}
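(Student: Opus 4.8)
The plan is to build the comparison function $g$ directly from $\rho$ and $\rho'$ and then read off (i)--(iii) from the defining relations of the two module structures, closely following the computation that established property (v).

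First I would settle (i). By the standing assumption every transversal carries $E(S)$ into $E(U)$, so both $\rho|_{E(S)}$ and $\rho'|_{E(S)}$ coincide with the unique inverse isomorphism $(j|_{E(U)})\m$; hence $\rho|_{E(S)}=\rho'|_{E(S)}$ and $\alpha'=\alpha$ by \eqref{eq-alpha=i^(-1)rho_E(S)}. To define $g$, I note that $j(\rho'(s))=s=j(\rho(s))$ for every $s\in S$, so Lemma~\ref{lem-u=i(a)rho(s)} applied to $u=\rho'(s)$ with the transversal $\rho$ produces a unique $g(s)\in A$ with
\begin{align*}
 \rho'(s)=i(g(s))\rho(s),\quad i(g(s)g(s)\m)=\rho(s)\rho(s)\m=i(\alpha(ss\m)),
\end{align*}
where the last equality uses \eqref{eq-rho(s)rho(s^(-1))} and $i\circ\alpha=\rho|_{E(S)}$. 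Injectivity of $i$ then forces $g(s)g(s)\m=\alpha(ss\m)$, i.e. $g(s)\in A_{\alpha(ss\m)}$, as required.

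For (ii) I would substitute $\rho'(s)=i(g(s))\rho(s)$ into $\lambda'_s=\nu_{\rho'(s)}$ and use that $\nu$ is a homomorphism together with the fact, recorded above, that $\nu_{i(a)}=\xi_a$:
\[
 \lambda'_s=\nu_{i(g(s))\rho(s)}=\nu_{i(g(s))}\circ\nu_{\rho(s)}=\xi_{g(s)}\circ\lambda_s.
\]
For (iii) I would expand $\rho'(s)\rho'(t)$ in two ways. Using \eqref{eq-rho(s)rho(t)} for $\rho'$ and then $\rho'(st)=i(g(st))\rho(st)$ gives $\rho'(s)\rho'(t)=i(f'(s,t)g(st))\rho(st)$. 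On the other hand, writing $\rho'(s)=i(g(s))\rho(s)$, $\rho'(t)=i(g(t))\rho(t)$ and mimicking the proof of property (v)---inserting $\rho(s)\m\rho(s)$, commuting this idempotent past $i(g(t))$ (legitimate since $E(i(A))\subseteq i(C(A))$), folding $\rho(s)i(g(t))\rho(s)\m$ into $i(\lambda_s(g(t)))$ via \eqref{eq-defn_of_nu_u}--\eqref{eq-lambda=nu-rho}, and applying \eqref{eq-rho(s)rho(t)}---gives $\rho'(s)\rho'(t)=i(g(s)\lambda_s(g(t))f(s,t))\rho(st)$. Comparing the two expressions, the uniqueness part of Lemma~\ref{lem-u=i(a)rho(s)} yields (iii), provided both coefficients lie in the group component $A_{\alpha(stt\m s\m)}=A_{i\m(\rho(st)\rho(st)\m)}$.

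The point that needs care, and which I expect to be the only real obstacle, is precisely this component bookkeeping. I would verify it as follows: $f'(s,t),f(s,t)\in A_{\alpha(stt\m s\m)}$ by definition, and $g(st)\in A_{\alpha(st(st)\m)}=A_{\alpha(stt\m s\m)}$; moreover $\lambda_s(g(t))\in A_{\alpha(stt\m s\m)}$, since $\lambda_s(g(t))\lambda_s(g(t))\m=\lambda_s(g(t)g(t)\m)=\lambda_s(\alpha(tt\m))=\alpha(stt\m s\m)$ by (ii) of Definition~\ref{defn-twisted_S-mod}, while $g(s)\in A_{\alpha(ss\m)}$. As $stt\m s\m\le ss\m$ gives $\alpha(stt\m s\m)\le\alpha(ss\m)$, the product $g(s)\lambda_s(g(t))f(s,t)$ again lands in $A_{\alpha(stt\m s\m)}$, so Lemma~\ref{lem-u=i(a)rho(s)} applies and $f'(s,t)g(st)=g(s)\lambda_s(g(t))f(s,t)$. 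The algebraic manipulations themselves are routine, being direct analogues of those already carried out for property (v).
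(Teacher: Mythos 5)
Your proposal is correct and follows essentially the same route as the paper: define $g$ from $\rho'(s)=i(g(s))\rho(s)$ via Lemma~\ref{lem-u=i(a)rho(s)}, get (ii) from $\nu_{i(g(s))}=\xi_{g(s)}$, and get (iii) by expanding $\rho'(s)\rho'(t)$ in two ways and invoking the uniqueness part of Lemma~\ref{lem-u=i(a)rho(s)} after checking that both sides lie in $A_{\alpha(stt\m s\m)}$. The only cosmetic difference is that you derive (ii) from the homomorphism property of $\nu$ where the paper writes out the conjugation computation explicitly; the component bookkeeping you flag as the delicate point is exactly the observation the paper makes at the end of its proof.
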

\begin{proof}
Since $\rho|_{E(S)}=\rho'|_{E(S)}=j\m|_{E(S)}$, then (i) is explained by~\eqref{eq-alpha=i^(-1)rho_E(S)}.

As $j\circ\rho'(s)=j\circ\rho(s)$, there is $g(s)\in A$, such that $\rho'(s)=i(g(s))\rho(s)$ and $i(g(s)g(s)\m)=\rho(s)\rho(s)\m$, the latter being $\rho(ss\m)=i\circ\alpha(ss\m)$ by~\eqref{eq-rho(s)rho(s^(-1))} and~\eqref{eq-alpha=i^(-1)rho_E(S)}. So, $g(s)\in A_{\alpha(ss\m)}$.
 
In view of~\eqref{eq-defn_of_nu_u}--\eqref{eq-lambda=nu-rho} one has
\begin{align*}
 \lambda'_s(a)&=i\m(\rho'(s)i(a)\rho'(s)\m)\\
 &=i\m(i(g(s))\rho(s)i(a)\rho(s)\m i(g(s)\m))\\
 &=g(s)i\m(\rho(s)i(a)\rho(s)\m)g(s)\m\\
 &=\xi_{g(s)}\circ\lambda_s(a),
\end{align*}
whence (ii).

Using~\eqref{eq-rho(s)rho(t)}, observe that on the one hand
\begin{align}\label{eq-f'(s,t)g(st)}
 \rho'(s)\rho'(t)=i(f'(s,t))\rho'(st)=i(f'(s,t)g(st))\rho(st),
\end{align}
and on the other hand, taking into account the fact that $E(U)=i(E(A))\subseteq i(C(A))$, we see that
\begin{align}
 \rho'(s)\rho'(t)&=i(g(s))\rho(s)i(g(t))\rho(t)\notag\\
 &=i(g(s))\rho(s)\cdot \rho(s)\m\rho(s)\cdot i(g(t))\rho(t)\notag\\
 &=i(g(s))\cdot\rho(s)i(g(t))\rho(s)\m\cdot\rho(s)\rho(t)\notag\\
 &=i(g(s)\lambda_s(g(t))f(s,t))\rho(st).\label{eq-g(s)lambda_s(g(t))f(s,t)}
\end{align}
Now (iii) follows from~\eqref{eq-f'(s,t)g(st)}--\eqref{eq-g(s)lambda_s(g(t))f(s,t)}, the uniqueness part of Lemma~\ref{lem-u=i(a)rho(s)} and the observation that both $f'(s,t)g(st)$ and $g(s)\lambda_s(g(t))f(s,t)$ belong to $A_{\alpha(stt\m s\m)}$.
\end{proof}

\begin{rem}\label{rem-g(e)}
 The restriction of the map $g$ from Proposition~\ref{prop-equiv-tw-S-mod} to $E(S)$ coincides with $\alpha$.
\end{rem}
\noindent To see this, one substitutes $s=t=e\in E(S)$ into (iii) of Proposition~\ref{prop-equiv-tw-S-mod} and uses (i), (iv) of Definition~\ref{defn-twisted_S-mod}.

\begin{defn}\label{defn-equiv-tw-S-mod}
 Two twisted $S$-module structures $(\alpha,\lambda,f)$ and $(\alpha',\lambda',f')$ on $A$, for which (i)--(iii) of Proposition~\ref{prop-equiv-tw-S-mod} hold, will be called {\it equivalent}.
\end{defn}

\begin{rem}\label{rem-equiv-tw-S-mod}
 This defines an equivalence relation on the set of twisted $S$-module structures on $A$.
\end{rem}
\noindent For if $\lambda'_s=\xi_{g(s)}\circ\lambda_s$, then $\xi_{g(s)\m}\circ\lambda'_s=\xi_{\alpha(ss\m)}\circ\lambda_s=\lambda_s$ by~\eqref{eq-lambda_s(e_lambda_s)}. Multiplying (iii) of Proposition~\ref{prop-equiv-tw-S-mod} by $\lambda_s(g(t)\m)g(s)\m$ on the left and by $g(st)\m$ on the right, one gets
\begin{align*}
 f(s,t)g(st)\m=\lambda_s(g(t)\m)g(s)\m f'(s,t)=g(s)\m\lambda'_s(g(t)\m)f'(s,t).
\end{align*}
This proves symmetry, reflexivity being trivial by taking $g(s) = \alpha (s s\m)$ and using  (\ref{eq-lambda_s(e_lambda_s)}). For transitivity suppose additionally that $\Lambda'$ is equivalent to $\Lambda''$, and a map $h:S\to A$ determines this equivalence. Then
\begin{align*}
 \xi_{h(s)g(s)}\circ\lambda_s&=\xi_{h(s)}\circ\xi_{g(s)}\circ\lambda_s=\xi_{h(s)}\circ\lambda'_s=\lambda''_s,\\
 f''(s,t)h(st)g(st)&=h(s)\lambda'_s(h(t))f'(s,t)g(st)=h(s)\lambda'_s(h(t))g(s)\lambda_s(g(t)) f(s,t) \\
 &=h(s)g(s)\lambda_s(h(t))g(s)\m g(s)\lambda_s(g(t)) f(s,t)\\
& =h(s)g(s)\lambda_s(h(t)g(t)) f(s,t).
\end{align*}

\begin{lem}\label{lem-equiv-ext-with-the-same-Lambda}
 Let $A\overset{i}{\to}U\overset{j}{\to}S$ and $A\overset{i'}{\to}U'\overset{j'}{\to}S$ be equivalent extensions of $A$ by $S$ with $\mu:U\to U'$ being the corresponding isomorphism. If $\rho$ is a transversal of $j$, then $\mu\circ\rho$ is a transversal of $j'$ inducing the same twisted $S$-module structure on $A$.
\end{lem}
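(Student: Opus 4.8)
The plan is to verify directly that $\mu\circ\rho$ is an admissible transversal of $j'$ and then that each of the three ingredients $\alpha',\lambda',f'$ of the structure it induces on the second extension coincides with the corresponding ingredient of $\Lambda=(\alpha,\lambda,f)$. Throughout I will use the two relations encoded in the commuting diagram~\eqref{eq-comm-diag-equiv-ext}, namely $\mu\circ i=i'$ and $j'\circ\mu=j$, together with the fact that $\mu$ is an isomorphism (Lemma~\ref{lem-mu-is-iso}). The key technical device is that, since $\mu\circ i=i'$ and $\mu$ is injective, one has $(i')\m=i\m\circ\mu\m$ on $i'(A)=\mu(i(A))$; this lets me transport every $i\m$-computation along $\mu$.

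First I would check that $\rho':=\mu\circ\rho$ is a transversal in the assumed sense. Indeed $j'\circ\rho'=(j'\circ\mu)\circ\rho=j\circ\rho=\id_S$, and since $\rho$ sends $E(S)$ into $E(U)$ and $\mu$ is a homomorphism, $\rho'$ sends $E(S)$ into $E(U')$. Next, for $\alpha'$ I would simply compute from~\eqref{eq-alpha=i^(-1)rho_E(S)} that $\alpha'=(i')\m\circ\rho'|_{E(S)}=i\m\circ\mu\m\circ\mu\circ\rho|_{E(S)}=\alpha$. For $\lambda'$, recall from~\eqref{eq-defn_of_nu_u}--\eqref{eq-lambda=nu-rho} that $\lambda'_s(a)=(i')\m(\rho'(s)i'(a)\rho'(s)\m)$; using $\rho'=\mu\circ\rho$, $i'=\mu\circ i$ and the fact that the homomorphism $\mu$ preserves products and inverses, the argument of $(i')\m$ equals $\mu(\rho(s)i(a)\rho(s)\m)$, and applying $(i')\m=i\m\circ\mu\m$ collapses this to $i\m(\rho(s)i(a)\rho(s)\m)=\lambda_s(a)$.

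For the cocycle $f'$ I would apply $\mu$ to the defining relation~\eqref{eq-rho(s)rho(t)}, $\rho(s)\rho(t)=i(f(s,t))\rho(st)$, obtaining $\rho'(s)\rho'(t)=i'(f(s,t))\rho'(st)$. By the uniqueness built into Lemma~\ref{lem-u=i(a)rho(s)} this forces $f'(s,t)=f(s,t)$, provided I check the membership condition $i'(f(s,t))\in i'(A)_{\rho'(st)\rho'(st)\m}$; but $\mu$ is an isomorphism of semilattices of groups, so it carries the group component $i(A)_{\rho(st)\rho(st)\m}$ onto $i'(A)_{\mu(\rho(st)\rho(st)\m)}=i'(A)_{\rho'(st)\rho'(st)\m}$, and the condition transfers. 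Having shown $\alpha'=\alpha$, $\lambda'=\lambda$ and $f'=f$, we conclude $\Lambda'=\Lambda$.

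I expect the only point needing genuine care to be this last verification for $f$: one must make sure that the normalization $i(f(s,t))\in i(A)_{\rho(st)\rho(st)\m}$ that singles out $f$ among all admissible choices is preserved by $\mu$, so that the uniqueness clause of Lemma~\ref{lem-u=i(a)rho(s)} genuinely applies on the target extension. The remaining computations for $\alpha'$ and $\lambda'$ are routine once the identity $(i')\m=i\m\circ\mu\m$ on $i'(A)$ is in hand.
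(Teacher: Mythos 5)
Your proposal is correct and follows essentially the same route as the paper's own proof: verify $j'\circ(\mu\circ\rho)=\id_S$, then transport each of $\alpha$, $\lambda$, $f$ along $\mu$ using $i'=\mu\circ i$ and the explicit formulas~\eqref{eq-alpha=i^(-1)rho_E(S)}, \eqref{eq-defn_of_nu_u}--\eqref{eq-lambda=nu-rho} and~\eqref{eq-rho(s)rho(t)}. Your extra care in checking that $\mu$ preserves the normalization $i(f(s,t))\in i(A)_{\rho(st)\rho(st)\m}$ before invoking the uniqueness clause of Lemma~\ref{lem-u=i(a)rho(s)} is a detail the paper leaves implicit, and it is handled correctly.
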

\begin{proof}
 Indeed, $j'\circ\rho'=j'\circ\mu\circ\rho=j\circ\rho=\id_S$. Denoting by $(\alpha,\lambda,f)$ and $(\alpha',\lambda',f')$ the twisted $S$-module structures coming from $(U,\rho)$ and $(U',\rho')$, respectively, one observes by~\eqref{eq-alpha=i^(-1)rho_E(S)} and~\eqref{eq-rho(s)rho(t)}--\eqref{eq-lambda=nu-rho} that
\begin{align*}
 \alpha'&=i'\m\circ\rho'|_{E(S')}=(\mu\circ i)\m\circ(\mu\circ\rho|_{E(S)})=i\m\circ\rho|_{E(S)}=\alpha,\\
 \lambda'_s(a)&=i'\m(\rho'(s)i'(a)\rho'(s)\m)=(\mu\circ i)\m\circ\mu(\rho(s)i(a)\rho(s)\m)\\
 &=i\m(\rho(s)i(a)\rho(s)\m)=\lambda_s(a),\\
 \rho'(s)\rho'(t)&=\mu(\rho(s)\rho(t))=\mu(i(f(s,t))\rho(st))=i'(f(s,t))\rho'(st),
\end{align*}
whence $f'(s,t)=f(s,t)$. 
\end{proof}

\begin{lem}\label{lem-ext-by-isomorphic-S}
 Given an extension $A\overset{i}{\to}U\overset{j}{\to}S$ of $A$ by $S$, a transversal $\rho$ of $j$ and an isomorphism $\nu:S\to S'$, the sequence $A\overset{i}{\to}U\overset{\nu\circ j}{\to}S'$ is an extension of $A$ by $S'$ and the map $\rho'=\rho\circ\nu\m$ is a transversal of $\nu\circ j$. Moreover, if $\Lambda = (\alpha,\lambda,f)$ and $\Lambda' = (\alpha',\lambda',f')$ are the corresponding twisted module structures on $A$, then $\Lambda=\Lambda'\circ\nu$ in the sense that
 \begin{align}\label{Lambda=Lambda'-circ-nu}
  \alpha=\alpha'\circ\nu|_{E(S)},\ \ \lambda=\lambda'\circ\nu,\ \ f=f'\circ(\nu\times\nu).
 \end{align}
\end{lem}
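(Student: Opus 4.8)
The plan is to prove the three assertions in turn — that $A\overset{i}{\to}U\overset{\nu\circ j}{\to}S'$ is an extension, that $\rho'=\rho\circ\nu\m$ is a transversal, and the compatibility $\Lambda=\Lambda'\circ\nu$ — each of which reduces to the single observation that $\nu$, being an isomorphism, carries $E(S)$ bijectively onto $E(S')$ and commutes with taking inverses and products. Throughout I would exploit the identity $\rho'\circ\nu=\rho\circ\nu\m\circ\nu=\rho$.

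First I would check that $A\overset{i}{\to}U\overset{\nu\circ j}{\to}S'$ is an extension. Since $j$ is an idempotent-separating epimorphism and $\nu$ is an isomorphism, $\nu\circ j$ is an epimorphism, and it is idempotent-separating because $j$ is injective on $E(U)$ and $\nu$ is injective. For the kernel condition, $(\nu\circ j)\m(E(S'))=j\m(\nu\m(E(S')))=j\m(E(S))=i(A)$, where I use $\nu\m(E(S'))=E(S)$. Next, $\rho'$ is a transversal of $\nu\circ j$: indeed $(\nu\circ j)\circ\rho'=\nu\circ(j\circ\rho)\circ\nu\m=\nu\circ\nu\m=\id_{S'}$, and $\rho'$ sends $E(S')$ into $E(U)$ since $\rho'(E(S'))=\rho(\nu\m(E(S')))=\rho(E(S))\subseteq E(U)$.

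For the compatibility of the twisted module structures I would simply unwind the defining formulas \eqref{eq-alpha=i^(-1)rho_E(S)}, \eqref{eq-defn_of_nu_u}--\eqref{eq-lambda=nu-rho} and \eqref{eq-rho(s)rho(t)}, which for the second extension use the same monomorphism $i$. For $e\in E(S)$,
\[
 \alpha'(\nu(e))=i\m(\rho'(\nu(e)))=i\m(\rho(e))=\alpha(e);
\]
for $s\in S$ and $a\in A$,
\[
 \lambda'_{\nu(s)}(a)=i\m(\rho'(\nu(s))\,i(a)\,\rho'(\nu(s))\m)=i\m(\rho(s)\,i(a)\,\rho(s)\m)=\lambda_s(a);
\]
and, since $\nu$ is a homomorphism, $\rho'(\nu(s))\rho'(\nu(t))=\rho(s)\rho(t)=i(f(s,t))\rho(st)$, while the same product equals $i(f'(\nu(s),\nu(t)))\rho'(\nu(st))=i(f'(\nu(s),\nu(t)))\rho(st)$, yielding $f(s,t)=f'(\nu(s),\nu(t))$.

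The only point needing care is this last equality: to pass from the two expressions for $\rho(s)\rho(t)$ to $f=f'\circ(\nu\times\nu)$ I must invoke the uniqueness in \eqref{eq-rho(s)rho(t)} (the uniqueness part of Lemma~\ref{lem-u=i(a)rho(s)}), which first requires checking that $f'(\nu(s),\nu(t))$ lies in the correct group component. Using the already-proved relation for $\alpha$ together with $\nu(stt\m s\m)=\nu(s)\nu(t)\nu(t)\m\nu(s)\m$, one gets $f'(\nu(s),\nu(t))\in A_{\alpha'(\nu(stt\m s\m))}=A_{\alpha(stt\m s\m)}$, the same component as $f(s,t)$, so uniqueness applies. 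A minor notational hazard is the clash between the isomorphism $\nu\colon S\to S'$ of this lemma and the homomorphism $u\mapsto\nu_u$ of \eqref{eq-defn_of_nu_u}; I avoid it by expressing $\lambda_s$ and $\lambda'_{s'}$ directly through their defining conjugation formulas rather than through the symbol $\nu_{(-)}$.
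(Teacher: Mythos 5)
Your proposal is correct and follows essentially the same route as the paper: the paper's proof likewise just unwinds the explicit formulas \eqref{eq-alpha=i^(-1)rho_E(S)}, \eqref{eq-defn_of_nu_u}--\eqref{eq-lambda=nu-rho} and \eqref{eq-rho(s)rho(t)} using $\rho'\circ\nu=\rho$ to obtain the three identities. You are merely more explicit about the routine preliminaries (that $\nu\circ j$ gives an extension, that $\rho'$ is a transversal) and about checking the group component of $f'(\nu(s),\nu(t))$ before invoking the uniqueness of Lemma~\ref{lem-u=i(a)rho(s)}, all of which the paper leaves implicit.
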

\begin{proof}
 Indeed, using the precise formulas~\eqref{eq-alpha=i^(-1)rho_E(S)} and~\eqref{eq-rho(s)rho(t)}--\eqref{eq-lambda=nu-rho}, we make sure that
\begin{align*}
 \alpha'\circ\nu|_{E(S)}&=i\m\circ\rho'\circ\nu|_{E(S)}=i\m\circ\rho\circ\nu\m\circ\nu|_{E(S)}=i\m\circ\rho|_{E(S)}=\alpha,\\
 \lambda'\circ\nu(a)&=i\m(\rho'(\nu(s))i(a)\rho'(\nu(s))\m)=i\m(\rho(s)i(a)\rho(s)\m)=\lambda_s(a),\\
 \rho'(\nu(s))\rho'(\nu(t))&=\rho(s)\rho(t)=i(f(s,t))\rho(st)=i(f(s,t))\rho'(\nu(s)\nu(t)),
\end{align*}
and consequently $f(s,t)=f'(\nu(s),\nu(t))$.
\end{proof}

\begin{cor}\label{cor-equiv-ext-by-isomorphic-S}
 Let $A\overset{i}{\to}U\overset{j}{\to}S$ and $A\overset{i'}{\to}U'\overset{j'}{\to}S'$ be extensions, such that there exist isomorphisms $\mu:U\to U'$ and $\nu:S\to S'$ making the following diagram
 $$
      	\begin{tikzpicture}[node distance=1.5cm, auto]
      		\node (A) {$A$};
      		\node (U) [right of=A] {$U$};
      		\node (S) [right of=U] {$S$};
      		\node (A') [below of=A]{$A$};
      		\node (U') [below of=U] {$U'$};
      		\node (S') [below of=S] {$S'$};
      		\draw[->] (A) to node {$i$} (U);
      		\draw[->] (U) to node {$j$} (S);
      		\draw[->] (A') to node {$i'$} (U');
      		\draw[->] (U') to node {$j'$} (S');
      		\draw[-,double distance=2pt] (A) to node {} (A');
      		\draw[->] (U) to node {$\mu$} (U');
      		\draw[->] (S) to node {$\nu$} (S');
      	\end{tikzpicture}
  $$
 commute. If $\rho$ and $\rho'$ are transversals of $j$ and $j'$, respectively, with $\mu\circ\rho=\rho'\circ\nu$, then the corresponding twisted module structures $\Lambda=(\alpha,\lambda,f)$ and $\Lambda'=(\alpha',\lambda',f')$ on $A$ satisfy $\Lambda=\Lambda'\circ\nu$.  
\end{cor}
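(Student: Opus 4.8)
The plan is to factor the passage from $(U,j,\rho)$ over $S$ to $(U',j',\rho')$ over $S'$ through an intermediate extension of $A$ by $S'$, so that the two preceding lemmas apply in succession and no direct computation with the formulas~\eqref{eq-alpha=i^(-1)rho_E(S)} and~\eqref{eq-rho(s)rho(t)}--\eqref{eq-lambda=nu-rho} is needed.

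First I would apply Lemma~\ref{lem-ext-by-isomorphic-S} to the extension $A\overset{i}{\to}U\overset{j}{\to}S$ and the isomorphism $\nu\colon S\to S'$. This produces an extension $A\overset{i}{\to}U\overset{\nu\circ j}{\to}S'$ with transversal $\rho\circ\nu\m$, and its associated twisted $S'$-module structure $\Lambda''=(\alpha'',\lambda'',f'')$ satisfies $\Lambda=\Lambda''\circ\nu$.

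Next I would observe that $\mu$ realizes an equivalence between this new extension $A\overset{i}{\to}U\overset{\nu\circ j}{\to}S'$ and the given extension $A\overset{i'}{\to}U'\overset{j'}{\to}S'$. Indeed, $\mu\circ i=i'$ is the commutativity of the left square of the diagram, while the right square gives $j'\circ\mu=\nu\circ j$; these are precisely the two conditions making the square~\eqref{eq-comm-diag-equiv-ext} commute, now with $S$ replaced by $S'$ and $j$ by $\nu\circ j$.

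Then Lemma~\ref{lem-equiv-ext-with-the-same-Lambda}, applied to these equivalent extensions over $S'$, tells me that the transversal $\rho\circ\nu\m$ of $\nu\circ j$ and the transversal $\mu\circ(\rho\circ\nu\m)$ of $j'$ induce one and the same twisted $S'$-module structure, namely $\Lambda''$. But the compatibility hypothesis $\mu\circ\rho=\rho'\circ\nu$ gives $\mu\circ\rho\circ\nu\m=\rho'$, so the transversal of $j'$ in question is exactly $\rho'$, whose induced structure is $\Lambda'$ by definition. Hence $\Lambda''=\Lambda'$, and combining with the first step I conclude $\Lambda=\Lambda''\circ\nu=\Lambda'\circ\nu$, as required. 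The argument is essentially bookkeeping; the only two points requiring care are verifying that the commutativity of the given square yields a genuine equivalence of extensions over $S'$, and checking that $\mu\circ\rho\circ\nu\m$ collapses to $\rho'$—this is exactly where the hypothesis $\mu\circ\rho=\rho'\circ\nu$ enters.
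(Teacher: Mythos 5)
Your argument is correct and coincides with the paper's own proof: both factor through the intermediate extension $A\overset{i}{\to}U\overset{\nu\circ j}{\to}S'$ with transversal $\rho\circ\nu\m$ via Lemma~\ref{lem-ext-by-isomorphic-S}, then invoke Lemma~\ref{lem-equiv-ext-with-the-same-Lambda} together with $\mu\circ\rho\circ\nu\m=\rho'$ to identify the resulting structure with $\Lambda'$. No issues.
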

\noindent For by Lemma~\ref{lem-ext-by-isomorphic-S} the sequence $A\overset{i}{\to}U\overset{\nu\circ j}{\to}S'$ is an extension and $\rho''=\rho\circ\nu\m$ is a transversal of $\nu\circ j$. Moreover, the twisted module $\Lambda''$ coming from this extension can be obtained from $\Lambda$ by means of $\Lambda=\Lambda''\circ\nu$. Since $A\overset{i}{\to}U\overset{\nu\circ j}{\to}S'$ is equivalent to $A\overset{i'}{\to}U'\overset{j'}{\to}S'$ with $\mu$ being the equivalence and $\mu\circ\rho''=\mu\circ\rho\circ\nu\m=\rho'$, then $\Lambda''=\Lambda$ thanks to Lemma~\ref{lem-equiv-ext-with-the-same-Lambda}. Thus, $\Lambda=\Lambda'\circ\nu$.

\begin{defn}\label{defn-A*_Lambda-S}
 Let $\Lambda=(\alpha,\lambda,f)$ be a twisted $S$-module structure on $A$. By the {\it crossed product of $A$ and $S$ by $\Lambda$}~\cite{Lausch} we mean the set 
 $$
  A*_\Lambda S=\{a\delta_s\mid a\in A,\ \ s\in S,\ \ aa\m=\alpha(ss\m)\}.
 $$
 It is an inverse semigroup under the multiplication $a\delta_s\cdot b\delta_t=a\lambda_s(b)f(s,t)\delta_{st}$ with
 \begin{align}\label{eq-adelta_s-inv}
  (a\delta_s)\m=f(s\m,s)\m\lambda_{s\m}(a\m)\delta_{s\m}. 
 \end{align}
 Moreover, $A*_\Lambda S$ is an extension of $A$ by $S$, where $i(a)=a\delta_{\alpha\m(aa\m)}$ and $j(a\delta_s)=s$ (see~\cite[Theorem 9.1]{Lausch}).
\end{defn}

The next remark is explained by~\cite[Theorem 9.2]{Lausch}.
\begin{rem}\label{rem-A*_Lambda-S-equiv-U}
 Let $A\overset{i}{\to}U\overset{j}{\to}S$ be an extension, $\rho$ a transversal of $j$ and $\Lambda$ the corresponding twisted $S$-module structure on $A$. Then $\mu:A*_\Lambda S\to U$, $\mu(a\delta_s)=i(a)\rho(s)$, defines an equivalence of extensions $A*_\Lambda S$ and $U$.
\end{rem}

\begin{prop}\label{prop-rho-for-A*_Lambda-S} 
 The epimorphism $j:A*_\Lambda S\to S$ admits the natural transversal $\rho(s)=\alpha(ss\m)\delta_s$, such that the corresponding twisted $S$-module is $(\alpha,\lambda',f)$, where
 \begin{align}\label{eq-lambda'-for-A*_Lambda-S}
  \lambda'_s(a)=\lambda_s(a)f(s,s\m)\m f(s,\alpha\m(aa\m))f(s\alpha\m(aa\m),s\m).
 \end{align}
\end{prop}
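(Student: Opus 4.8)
The plan is to verify directly that $\rho(s)=\alpha(ss\m)\delta_s$ is a transversal obeying the standing assumption, and then to read off the three components $(\alpha',\lambda',f')$ of the induced triple from the recipes~\eqref{eq-alpha=i^(-1)rho_E(S)}, \eqref{eq-rho(s)rho(t)} and \eqref{eq-defn_of_nu_u}--\eqref{eq-lambda=nu-rho}. First I would check that $\rho(s)$ is a legitimate element (as $\alpha(ss\m)$ is idempotent, its coefficient satisfies $\alpha(ss\m)\alpha(ss\m)\m=\alpha(ss\m)$) and that $j\circ\rho=\id_S$. Using $f(e,e)=\alpha(e)$ and $\lambda_e(\alpha(e))=\alpha(e)$, which come from properties (i),(iv) of Definition~\ref{defn-twisted_S-mod}, one sees $\rho(e)=\alpha(e)\delta_e$ is idempotent for $e\in E(S)$, so $\rho(E(S))\subseteq E(A*_\Lambda S)$. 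Since $\rho(e)=\alpha(e)\delta_e=i(\alpha(e))$, formula~\eqref{eq-alpha=i^(-1)rho_E(S)} immediately gives $\alpha'=\alpha$.

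For the cocycle component I would compute $\rho(s)\rho(t)=\alpha(ss\m)\lambda_s(\alpha(tt\m))f(s,t)\delta_{st}$ and collapse it to $f(s,t)\delta_{st}$, using property (ii) of Definition~\ref{defn-twisted_S-mod} to rewrite $\lambda_s(\alpha(tt\m))=\alpha(stt\m s\m)$, the absorption $\alpha(ss\m)\alpha(stt\m s\m)=\alpha(stt\m s\m)$, and the fact that $\alpha(stt\m s\m)$ is the identity of the group $A_{\alpha(stt\m s\m)}\ni f(s,t)$. A parallel computation using (i),(iv) shows $i(f(s,t))\rho(st)=f(s,t)\delta_{st}$ as well, so by the uniqueness in~\eqref{eq-rho(s)rho(t)} the cocycle component is $f'=f$. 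This settles the first and third entries of $(\alpha',\lambda',f')$.

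The substantive part is $\lambda'_s(a)=i\m(\rho(s)i(a)\rho(s)\m)$. I would first determine $\rho(s)\m$ from~\eqref{eq-adelta_s-inv} with coefficient $\alpha(ss\m)$: property (ii) gives $\lambda_{s\m}(\alpha(ss\m))=\alpha(s\m s)$, whence $\rho(s)\m=f(s\m,s)\m\delta_{s\m}$. The decisive organisational choice is to bracket the triple product as $\rho(s)\cdot\bigl(i(a)\rho(s)\m\bigr)$. Writing $e_a:=\alpha\m(aa\m)$ so that $i(a)=a\delta_{e_a}$, property (i) and $a\alpha(e_a)=a$ yield $i(a)\rho(s)\m=a\,f(s\m,s)\m f(e_a,s\m)\delta_{e_as\m}$; multiplying on the left by $\rho(s)$ and using~\eqref{eq-lambda_s(e_lambda_s)} to drop the leading $\alpha(ss\m)$, the multiplicativity of $\lambda_s$, and the auxiliary identity $\lambda_s(f(s\m,s))=f(s,s\m)$ (itself an easy consequence of property (v) at $(s,s\m,s)$ together with the special values $f(s,s\m s)=f(ss\m,s)=\alpha(ss\m)$), this produces the factor $f(s,s\m)\m$ and leaves $\lambda'_s(a)=\lambda_s(a)f(s,s\m)\m\lambda_s(f(e_a,s\m))f(s,e_as\m)$, carried on the index $se_as\m$.

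Finally I would apply the $2$-cocycle identity (v) of Definition~\ref{defn-twisted_S-mod} in the single instance $(s,e_a,s\m)$, namely $\lambda_s(f(e_a,s\m))f(s,e_as\m)=f(s,e_a)f(se_a,s\m)$, to reach exactly $\lambda'_s(a)=\lambda_s(a)f(s,s\m)\m f(s,e_a)f(se_a,s\m)$; since the index $se_as\m$ is idempotent the whole element lies in $i(A)=j\m(E(S))$ and $i\m$ merely strips off $\delta_{se_as\m}$, giving~\eqref{eq-lambda'-for-A*_Lambda-S}. The only real obstacle is organisational rather than conceptual: one must bracket the triple product so that $f(s,s\m)\m$ emerges on the left and the remaining middle block coincides verbatim with the left-hand side of (v). With the opposite bracketing $\bigl(\rho(s)i(a)\bigr)\rho(s)\m$ one is instead forced through $\lambda_{se_a}(f(s\m,s)\m)$ and must unwind several further instances of (v) before arriving at the same expression. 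Throughout, the one recurring technical point to keep straight is which group component $A_{\alpha(\cdot)}$ each factor inhabits, as that governs every absorption of idempotents.
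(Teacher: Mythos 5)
Your proposal is correct and follows essentially the same route as the paper's proof: the same computation of $\rho(s)\m=f(s\m,s)\m\delta_{s\m}$, the same bracketing $\rho(s)\cdot\bigl(i(a)\rho(s)\m\bigr)$, the same auxiliary identity $\lambda_s(f(s\m,s))=f(s,s\m)$ obtained from (v) at $(s,s\m,s)$, and the same final application of (v) to $(s,\alpha\m(aa\m),s\m)$. The only (harmless) differences are cosmetic, e.g.\ your explicit verification that $\rho(E(S))\subseteq E(A*_\Lambda S)$ and your use of~\eqref{eq-lambda_s(e_lambda_s)} rather than a group-component argument to discard the leading $\alpha(ss\m)$.
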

\begin{proof}
 Clearly, $j\circ\rho(s)=j(\alpha(ss\m)\delta_s)=s$. Denote by $(\alpha',\lambda',f')$ the twisted $S$-module defined by $(A*_\Lambda S,\rho)$. According to~\eqref{eq-alpha=i^(-1)rho_E(S)}:
$$
 \alpha'(e)=i\m\circ\rho(e)=i\m(\alpha(e)\delta_e)=\alpha(e).
$$

Thanks to (ii) of Definition~\ref{defn-twisted_S-mod} and the fact that $f(s,t)\in A_{\alpha(stt\m s\m)}$:
\begin{align*}
\rho(s)\rho(t)&=\alpha(ss\m)\delta_s\cdot\alpha(tt\m)\delta_t=\alpha(ss\m)\lambda_s(\alpha(tt\m))f(s,t)\delta_{st}\\
 &=\alpha(stt\m s\m)f(s,t)\delta_{st}=f(s,t)\rho(st), 
\end{align*}
so $f'(s,t)=f(s,t)$.

For $\lambda'_s$ we need to calculate $\rho(s)\m$, as one sees from~\eqref{eq-defn_of_nu_u}--\eqref{eq-lambda=nu-rho}. By~\eqref{eq-adelta_s-inv}, (ii) of Definition~\ref{defn-twisted_S-mod} and the fact that $f(s\m,s)\in A_{\alpha(s\m s)}$:
\begin{align*}
 \rho(s)\m&=(\alpha(ss\m)\delta_s)\m=f(s\m,s)\m\lambda_{s\m}(\alpha(ss\m))\delta_{s\m}\\
 &=f(s\m,s)\m\alpha(s\m s)\delta_{s\m}=f(s\m,s)\m\delta_{s\m}.
\end{align*}
Then by (i) of Definition~\ref{defn-twisted_S-mod}
\begin{align*}
  i(a)\rho(s)\m&=a\delta_{\alpha\m(aa\m)}\cdot f(s\m,s)\m\delta_{s\m}\\
  &=a\lambda_{\alpha\m(aa\m)}(f(s\m,s)\m)f(\alpha\m(aa\m),s\m)\delta_{\alpha\m(aa\m)s\m}\\
  &=a\cdot aa\m f(s\m,s)\m f(\alpha\m(aa\m),s\m)\delta_{\alpha\m(aa\m)s\m}\\
  &=af(s\m,s)\m f(\alpha\m(aa\m),s\m)\delta_{\alpha\m(aa\m)s\m}.
\end{align*}
Hence $\lambda'_s(a)=i\m(\rho(s)i(a)\rho(s)\m)$ equals
\begin{align}\label{eq-lambda'_s(a)-not-simplified}
 \alpha(ss\m)\lambda_s(a)\lambda_s(f(s\m,s))\m\lambda_s(f(\alpha\m(aa\m),s\m))f(s,\alpha\m(aa\m)s\m).
\end{align}
The idempotent $\alpha(ss\m)$ can be removed from~\eqref{eq-lambda'_s(a)-not-simplified}, as $\lambda_s(f(s\m,s))\in A_{\alpha(ss\m)}$. Furthermore, it follows from (v) of Definition~\ref{defn-twisted_S-mod} with $t=s\m$ and $u=s$ that
$$
 \lambda_s(f(s\m,s))f(s,s\m s)=f(s,s\m)f(ss\m,s).
$$
But $f(s,s\m s)=f(ss\m,s)=\alpha(ss\m)$ by (iv) of the same definition. Since both $f(s,s\m)$ and $\lambda_s(f(s\m,s))$ belong to $A_{\alpha(ss\m)}$, one concludes that
\begin{align}\label{eq-lambda_s(f(s^(-1),s))=f(s,s^(-1))}
 \lambda_s(f(s\m,s))=f(s,s\m).
\end{align}
Now applying (v) of Definition~\ref{defn-twisted_S-mod} to the triple $(s,\alpha\m(aa\m),s\m)$, we see that
\begin{align}\label{eq-2-cocycle-for-(s,alpha^(-1)(aa^(-1)),s^(-1))}
 \lambda_s(f(\alpha\m(aa\m),s\m))f(s,\alpha\m(aa\m)s\m)=f(s,\alpha\m(aa\m))f(s\alpha\m(aa\m),s\m).
\end{align}
Substituting~\eqref{eq-lambda_s(f(s^(-1),s))=f(s,s^(-1))} and~\eqref{eq-2-cocycle-for-(s,alpha^(-1)(aa^(-1)),s^(-1))} into~\eqref{eq-lambda'_s(a)-not-simplified} we finally obtain~\eqref{eq-lambda'-for-A*_Lambda-S}.
\end{proof}

\begin{lem}\label{lem-lambda_s_satisfies_(i)}
 If in Definition~\ref{defn-twisted_S-mod} we replace the condition $\lambda_s\in\iend A$ by the weaker one $\lambda_s\in\End A$, then $\lambda_s$ still satisfies (i) of Definition~\ref{defn-rel_inv} with $\overline{\lambda_s}=\xi_{f(s\m,s)\m}\circ\lambda_{s\m}$ and $e_{\lambda_s}=\alpha(s\m s)$. 
\end{lem}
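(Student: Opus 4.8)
The plan is to verify directly the two identities comprising (i) of Definition~\ref{defn-rel_inv} for the data $\varphi=\lambda_s$, $\bar\varphi=\overline{\lambda_s}=\xi_{f(s\m,s)\m}\circ\lambda_{s\m}$ and $e_\varphi=e_{\lambda_s}=\alpha(s\m s)$. First I observe that $\overline{\lambda_s}$ is genuinely an element of $\End A$, being the composite of $\lambda_{s\m}\in\End A$ with the inner endomorphism $\xi_{f(s\m,s)\m}$; explicitly $\overline{\lambda_s}(a)=f(s\m,s)\m\lambda_{s\m}(a)f(s\m,s)$. Throughout I will use the memberships $f(s\m,s)\in A_{\alpha(s\m s)}$ and $f(s,s\m)\in A_{\alpha(ss\m)}$ (read off from $f(s,t)\in A_{\alpha(stt\m s\m)}$), so that $f(s\m,s)\m f(s\m,s)=\alpha(s\m s)$ and $f(s,s\m)\m f(s,s\m)=\alpha(ss\m)$, together with the centrality $E(A)\subseteq C(A)$ of the idempotents.

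For the first identity I compute $\overline{\lambda_s}\circ\lambda_s(a)=f(s\m,s)\m\,\lambda_{s\m}(\lambda_s(a))\,f(s\m,s)$ and rewrite the middle factor by (iii) of Definition~\ref{defn-twisted_S-mod} applied to the pair $(s\m,s)$, namely $\lambda_{s\m}\circ\lambda_s(a)=f(s\m,s)\lambda_{s\m s}(a)f(s\m,s)\m$. The two outer pairs $f(s\m,s)\m f(s\m,s)$ then collapse to $\alpha(s\m s)$, and since $s\m s\in E(S)$, condition (i) of Definition~\ref{defn-twisted_S-mod} gives $\lambda_{s\m s}(a)=\alpha(s\m s)a$; using that $\alpha(s\m s)$ is a central idempotent I obtain $\overline{\lambda_s}\circ\lambda_s(a)=\alpha(s\m s)a=e_{\lambda_s}a$, as required.

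For the second identity I apply the homomorphism $\lambda_s$ across $\overline{\lambda_s}(a)$, obtaining $\lambda_s(f(s\m,s)\m)\,\lambda_s(\lambda_{s\m}(a))\,\lambda_s(f(s\m,s))$. Here I replace $\lambda_s(f(s\m,s))$ by $f(s,s\m)$ using~\eqref{eq-lambda_s(f(s^(-1),s))=f(s,s^(-1))}; since $\lambda_s$ is an endomorphism of the inverse semigroup $A$ it preserves inverses and, by (ii) of Definition~\ref{defn-twisted_S-mod}, sends the identity $\alpha(s\m s)$ of $A_{\alpha(s\m s)}$ to $\alpha(ss\m)$, so $\lambda_s$ restricts to a group homomorphism $A_{\alpha(s\m s)}\to A_{\alpha(ss\m)}$ and hence $\lambda_s(f(s\m,s)\m)=f(s,s\m)\m$. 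Rewriting the middle factor by (iii) applied to $(s,s\m)$, namely $\lambda_s\circ\lambda_{s\m}(a)=f(s,s\m)\lambda_{ss\m}(a)f(s,s\m)\m$, the outer pairs collapse to $\alpha(ss\m)$ and (i) gives $\lambda_{ss\m}(a)=\alpha(ss\m)a$, so by centrality $\lambda_s\circ\overline{\lambda_s}(a)=\alpha(ss\m)a$. Finally, (ii) of Definition~\ref{defn-twisted_S-mod} yields $\lambda_s(e_{\lambda_s})=\lambda_s(\alpha(s\m s))=\alpha(ss\m s s\m)=\alpha(ss\m)$, so the computed value is exactly $\lambda_s(e_{\lambda_s})a$, completing (i).

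The one point requiring care is the legitimacy of invoking~\eqref{eq-lambda_s(f(s^(-1),s))=f(s,s^(-1))}, which was derived inside the proof of Proposition~\ref{prop-rho-for-A*_Lambda-S} under the stronger hypothesis $\lambda_s\in\iend A$. I expect this to be the main (though mild) obstacle: one must check that the argument for $\lambda_s(f(s\m,s))=f(s,s\m)$ rests only on (iv) and (v) of Definition~\ref{defn-twisted_S-mod}, on the component memberships of $f(s\m,s)$ and $f(s,s\m)$, and on $\lambda_s\in\End A$ — none of which uses relative invertibility — so the identity persists under the weakened hypothesis. Equivalently, one may simply reproduce that short two-step computation here (apply (v) to $(s,s\m,s)$, then evaluate $f(s,s\m s)=f(ss\m,s)=\alpha(ss\m)$ via (iv) and cancel the idempotent) to keep the proof self-contained.
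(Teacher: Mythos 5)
Your proof is correct and follows essentially the same route as the paper's: both verify the two identities of (i) of Definition~\ref{defn-rel_inv} by applying (iii) of Definition~\ref{defn-twisted_S-mod} to the pairs $(s\m,s)$ and $(s,s\m)$, collapsing the resulting conjugations, and invoking~\eqref{eq-lambda_s(f(s^(-1),s))=f(s,s^(-1))} (which the paper also cites here, despite having derived it earlier under the stronger hypothesis). Your closing observation that the derivation of that identity uses only (ii), (iv), (v) and $\lambda_s\in\End A$ is accurate and correctly disposes of the only subtlety.
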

\begin{proof}
By (i)--(iii) of Definition~\ref{defn-twisted_S-mod}
\begin{align*}
\overline{\lambda_s}\circ\lambda_s(a)&=\xi_{f(s\m,s)\m}(\lambda_{s\m}(\lambda_s(a)))\\
&=\xi_{f(s\m,s)\m}(\xi_{f(s\m,s)}\circ\lambda_{s\m s}(a))\\
&=\xi_{f(s\m,s)\m f(s\m,s)}\circ\lambda_{s\m s}(a)\\
&=\xi_{\alpha(s\m s)}(\alpha(s\m s)a)\\
&=\alpha(s\m s)a,
\end{align*}
so $\alpha(s\m s)$ is a candidate for $e_{\lambda_s}$.

Now in view of (i) and (iii) of Definition~\ref{defn-twisted_S-mod}
$$
\lambda_s\circ\overline{\lambda_s}=\xi_{\lambda_s(f(s\m,s)\m)f(s,s\m)\alpha(ss\m)}
=\xi_{\lambda_s(f(s\m,s)\m)f(s,s\m)},
$$
because $f(s,s\m)\in A_{\alpha(ss\m)}$. It remains to notice by~\eqref{eq-lambda_s(f(s^(-1),s))=f(s,s^(-1))} that
$$
 \lambda_s(f(s\m,s)\m)f(s,s\m)=f(s,s\m)\m f(s,s\m)=\alpha(ss\m),
$$
so $\lambda_s\circ\overline{\lambda_s}(a)=\alpha(ss\m)a=\lambda_s(\alpha(s\m s))a$. 
\end{proof}

\begin{rem}\label{lambda-bar}  
If $\Lambda $ is a twisted $S$-module structure on $A,$ then $\overline{\lambda_s}$ given in Lemma~\ref{lem-lambda_s_satisfies_(i)} is the inverse of $\lambda _s$ in $\iend A$ with $e_{\lambda_s}=\alpha(s\m s)$ (see Definition~\ref{defn-rel_inv}).  
\end{rem}

\noindent Indeed,  we need to show that $\lambda _s$ and $\overline{\lambda_s}=\xi_{f(s\m,s)\m}\circ\lambda_{s\m}$ satisfy (i) and (ii) of Definition~\ref{defn-rel_inv} with $e_{\lambda_s}=\alpha(s\m s).$ Item (i) is satisfied by Lemma~\ref{lem-lambda_s_satisfies_(i)}.  Moreover, $\lambda _s(e_{\lambda_s})=\lambda_s(\alpha(s\m s))=\alpha(ss\m)$ is the identity of $\lambda_s(A)$ thanks to Remark~\ref{rem-lambda_s(e_lambda_s)}. Therefore, $e_{\lambda_s}\overline{\lambda_s}=\alpha(s\m s)\overline{\lambda_s}=\overline{\lambda_s}$ is the inverse of $\lambda_s$, as it was explained in the proof of Remark~\ref{rem-rel_inv_weakened_(ii)}.

    \begin{rem}\label{rem-lambda_s_rel_inv}
        If under the conditions of Lemma~\ref{lem-lambda_s_satisfies_(i)} we assume that $S$ is a monoid, then $\alpha(ss\m)$ is the identity of $\lambda_s(A)$. In particular, $\lambda_s\in\iend A$ and hence $\Lambda$ is a twisted $S$-module structure on $A$.
    \end{rem}
    \noindent Indeed, $\alpha(1_S)$, being $1_{E(A)}$, is $1_A$, because $1_{E(A)}a=1_{E(A)}aa\m\cdot a=aa\m\cdot a=a$ for any $a\in A$. Hence by (i) of Definition~\ref{defn-twisted_S-mod} the endomorphism $\lambda_1$ is trivial, so thanks to (iii)--(iv) of the same definition for any $s\in S$:
    $$
        \lambda_s=\lambda_s\circ\lambda_1=\xi_{f(s,1)}\circ\lambda_{s\cdot 1}=\xi_{\alpha(ss\m)}\circ\lambda_s.
    $$
    In other words, $\lambda_s(a)=\alpha(ss\m)\lambda_s(a)=\lambda_s(\alpha(s\m s))\lambda_s(a)$ and hence in view of Lemma~\ref{lem-lambda_s_satisfies_(i)} and Remark~\ref{rem-rel_inv_weakened_(ii)} the endomorphism $\lambda_s$ is relatively invertible.\\
    
    Item (c) of~\cite[Definition 2.2]{Sieben98} inspires us to consider twisted $S$-modules $(\alpha,\lambda,f)$, whose $f$ satisfies a normality condition, which is stronger than (iv) of Definition~\ref{defn-twisted_S-mod}, namely
    \begin{enumerate}
     \item[(iv')] $f(s,e)=\alpha(ses\m)$ and $f(e,s)=\alpha(ess\m)$ for all $s\in S$ and $e\in E(S)$.
    \end{enumerate}
    We call such twisted $S$-modules {\it Sieben} twisted $S$-modules (see also~\cite[Definition 5.4]{Buss-Exel11}). Using the idea of~\cite[Proposition 2.5]{Sieben98}, we describe those $\rho:S\to U$, which induce this class of twisted $S$-modules.
    
    \begin{prop}\label{prop-order-pres-rho}
     Let $A\overset{i}{\to}U\overset{j}{\to}S$ be an extension, $\rho$ a transversal of $j$ and $\Lambda=(\alpha,\lambda,f)$ the corresponding twisted module $S$-structure on $A$. Then $\Lambda$ is a Sieben twisted $S$-module structure if and only if $\rho$ is order-preserving, i.\,e. $s\le t\impl\rho(s)\le\rho(t)$ for all $s,t\in S$.
    \end{prop}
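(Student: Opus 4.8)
The plan is to prove both implications, using throughout the defining relation $\rho(s)\rho(t)=i(f(s,t))\rho(st)$ from~\eqref{eq-rho(s)rho(t)}, the identity $\rho(e)=i(\alpha(e))$ for $e\in E(S)$ coming from~\eqref{eq-alpha=i^(-1)rho_E(S)}, and two elementary facts about the natural partial order on the inverse semigroup $U$: multiplication by an idempotent decreases an element (so $u\epsilon\le u$ and $\epsilon u\le u$ for $\epsilon\in E(U)$), and if $a,b\le c$ with $a\m a=b\m b$, then $a=b$ (since $a=c a\m a=c b\m b=b$). I will also use the standard characterisation $s\le t\iff s=t(s\m s)\iff s=(ss\m)t$ of the order on $S$, together with the facts that the idempotents $i(E(A))=E(U)$ commute and that $\alpha\colon E(S)\to E(A)$ is a homomorphism.

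First I would treat the implication \emph{$\rho$ order-preserving} $\impl$ \emph{$\Lambda$ Sieben}. Fix $s\in S$ and $e\in E(S)$; the goal is to verify condition (iv$'$), namely $f(s,e)=\alpha(ses\m)$ and $f(e,s)=\alpha(ess\m)$. Since $se\le s$, order-preservation gives $\rho(se)\le\rho(s)$, whereas $\rho(s)\rho(e)\le\rho(s)$ holds automatically. A short computation with commuting idempotents shows that these two elements share the same right idempotent, $(\rho(s)\rho(e))\m\rho(s)\rho(e)=i(\alpha(es\m s))=\rho(se)\m\rho(se)$, so by the elementary fact above $\rho(s)\rho(e)=\rho(se)$. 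Comparing with~\eqref{eq-rho(s)rho(t)} gives $i(f(s,e))\rho(se)=\rho(se)$; multiplying on the right by $\rho(se)\m$ and using $\rho(se)\rho(se)\m=i(\alpha(ses\m))$ yields $f(s,e)\alpha(ses\m)=\alpha(ses\m)$, hence $f(s,e)=\alpha(ses\m)$ because $f(s,e)\in A_{\alpha(ses\m)}$. The companion identity $f(e,s)=\alpha(ess\m)$ follows symmetrically from $\rho(e)\rho(s)=\rho(es)$, established the same way using $es\le s$.

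For the converse, assume $\Lambda$ is Sieben and let $s\le t$. Writing $s=t e$ with $e=s\m s\in E(S)$, one has $ss\m=t e t\m$, so the Sieben identity gives $f(t,e)=\alpha(t e t\m)=\alpha(ss\m)$. Then, by~\eqref{eq-rho(s)rho(t)}, \eqref{eq-alpha=i^(-1)rho_E(S)} and~\eqref{eq-rho(s)rho(s^(-1))},
\[
\rho(t)\rho(e)=i(f(t,e))\rho(s)=i(\alpha(ss\m))\rho(s)=\rho(s)\rho(s)\m\rho(s)=\rho(s).
\]
Since $\rho(e)$ is idempotent, $\rho(s)=\rho(t)\rho(e)\le\rho(t)$, which is exactly order-preservation.

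The crux of the argument is the converse direction: seeing that order-preservation forces the two a priori distinct elements $\rho(s)\rho(e)$ and $\rho(se)$, both lying below $\rho(s)$, to coincide is precisely what pins down $f$ on idempotent arguments. I also expect the essential subtlety to be that the forward implication genuinely requires the stronger normality (iv$'$) and not merely Lausch's condition (iv) of Definition~\ref{defn-twisted_S-mod}: the value $f(t,s\m s)$ appearing there has $s\m s\le t\m t$ and is not of the controlled form $f(t'e',e')$, so only (iv$'$) delivers $f(t,s\m s)=\alpha(ss\m)$. Beyond this, the remaining work is the routine bookkeeping of domain and range idempotents, which is straightforward once one exploits that the idempotents of $U$ commute and that $i$ and $\alpha$ are injective homomorphisms.
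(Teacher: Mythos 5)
Your proposal is correct and follows essentially the same route as the paper's proof: both directions come down to evaluating $\rho(s)\rho(e)=i(f(s,e))\rho(se)$ together with $\rho|_{E(S)}=i\circ\alpha$, and the converse is identical. The only cosmetic difference is in the forward direction, where you obtain the equality $\rho(s)\rho(e)=\rho(se)$ from the uniqueness of elements below $\rho(s)$ with a prescribed right idempotent, while the paper multiplies the inequality $\rho(e)\rho(s)\le i(f(e,s))\rho(s)$ on the right by $\rho(s)\m$; both reduce to the same conclusion that $f(s,e)$ is the idempotent $\alpha(ses\m)$.
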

    \begin{proof}
     Suppose that $\rho$ preserves the order. Then by~\eqref{eq-rho(s)rho(t)}
     \begin{align*}
      \rho(e)\rho(s)=i(f(e,s))\rho(es)\le i(f(e,s))\rho(s).
     \end{align*}
     Let us multiply this on the right by $\rho(s)\m$. In view of~\eqref{eq-rho(s)rho(s^(-1))} we get
     \begin{align*}
      \rho(ess\m)\le i(f(e,s))\rho(ss\m).
     \end{align*}
     Using~\eqref{eq-alpha=i^(-1)rho_E(S)}, we rewrite this as 
     $$
      \alpha(ess\m)\le f(e,s)\alpha(ss\m).
     $$
    Therefore, 
    $$
     \alpha(ess\m)=\alpha(ess\m)f(e,s)\alpha(ss\m)=\alpha(ess\m)f(e,s),
    $$
    the latter being $f(e,s)$, as $f(e,s)\in A_{\alpha(ess\m)}$. The equality $f(s,e)=\alpha(ses\m)$ is proved similarly.
    
    Conversely, if $f(e,s)=\alpha(ess\m)$, then using~\eqref{eq-alpha=i^(-1)rho_E(S)}--\eqref{eq-rho(s)rho(t)} again, we see
    $$
     \rho(e)\rho(s)=i\circ\alpha(ess\m)\rho(es)=\rho(ess\m)\rho(es)=\rho(es(es)\m)\rho(es)=\rho(es).
    $$
    Thus, $\rho(es)\le\rho(s)$. It remains to note that each $t\le s$ has the form $es$ for some $e\in E(S)$.
    \end{proof}
    
    \begin{rem}\label{rem-f(e,s)-and-f(s,e)}
     It can be seen from the proof of Proposition~\ref{prop-order-pres-rho} that $f(e,s)=\alpha(ess\m)\iff f(s,e)=\alpha(ses\m)$.
    \end{rem}
    
    \begin{rem}\label{rem-equiv-ext-Sieben}
     The property of $j$ ``to have an order-preserving transversal'' is invariant under the equivalence of extensions.
    \end{rem}
    \noindent For if $\mu:U\to U'$ determines the equivalence of $U$ and $U'$, and $\rho$ is an order-preserving transversal of $j$, then $\mu\circ\rho$ is an order-preserving transversal of $j'$.\\
    
    The next proposition is inspired by~\cite[Proposition 2.8]{Sieben98}.
    \begin{prop}\label{prop-ext-by-F-inverse}
     Let $A\overset{i}{\to}U\overset{j}{\to}S$ be an extension and $S$ an $F$-inverse monoid. Then there exists an order-preserving transversal $\rho$ of $j$.
    \end{prop}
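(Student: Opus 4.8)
The plan is to build $\rho$ from a single choice of lifts of the maximal elements of the $\sigma$-classes of $S$ and then to propagate these lifts to all of $S$ through the idempotent part, exploiting the fact that in an $F$-inverse monoid every element lies below the maximum of its $\sigma$-class. Throughout I would write $\kappa=(j|_{E(U)})\m:E(S)\to E(U)$ for the canonical idempotent lift (recall $j|_{E(U)}$ is an isomorphism), and for $s\in S$ I let $m_s$ denote the maximum of the $\sigma$-class of $s$, which exists since $S$ is $F$-inverse; thus $s\le m_s$ and $m_s$ depends only on the $\sigma$-class of $s$. First I would record that $1_S$ is the maximum of the $\sigma$-class of idempotents: all idempotents are $\sigma$-equivalent and lie below $1_S$, so $1_S\le m_{1_S}$, and since $F$-inverse implies $E$-unitary this forces $m_{1_S}\in E(S)$, whence $m_{1_S}\le 1_S$ and $m_{1_S}=1_S$.

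Next, for each maximal element $m$ I would choose some $\rho(m)\in j\m(m)$, subject to the single normalization $\rho(1_S)=\kappa(1_S)$; this is legitimate because $\kappa(1_S)\in E(U)$ and $j(\kappa(1_S))=1_S$. For an arbitrary $s\in S$ I then define
\[
 \rho(s)=\kappa(ss\m)\,\rho(m_s).
\]
The transversal property is a short check: $j(\rho(s))=ss\m\,j(\rho(m_s))=ss\m m_s=s$, the last equality because $s\le m_s$ gives $s=ss\m m_s$. I would note that the formula is consistent with the choices made: when $s=m$ is maximal, $j$ being idempotent-separating forces $\rho(m)\rho(m)\m=\kappa(mm\m)$, so $\kappa(mm\m)\rho(m)=\rho(m)$ and the formula returns the chosen lift. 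On an idempotent $e$ one has $m_e=1_S$, so $\rho(e)=\kappa(e)\rho(1_S)=\kappa(e)\kappa(1_S)=\kappa(e)$; hence $\rho$ maps $E(S)$ into $E(U)$ and restricts on idempotents to $\kappa$, as required by the standing convention on transversals.

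The remaining and only nonroutine step is order-preservation. The key observation is that comparable elements are $\sigma$-equivalent: if $s\le t$, then $s\le s,t$ yields $(s,t)\in\sigma$, so $m_s=m_t=:m$. Since $s\le t$ also gives $ss\m\le tt\m$, and $\kappa$ is a semilattice isomorphism and therefore order-preserving, I would compute
\[
 \rho(s)=\kappa(ss\m)\rho(m)=\kappa(ss\m)\kappa(tt\m)\rho(m)=\kappa(ss\m)\rho(t)\le\rho(t),
\]
the final inequality holding because $\kappa(ss\m)\in E(U)$. This establishes that $\rho$ is an order-preserving transversal.

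I expect the main subtlety to be isolating exactly the fact that makes the construction work, namely that any two comparable elements share the same maximal representative $m_s$; this is what collapses the a priori independent choices $\rho(m)$ into an order-compatible family. Once this is recognized, order-preservation reduces to monotonicity of the idempotent lift $\kappa$, and all the other verifications (the transversal identity, consistency on maximal elements, and the normalization on $E(S)$) are immediate from $j$ being idempotent-separating and from the inequality $s=ss\m m_s$.
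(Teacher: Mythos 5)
Your construction is correct and is essentially the paper's own proof: both lift the maximum $\max x$ of each $\sigma$-class arbitrarily, extend by multiplying by the canonical idempotent lifts, and deduce order-preservation from the fact that comparable elements share the same $\sigma$-class and hence the same maximum. The only cosmetic difference is that you write $s=ss\m m_s$ with the canonical idempotent $ss\m$, which makes well-definedness automatic, whereas the paper writes $s=e\max x$ for an arbitrary idempotent $e$ and checks independence of the choice of $e$.
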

    \begin{proof}
      We know that any transversal of $j$ coincides with $(j|_{E(U)})\m$ on $E(S)$. So, our $\rho$ is uniquely defined on the idempotents of $S$, which constitute a whole $\sigma$-class of $S$, as $S$ is $F$-inverse. Fix a $\sigma$-class $x$ of $S$ distinct from $E(S)$. We shall construct the desired $\rho$ on $x$. Due to the fact that $x$ is arbitrary, this will define $\rho$ on the whole $S$. 
      
      Let $\max x$ denote the greatest element of $x$ and $e_x=\max x(\max x)\m$. Set $\rho(\max x)$ to be an arbitrary element of $j\m(\max x)$, so that $j(\rho(\max x))=\max x$. Since $j$ is idempotent-separating, $\rho(\max x)\rho(\max x)\m=\rho(e_x)$. Then 
      \begin{align}\label{eq-rho(s)=rho(ss^(-1))rho(s)}
       \rho(\max x)=\rho(e_x)\rho(\max x).
      \end{align}
      Now given $e\max x\in x$, $e\in E(S)$, define $\rho(e\max x)=\rho(e)\rho(\max x)$. Observe that if $e\max x=f\max x$, then $ee_x=fe_x$, and hence $\rho(e)\rho(e_x)=\rho(f)\rho(e_x)$. Multiplying this on the right by $\rho(\max x)$ and using~\eqref{eq-rho(s)=rho(ss^(-1))rho(s)}, we get $\rho(e)\rho(\max x)=\rho(f)\rho(\max x)$, so the definition makes sense. Moreover, 
      $$
      j(\rho(e\max x))=j(\rho(e))j(\rho(\max x))=e\max x.
      $$
      
      If $s\le t$ for some $s,t\in S$, then $s$ and $t$ belong to the same $\sigma$-class $x$ of $S$. Therefore, $s=e\max x$ and  $t=f\max x$ for $e,f\in E(S)$ with $e\le f$. So,
      $$
       \rho(s)=\rho(e)\rho(\max x)\le\rho(f)\rho(\max x)=\rho(t).
      $$
    \end{proof}
    
    \begin{rem}\label{rem-A->A->E(A)}
     The condition that $S$ is an $F$-inverse monoid in Proposition~\ref{prop-ext-by-F-inverse} is not necessary for the existence of an order-preserving transversal.  
    \end{rem}
    \noindent Indeed, the extension $A\overset{\id}{\to}A\overset{j}{\to}E(A)$, where $j(a)=aa\m$, admits the order-preserving transversal $\rho=\id_{E(A)}$. However, $E(A)$ is not an $F$-inverse monoid, if $A$ is not a monoid (nevertheless $E(A)$ is $E$-unitary).\\
    
    We shall need the following technical lemma.

\begin{lem}\label{lem-f(s',t')}
 Let $(\alpha,\lambda,f)$ be a Sieben twisted $S$-module structure on $A$. If $s\le s'$ and $t\le t'$, then
 \begin{align*}%\label{eq-f(s',t')}
  f(s,t)=\alpha(stt\m s\m)f(s',t').
 \end{align*}
\end{lem}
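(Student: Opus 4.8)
The plan is to reduce the two arguments of $f$ one at a time, each time invoking the cocycle identity (v) of Definition~\ref{defn-twisted_S-mod} with one of the three entries specialized to an idempotent, so that the Sieben normality condition (iv') converts the resulting $f$-values into $\alpha$-images that can be absorbed; throughout I would use repeatedly that $\alpha$ is a semilattice isomorphism and that $E(A)\subseteq C(A)$, so that $\alpha$-idempotents commute with everything.

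First I would handle the first argument, proving that $f(s,u)=\alpha(suu\m s\m)f(s',u)$ for every $u\in S$ whenever $s\le s'$. Writing $e=ss\m$, so that $s=es'$, I would apply (v) to the triple $(e,s',u)$. Using (i) to evaluate $\lambda_e$ as multiplication by $\alpha(e)$, and (iv') to rewrite $f(e,s'u)=\alpha(es'uu\m s'\m)$ and $f(e,s')=\alpha(es's'\m)=\alpha(e)$, the identity collapses---after commuting the central idempotents past $f(s',u)$ and using $f(s',u)\in A_{\alpha(s'uu\m s'\m)}$---to $\alpha(e)f(s',u)=\alpha(e)f(s,u)$. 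Since $suu\m s\m\le ss\m=e$, the idempotent $\alpha(e)$ fixes $f(s,u)$; and since $suu\m s\m=e\cdot s'uu\m s'\m$ one gets $\alpha(suu\m s\m)f(s',u)=\alpha(e)f(s',u)$. Chaining these equalities yields the claimed formula.

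Next I would treat the second argument, proving $f(s',t)=\alpha(s'tt\m s'\m)f(s',t')$ for $t\le t'$. Writing $t=t'g'$ with $g'=t\m t$, I would apply (v) to the triple $(s',t',g')$, this time using (iv') to rewrite $f(t',g')=\alpha(t'g't'\m)$ and $f(s't',g')=\alpha(s't'g't'\m s'\m)$, and (ii) to compute $\lambda_{s'}(\alpha(t'g't'\m))=\alpha(s't'g't'\m s'\m)$. After simplifying by centrality and using the identity $t'g't'\m=tt\m$, the cocycle relation reduces to $\alpha(s'tt\m s'\m)f(s',t)=\alpha(s'tt\m s'\m)f(s',t')$; as $f(s',t)\in A_{\alpha(s'tt\m s'\m)}$ is fixed by that idempotent, the desired equality follows.

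Finally, composing the two reductions gives $f(s,t)=\alpha(stt\m s\m)\alpha(s'tt\m s'\m)f(s',t')$, and since $s\le s'$ forces $stt\m s\m=ss\m\cdot s'tt\m s'\m\le s'tt\m s'\m$, the two leading idempotents merge into $\alpha(stt\m s\m)$, completing the proof. The main obstacle is purely the bookkeeping in the two reduction steps: one must consistently exploit $E(A)\subseteq C(A)$ to slide the $\alpha$-idempotents past $f$-values, and keep precise track of the group component $A_{\alpha(\cdots)}$ in which each $f$-value lives, so that the stray idempotents are absorbed exactly and no spurious factor survives.
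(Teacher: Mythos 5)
Your proof is correct and follows essentially the same route as the paper's: you apply the cocycle identity (v) first to the triple $(e,s',t)$ with $e=ss\m$ and then to $(s',t',t\m t)$, using (i), (ii) and the Sieben condition (iv') to evaluate the idempotent-involving $f$-values, and combine the two reductions exactly as in the paper (your intermediate form $f(s,u)=\alpha(suu\m s\m)f(s',u)$ is just the paper's $f(s,t)=\alpha(e)f(s',t)$ with the idempotent rewritten). No gaps.
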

\begin{proof}
 Let $s=es'$, where $e\in E(S)$. Applying (v) of Definition~\ref{defn-twisted_S-mod} to the triple $(e,s',t)$, we get
 $$
  \lambda_e(f(s',t))f(e,s't)=f(e,s')f(s,t).
 $$
 Using (i) of the same definition, (iv') and the facts that $f(s',t)\in A_{\alpha(s'tt\m s'\m)}$ and $f(s,t)\in A_{\alpha(stt\m s\m)}$, we conclude that 
 \begin{align}\label{eq-f(s,t)=alpha(e)f(s',t)}
  f(s,t)=\alpha(e)f(s',t).
 \end{align}

 Now taking $e'\in E(S)$, such that $t=t'e'$, we apply (v) of Definition~\ref{defn-twisted_S-mod} to the triple $(s',t',e')$:
 $$
  \lambda_{s'}(f(t',e'))f(s',t)=f(s',t')f(s't',e').
 $$
 It follows that
 \begin{align}\label{eq-f(s',t)=alpha(s'tt^(-1)s'^(-1))f(s',t')}
  f(s',t)=\alpha(s'tt\m s'\m)f(s',t').
 \end{align}
 
 Combining~\eqref{eq-f(s,t)=alpha(e)f(s',t)} and~\eqref{eq-f(s',t)=alpha(s'tt^(-1)s'^(-1))f(s',t')}, one finally obtains
 \begin{align*}
  f(s,t)&=\alpha(es'tt\m s'\m)f(s',t')\\
  &=\alpha((es')tt\m(es')\m)f(s',t')\\
  &=\alpha(stt\m s\m)f(s',t').
 \end{align*}

\end{proof}

\begin{cor}\label{cor-alpha(st(s't')^(-1))f(s,t)}
 Let $(\alpha,\lambda,f)$ be a Sieben twisted $S$-module structure on $A,$ where $S$ is  $E$-unitary. If  $(s,s'),(t,t')\in\sigma ,$ then
 \begin{align*}%\label{eq-alpha(st(s't')^(-1))f(s,t)}
  \alpha(st(s't')\m)f(s,t)=\alpha(st(s't')\m)f(s',t').
 \end{align*}
\end{cor}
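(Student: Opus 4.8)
The plan is to realise both $f(s,t)$ and $f(s',t')$ as restrictions of a single value $f(\hat s,\hat t)$ at a common lower bound, and then to identify the relevant range idempotent with $st(s't')\m$.

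First I would check that $st(s't')\m$ is an idempotent of $S$. As $\sigma$ is a congruence, $(s,s'),(t,t')\in\sigma$ give $(st,s't')\in\sigma$; since $S$ is $E$-unitary this forces $g:=st(s't')\m\in E(S)$, so $\alpha(g)$ is defined and the asserted equality makes sense. From $E$-unitarity I would also record $ss'\m,tt'\m\in E(S)$, each being its own inverse, so $ss'\m=s's\m$ and $tt'\m=t't\m$.

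Next I would introduce $\hat s:=ss'\m s'$ and $\hat t:=tt'\m t'$. Writing $\hat s=(ss'\m)s'$ and $\hat s=s(s'\m s')$ exhibits $\hat s\le s'$ and $\hat s\le s$ (as $ss'\m,s'\m s'\in E(S)$), and similarly $\hat t\le t,t'$. Hence $(\hat s,\hat t)\le(s,t)$ and $(\hat s,\hat t)\le(s',t')$ coordinatewise, and Lemma~\ref{lem-f(s',t')} applies to each pair. With $k:=(\hat s\hat t)(\hat s\hat t)\m$ it yields
\[
 \alpha(k)f(s,t)=f(\hat s,\hat t)=\alpha(k)f(s',t').
\]
Thus everything reduces to the identity $k=g$, after which this display is exactly the statement of the corollary.

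To prove $k=g$ I would compute both idempotents in the conjugated form $s(\,\cdot\,)s\m$. Using $\hat t\hat t\m=tt'\m$ and $\hat s=s(s'\m s')$, a short computation gives
\[
 k=\hat s(\hat t\hat t\m)\hat s\m=s\bigl((tt'\m)(s'\m s')\bigr)s\m .
\]
On the other hand, since $g\in E(S)$ we have $g=gg\m=s\bigl(t(s't')\m(s't')t\m\bigr)s\m$, and expanding $(s't')\m(s't')=t'\m s'\m s't'$ turns the bracket into $(tt'\m)(s'\m s')(t't\m)$. The trailing factor $t't\m$ equals $tt'\m$, which already occurs in the bracket, so since idempotents commute it is absorbed and the bracket collapses to $(tt'\m)(s'\m s')$. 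Hence the two conjugated expressions coincide and $k=g$, giving $\alpha(g)f(s,t)=\alpha(g)f(s',t')$.

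The main obstacle is precisely this last idempotent bookkeeping. For \emph{any} common lower bounds $s''\le s,s'$ and $t''\le t,t'$ one easily gets $(s''t'')(s''t'')\m\le g$ (from $s''t''\le st,s't'$), so the real content is the reverse inequality for the specific choice $\hat s,\hat t$; this is exactly what the absorption $t't\m=tt'\m\in E(S)$ (and its counterpart for $s$) provides, which is where $E$-unitarity is used. The remaining ingredients are the twofold application of Lemma~\ref{lem-f(s',t')} and the fact that $\alpha$ maps $E(S)$ isomorphically onto the central idempotents of $A$.
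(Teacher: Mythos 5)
Your proof is correct and takes essentially the same route as the paper's: both arguments produce common lower bounds of $(s,s')$ and $(t,t')$ from the $E$-unitary hypothesis (the paper uses $ss\m s'$ and $tt\m t'$ where you use $ss'\m s'$ and $tt'\m t'$, an immaterial difference), apply Lemma~\ref{lem-f(s',t')} twice, and reduce the claim to the idempotent identity $\hat s\hat t(\hat s\hat t)\m=st(s't')\m$, which you verify by the same kind of commuting-idempotent bookkeeping (using $t't\m=tt'\m$ and $s'\m s'$ absorption) that the paper carries out.
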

\noindent Indeed, since $S$ is $E$-unitary, then $s\m s'=s'\m s$ and $t\m t'=t'\m t$ are idempotents of $S$. So, $u=ss\m s'\le s,s'$ and $v=tt\m t'\le t,t'$. By Lemma~\ref{lem-f(s',t')}
$$
 f(u,v)=\alpha(uvv\m u\m)f(s,t)=\alpha(uvv\m u\m)f(s',t').
$$
It remains to prove that $uvv\m u\m=st(s't')\m$. We have
\begin{align*}
 uvv\m u\m&=ss\m \cdot s'\cdot tt\m\cdot t't'\m\cdot tt\m\cdot s'\m\cdot ss\m\\
 &=ss\m s't\cdot t\m t'\cdot t'\m s'\m\\
 &=ss\m s'\cdot tt'\m\cdot tt'\m\cdot s'\m\\
 &=s\cdot s\m s'\cdot tt'\m s'\m\\
 &=s\cdot s\m s'\cdot s\m s'\cdot tt'\m s'\m\\
 &=ss\m\cdot s's\m\cdot s'\cdot tt'\m s'\m\\
 &=s\cdot s\m s\cdot s'\m s'\cdot tt'\m\cdot s'\m\\
 &=s\cdot tt'\m\cdot s'\m\\
 &=st(s't')\m.
\end{align*}

\begin{cor}\label{cor-Lambda-for-A*_Lambda-S}
 If with the notations of Proposition~\ref{prop-rho-for-A*_Lambda-S} we assume that $\Lambda$ is a Sieben twisted $S$-module structure, then $\lambda'=\lambda$ and thus $\Lambda'=\Lambda$.
\end{cor}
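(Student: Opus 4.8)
The plan is to show that, under the Sieben hypothesis, the three rightmost factors in the formula \eqref{eq-lambda'-for-A*_Lambda-S} of Proposition~\ref{prop-rho-for-A*_Lambda-S} collapse into a central idempotent which is then absorbed by $\lambda_s(a)$. Writing $e=\alpha\m(aa\m)\in E(S)$, so that $\alpha(e)=aa\m$, the formula reads $\lambda'_s(a)=\lambda_s(a)f(s,s\m)\m f(s,e)f(se,s\m)$. First I would simplify $f(s,e)$: by the normality condition (iv') one has $f(s,e)=\alpha(ses\m)$ directly.

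The main obstacle is the factor $f(se,s\m)$, since neither $se$ nor $s\m$ is idempotent, so (iv') cannot be applied to it. Here I would invoke Lemma~\ref{lem-f(s',t')} with $se\le s$ and $s\m\le s\m$, which yields $f(se,s\m)=\alpha(se\,s\m s\,es\m)f(s,s\m)$. A short idempotent computation, using that the idempotents of $S$ commute and that $ss\m s=s$, reduces $se\,s\m s\,es\m$ to $ses\m$, so that $f(se,s\m)=\alpha(ses\m)f(s,s\m)$.

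Substituting these two evaluations into the formula gives $\lambda'_s(a)=\lambda_s(a)f(s,s\m)\m\alpha(ses\m)\alpha(ses\m)f(s,s\m)$. Now the two copies of the idempotent $\alpha(ses\m)$ merge into one; being an element of $E(A)\subseteq C(A)$ it is central, so it commutes past $f(s,s\m)$ and the product $f(s,s\m)\m f(s,s\m)=\alpha(ss\m)$ is then absorbed into the smaller idempotent $\alpha(ses\m)\le\alpha(ss\m)$, leaving $\lambda'_s(a)=\lambda_s(a)\alpha(ses\m)$. Finally, by (ii) of Definition~\ref{defn-twisted_S-mod} together with $\alpha(e)=aa\m$ we have $\alpha(ses\m)=\lambda_s(\alpha(e))=\lambda_s(aa\m)$, whence $\lambda_s(a)\alpha(ses\m)=\lambda_s(a)\lambda_s(aa\m)=\lambda_s(a\cdot aa\m)=\lambda_s(a)$. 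Thus $\lambda'_s=\lambda_s$ for every $s\in S$, so $\lambda'=\lambda$ and therefore $\Lambda'=(\alpha,\lambda',f)=(\alpha,\lambda,f)=\Lambda$.
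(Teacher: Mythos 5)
Your proof is correct and follows essentially the same route as the paper: evaluate $f(s,e)$ by (iv'), reduce $f(se,s\m)$ to $\alpha(ses\m)f(s,s\m)$ via Lemma~\ref{lem-f(s',t')}, and absorb the resulting idempotent $\alpha(ses\m)$ into $\lambda_s(a)$. The only cosmetic difference is that the paper absorbs the idempotent by noting $\lambda_s(a)\in A_{\alpha(ses\m)}$, while you use the homomorphism property $\lambda_s(a)\lambda_s(aa\m)=\lambda_s(a)$ — the same fact in different clothing.
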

\noindent For $f(s,\alpha\m(aa\m))=\alpha(s\alpha\m(aa\m)s\m)$ by (iv') and
$$
 f(s\alpha\m(aa\m),s\m)=\alpha(s\alpha\m(aa\m)s\m)f(s,s\m)
$$
by Lemma~\ref{lem-f(s',t')}. Therefore,~\eqref{eq-lambda'-for-A*_Lambda-S} becomes
\begin{align*}
 \lambda'_s(a)&=\lambda_s(a)f(s,s\m)\m\alpha(s\alpha\m(aa\m)s\m)f(s,s\m)\\
 &=\lambda_s(a)\alpha(ss\m)\alpha(s\alpha\m(aa\m)s\m)\\
 &=\lambda_s(a)\alpha(s\alpha\m(aa\m)s\m),
\end{align*}
the latter being $\lambda_s(a)$, as $a\in A_{ aa\m }$ and hence $\lambda_s(a)\in A_{\alpha(s\alpha\m(aa\m)s\m)}$.

    \section{\texorpdfstring{Extensions of $A$ by $G$}{Extensions of A by G}}\label{subsec-ext-A-G}
    
Let $A\overset{i}{\to}U\overset{j}{\to}S$ be an extension. Consider the composition $j'=\sigma^\natural\circ j:U\to \cG S$. When $S$ is $E$-unitary, $(\sigma^\natural)\m(1)=E(S)$. Therefore, $j'\m(1)=j\m(E(S))=i(A)$.
    
\begin{defn}\label{defn-ext-of-A-by-G}
 Let $A$ be a semilattice of groups and $G$ a group. An {\it extension of $A$ by $G$} is an inverse semigroup $U$ with a monomorphism $i:A\to U$ and an epimorphism $j:U\to G$, such that $i(A)=j\m(1)$.
\end{defn}

 Notice that $E(U)\subseteq j\m(1)=i(A)$, so\footnote{This will also follow from Proposition~\ref{prop-ext-A-G-to-ext-A-S}}
 \begin{align}\label{eq-E(U)=i(E(A))}
  E(U)=i(E(A)).
 \end{align}
 
\begin{rem}\label{rem-A-is-a-group}
 An extension $U$ of $A$ by $G$ is an extension of inverse semigroups in the sense of Lausch~\cite{Lausch} if and only if $A$ is a group. In this case $U$ is a classical extension of groups.
\end{rem}
\noindent For if $U$ is a ``Lausch'' extension, then $E(A)\cong E(U)\cong E(G)$, so $A$ and $U$ are groups. Conversely, if $A$ is a group, it follows from~\eqref{eq-E(U)=i(E(A))} that $U$ is also a group.

\begin{defn}\label{defn-equiv-ext-of-A-by-G}
Two extensions $A\overset{i}{\to}U\overset{j}{\to}G$ and $A\overset{i}{\to}U'\overset{j}{\to}G$ of $A$ by $G$ are called {\it equivalent} if there is an isomorphism $\mu:U\to U'$ making the following diagram
 	\begin{align}\label{eq-comm-diag-equiv-ext-A-G}
     	\begin{tikzpicture}[node distance=1.5cm, auto]
     		\node (A) {$A$};
     		\node (U) [right of=A] {$U$};
     		\node (G) [right of=U] {$G$};
     		\node (A') [below of=A]{$A$};
     		\node (U') [below of=U] {$U'$};
     		\node (G') [below of=S] {$G$};
     		\draw[->] (A) to node {$i$} (U);
     		\draw[->] (U) to node {$j$} (G);
     		\draw[->] (A') to node {$i'$} (U');
     		\draw[->] (U') to node {$j'$} (G');
     		\draw[-,double distance=2pt] (A) to node {} (A');
     		\draw[->] (U) to node {$\mu$} (U');
    		\draw[-,double distance=2pt] (G) to node {} (G');
     	\end{tikzpicture}
 	\end{align}
commute.
\end{defn}

\begin{rem}\label{rem-mu-is-injective}
 It is sufficient to require that $\mu$ is an epimorphism.
\end{rem}
 \noindent For the injectivity of $\mu$ follows from the commutativity of~\eqref{eq-comm-diag-equiv-ext-A-G}, the injectivity of $i$, $i'$ and the facts that $i(A)=j\m(1)$, $i'(A)=j'\m(1)$ (see the proof of Lemma~\ref{lem-mu-is-iso}).
 
\begin{prop}\label{prop-ext-A-G-to-ext-A-S} 
 Let $A\overset{i}{\to}U\overset{j}{\to}G$ be an extension of $A$ by $G$. Then there exist an inverse semigroup $S$ and epimorphisms $\pi:U\to S$, $\kappa:S\to G$, such that 
 \begin{enumerate}
  \item $A\overset{i}{\to}U\overset{\pi}{\to}S$ is an extension of $A$ by $S$;
  \item $j=\kappa\circ\pi$.
 \end{enumerate}
 Moreover, it follows that 
 \begin{enumerate}
  \item[(iii)] $S$ is $E$-unitary;
  \item[(iv)] $\ker\kappa=\sigma$.
 \end{enumerate}
\end{prop}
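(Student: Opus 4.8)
The plan is to realize $S$ as the quotient of $U$ by the idempotent-separating congruence whose kernel is $i(A)$. Concretely, I would consider the collection $\mathcal N=\{i(A)_e\}_{e\in E(U)}$ of group components of the inverse subsemigroup $i(A)\cong A$ (recall $E(U)=i(E(A))$ by~\eqref{eq-E(U)=i(E(A))}). By the criterion for group kernel normal systems recalled in Section~\ref{sec-inv-sem} (see~\cite[Theorem 7.54]{Clifford-Preston-2}), it suffices to check that $N=i(A)$ is a subsemigroup closed under conjugation, i.\,e. $u\,i(A)\,u\m\subseteq i(A)$ for every $u\in U$. This is the one place where the hypothesis that $G$ is a \emph{group} is decisive: for $a\in A$ we have $j(u\,i(a)\,u\m)=j(u)\,j(i(a))\,j(u)\m=j(u)j(u)\m=1$, since $j(i(a))=1$ and $G$ is a group, whence $u\,i(a)\,u\m\in j\m(1)=i(A)$. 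Taking $\pi=\rho_{\mathcal N}^\natural\colon U\to S:=U/\rho_{\mathcal N}$, the induced epimorphism is idempotent-separating with $\iker\pi=\mathcal N$, so $\pi\m(E(S))=\bigsqcup_e i(A)_e=i(A)$; together with the injectivity of $i$ this shows that $A\overset{i}{\to}U\overset{\pi}{\to}S$ is an extension of $A$ by $S$, which is~(i).

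For~(ii) I would verify that $\rho_{\mathcal N}\subseteq\ker j$: if $(u,v)\in\rho_{\mathcal N}$ then $v=wu$ for some $w\in i(A)$ (see Section~\ref{sec-inv-sem}), so $j(v)=j(w)j(u)=j(u)$ as $j(w)=1$. By the universal property of the quotient, $j$ factors uniquely as $j=\kappa\circ\pi$ with $\kappa\colon S\to G$ an epimorphism. The pivotal observation, on which the remaining assertions rest, is that $\kappa\m(1)=E(S)$. Indeed, for $s=\pi(u)$ one has $\kappa(s)=1\iff j(u)=1\iff u\in i(A)$, so $\kappa\m(1)=\pi(i(A))$; and $\pi(i(A))=E(S)$, because $\pi\m(E(S))=i(A)$ forces $\pi(i(A))\subseteq E(S)$, while every idempotent of $S$ is the $\pi$-image of an idempotent of $U$ and $E(U)=i(E(A))\subseteq i(A)$.

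Finally, both~(iii) and~(iv) are quick formal consequences of $\kappa\m(1)=E(S)$. Since $S/\ker\kappa\cong\kappa(S)=G$ is a group and $\sigma$ is the minimum group congruence, $\sigma\subseteq\ker\kappa$. For~(iii): if $(e,s)\in\sigma$ with $e\in E(S)$, then $(e,s)\in\ker\kappa$, so $\kappa(s)=\kappa(e)=1$ and hence $s\in\kappa\m(1)=E(S)$, proving that $S$ is $E$-unitary. For~(iv) it remains to show $\ker\kappa\subseteq\sigma$, and here I would invoke the characterization of $\sigma$ on an $E$-unitary semigroup recalled in Section~\ref{sec-inv-sem}, namely $(s,t)\in\sigma\iff s\m t,\,st\m\in E(S)$: if $(s,t)\in\ker\kappa$ then $\kappa(s\m t)=\kappa(st\m)=1$, so $s\m t,st\m\in\kappa\m(1)=E(S)$, whence $(s,t)\in\sigma$. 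The main (and essentially only) obstacle is spotting the correct congruence and verifying the conjugation-closure $u\,i(A)\,u\m\subseteq i(A)$; once $S$ and $\kappa$ are in place and the identity $\kappa\m(1)=E(S)$ is established, all four assertions follow automatically.
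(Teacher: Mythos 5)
Your proposal is correct and follows essentially the same route as the paper: the same group kernel normal system $\{i(A)_e\}_{e\in E(U)}$ with conjugation-closure checked via $j(u\,i(a)\,u\m)=1$, the same factorization $j=\kappa\circ\pi$, and the same use of the $E$-unitary characterization of $\sigma$ for (iv). The only cosmetic difference is that you package (iii) and (iv) through the identity $\kappa\m(1)=E(S)$ and compute $\kappa(s\m t)$ directly in $S$, where the paper lifts to $U$ and computes $j(uv\m)$; these are interchangeable.
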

\begin{proof}
 Observe that $ui(A)u\m\subseteq i(A)$ for all $u\in U$, so $\mathcal A=\{i(A)_e\}_{e\in E(U)}$ is a (group) kernel normal system. Set $S=U/\rho_{\mathcal A}$ and $\pi=\rho_{\mathcal A}^\natural:U\to S$. By construction $\pi$ is an idempotent-separating epimorphism and $\pi\m(E(S))=i(A)$, giving (i). 
 
 To build $\kappa:S\to G$, note that $\ker\pi=\rho_{\mathcal A}\subseteq\ker j$. Indeed, $(s,t)\in\rho_{\mathcal A}$ means that $t=us$ for $u\in i(A)_{ss\m}$. Then $j(t)=j(us)=j(u)j(s)=j(s)$. Thus, there exists a (unique) epimorphism $\kappa:S\to G$ satisfying (ii). 
 
 To prove (iii), suppose that $e\le s$ for some $s=\pi(u)\in S$ and $e\in E(S)$. Then $\kappa(s)=\kappa(e)=1$, i.\,e. $j(u)=1$. Hence $u\in i(A)$ and consequently $s=\pi(u)\in E(S)$. 
 
 Clearly, $\ker\kappa$ is a group congruence, so $\sigma\subseteq\ker\kappa$. For the converse inclusion let $(s,t)\in\ker\kappa$, where $s=\pi(u)$, $t=\pi(v)$. Then $\kappa(s)=\kappa(t)$ means that $j(u)=j(v)$. It follows that $j(uv\m)=1$, so $uv\m\in i(A)$ and hence $st\m=\pi(uv\m)\in E(S)$. Similarly $s\m t\in E(S)$. Since $S$ is $E$-unitary, $(s,t)\in\sigma$ by~\cite[Theorem 2.4.6]{Lawson}. This proves (iv).
\end{proof}

\begin{prop}\label{prop-eq-ext-A-G-to-eq-ext-A-S}   
 Let $A\overset{i}{\to}U\overset{j}{\to}G$ and $A\overset{i'}{\to}U'\overset{j'}{\to}G$ be extensions of $A$ by $G$ with $\pi:U\to S$, $\kappa:S\to G$ and $\pi':U\to S'$, $\kappa':S'\to G$ being the corresponding epimorphisms from Proposition~\ref{prop-ext-A-G-to-ext-A-S}. Given a homomorphism $\mu:U\to U'$ making the diagram~\eqref{eq-comm-diag-equiv-ext-A-G} commute, there exists a homomorphism $\nu:S\to S'$, such that
 \begin{align}\label{eq-comm-diag-A-U-S-G}
     	\begin{tikzpicture}[node distance=1.5cm, auto]
     		\node (A) {$A$};
     		\node (U) [right of=A] {$U$};
     		\node (S) [right of=U] {$S$};
     		\node (G) [right of=S] {$G$};
     		\node (A') [below of=A]{$A$};
     		\node (U') [below of=U] {$U'$};
     		\node (S') [below of=S] {$S'$};
     		\node (G') [below of=G] {$G$};
     		\draw[->] (A) to node {$i$} (U);
     		\draw[->] (U) to node {$\pi$} (S);
     		\draw[->] (S) to node {$\kappa$} (G);
     		\draw[->] (A') to node {$i'$} (U');
     		\draw[->] (U') to node {$\pi'$} (S');
     		\draw[->] (S') to node {$\kappa'$} (G');
     		\draw[-,double distance=2pt] (A) to node {} (A');
     		\draw[->] (U) to node {$\mu$} (U');
     		\draw[->,dashed] (S) to node {$\nu$} (S');
     		\draw[-,double distance=2pt] (G) to node {} (G');
     	\end{tikzpicture}
	\end{align}
 	commutes. Moreover, if $\mu$ is injective, then $\nu$ is injective; if $\mu$ is surjective, then $\nu$ is surjective.
\end{prop}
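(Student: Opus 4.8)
The plan is to define $\nu:S\to S'$ directly by $\nu(\pi(u))=\pi'(\mu(u))$ for $u\in U$, so that the left-hand square of~\eqref{eq-comm-diag-A-U-S-G} commutes by construction. Recall from the proof of Proposition~\ref{prop-ext-A-G-to-ext-A-S} that $S=U/\rho_{\mathcal A}$ with $\mathcal A=\{i(A)_e\}_{e\in E(U)}$ and $\pi=\rho_{\mathcal A}^\natural$, so $\ker\pi=\rho_{\mathcal A}$; similarly $\ker\pi'=\rho_{\mathcal A'}$ with $\mathcal A'=\{i'(A)_e\}_{e\in E(U')}$. Since $\pi$ is a surjective homomorphism, the map $\nu$ is a well-defined homomorphism as soon as $\ker\pi\subseteq\ker(\pi'\circ\mu)$, and this containment is the crux of the argument.

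To establish it, I would take $(u,v)\in\rho_{\mathcal A}$ and use the description of $\rho_{\mathcal A}$ from Section~\ref{sec-inv-sem}: $v=wu$ for some $w\in i(A)_{uu\m}$. Writing $w=i(a)$ and applying $\mu\circ i=i'$ (from the commutativity of~\eqref{eq-comm-diag-equiv-ext-A-G}) gives $\mu(v)=i'(a)\mu(u)$ with $i'(a)\in i'(A)$. Because $w$ lies in the group component with identity $uu\m$, we have $ww\m=uu\m$; applying the homomorphism $\mu$ yields $i'(a)i'(a)\m=\mu(u)\mu(u)\m$, so $i'(a)\in i'(A)_{\mu(u)\mu(u)\m}$. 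Hence $(\mu(u),\mu(v))\in\rho_{\mathcal A'}=\ker\pi'$, i.e. $\pi'\mu(u)=\pi'\mu(v)$, proving $\ker\pi\subseteq\ker(\pi'\circ\mu)$. The right-hand square then commutes automatically: $\kappa'\circ\nu\circ\pi=\kappa'\circ\pi'\circ\mu=j'\circ\mu=j=\kappa\circ\pi$, and cancelling the epimorphism $\pi$ gives $\kappa'\circ\nu=\kappa$.

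For the transfer of injectivity and surjectivity, surjectivity is immediate, since $\nu\circ\pi=\pi'\circ\mu$ is a composite of surjections when $\mu$ is onto, forcing $\nu$ to be onto. For injectivity I would assume $\mu$ injective and $\nu(\pi(u_1))=\nu(\pi(u_2))$, i.e. $(\mu(u_1),\mu(u_2))\in\rho_{\mathcal A'}$, so $\mu(u_2)=i'(a)\mu(u_1)$ with $i'(a)\in i'(A)_{\mu(u_1)\mu(u_1)\m}$. Rewriting $i'(a)=\mu(i(a))$ and cancelling the injective $\mu$ in $\mu(i(a)u_1)=\mu(u_2)$ gives $u_2=i(a)u_1$; the idempotent identity $\mu(i(aa\m))=i'(aa\m)=\mu(u_1u_1\m)$ then yields, after cancelling $\mu$, that $i(a)\in i(A)_{u_1u_1\m}$. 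Thus $(u_1,u_2)\in\rho_{\mathcal A}$, i.e. $\pi(u_1)=\pi(u_2)$, and $\nu$ is injective.

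The main obstacle is precisely the well-definedness of $\nu$, namely the containment $\ker\pi\subseteq\ker(\pi'\circ\mu)$. Its proof rests on transporting membership in a group component $i(A)_e$ through $\mu$, which is exactly where the identity $\mu\circ i=i'$ and the fact that $\mu$ preserves products, inverses and hence idempotents are needed. Once this is in place, the commutativity of the remaining square and the injective and surjective transfers follow by cancelling $\pi$ and $\mu$, respectively.
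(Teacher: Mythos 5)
Your proposal is correct and follows essentially the same route as the paper: define $\nu$ by $\nu(\pi(u))=\pi'(\mu(u))$, verify well-definedness using the description of $\ker\pi=\rho_{\mathcal A}$ and the identity $\mu\circ i=i'$, and transfer injectivity/surjectivity by the same cancellation arguments. The only cosmetic difference is that you check $(\mu(u),\mu(v))\in\rho_{\mathcal A'}$ directly via the group kernel normal system, whereas the paper computes $\pi'(\mu(u))=\pi'(\mu(v))$ by showing $\pi'(i'(a))=\pi'(\mu(vv\m))$; these are interchangeable.
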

\begin{proof}
 For the middle square of~\eqref{eq-comm-diag-A-U-S-G} to commute, we set $\nu(s)=\pi'\circ\mu(u)$, where $s=\pi(u)$. We need to show that the definition does not depend on the choice of $u$. If $\pi(u)=\pi(v)$, then $u=i(a)v$ with $i(aa\m)=vv\m$. Hence, $\mu(u)=i'(a)\mu(v)$ and consequently 
 \begin{align}\label{eq-pi'mu(u)=pi'i'(a)mu(v)}
  \pi'\circ\mu(u)=\pi'(i'(a))\pi'(\mu(v)).
 \end{align}
 Since $\pi'\circ i'(a)\in E(S')$, we have $\pi'\circ i'(a)=\pi'\circ i'(aa\m)=\pi'\circ\mu\circ i(aa\m)=\pi'\circ\mu(vv\m)$. Therefore,~\eqref{eq-pi'mu(u)=pi'i'(a)mu(v)} equals $\pi'\circ\mu(vv\m v)=\pi'\circ\mu(v)$ and thus $\nu$ is well-defined.

 To prove the commutativity of the right square of~\eqref{eq-comm-diag-A-U-S-G}, observe that $\kappa'\circ\nu(s)=\kappa'\circ\pi'\circ\mu(u)=j'\circ\mu(u)=j(u)=\kappa\circ\pi(u)=\kappa(s)$.
 
 Suppose that $\nu(s)=\nu(t)$ for $s=\pi(u)$ and $t=\pi(v)$. By the definition of $\nu$ we have $\pi'\circ\mu(u)=\pi'\circ\mu(v)$. Then $\mu(u)=i'(a)\mu(v)=\mu(i(a)v)$, where $\mu(i(aa\m))=i'(aa\m)=\mu(vv\m)$. If $\mu$ is injective, then $u=i(a)v$ with $i(aa\m)=vv\m$. It follows that $\pi(u)=\pi(v)$, i.\,e. $s=t$.
 
 Given $s'\in S'$, there is $u'\in U'$, such that $s'=\pi'(u')$. Assuming that $\mu$ is surjective, we find $u\in U$ with $\mu(u)=u'$. Setting $s=\pi(u)$, we see that $\nu(s)=\nu\circ\pi(u)=\pi'\circ\mu(u)=s'$.
\end{proof}

\begin{rem}\label{rem-S-pi-kappa-are-unique-up-to-iso}
 The semigroup $S$ from Proposition~\ref{prop-ext-A-G-to-ext-A-S} is unique up to an isomorphism respecting $\pi$ and $\kappa$.
\end{rem}
\noindent To see this, one sets $U=U'$ and $\mu=\id_U$ in Proposition~\ref{prop-eq-ext-A-G-to-eq-ext-A-S}.

\begin{prop}\label{prop-k^(-1)(x)-has-max}
 Under the conditions of Proposition~\ref{prop-ext-A-G-to-ext-A-S} the $\sigma$-class $\kappa\m(x)$ has a maximum element if and only if $j\m(x)=u_xi(A)$ for some  $u_x\in j\m(x)$.
\end{prop}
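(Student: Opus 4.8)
The plan is to transport the question along the refinement $A\overset{i}{\to}U\overset{\pi}{\to}S\overset{\kappa}{\to}G$ furnished by Proposition~\ref{prop-ext-A-G-to-ext-A-S}. Two of its conclusions drive everything: first, $j=\kappa\circ\pi$, so that $j\m(x)=\pi\m(\kappa\m(x))$; and second, $\ker\kappa=\sigma$, so that $\kappa\m(x)$ is exactly a $\sigma$-class of $S$ and ``having a maximum'' refers to the natural partial order restricted to this class. I will also use repeatedly that $\pi$ is idempotent-separating with $\pi\m(E(S))=i(A)$, whence $\pi(i(a))\in E(S)$ for every $a\in A$, and that $\pi$ restricts to an isomorphism $i(E(A))=E(U)\to E(S)$.

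For the implication ``$\impl$'' I would pick $u_x\in\pi\m(s_x)$, where $s_x=\max\kappa\m(x)$; then $j(u_x)=\kappa(s_x)=x$, so $u_x\in j\m(x)$, and the inclusion $u_xi(A)\subseteq j\m(x)$ is immediate since $j(u_xi(a))=j(u_x)\cdot 1=x$. For the reverse inclusion take $u\in j\m(x)$. Then $\pi(u)\in\kappa\m(x)$ lies below the maximum, so $\pi(u)=s_xe'$ for some $e'\in E(S)$; writing $e'=\pi(i(f))$ with $f\in E(A)$ via the isomorphism $E(U)\to E(S)$, we obtain $\pi(u)=\pi(u_xi(f))$. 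Hence $u$ and $u_xi(f)$ lie in the same $\ker\pi=\rho_{\mathcal A}$-class.

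For the converse ``$\Leftarrow$'' I would start from $j\m(x)=u_xi(A)$ and set $s_x=\pi(u_x)$; then $\kappa(s_x)=j(u_x)=x$, so $s_x\in\kappa\m(x)$. Given any $s\in\kappa\m(x)$, surjectivity of $\pi$ lets me lift it to $u\in U$ with $\pi(u)=s$, and $j(u)=\kappa(s)=x$ forces $u\in u_xi(A)$, say $u=u_xi(a)$. Applying $\pi$ yields $s=s_x\pi(i(a))$ with $\pi(i(a))$ idempotent, so $s\le s_x$. Thus $s_x=\max\kappa\m(x)$, and this direction needs nothing beyond the factorisation of $u$ through $u_x$.

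The only step that calls for the structure theory rather than a direct manipulation is passing, in the first direction, from $(u,u_xi(f))\in\rho_{\mathcal A}$ to $u\in u_xi(A)$. Here I would invoke the explicit description of a group kernel normal system recalled in Section~\ref{sec-inv-sem}: two elements are $\rho_{\mathcal A}$-related precisely when one is obtained from the other by right multiplication by an element of the appropriate group component of $i(A)$, so $u=u_xi(f)\cdot i(c)=u_xi(fc)\in u_xi(A)$. This is the one genuinely delicate point; once it is in place, $j\m(x)=u_xi(A)$ follows and the argument is complete.
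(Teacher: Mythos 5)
Your proposal is correct and follows essentially the same route as the paper: in both directions you pass between $j\m(x)$ and $\kappa\m(x)$ via $\pi$, use the natural partial order to write an element below $s_x$ as $s_xe'$ with $e'\in E(S)$, and then use the description of the $\ker\pi=\rho_{\mathcal A}$-classes (right multiplication by the appropriate group component of $i(A)$) to conclude $u\in u_xi(A)$. The backward direction likewise matches the paper's argument verbatim in substance.
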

\begin{proof}
 Suppose that there is $s_x\in S$, such that $s\le s_x$ whenever $\kappa(s)=x$. Then choosing $u_x\in U$ with $\pi(u_x)=s_x$, for any $u\in U$ we have $j(u)=x$ if and only if $\kappa\circ\pi(u)=x$, i.\,e. $\pi(u)\le s_x=\pi(u_x)$. The latter is equivalent to $\pi(u)=\pi(u_xe)$, where $e\in E(U)$. Hence, $u=u_xei(a)$ for some $a\in A$ satisfying $i(a\m a)=eu_x\m u_x$. It remains to note that $ei(a)\in i(A)$, so $j\m(x)\subseteq u_xi(A)$. The converse inclusion is trivial.
 
 Assume that $j\m(x)=u_xi(A)$. Take $s\in S$ and observe that $\kappa(s)=x$ is equivalent to $j(u)=x$, where $\pi(u)=s$. So, $u=u_xi(a)$ and thus $s=\pi(u)\le\pi(u_x)$, because $\pi(i(a))\in E(S)$. Thus $\pi(u_x)$  is the maximum element of $\kappa\m(x)$.
 \end{proof}

\begin{cor}\label{cor-S-is-F-inv}
 Under the conditions of Proposition~\ref{prop-ext-A-G-to-ext-A-S} the semigroup $S$ is an $F$-inverse monoid if and only if there is $\{u_x\}_{x\in G}\subseteq U$, such that $j\m(x)=u_xi(A)$ for all $x\in G$.
\end{cor}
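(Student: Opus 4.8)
The plan is to derive this corollary by quantifying Proposition~\ref{prop-k^(-1)(x)-has-max} over all $x\in G$, once the $\sigma$-classes of $S$ have been identified with the fibers of $\kappa$. Recall that, in the terminology fixed in Section~\ref{sec-inv-sem}, an $F$-inverse monoid is an inverse semigroup each of whose $\sigma$-classes has a maximum element, so the whole task is to control the $\sigma$-classes of $S$ one at a time.

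First I would invoke part (iv) of Proposition~\ref{prop-ext-A-G-to-ext-A-S}, namely $\ker\kappa=\sigma$, together with the surjectivity of $\kappa:S\to G$. This immediately tells me that the $\sigma$-classes of $S$ are precisely the fibers $\kappa\m(x)$, $x\in G$, and that each $x\in G$ indeed produces a (nonempty) class. Consequently $S$ is an $F$-inverse monoid if and only if $\kappa\m(x)$ possesses a maximum element for every single $x\in G$.

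Next I would feed each fiber into Proposition~\ref{prop-k^(-1)(x)-has-max}, which, under exactly the hypotheses of Proposition~\ref{prop-ext-A-G-to-ext-A-S} that we are assuming here, asserts that $\kappa\m(x)$ has a maximum element if and only if $j\m(x)=u_xi(A)$ for some $u_x\in j\m(x)$. Applying this equivalence to all $x$ at once and collecting the resulting elements into a family $\{u_x\}_{x\in G}\subseteq U$ yields the claim: $S$ is $F$-inverse exactly when such a family with $j\m(x)=u_xi(A)$ for all $x\in G$ exists.

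I do not expect a genuine obstacle here, since the corollary is essentially Proposition~\ref{prop-k^(-1)(x)-has-max} read uniformly in $x$; the only point worth a sentence is that nothing extra is needed to secure the ``monoid'' in ``$F$-inverse monoid''. The $\sigma$-class of idempotents of $S$ is $E(S)$, as $S$ is $E$-unitary by part (iii), and once this class has a maximum element $m$, that $m$ is the greatest idempotent and hence an identity of $S$: from $ss\m\le m$ one gets $ms=s=sm$. Thus matching the definition requires only the maxima of the $\sigma$-classes, which is exactly what the two propositions above provide.
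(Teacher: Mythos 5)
Your proposal is correct and matches the paper's intended argument: the corollary is stated without proof precisely because it is Proposition~\ref{prop-k^(-1)(x)-has-max} applied to every fiber $\kappa\m(x)$ at once, using $\ker\kappa=\sigma$ (part (iv) of Proposition~\ref{prop-ext-A-G-to-ext-A-S}) and the surjectivity of $\kappa$ to identify the $\sigma$-classes of $S$ with the fibers of $\kappa$. Your closing remark about the maximum of $E(S)$ serving as an identity is a correct (if optional) observation, since the paper's definition of an $F$-inverse monoid only asks that each $\sigma$-class have a maximum.
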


\section{Twisted partial actions}\label{subsec-tw-pact}

Let $S$ be a semigroup. By a {\it multiplier} of $S$ we mean a pair of linked right and left translations~\cite{Clifford-Preston-1,Petrich} of $S$, i.\,e. a pair of maps $(L,R)$ from $S$ to itself satisfying
\begin{enumerate}
 \item $L(st)=L(s)t$;
 \item $R(st)=sR(t)$;
 \item $sL(t)=R(s)t$
\end{enumerate}
for all $s,t\in S$. The multipliers of $S$ form a monoid\footnote{In the literature on semigroup theory the monoid $\mathcal M(S)$ is called {\it the translational hull of $S$}.} $\mathcal M(S)$ under the operation $(L,R)(L',R')=(L\circ L',R'\circ R)$. 

Given $w=(L,R)\in\mathcal M(S)$ and $s\in S$, we shall use the notations of~\cite{DES1} by writing $ws$ for $L(s)$ and $sw$ for $R(s)$. Then (i)--(iii) can be rewritten as 
\begin{enumerate}
 \item[(i')] $w(st)=(ws)t$;
 \item[(ii')] $(st)w=s(tw)$;
 \item[(iii')] $s(wt)=(sw)t$.
\end{enumerate}

\begin{rem}\label{rem-M(S)-cong-S}
 When $1\in S$ and $w\in\mathcal M(S)$, it follows from (iii') that $w1=1(w1)=(1w)1=1w$, and moreover the map $w\mapsto w1=1w$ defines an isomorphism of monoids $\mathcal M(S)\cong S$.
\end{rem}

\begin{rem}\label{rem-M(S)-for-comm-S}
 If $w\in\mathcal M(S)$ and $s,t\in C(S)$, then $w(st)=(ws)t=t(ws)=(tw)s=s(tw)=(st)w$. In particular, $we=ew$ for $e\in E(C(S))$.
\end{rem}

\begin{rem}\label{rem-wIw^(-1)}
 Let $I$ be an ideal of $S$ and $w\in\cM S$. If $I^2=I$, then $wI,Iw\subseteq I$. Moreover, if $w$ is invertible, then $wIw\m=I$.
\end{rem}
\noindent Indeed, as $I^2=I$, each $s\in I$ is $tu$ for some $t,u\in I$. Therefore, $ws=w(tu)=(wt)u\in I$ and similarly $sw=t(uw)\in I$. By (ii) of~\cite[Proposition 2.5]{DE} one has $(ws)w\m=w(sw\m)$ for all $s\in I$, so $wIw\m$ makes sense, being equal to $(wI)w\m=w(Iw\m)$. Clearly, $wIw\m\subseteq Iw\m\subseteq I$. For the converse inclusion observe that $w\m Iw\subseteq I$, so $I=w(w\m Iw)w\m\subseteq w Iw\m$.\\

We use $\cU M$ to denote the group of invertible elements of a monoid $M$.

\begin{lem}\label{lem-(ws)^(-1)}
 Let $S$ be an inverse semigroup and $w\in\cU{\cM S}$. Then for any $s\in S$:
 \begin{enumerate}
  \item $(ws)\m=s\m w\m$;
  \item $(sw)\m=w\m s\m$.
 \end{enumerate}
\end{lem}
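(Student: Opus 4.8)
The plan is to exploit uniqueness of inverses in an inverse semigroup: it is enough to exhibit, in each case, an element that satisfies the two defining relations of a semigroup inverse, and uniqueness then identifies it with the inverse. For (i) I would check that $s\m w\m$ is the inverse of $ws$ by verifying $(ws)(s\m w\m)(ws)=ws$ and $(s\m w\m)(ws)(s\m w\m)=s\m w\m$; for (ii) I would check the mirror relations $(sw)(w\m s\m)(sw)=sw$ and $(w\m s\m)(sw)(w\m s\m)=w\m s\m$ for $sw$. Since $S$ is inverse, this pins down $(ws)\m=s\m w\m$ and $(sw)\m=w\m s\m$.

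The computational engine is the multiplier relations (i')--(iii') together with one consequence of invertibility that I would extract first. Writing $w=(L,R)$ and $w\m=(L',R')$, the equalities $ww\m=w\m w=1$ in $\cM S$ under the operation $(L,R)(L',R')=(L\circ L',R'\circ R)$ say exactly that $L,L'$ and $R,R'$ are mutually inverse bijections, i.e. $L\circ L'=L'\circ L=R\circ R'=R'\circ R=\id$. In the notation $ws=L(s)$, $sw=R(s)$ this gives the two one-sided cancellations $(s\m w\m)w=s\m$ (since $R\circ R'=\id$) and $(sw)w\m=s$ (since $R'\circ R=\id$), which are all I need beyond (i')--(iii').

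With these in hand, each verification is a short reassociation. For (i), relation (iii') lets me move $w$ through the product to get $(s\m w\m)(ws)=((s\m w\m)w)s=s\m s$, whence $(ws)(s\m w\m)(ws)=(ws)(s\m s)$, and one use of (i') with $s(s\m s)=s$ returns $ws$; dually $(s\m s)(s\m w\m)=s\m w\m$ by (ii') and $s\m s\cdot s\m=s\m$, giving the second relation. Part (ii) is the left-right mirror: (iii') for $w\m$ yields $(sw)(w\m s\m)=((sw)w\m)s\m=ss\m$, and then (ii') and (i') for $w\m$ reduce $(ss\m)(sw)$ to $sw$ and $(w\m s\m)(ss\m)$ to $w\m s\m$, using $ss\m s=s$ and $s\m\cdot ss\m=s\m$.

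The one place that demands care, and the step I would treat as the main obstacle, is the bookkeeping of the monoid product $(L,R)(L',R')=(L\circ L',R'\circ R)$: its reversed second coordinate makes it tempting to attach the cancellation $R\circ R'=\id$ to the wrong side and thereby mis-simplify $(s\m w\m)w$ or $(sw)w\m$. Once the correct one-sided cancellations are fixed, the remainder is routine manipulation via (i')--(iii') and the inverse-semigroup identities $ss\m s=s$, $s\m ss\m=s\m$.
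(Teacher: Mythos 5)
Your proof is correct and follows essentially the same route as the paper: you verify that the candidate element satisfies the two defining relations of an inverse, using (iii$'$) to collapse $(s\m w\m)(ws)$ to $s\m s$ (respectively $(sw)(w\m s\m)$ to $ss\m$) and then (i$'$)--(ii$'$) together with $ss\m s=s$, and you invoke uniqueness of inverses; you also correctly extract the one-sided cancellations from $ww\m=w\m w=1$ under the product $(L,R)(L',R')=(L\circ L',R'\circ R)$, a point the paper leaves implicit. The only cosmetic difference is that for (ii) the paper simply substitutes $s\mapsto s\m$, $w\mapsto w\m$ into (i), whereas you redo the mirror computation directly.
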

\begin{proof}
 By (iii') of the definition of a multiplier $s\m w\m\cdot ws=((s\m w\m)w)s=s\m s$. Therefore, $ws\cdot s\m w\m\cdot ws=ws\cdot s\m s=w(ss\m s)=ws$. Similarly $s\m w\m\cdot ws\cdot s\m w\m=s\m s\cdot s\m w\m=(s\m ss\m)w\m=s\m w\m$, completing the proof of (i). Equality (ii) follows by replacing $s$ and $w$ by their inverses in (i).
\end{proof}

\begin{defn}\label{defn-tw_part_act}
 By a {\it twisted partial action}~\cite{DES1} of a group $G$ on a semigroup $S$ we mean a pair $\Theta=(\0,w)$, where $\0$ is a collection $\{\0_x:\cD_{x\m}\to \cD_x\}_{x\in G}$ of isomorphisms between non-empty ideals of $S$ and $w=\{w_{x,y}\in\cU{\mathcal M(\cD_x\cD_{xy})}\}_{x,y\in G}$, such that
 \begin{enumerate}
  \item $\cD_x^2=\cD_x$ and $\cD_x\cD_y=\cD_y\cD_x$;
  \item $\cD_1=S$ and $\0_1=\id_S$;
  \item $\0_x(\cD_{x\m}\cD_y)=\cD_x\cD_{xy}$;
  \item $\0_x\circ\0_y(s)=w_{x,y}\0_{xy}(s)w\m_{x,y}$ for any $s\in \cD_{y\m}\cD_{y\m x\m}$;
  \item $w_{1,x}=w_{x,1}=\id_{\cD_x}$;
  \item $\0_x(sw_{y,z})w_{x,yz}=\0_x(s)w_{x,y}w_{xy,z}$ for all $s\in \cD_{x\m}\cD_y\cD_{yz}$.
 \end{enumerate}
\end{defn}
Items (iv)--(vi) require some explanations. Since the ideals $\cD_x$ and $\cD_{xy}$ are idempotent and commute, their product $\cD_x\cD_{xy}$ is also an idempotent ideal, so by (ii) of~\cite[Proposition 2.5]{DE} we have $(ws)w'=w(sw')$ for all $w,w'\in\cM{\cD_x\cD_{xy}}$ and $s\in \cD_x\cD_{xy}$. This explains, why the right-hand side of (iv) makes sense without a pair of additional brackets. 

A priori $w_{1,x}\in\mathcal M(S\cdot \cD_x)$. However $S\cdot \cD_x=\cD_x$: the inclusion $S\cdot \cD_x\subseteq \cD_x$ is the fact that $\cD_x$ is a left ideal of $S$, while the converse inclusion takes place, because $\cD_x=\cD_x^2\subseteq S\cdot \cD_x$. So, the condition $w_{1,x}=\id_{\cD_x}$ in (v) makes sense.

To justify the applicability of the multipliers in (vi), we observe by (i) that (iii) extends to the product of any finite number of ideals $\cD_x$, namely
\begin{align*}
 \0_x(\cD_{x\m}\cD_{y_1}\dots \cD_{y_n})&=\0_x(\cD_{x\m}\cD_{y_1}\dots \cD_{x\m}\cD_{y_n})\\
 &=\cD_x\cD_{xy_1}\dots \cD_x\cD_{xy_n}\\
 &=\cD_x\cD_{xy_1}\dots \cD_{xy_n}.
\end{align*}

\begin{rem}\label{rem-unital-tw-part-act}
 If $\0$ is {\it unital} in the sense that all the ideals $\cD_x$, $x\in G$, are unital (that is $\cD_x=1_xS$ for some central idempotent $1_x\in S$), then $w$ can be identified with a function $G^2\ni(x,y)\mapsto w_{x,y}\in\mathcal U(1_x1_{xy}S)$ and (vi) reduces to
\begin{enumerate}
\item[(vi')] $\0_x(1_{x\m}w_{y,z})w_{x,yz}=w_{x,y}w_{xy,z}$.
\end{enumerate} 
Furthermore, in this case 
\begin{align}\label{D_x-cap-D_y=D_xD_y}
\cD _x \cap \cD _y= \cD _x  \cD _y,\    x,y\in G.
\end{align} 
If, moreover, $S$ is commutative, then  properties (i)--(iv) say that $\0$ is a partial action of $G$ on $S$, and (vi') is exactly the fact that $w$ is a partial $2$-cocycle (see~\cite{DK}).  
\end{rem}
\noindent Indeed, since the ideal $\cD_{y\m}\cD_{y\m x\m}$ is idempotent, then $\0_{xy}(s)=\0_{xy}(s's'')=\0_{xy}(s')\0_{xy}(s'')$ for some $s',s''\in \cD_{y\m}\cD_{y\m x\m}$. So, by Remark~\ref{rem-M(S)-for-comm-S} the right-hand side of (iv) becomes $\0_{xy}(s)w_{x,y}w\m_{x,y}=\0_{xy}(s)$.

\begin{rem}\label{I-cap-J=IJ-in-inverse-S} 
Notice that for any two ideals $I,J$ in an inverse semigroup $S$ one has that $I\cap J=I J,$ so~\eqref{D_x-cap-D_y=D_xD_y} holds for any twisted partial action on $S.$ Hence, any twisted partial action on a commutative inverse semigroup $S$ results in a partial action on $S$ (in the sense of~\cite{DK}).
\end{rem}

%\begin{rem}\label{rem-tw-part-act-for-comm-S}
 
\begin{rem}\label{rem-part-act-on-E(S)}
 If $S$ is a semilattice of groups, then any twisted partial action of $G$ on $S$ restricts to a partial action of $G$ on $E(S)$.  
\end{rem}
\noindent Indeed, given a twisted partial action $(\0,w)$ of $G$ on $S$, we first observe that $E(\cD_x)$ is a non-empty ideal of $E(S)$, $E(\cD_1)=E(S)$ and $\0_x$ maps isomorphically $E(\cD_{x\m})$ onto $E(\cD_x)$. Furthermore, $E(IJ)=E(I)E(J)$ for any pair of ideals $I,J$ of $S$: the inclusion $E(I)E(J)\subseteq E(IJ)$ follows from the commutativity of $E(S)$, while the converse inclusion is explained by the fact that $E(IJ)\subseteq E(I)\cap E(J)$, so each $e\in E(IJ)$, being $e\cdot e$, belongs to $E(I)E(J)$. By this observation 
\begin{align*}
 \0_x(E(\cD_{x\m})E(\cD_y))=\0_x(E(\cD_{x\m}\cD_y))=E(\0_x(\cD_{x\m}\cD_y))&=E(\cD_x\cD_{xy})\\
 &=E(\cD_x)E(\cD_{xy}).
\end{align*}
Finally, $\0_x\circ\0_y(e)=\0_{xy}(e)$ for any $e\in E(\cD_{y\m})E(\cD_{y\m x\m})=E(\cD_{y\m}\cD_{y\m x\m})$ by Remark~\ref{rem-M(S)-for-comm-S}.

Following~\cite[Definition 6.1]{DES2} we give the next
\begin{defn}\label{defn-equiv-tw-pact}
 Two twisted partial actions $(\0,w)$ and $(\0',w')$ of $G$ on $S$ are called {\it equivalent}, if
 \begin{enumerate}
  \item $\cD'_x=\cD_x$;
  \item $\0'_x(s)=\e_x\0_x(s)\e\m_x$, $s\in\cD_{x\m}$;
  \item $\0'_x(s)w'_{x,y}\e_{xy}=\e_x\0_x(s\e_y)w_{x,y}$, $s\in\cD_{x\m}\cD_y$,
 \end{enumerate}
for some $\e_x\in\cU{\cM{\cD_x}}$.
\end{defn}

\begin{rem}\label{rem-eq-tw-pact-rest-to-E(A)}
 Let $A$ be a semilattice of groups. Then equivalent twisted partial actions of $G$ on $A$ restrict to the same partial action of $G$ on $E(A)$.
\end{rem}
\noindent Indeed, if $(\0,w)$ is equivalent to $(\0',w')$, then $E(\cD_x)=E(\cD'_x)$ due to (i) of Definition~\ref{defn-equiv-tw-pact}. Moreover, $\0'_x(e)=\e_x\0_x(e)\e\m_x=\0_x(e)$ for any $e\in E(\cD_{x\m})$ by (ii) of the same definition and Remark~\ref{rem-M(S)-for-comm-S}.

\begin{lem}\label{lem-0_x-inv}
 Let $(\0,w)$ be a twisted partial action of a group $G$ on a semigroup $S$. Then for any $x\in G$ and $s\in \cD_x$ we have\footnote{This is formula (14) from~\cite{DES1}}
 \begin{align}\label{eq-0_x-inv}
  \0\m_x(s)=w\m_{x\m,x}\0_{x\m}(s)w_{x\m,x}.
 \end{align}
\end{lem}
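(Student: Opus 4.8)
The plan is to obtain~\eqref{eq-0_x-inv} directly from axiom (iv) of Definition~\ref{defn-tw_part_act} applied to the pair $(x\m,x)$, followed by an inversion. First I would specialize (iv), replacing its ``$x$'' by $x\m$ and its ``$y$'' by $x$. The domain condition $s\in\cD_{y\m}\cD_{y\m x\m}$ then becomes $s\in\cD_{x\m}\cD_{x\m x}=\cD_{x\m}\cD_1=\cD_{x\m}$, where I use (ii) of Definition~\ref{defn-tw_part_act} together with $\cD_{x\m}^2=\cD_{x\m}$ to identify $\cD_{x\m}S$ with $\cD_{x\m}$. Since $\0_{x\m x}=\0_1=\id_S$, axiom (iv) collapses to
\begin{align*}
 \0_{x\m}\circ\0_x(s)=w_{x\m,x}\,s\,w\m_{x\m,x}
\end{align*}
for all $s\in\cD_{x\m}$. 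Note that $w_{x\m,x}\in\cU{\mathcal M(\cD_{x\m}\cD_{x\m x})}=\cU{\mathcal M(\cD_{x\m})}$, so the right-hand side is a genuine conjugation inside $\cD_{x\m}$.

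Next I would substitute $s=\0\m_x(t)$, with $t$ ranging over $\cD_x$; this is legitimate because $\0_x$ is an isomorphism $\cD_{x\m}\to\cD_x$. The displayed identity becomes
\begin{align*}
 \0_{x\m}(t)=w_{x\m,x}\,\0\m_x(t)\,w\m_{x\m,x},
\end{align*}
valid for every $t\in\cD_x$, and it only remains to solve it for $\0\m_x(t)$. Acting on the left by $w\m_{x\m,x}$ and on the right by $w_{x\m,x}$, and using that $w_{x\m,x}$ is invertible in the group $\cU{\mathcal M(\cD_{x\m})}$, I obtain $\0\m_x(t)=w\m_{x\m,x}\,\0_{x\m}(t)\,w_{x\m,x}$, which is~\eqref{eq-0_x-inv} once $t$ is renamed $s$.

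The only delicate point, and the step I expect to require the most care, is this final conjugation, which rearranges products of an element of $\cD_{x\m}$ with multipliers on both sides. The justification rests on the fact that $\cD_{x\m}$ is an idempotent ideal, so that by (ii) of~\cite[Proposition~2.5]{DE} the identity $(wa)w'=w(aw')$ holds; hence the left multipliers compose among themselves, the right multipliers compose among themselves, and the two actions commute, yielding
\begin{align*}
 w\m_{x\m,x}\big(w_{x\m,x}\,\0\m_x(t)\,w\m_{x\m,x}\big)w_{x\m,x}&=\big(w\m_{x\m,x}w_{x\m,x}\big)\0\m_x(t)\big(w\m_{x\m,x}w_{x\m,x}\big)\\
 &=\0\m_x(t),
\end{align*}
since $w\m_{x\m,x}w_{x\m,x}=\id_{\cD_{x\m}}$ acts trivially on both sides. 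Everything else is a direct reading of the axioms, so I anticipate no further obstacles.
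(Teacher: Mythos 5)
Your proof is correct and follows essentially the same route as the paper's: both hinge on specializing (iv) of Definition~\ref{defn-tw_part_act} to the pair $(x\m,x)$, using $\0_{x\m x}=\0_1=\id_S$ and $\cD_{x\m}\cD_1=\cD_{x\m}$ to get $\0_{x\m}\circ\0_x(s)=w_{x\m,x}\,s\,w\m_{x\m,x}$ on $\cD_{x\m}$, and then undoing the conjugation by the invertible multiplier $w_{x\m,x}\in\cU{\cM{\cD_{x\m}}}$. The only divergence is that the paper goes on to verify the second composition $\0_x\circ f_x=\id_{\cD_x}$ as well (a longer computation invoking Lemma~\ref{lem-(ws)^(-1)} and~\eqref{eq-0_x(sw_(x^(-1)x))}), which your substitution $s=\0\m_x(t)$ correctly shows to be unnecessary, since $\0_x$ is by definition a bijection $\cD_{x\m}\to\cD_x$ and a left inverse of a bijection is automatically its inverse.
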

\begin{proof}
 Denote the right-hand side of~\eqref{eq-0_x-inv} by $f_x(s)$. Obviously, $f_x:\cD_x\to \cD_{x\m}$. By (ii) and (iv) of Definition~\ref{defn-tw_part_act} the composition $f_x\circ\0_x(s)$ is
 $$
  w\m_{x\m,x}\0_{x\m}(\0_x(s))w_{x\m,x}=w\m_{x\m,x} w_{x\m,x}sw\m_{x\m,x} w_{x\m,x}=s.
 $$
 
 Let us now prove that $\0_x\circ f_x(s)=s$ for $ s\in \cD _x.$  Writing (vi) of Definition~\ref{defn-tw_part_act} with $y=x\m$, $z=x$ and taking into account (v), we get\footnote{This is formula (6) from~\cite{DES1}}
 \begin{align}\label{eq-0_x(sw_(x^(-1)x))}
  \0_x(sw_{x\m x})=\0_x(s)w_{x,x\m}
 \end{align}
 for all $x\in G$ and $s\in \cD_{x\m}$. Then $\0_x\circ f_x(s)$ is
 \begin{align}\label{eq-0_x-circ-f_x(s)}
  \0_x(w\m_{x\m,x}\0_{x\m}(s)w_{x\m,x})=\0_x(w\m_{x\m,x}\0_{x\m}(s))w_{x,x\m}.
 \end{align}
 By Lemma~\ref{lem-(ws)^(-1)}
 $$
  w\m_{x\m,x}\0_{x\m}(s)=(\0_{x\m}(s)\m w_{x\m,x})\m=(\0_{x\m}(s\m)w_{x\m,x})\m.
 $$
 It follows, using~\eqref{eq-0_x(sw_(x^(-1)x))}, that the right-hand side of~\eqref{eq-0_x-circ-f_x(s)} equals
 $$
   \0_x(\0_{x\m}(s\m)w_{x\m,x})\m w_{x,x\m}=(\0_x(\0_{x\m}(s\m))w_{x,x\m})\m w_{x,x\m},
 $$
 which is 
 $$
  (w_{x,x\m}s\m w\m_{x,x\m} w_{x,x\m})\m w_{x,x\m}=(w_{x,x\m}s\m)\m w_{x,x\m}
 $$ 
 thanks to (ii) and (iv) of Definition~\ref{defn-tw_part_act}. It remains to apply Lemma~\ref{lem-(ws)^(-1)} to get $sw\m_{x,x\m} w_{x,x\m}=s$.
\end{proof}

Having a twisted partial action $\Theta=(\theta,w)$ of $G$ on $S$, one can define the {\it crossed product of $S$ and $G$ by $\Theta$} as the set $S*_\Theta G=\{s\delta_x\mid s\in \cD_x\}$ with the multiplication $s\delta_x\cdot t\delta_y=\0_x(\0\m_x(s)t)w_{x,y}\delta_{xy}$. The proof of Theorem~2.4 from~\cite{DES1} implies that $S*_\Theta G$ is associative, so $S*_\Theta G$ is a semigroup.

\begin{lem}\label{lem-S*_Theta_G_is_inverse}
 Let $\Theta=(\theta,w)$ be a twisted partial action of a group $G$ on a semigroup $S$. If $S$ is inverse, then $S*_\Theta G$ is inverse and 
\begin{align}\label{eq-s-delta_x-inv}
 (s\delta_x)\m=w\m_{x\m,x}\0_{x\m}(s\m)\delta_{x\m}
\end{align}
 in $S*_\Theta G$.
\end{lem}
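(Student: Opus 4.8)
The plan is to prove that $S*_\Theta G$ is regular and has commuting idempotents, so that it is inverse by the characterization of inverse semigroups recalled in Section~\ref{sec-inv-sem}; uniqueness of inverses will then force $(s\delta_x)\m$ to coincide with the displayed element. Write $u=s\delta_x$ and $v=w\m_{x\m,x}\0_{x\m}(s\m)\delta_{x\m}$. First I would check that $v$ really lies in $S*_\Theta G$: since $\cD_x$ is an ideal of the inverse semigroup $S$ it is an inverse subsemigroup, so $s\m\in\cD_x$ and $\0_{x\m}(s\m)\in\0_{x\m}(\cD_x)=\cD_{x\m}$; as $\cD_{x\m}\cD_{x\m x}=\cD_{x\m}\cD_1=\cD_{x\m}$, the multiplier $w_{x\m,x}$ lies in $\cU{\cM{\cD_{x\m}}}$, and Remark~\ref{rem-wIw^(-1)} gives $w\m_{x\m,x}\0_{x\m}(s\m)\in\cD_{x\m}$, as required.

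Next I would determine the idempotents. From $(t\delta_y)^2=\0_y(\0\m_y(t)t)w_{y,y}\delta_{y^2}$ an idempotent forces $y^2=y$, i.e.\ $y=1$; and since $\0_1=\id_S$ and $w_{1,1}=\id_S$ by (ii) and (v) of Definition~\ref{defn-tw_part_act}, the multiplication over $1$ is simply $t\delta_1\cdot t'\delta_1=tt'\delta_1$. Hence $E(S*_\Theta G)=\{e\delta_1\mid e\in E(S)\}$, and these commute because the idempotents of $S$ do.

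The heart of the matter is to show that $v$ is an inverse of $u$, for which it suffices to establish $uv=ss\m\delta_1$ and $vu=tt\m\delta_1$, where $t=w\m_{x\m,x}\0_{x\m}(s\m)$ is the $S$-component of $v$. Granting these two identities, both $uv$ and $vu$ are idempotents, and using $\0_1=\id_S$, $w_{1,x}=\id_{\cD_x}$ we obtain at once
\[
 uvu=ss\m\delta_1\cdot s\delta_x=(ss\m s)\delta_x=u,\qquad vuv=tt\m\delta_1\cdot t\delta_{x\m}=(tt\m t)\delta_{x\m}=v.
\]
To obtain $uv=ss\m\delta_1$ I would expand
\[
 uv=\0_x\bigl(\0\m_x(s)\,w\m_{x\m,x}\0_{x\m}(s\m)\bigr)w_{x,x\m}\delta_1,
\]
rewrite $\0\m_x(s)$ by Lemma~\ref{lem-0_x-inv} as $w\m_{x\m,x}\0_{x\m}(s)w_{x\m,x}$, so that the adjacent pair $w_{x\m,x}w\m_{x\m,x}$ cancels and the argument of $\0_x$ becomes $w\m_{x\m,x}\0_{x\m}(s)\0_{x\m}(s\m)=w\m_{x\m,x}\0_{x\m}(ss\m)$. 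Applying $\0_x$ and then $w_{x,x\m}$, and invoking the relation $\0_x(aw_{x\m,x})=\0_x(a)w_{x,x\m}$ from~\eqref{eq-0_x(sw_(x^(-1)x))} together with Lemma~\ref{lem-0_x-inv} (which expresses $\0\m_x$ through $\0_{x\m}$ and the same multipliers), the surviving multipliers should recombine and the first coordinate collapse to $ss\m$. The identity $vu=tt\m\delta_1$ is proved analogously, with the roles of $x$ and $x\m$ interchanged.

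The main obstacle is precisely this last computation: one must track on which idempotent ideal each of $w_{x\m,x}$ and $w_{x,x\m}$ acts, legitimately regroup $(aw)w'=a(ww')$ and $(wa)w'=w(aw')$ there, and orchestrate Lemmas~\ref{lem-(ws)^(-1)} and~\ref{lem-0_x-inv} with the cocycle identity (vi) of Definition~\ref{defn-tw_part_act} so that the multipliers cancel against one another without ever assuming $S$ is a semilattice of groups. Once regularity and commutativity of idempotents are secured, $S*_\Theta G$ is inverse; the unique inverse of $s\delta_x$ is then the element $v$ exhibited above, which is exactly~\eqref{eq-s-delta_x-inv}.
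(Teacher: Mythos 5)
Your proposal is correct and follows essentially the same route as the paper: identify the idempotents as the commuting set $\{e\delta_1\mid e\in E(S)\}$, then verify by direct computation (using Lemma~\ref{lem-0_x-inv}, the identity~\eqref{eq-0_x(sw_(x^(-1)x))} and Lemma~\ref{lem-(ws)^(-1)}) that the displayed element is an inverse of $s\delta_x$, whence uniqueness of inverses gives~\eqref{eq-s-delta_x-inv}. The only cosmetic difference is that you compute $uv=ss\m\delta_1$ first whereas the paper computes $vu=\0\m_x(s\m s)\delta_1$ (your two products are exactly the content of Corollary~\ref{cor-sdelta_x(sdelta_x)^(-1)}), and your multiplier-cancellation sketch does go through as claimed.
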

\begin{proof}
 It is easy to see that the idempotents of $S*_\Theta G$ are precisely the elements of the form $e\delta_1$ with $e\in E(S)$. Obviously, any two such elements commute.

Denote the right-hand side of~\eqref{eq-s-delta_x-inv} by $t\delta_{x\m}$ and calculate the product $t\delta_{x\m}\cdot s\delta_x$:
\begin{align}\label{eq-t-delta_(x^(-1))-cdot-s-delta_x}
 t\delta_{x\m}\cdot s\delta_x=\0_{x\m}(\0\m_{x\m}(t)s)w_{x\m,x}\delta_1.
\end{align}
By Lemmas~\ref{lem-0_x-inv} and~\ref{lem-(ws)^(-1)}
\begin{align*}
 \0\m_{x\m}(t)&=w\m_{x,x\m}\0_x(t)w_{x,x\m}\\
 &=w\m_{x,x\m}\0_x(w\m_{x\m,x}\0_{x\m}(s\m))w_{x,x\m}\\
 &=w\m_{x,x\m}\0_x(\0_{x\m}(s)w_{x\m,x})\m w_{x,x\m}.
\end{align*}
Thanks to~\eqref{eq-0_x(sw_(x^(-1)x))} and (ii), (iv) of Definition~\ref{defn-tw_part_act} this equals
\begin{align*}
 w\m_{x,x\m}(\0_x(\0_{x\m}(s))w_{x,x\m})\m w_{x,x\m}&=w\m_{x,x\m}(w_{x,x\m}s)\m w_{x,x\m}\\
 &=w\m_{x,x\m} s\m.
\end{align*}
Hence, the right-hand side of~\eqref{eq-t-delta_(x^(-1))-cdot-s-delta_x} is
$$
 \0_{x\m}(w\m_{x,x\m} s\m s)w_{x\m,x}\delta_1=\0_{x\m}(s\m s w_{x,x\m})\m w_{x\m,x}\delta_1.
$$
Using~\eqref{eq-0_x(sw_(x^(-1)x))} and Lemma~\ref{lem-0_x-inv} we finally see that 
\begin{align}\label{eq-0_x^(-1)(s^(-1)s)-delta_1}
 t\delta_{x\m}\cdot s\delta_x&=(\0_{x\m}(s\m s) w_{x\m,x})\m w_{x\m,x}\delta_1\notag\\
 &=w\m_{x\m,x}\0_{x\m}(s\m s)w_{x\m,x}\delta_1\notag\\
 &=\0\m_x(s\m s)\delta_1.
\end{align}

It immediately follows from~\eqref{eq-0_x^(-1)(s^(-1)s)-delta_1} and (v) of Definition~\ref{defn-tw_part_act} that
\begin{align*}
 s\delta_x\cdot t\delta_{x\m}\cdot s\delta_x&=s\delta_x\cdot\0\m_x(s\m s)\delta_1\\
 &=\0_x(\0\m_x(s)\0\m_x(s\m s))\delta_x\\
 &=\0_x(\0\m_x(ss\m s))\delta_x\\
 &=s\delta_x.
\end{align*}
 
At the same time $t\delta_{x\m}\cdot s\delta_x\cdot t\delta_{x\m}$ by~\eqref{eq-0_x^(-1)(s^(-1)s)-delta_1} and (ii), (v) of Definition~\ref{defn-tw_part_act} equals
$$
 \0\m_x(s\m s)\delta_1\cdot w\m_{x\m,x}\0_{x\m}(s\m)\delta_{x\m}=\0\m_x(s\m s)w\m_{x\m,x}\0_{x\m}(s\m)\delta_{x\m}.
$$
Applying Lemma~\ref{lem-0_x-inv} twice, we conclude
\begin{align*}
 t\delta_{x\m}\cdot s\delta_x\cdot t\delta_{x\m}&=\0\m_x(s\m s)\0\m_x(s\m)w\m_{x\m,x}\delta_{x\m}\\
 &=\0\m_x(s\m)w\m_{x\m,x}\delta_{x\m}\\
 &=w\m_{x\m,x}\0_{x\m}(s\m)\delta_{x\m}\\
 &=t\delta_{x\m}.
\end{align*}
	\end{proof}

\begin{cor}\label{cor-sdelta_x(sdelta_x)^(-1)}
 For any $s\delta_x\in A*_\Theta G$ one has $s\delta_x(s\delta_x)\m=ss\m\delta_1$ and $(s\delta_x)\m s\delta_x=\0\m_x(s\m s)\delta_1$.
\end{cor}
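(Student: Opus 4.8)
The plan is to read the second identity off directly from the proof of Lemma~\ref{lem-S*_Theta_G_is_inverse} and then obtain the first one by a short duality argument. Indeed, in that proof the inverse of $s\delta_x$ was shown to be $t\delta_{x\m}$ with $t=w\m_{x\m,x}\0_{x\m}(s\m)$, and equation~\eqref{eq-0_x^(-1)(s^(-1)s)-delta_1} established that $t\delta_{x\m}\cdot s\delta_x=\0\m_x(s\m s)\delta_1$. Since $t\delta_{x\m}=(s\delta_x)\m$, this is precisely $(s\delta_x)\m s\delta_x=\0\m_x(s\m s)\delta_1$, so the second formula requires no further work.

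For the first identity I would exploit that~\eqref{eq-0_x^(-1)(s^(-1)s)-delta_1} is a general statement computing $u\m u$ for every element $u=s\delta_x$ of $A*_\Theta G$. Applying it to the element $u\m=s'\delta_{x'}$, where $s'=w\m_{x\m,x}\0_{x\m}(s\m)\in\cD_{x\m}$ and $x'=x\m$ (note $w\m_{x\m,x}\in\cU{\cM{\cD_{x\m}}}$ maps $\cD_{x\m}$ into itself, so $s'\delta_{x'}$ is a genuine element), and using that $(u\m)\m=u=s\delta_x$ by uniqueness of inverses, I obtain
\begin{align*}
 s\delta_x\cdot(s\delta_x)\m=uu\m=(s'\delta_{x'})\m\, s'\delta_{x'}=\0\m_{x\m}(s'\m s')\delta_1.
\end{align*}
It then remains to simplify $s'\m s'$. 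By Lemma~\ref{lem-(ws)^(-1)}(i) and the fact that $\0_{x\m}$ is a homomorphism one has $s'\m=\0_{x\m}(s)w_{x\m,x}$, whence $s'\m s'=\0_{x\m}(s)w_{x\m,x}w\m_{x\m,x}\0_{x\m}(s\m)$; collapsing $w_{x\m,x}w\m_{x\m,x}$ by means of multiplier axiom (iii') gives $s'\m s'=\0_{x\m}(s)\0_{x\m}(s\m)=\0_{x\m}(ss\m)$. Finally $\0\m_{x\m}(\0_{x\m}(ss\m))=ss\m$, since $\0_{x\m}\colon\cD_x\to\cD_{x\m}$ is an isomorphism and $ss\m\in\cD_x$, yielding $s\delta_x(s\delta_x)\m=ss\m\delta_1$.

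An equally viable route is a direct computation of $s\delta_x\cdot t\delta_{x\m}$ from the definition of the product: using Lemma~\ref{lem-0_x-inv} to rewrite $\0\m_x(s)$, one reduces $\0\m_x(s)t$ to $w\m_{x\m,x}\0_{x\m}(ss\m)$, and then~\eqref{eq-0_x(sw_(x^(-1)x))} together with the identity $\0_x\circ f_x=\id$ from the proof of Lemma~\ref{lem-0_x-inv} turns $\0_x(w\m_{x\m,x}\0_{x\m}(ss\m))w_{x,x\m}$ into $ss\m$. I expect the only delicate point in either route to be the bookkeeping with the multipliers $w_{x\m,x}$ and $w_{x,x\m}$: one must keep track of which subexpressions are left versus right translations and invoke the axioms (i')--(iii') in the correct order so that the cancellations $w_{x\m,x}w\m_{x\m,x}$ are legitimate. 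The underlying semigroup-theoretic content is otherwise routine.
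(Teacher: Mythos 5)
Your proof is correct and follows essentially the same route as the paper: the second identity is read off from the display~\eqref{eq-0_x^(-1)(s^(-1)s)-delta_1} in the proof of Lemma~\ref{lem-S*_Theta_G_is_inverse}, and the first is obtained by applying that same formula to $u\m=(s\delta_x)\m$ via $uu\m=(u\m)\m u\m$ and simplifying $s'\m s'$ to $\0_{x\m}(ss\m)$ with Lemma~\ref{lem-(ws)^{(-1)}} and the multiplier axioms. The only (immaterial) difference is at the last step, where you cancel $\0\m_{x\m}\circ\0_{x\m}$ directly as mutually inverse isomorphisms, whereas the paper expands $\0\m_{x\m}$ via Lemma~\ref{lem-0_x-inv} and item (iv) of Definition~\ref{defn-tw_part_act}.
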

\noindent Indeed, the second equality is~\eqref{eq-0_x^(-1)(s^(-1)s)-delta_1}. Then using~\eqref{eq-0_x-inv}, \eqref{eq-s-delta_x-inv} and Lemma~\ref{lem-(ws)^(-1)} we have
\begin{align*}
 s\delta_x(s\delta_x)\m&=((s\delta_x)\m)\m(s\delta_x)\m\\
 &=(w\m_{x\m,x}\0_{x\m}(s\m)\delta_{x\m})\m w\m_{x\m,x}\0_{x\m}(s\m)\delta_{x\m}\\
 &=\0\m_{x\m}((w\m_{x\m,x}\0_{x\m}(s\m))\m w\m_{x\m,x}\0_{x\m}(s\m))\delta_1\\
 &=\0\m_{x\m}(\0_{x\m}(ss\m))\delta_1\\
 &=w\m_{x,x\m}\0_x(\0_{x\m}(ss\m))w_{x,x\m}\delta_1\\
 &=w\m_{x,x\m}w_{x,x\m}ss\m w\m_{x,x\m}w_{x,x\m} \delta_1\\
 &=ss\m\delta_1.
\end{align*} 
 
\begin{lem}\label{lem-S*_Theta-G-cong-S*_Theta'-G}
 Let $\Theta=(\0,w)$ and $\Theta'=(\0',w')$ be twisted partial actions of $G$ on $S$, such that the map $\varphi(s\delta_x)=s\delta'_x$ is a well-defined isomorphism $S*_\Theta G\to S*_{\Theta'}G$. Then $\Theta=\Theta'$.
\end{lem}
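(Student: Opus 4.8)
The plan is to show successively that $\cD_x=\cD'_x$, that $\0_x=\0'_x$, and finally that $w_{x,y}=w'_{x,y}$, for all $x,y\in G$. Since $\varphi$ preserves the index $x$ by its very definition $\varphi(s\delta_x)=s\delta'_x$, well-definedness of $\varphi$ (the value $s\delta'_x$ must lie in $S*_{\Theta'}G$) forces $\cD_x\subseteq\cD'_x$ for every $x$, while surjectivity of $\varphi$ forces the reverse inclusion; hence $\cD_x=\cD'_x$ and both crossed products share the same underlying set.

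Next I extract $\0_x=\0'_x$. Being an isomorphism of inverse semigroups, $\varphi$ preserves inverses, so applying it to the identity $(s\delta_x)\m s\delta_x=\0\m_x(s\m s)\delta_1$ of Corollary~\ref{cor-sdelta_x(sdelta_x)^(-1)} yields $\0\m_x(e)=(\0'_x)\m(e)$ for every idempotent $e=s\m s\in E(\cD_x)$; that is, $\0\m_x$ and $(\0'_x)\m$ agree on idempotents. Now apply the homomorphism property to the products with trivial second index: since $w_{x,1}=\id_{\cD_x}$ by (v) and $\0_1=\id_S$ by (ii) of Definition~\ref{defn-tw_part_act}, one has $s\delta_x\cdot t\delta_1=\0_x(\0\m_x(s)t)\delta_x$ for $s\in\cD_x$, $t\in S$, and comparing $\varphi(s\delta_x\cdot t\delta_1)$ with $\varphi(s\delta_x)\varphi(t\delta_1)$ gives
\[
 \0_x(\0\m_x(s)t)=\0'_x((\0'_x)\m(s)t),\qquad s\in\cD_x,\ t\in S.
\]
Specializing $s=e\in E(\cD_x)$ and writing $f=\0\m_x(e)=(\0'_x)\m(e)$, this reads $\0_x(ft)=\0'_x(ft)$ for all $t\in S$ and all $f\in E(\cD_{x\m})$ (as $e$ runs through $E(\cD_x)$, $f$ runs through $E(\cD_{x\m})$). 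Since every $u\in\cD_{x\m}$ can be written $u=(uu\m)u$ with $uu\m\in E(\cD_{x\m})$, the products $ft$ exhaust $\cD_{x\m}=\dom{\0_x}$, whence $\0_x=\0'_x$.

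It remains to prove $w_{x,y}=w'_{x,y}$. Applying the homomorphism property to a general product $s\delta_x\cdot t\delta_y$ and using $\0=\0'$ (hence $\0\m=(\0')\m$), we obtain $\0_x(\0\m_x(s)t)w_{x,y}=\0_x(\0\m_x(s)t)w'_{x,y}$ for all $s\in\cD_x$, $t\in\cD_y$. As $s,t$ vary, $\0\m_x(s)t$ ranges over the whole ideal $\cD_{x\m}\cD_y$, so by (iii) of Definition~\ref{defn-tw_part_act} the element $a=\0_x(\0\m_x(s)t)$ ranges over all of $I:=\cD_x\cD_{xy}$; thus $aw_{x,y}=aw'_{x,y}$ for every $a\in I$. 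Using the linking property $s(wt)=(sw)t$ of a multiplier, for arbitrary $s,t\in I$ we then get $s(w_{x,y}t)=(sw_{x,y})t=(sw'_{x,y})t=s(w'_{x,y}t)$. The last step, and the only point requiring care, is the cancellation fact that in an inverse semigroup $sa=sb$ for all $s$ forces $a=b$ (take $s=aa\m$ and $s=bb\m$, giving $a=aa\m b$ and $b=bb\m a$, then combine using commutativity of idempotents); this yields $w_{x,y}t=w'_{x,y}t$ for all $t\in I$. Hence both the right and the left translations of $w_{x,y}$ and $w'_{x,y}$ coincide on $I$, so $w_{x,y}=w'_{x,y}$, and therefore $\Theta=\Theta'$.
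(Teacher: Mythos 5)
Your proof is correct, and its skeleton --- first the domains $\cD_x$, then the maps $\0_x$, then the multipliers $w_{x,y}$, each extracted by comparing $\varphi$ of a product with the product of the images --- is the same as the paper's. The differences lie in how you finish the last two steps. For $\0_x=\0'_x$ the paper factors $s\delta_x=ss\m\delta_x\cdot\0\m_x(s)\delta_1$, deduces the inequality $s\le\0'_x(\0\m_x(s))$, and then gets equality by symmetry together with an application of the isomorphism $\0_x\circ(\0'_x)\m$ to reverse it; you instead first note that $\0\m_x$ and $(\0'_x)\m$ agree on idempotents (via $(s\delta_x)\m s\delta_x$ and Corollary~\ref{cor-sdelta_x(sdelta_x)^(-1)}) and then read off the equality $\0_x(ft)=\0'_x(ft)$ directly from the products $e\delta_x\cdot t\delta_1$, which bypasses the order-theoretic detour. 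For $w_{x,y}=w'_{x,y}$ both arguments reduce to the coincidence of the right translations on $\cD_x\cD_{xy}$; the paper then transfers this to the left translations via Lemma~\ref{lem-(ws)^(-1)} and the invertibility of the multipliers, whereas you use the linking axiom $s(wt)=(sw)t$ together with the cancellation ``$sa=sb$ for all $s$ implies $a=b$'' in an inverse semigroup, which you justify correctly. Both finishes are sound; yours is slightly more elementary in that it does not invoke invertibility of $w_{x,y}$ at that point, though, like the paper's own proof, it tacitly uses that $S$ is inverse (which is the setting in which the lemma is applied).
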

\begin{proof}
 We have 
 $$
  s\in \cD_x\iff s\delta_x\in S*_\Theta G\iff\varphi(s\delta_x)=s\delta'_x\in S*_{\Theta'}G\iff s\in \cD'_x,
 $$
 so $\0$ and $\0'$ have the same domains. Since
 $$
  ss\m\delta_x\cdot\0\m_x(s)\delta_1=\0_x(\0\m_x(ss\m)\0\m_x(s))\delta_x=\0_x\circ\0\m_x(ss\m s)\delta_x=s\delta_x,
 $$
 one has $\varphi(s\delta_x)=\varphi(ss\m\delta_x)\varphi(\0\m_x(s)\delta_x)$, that is
 $$
  s\delta'_x=\0'_x(\0'\m_x(ss\m)\0\m_x(s))\delta'_x=ss\m\0'_x(\0\m_x(s))\delta'_x.
 $$
 Here we used the facts that $\0'\m_x(ss\m)\in \cD'_{x\m}$, $\0\m_x(s)\in \cD_{x\m}$ and $\cD'_{x\m}=\cD_{x\m}$. It follows  that 
 \begin{align}\label{eq-s<=0'-circ-0_x(s)}
  s\le\0'_x(\0\m_x(s)).
 \end{align}
 By symmetry $s\le\0_x(\0'\m_x(s))$. But applying the isomorphism $\0_x\circ\0'\m_x:\cD_x\to \cD_x$ to the both sides of~\eqref{eq-s<=0'-circ-0_x(s)}, we get $\0_x(\0'\m_x(s))\le s$. Thus, $\0_x(\0'\m_x(s))=s$, that is $\0_x=\0'_x$.
 
 Let $s\in \cD_{x\m}$ and $t\in \cD_y$. Then
 $$
  \0_x(s)\delta_x\cdot t\delta_y=\0_x(\0\m_x(\0_x(s))t)w_{x,y}\delta_{xy}=\0_x(st)w_{x,y}\delta_{xy}.
 $$
 Since
 $$
  \varphi(\0_x(s)\delta_x)\varphi(t\delta_y)=\0_x(st)w'_{x,y}\delta'_{xy},
 $$
 it follows that $\0_x(st)w_{x,y}=\0 _x(st)w'_{x,y}$. By (iii) of Definition~\ref{defn-tw_part_act} each element of $\cD_x\cD_{xy}$ has the  form $\0_x(st)$ for some $s\in \cD_{x\m}$ and $t\in \cD_y$. Thus, the ``right parts'' of the multipliers $w_{x,y}$ and $w'_{x,y}$ coincide. Taking into account Lemma~\ref{lem-(ws)^(-1)}, we conclude that $w_{x,y}=w'_{x,y}$.
\end{proof}

\begin{prop}\label{prop-A*_Theta-G-is-ext}
 Let $G$ be a group and $A$ a semilattice of groups. For any twisted partial action $\Theta=(\0,w)$ of $G$ on $A$ the crossed product $A*_\Theta G$ is an extension of $A$ by $G$. Moreover, up to an isomorphism, the corresponding $E$-unitary inverse semigroup is $E(A)*_\0 G$ with $\pi:A*_\Theta G\to E(A)*_\0 G$ and $\kappa:E(A)*_\0 G\to G$ given by $\pi(a\delta_x)=aa\m\delta_x$ and $\kappa(e\delta_x)=x$.
\end{prop}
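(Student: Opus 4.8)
The plan is to treat the two assertions in turn: first realize $A*_\Theta G$ as an extension of $A$ by $G$ in the sense of Definition~\ref{defn-ext-of-A-by-G}, then identify the $E$-unitary semigroup furnished by Proposition~\ref{prop-ext-A-G-to-ext-A-S} with $E(A)*_\0 G$.

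For the first assertion, $A*_\Theta G$ is inverse by Lemma~\ref{lem-S*_Theta_G_is_inverse}. Since $\cD_1=A$, $\0_1=\id_A$ and $w_{1,1}=\id_A$ by (ii) and (v) of Definition~\ref{defn-tw_part_act}, the multiplication gives $a\delta_1\cdot b\delta_1=ab\delta_1$, so $i(a)=a\delta_1$ is a monomorphism $A\to A*_\Theta G$. I would set $j(a\delta_x)=x$; it is a homomorphism straight from the definition of the product and an epimorphism because every $\cD_x$ is non-empty. Then $j\m(1)=\{a\delta_1\mid a\in\cD_1\}=i(A)$, which is exactly the condition required in Definition~\ref{defn-ext-of-A-by-G}.

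For the second assertion, note first that $E(A)*_\0 G$ is meaningful: by Remark~\ref{rem-part-act-on-E(S)} the restriction of $\0$ to $E(A)$ is a partial action, regarded as a twisted partial action with trivial twisting, so $E(A)*_\0 G$ is inverse by Lemma~\ref{lem-S*_Theta_G_is_inverse}. I would then verify that $\pi(a\delta_x)=aa\m\delta_x$ is a well-defined idempotent-separating epimorphism: well-definedness holds as $aa\m\in E(\cD_x)$ whenever $a\in\cD_x$; surjectivity is clear since $\pi(e\delta_x)=e\delta_x$ for $e\in E(\cD_x)$; and idempotent-separation is immediate because the idempotents of both semigroups are the $e\delta_1$, $e\in E(A)$, fixed by $\pi$. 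The homomorphism property is the crux: with $s\delta_x\cdot t\delta_y=\0_x(\0\m_x(s)t)w_{x,y}\delta_{xy}$ and $b=\0_x(\0\m_x(s)t)$, one must show $(bw_{x,y})(bw_{x,y})\m=bb\m=\0_x(\0\m_x(ss\m)tt\m)$, which is the first coordinate of $\pi(s\delta_x)\pi(t\delta_y)$. I expect this multiplier computation—using Lemma~\ref{lem-(ws)^(-1)}, invertibility $w_{x,y}w\m_{x,y}=\id$, axiom (iii') and centrality of idempotents in $A$—to be the main obstacle; the rest is routine. Finally $\kappa(e\delta_x)=x$ is an epimorphism with $\kappa\circ\pi=j$.

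It then remains to match $E(A)*_\0 G$ with the canonical $E$-unitary semigroup $S=(A*_\Theta G)/\rho_{\mathcal A}$ of Proposition~\ref{prop-ext-A-G-to-ext-A-S}, whose structure epimorphisms I denote $\pi_0=\rho_{\mathcal A}^\natural$ and $\kappa_0$. Here I would compute both kernels: on one side $\pi(a\delta_x)=\pi(b\delta_y)$ iff $x=y$ and $aa\m=bb\m$; on the other, using $(a\delta_x)(a\delta_x)\m=aa\m\delta_1$ from Corollary~\ref{cor-sdelta_x(sdelta_x)^(-1)} and the description of $\rho_{\mathcal A}$-classes recalled in Section~\ref{sec-inv-sem}, the class of $a\delta_x$ is $\{c\delta_1\cdot a\delta_x\mid cc\m=aa\m\}=\{b\delta_x\mid bb\m=aa\m\}$. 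Thus $\ker\pi=\rho_{\mathcal A}$, so $\pi$ induces an isomorphism $\phi\colon S\to E(A)*_\0 G$ with $\phi\circ\pi_0=\pi$; since $\pi_0$ is onto and $\kappa\circ\pi=j=\kappa_0\circ\pi_0$, also $\kappa\circ\phi=\kappa_0$, so $\phi$ respects both structure maps. As $S$ is $E$-unitary by (iii) of Proposition~\ref{prop-ext-A-G-to-ext-A-S}, this yields the claimed identification.
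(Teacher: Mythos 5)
Your proposal is correct and, in its first half, essentially reproduces the paper's argument: the same $i(a)=a\delta_1$, $j(a\delta_x)=x$, the observation $j\m(1)=i(A)$, and the identification of the multiplicativity of $\pi$ as the only non-routine point. You leave that computation as an expectation; it does go through with exactly the tools you name, and you should write it out. Setting $c=\0_x(\0\m_x(a)b)$, Lemma~\ref{lem-(ws)^(-1)} and the multiplier identities give
\[
(cw_{x,y})(cw_{x,y})\m=c\,(w_{x,y}w\m_{x,y})\,c\m=cc\m=\0_x(\0\m_x(a)\,bb\m\,\0\m_x(a\m))=\0_x(\0\m_x(aa\m)bb\m),
\]
where the last step uses $E(A)\subseteq C(A)$; this is precisely $aa\m\delta_x\cdot bb\m\delta_y$ in $E(A)*_\0 G$, as required. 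Where you genuinely diverge from the paper is in the ``up to an isomorphism'' clause: you compute $\ker\pi$ explicitly and match it against the $\rho_{\mathcal A}$-classes $\{b\delta_x\mid bb\m=aa\m\}$, thereby exhibiting a concrete isomorphism $\phi:S\to E(A)*_\0 G$ compatible with the structure maps, and you import $E$-unitarity from (iii) of Proposition~\ref{prop-ext-A-G-to-ext-A-S}. The paper instead verifies that $(E(A)*_\0 G,\pi,\kappa)$ directly satisfies conditions (i)--(ii) of Proposition~\ref{prop-ext-A-G-to-ext-A-S} --- in particular it records $\pi\m(E(E(A)*_\0 G))=i(A)$, which your kernel comparison supplies only implicitly --- and then notes that (iii)--(iv) follow formally, with uniqueness coming from Remark~\ref{rem-S-pi-kappa-are-unique-up-to-iso}. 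The two routes are equivalent; the paper's avoids constructing $\phi$ at all, while yours makes the identification explicit, which is arguably more transparent. Neither step of your outline would fail.
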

\begin{proof}
 By Lemma~\ref{lem-S*_Theta_G_is_inverse} the semigroup $A*_\Theta G$ is inverse. The map $i:A\to A*_\Theta G$, $i(a)=a\delta_1$, is well-defined and clearly injective. Thanks to (ii) and (v) of Definition~\ref{defn-tw_part_act} it is a homomorphism. An epimorphism $j:A*_\Theta G\to G$ can be naturally defined by $j(a\delta_x)=x$. Obviously, $j\m(1)=\{a\delta_1\mid a\in A\}=i(A)$.
 
 In view of Lemma~\ref{lem-(ws)^(-1)}, (iii') of the definition of a multiplier and the fact that $E(A)\subseteq C(A)$ we have
 \begin{align*}
  \pi(a\delta_x\cdot b\delta_y)&=\0_x(\0\m_x(a)b)w_{x,y}\cdot w\m_{x,y}\0_x(b\m\0\m_x(a\m))\delta_{xy}\\
  &=\0_x(\0\m_x(a)b)\0_x(b\m\0\m_x(a\m))\delta_{xy}\\
  &=\0_x(\0\m_x(a)bb\m\0\m_x(a\m))\delta_{xy}\\
  &=\0_x(\0\m_x(aa\m)bb\m)\delta_{xy}\\
  &=aa\m\delta_x\cdot bb\m\delta_y\\
  &=\pi(a\delta_x)\pi(b\delta_y).
 \end{align*}
It is clear that $\pi$ is surjective. Since $E(A*_\Theta G)=E(E(A)*_\0 G)=i(E(A))$, it follows that $\pi|_{E(A*_\Theta G)}=\id$ (so $\pi$ is idempotent-separating) and $\pi\m(E(E(A)*_\0 G))=\pi\m\circ i(E(A))=i(A)$. Moreover,
$$
 \kappa\circ\pi(a\delta_x)=\kappa(aa\m\delta_x)=x=j(a\delta_x).
$$
Thus, (i)--(ii) of Proposition~\ref{prop-ext-A-G-to-ext-A-S} are satisfied, and therefore (iii)--(iv) of the same proposition also hold.
\end{proof}

Observe that the extension $A\overset{i}{\to}A*_\Theta G\overset{\pi}{\to}E(A)*_\0 G$ from Proposition~\ref{prop-A*_Theta-G-is-ext} admits the order-preserving transversal $\rho=\id_{E(A)*_\0 G}$.

\begin{defn}\label{defn-adm-ext-of-A-by-G}
 Let $A$ be a semilattice of groups and $G$ a group. An extension $A\overset{i}{\to}U\overset{j}{\to}G$ will be called {\it admissible}, if there exists an inverse semigroup $S$ together with an epimorphism $\pi:U\to S$ as in Proposition~\ref{prop-ext-A-G-to-ext-A-S}, such that $\pi$ has an order-preserving transversal.
\end{defn}
 
\begin{rem}\label{rem-adm-is-inv}
 Admissibility is invariant under equivalence of extensions of $A$ by $G$.
\end{rem}
\noindent This follows from Remark~\ref{rem-equiv-ext-Sieben} and Proposition~\ref{prop-eq-ext-A-G-to-eq-ext-A-S}.

Our next aim is to show that any admissible extension of $A$ by $G$ is equivalent to $A*_\Theta G$ for some twisted partial action $\Theta$ of $G$ on $A$.

\section{From Sieben twisted modules to twisted partial actions}\label{sec-Theta^Lambda}

\begin{lem}\label{lem-ss^(-1)=tt^(-1)}
 Let $S$ be an inverse semigroup and $s,t\in S$. If $s\m t\in E(S)$ and $ss\m=tt\m$, then $s=t$.
\end{lem}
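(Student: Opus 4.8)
The plan is to prove $s=t$ by showing that $s$ and $t$ are comparable in both directions under the natural partial order $\le$ and then invoking its antisymmetry. Set $e=s\m t$, which is an idempotent by hypothesis. Since idempotents are their own inverses, this immediately gives $e=e\m=t\m s$, an observation that will make the two halves of the argument symmetric.

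First I would establish $t\le s$. Using $t=tt\m t$ together with the hypothesis $tt\m=ss\m$, one computes
\[
 t=tt\m t=ss\m t=s(s\m t)=se,
\]
and since $e\in E(S)$ this exhibits $t\le s$. For the reverse direction I would use the self-inverse identity $t\m s=(s\m t)\m=e$ to compute, analogously,
\[
 s=ss\m s=tt\m s=t(t\m s)=te,
\]
so that $s\le t$. As the natural partial order on an inverse semigroup is antisymmetric, the relations $t\le s$ and $s\le t$ force $s=t$.

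There is no real obstacle here; the single idea carrying the proof is the identity $t\m s=(s\m t)\m=s\m t$, valid precisely because $s\m t$ is idempotent. The remaining manipulations are routine applications of the defining relation $s=ss\m s$ combined with the assumption $ss\m=tt\m$, and the conclusion is a direct consequence of antisymmetry of $\le$.
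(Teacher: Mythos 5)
Your proof is correct and follows essentially the same route as the paper: both arguments use $ss\m=tt\m$ together with the identity $t\m s=(s\m t)\m=s\m t$ (valid because $s\m t$ is idempotent) to show $s\le t$ and $t\le s$, and then conclude by antisymmetry of the natural partial order.
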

\begin{proof}
 We have
 \begin{align*}
  s&=ss\m\cdot s=tt\m\cdot s=t(s\m t)\m=t\cdot s\m t\le t,\\
  t&=tt\m\cdot t=ss\m\cdot t=s\cdot s\m t\le s,
 \end{align*}
 whence $s=t$.
\end{proof}

\begin{cor}\label{cor-s=t-in-E-unitary}
 If $S$ is $E$-unitary and $s,t\in S$ with $(s,t)\in\sigma$, then $ss\m=tt\m$ implies $s=t$.
\end{cor}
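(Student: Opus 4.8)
The plan is to reduce the statement directly to Lemma~\ref{lem-ss^(-1)=tt^(-1)}, whose conclusion $s=t$ already follows from the two hypotheses $s\m t\in E(S)$ and $ss\m=tt\m$. Since the second of these is assumed outright in the corollary, the only thing left to produce is the first, namely $s\m t\in E(S)$, and this is precisely where $E$-unitarity will be used.

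First I would recall the characterization of $E$-unitary inverse semigroups noted in Section~\ref{sec-inv-sem} (from~\cite[Theorem 2.4.6]{Lawson}): for an $E$-unitary $S$ one has $(s,t)\in\sigma\iff s\m t,\,st\m\in E(S)$. Applying this equivalence to the given pair $(s,t)\in\sigma$ immediately yields $s\m t\in E(S)$ (the companion condition $st\m\in E(S)$ is not even needed here).

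Having both $s\m t\in E(S)$ and the assumed $ss\m=tt\m$ in hand, I would then invoke Lemma~\ref{lem-ss^(-1)=tt^(-1)} verbatim to conclude $s=t$, which is the assertion of the corollary.

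I do not anticipate any genuine obstacle: the entire computational content is carried by the preceding lemma, and the role of the corollary is simply to observe that, in the $E$-unitary setting, $\sigma$-relatedness supplies exactly the idempotency hypothesis $s\m t\in E(S)$ that the lemma requires. The only point worth stating explicitly is this translation from the congruence $\sigma$ to the membership $s\m t\in E(S)$ via the $E$-unitary characterization.
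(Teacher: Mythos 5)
Your proposal is correct and coincides with the paper's own argument: the paper likewise notes that in the $E$-unitary setting $(s,t)\in\sigma$ means $s\m t, st\m\in E(S)$ (via~\cite[Theorem 2.4.6]{Lawson}) and then applies Lemma~\ref{lem-ss^(-1)=tt^(-1)}. No gaps.
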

\noindent Indeed, $(s,t)\in\sigma$ means that $s\m t,st\m\in E(S)$.

\begin{rem}\label{rem-s^(-1)s=t^(-1)t}
 Corollary~\ref{cor-s=t-in-E-unitary} remains valid, if we replace $ss\m=tt\m$ by $s\m s=t\m t$.
\end{rem}
\noindent For in this case we may apply Lemma~\ref{lem-ss^(-1)=tt^(-1)} to the pair $s\m,t\m\in S$.\\

In all what follows in this section  $S$ is assumed to be an $E$-unitary inverse semigroup and $A$ a semilattice of groups with a twisted $S$-module structure $\Lambda=(\alpha,\lambda,f)$.

\begin{lem}\label{lem-D_x-ideal}
 For each $\sigma$-class $x\in\cG S$ set 
\begin{align}\label{eq-D_x-from-Lambda}
 \cD_x=\bigsqcup_{s\in x} A_{\alpha(ss\m)}.
\end{align}
Then $\{\cD_x\}_{x\in\cG S}$ is a collection of commuting idempotent ideals of $A$ with $\cD_1=A$.
\end{lem}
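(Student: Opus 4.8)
The plan is to regard each $\cD_x$ as a union of whole group components of $A$ and to reduce every assertion to a single combinatorial property of the index set $T_x=\{\alpha(ss\m)\mid s\in x\}\subseteq E(A)$; note that only the isomorphism $\alpha$, and not the rest of the twisted module structure, enters. The basic observation is that a union of group components $\bigsqcup_{e\in T}A_e$ is a two-sided ideal of $A$ exactly when $T$ is down-closed in the semilattice $E(A)$: for $a\in A_e$ with $e\in T$ and $b\in A_f$ one has $ab,ba\in A_{ef}$ with $ef\le e$, so closure under multiplication is precisely the absorption of meets from below. Since $\alpha\colon E(S)\to E(A)$ is a semilattice isomorphism, hence an order isomorphism, it suffices to show that $\{ss\m\mid s\in x\}$ is down-closed in $E(S)$.

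This down-closedness is the step I expect to be the crux. Given $s\in x$ and an idempotent $g\le ss\m$, I would put $t=gs$ and verify two things. First, $t\in x$: indeed $t=gs\le s$, so $(t,s)\in\sigma$ and $t$ lies in the same $\sigma$-class $x$. Second, $tt\m=g$: since idempotents of $S$ commute and $g\le ss\m$ gives $g\,ss\m=g$, one computes $tt\m=g\,ss\m\,g=g\,ss\m=g$. Thus every idempotent below some $ss\m$ (with $s\in x$) is again of the form $tt\m$ for some $t\in x$, so $\{ss\m\mid s\in x\}$, and hence $T_x$, is down-closed and $\cD_x$ is an ideal.

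Once $\cD_x$ is an ideal, the remaining properties are essentially free. Idempotency holds because any ideal $I$ of an inverse semigroup satisfies $I=I^2$; concretely, for $a\in\cD_x$ one has $a=a\,(aa\m)$ with both factors in the component $A_{aa\m}\subseteq\cD_x$. The ideals commute by Remark~\ref{I-cap-J=IJ-in-inverse-S}, which gives $\cD_x\cD_y=\cD_x\cap\cD_y=\cD_y\cap\cD_x=\cD_y\cD_x$. Finally, for $\cD_1=A$ I would use that $S$ is $E$-unitary, so the identity $\sigma$-class equals $E(S)$; then $\cD_1=\bigsqcup_{e\in E(S)}A_{\alpha(e)}$, and as $\alpha$ maps $E(S)$ bijectively onto $E(A)$ this runs over all group components, yielding $\cD_1=A$. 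The only bookkeeping point is the well-definedness of the disjoint union, but distinct idempotents index disjoint components while coinciding ones index the same component, so no ambiguity arises.
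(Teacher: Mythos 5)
Your proof is correct, and for the crux it is essentially the paper's argument in different clothing: the paper, given $a\in A_{\alpha(ss\m)}$ and $b\in A$, directly exhibits $t=\alpha\m(bb\m)s\le s$ (hence $t\in x$) with $\alpha(tt\m)=(ba)(ba)\m$, which is exactly your down-closedness step $t=gs$ specialized to $g=\alpha\m(bb\m aa\m)$. Your abstract reformulation (a union of group components is an ideal iff its index set is down-closed in $E(A)$, then transport along the order isomorphism $\alpha$) is a clean way to package this, and it isolates correctly that only $\alpha$ matters here. Where you genuinely diverge is the commutativity of the ideals: the paper verifies $\cD_x\cD_y=\cD_y\cD_x$ by the explicit factorization $ab=abb\m\cdot a\m ab$ with $abb\m\in\cD_y$ and $a\m ab\in\cD_x$, whereas you invoke Remark~\ref{I-cap-J=IJ-in-inverse-S} to get $\cD_x\cD_y=\cD_x\cap\cD_y$; both are valid, and yours is shorter once that remark is granted. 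One small point you gloss over: the displayed formula asserts a \emph{disjoint} union indexed by $s\in x$, and the paper justifies this by noting that distinct $s,t\in x$ give distinct idempotents $\alpha(ss\m)\ne\alpha(tt\m)$ (Corollary~\ref{cor-s=t-in-E-unitary}, using that $S$ is $E$-unitary); your remark that coinciding idempotents "index the same component" only shows the union is well defined as a set, not that the indexing by $s$ is injective. This injectivity is harmless here but is what later makes $\0_x$ well defined, so it is worth stating.
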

\begin{proof}
 First of all we note that the union above is indeed disjoint, because by Corollary~\ref{cor-s=t-in-E-unitary} for distinct $s,t\in x$ the idempotents $\alpha(ss\m)$ and $\alpha(tt\m)$ should also be distinct.
 
 Suppose that $a\in \cD_x$, that is $a\in A_{\alpha(ss\m)}$ for some $s\in x$. Given an arbitrary $b\in A$, consider $t=\alpha\m(bb\m)s$. Clearly $t\in x$, as $t\le s$. Moreover,
 $$
  \alpha(tt\m)=\alpha(\alpha\m(bb\m)ss\m)=bb\m\alpha(ss\m)=bb\m aa\m,
 $$
 the latter being $baa\m b\m=ba(ba)\m$, because $E(A)\subseteq C(A)$. Thus, $ba\in \cD_x$. The fact that $ab\in \cD_x$ is proved similarly.
 
 The inclusion $\cD_x^2\subseteq \cD_x$ is trivial. Now given $a\in \cD_x$, we have $a=aa\m\cdot a$ with $aa\m\in \cD_x$, so $a\in \cD_x^2$.
 
 Take $a\in \cD_x$, $b\in \cD_y$ and find $s\in x$, $t\in y$ with $a\in A_{\alpha(ss\m)}$, $b\in A_{\alpha(tt\m)}$. Note that
 $$
  ab=a\cdot a\m a\cdot bb\m\cdot b=abb\m\cdot a\m ab.
 $$
 We prove that $abb\m\in \cD_y$ and $a\m ab\in \cD_x$. Indeed, 
 $$
  abb\m\in A_{\alpha(ss\m)}A_{\alpha(tt\m)}\subseteq A_{\alpha(ss\m tt\m)}=A_{\alpha(ss\m t(ss\m t)\m)}
 $$
 and $ss\m t\in y$, as $ss\m t\le t$. The proof that $a\m a b=baa\m\in \cD_x$ is symmetric.
 
 Since $S$ is $E$-unitary, then $s\in 1$ is equivalent to $s\in E(S)$. So, $\cD_1=\bigsqcup_{e\in E(S)} A_{\alpha(e)}=\bigsqcup_{e\in E(A)} A_e=A$.
\end{proof}

 Now we assume additionally that $\Lambda$ is a Sieben twisted $S$-module structure.
 
\begin{lem}\label{lem-0_x-isom}
 For any $a\in \cD_{x\m}$ set 
 \begin{align}\label{eq-0_x-from-Lambda}
  \0_x(a)=\lambda_s(a),\mbox{ where }s\in x\mbox{ and }\alpha(s\m s)=aa\m. 
 \end{align}
 Then $\0_x$ is a well-defined isomorphism $\cD_{x\m}\to \cD_x$ satisfying (ii) of Definition~\ref{defn-tw_part_act}.
\end{lem}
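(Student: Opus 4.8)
The plan is to verify, in order, that~\eqref{eq-0_x-from-Lambda} is unambiguous, that its values lie in $\cD_x$, that it is multiplicative, that it is bijective, and finally that $\0_1=\id_A$. For well-definedness I would first observe that if $a\in\cD_{x\m}$, say $a\in A_{\alpha(rr\m)}$ with $r\in x\m$, then $s=r\m\in x$ satisfies $\alpha(s\m s)=aa\m$, so an admissible $s$ exists; and if $s,s'\in x$ both satisfy $\alpha(s\m s)=aa\m=\alpha(s'\m s')$, then $s\m s=s'\m s'$ by injectivity of $\alpha$, whence $s=s'$ by Remark~\ref{rem-s^(-1)s=t^(-1)t} (this is where $E$-unitarity of $S$ enters). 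Thus $s$ is uniquely determined by $a$. Since $a\in A_{\alpha(s\m s)}$ and $\alpha(s\m s)=e_{\lambda_s}$ by Remark~\ref{lambda-bar}, Remark~\ref{lambda-iso} together with (ii) of Definition~\ref{defn-twisted_S-mod} gives $\lambda_s(a)\in\lambda_s(A_{\alpha(s\m s)})=A_{\alpha(ss\m)}\subseteq\cD_x$, so $\0_x$ is indeed a map $\cD_{x\m}\to\cD_x$.

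Before multiplicativity I would isolate two facts. The first, where the Sieben normality (iv') is used, is that $u\le s$ in $S$ implies $\lambda_u(c)=\alpha(uu\m)\lambda_s(c)$ for every $c\in A$: writing $u=es$ with $e=uu\m$, property (iii) of Definition~\ref{defn-twisted_S-mod} reads $\alpha(e)\lambda_s(c)=f(e,s)\lambda_u(c)f(e,s)\m$, where $f(e,s)=\alpha(ess\m)=\alpha(e)$ by (iv') and $ess\m=uu\m=e$; since $\alpha(e)$ is a central idempotent the right side collapses to $\alpha(e)\lambda_u(c)=\lambda_u(c)$ by~\eqref{eq-lambda_s(e_lambda_s)}. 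The second, purely about $S$, is that $(s,t)\in\sigma$ forces $ss\m tt\m=st\m$: from $E$-unitarity $s\m t=t\m s$, so $st\m s=s(s\m t)=ss\m t$, and right multiplication by $t\m$ gives $(st\m)^2=ss\m tt\m$, which equals $st\m$ because $st\m\in E(S)$ is idempotent.

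For multiplicativity I would take $a,b\in\cD_{x\m}$ with attached elements $s,t\in x$, so $\0_x(a)=\lambda_s(a)$, $\0_x(b)=\lambda_t(b)$ and $aa\m=\alpha(s\m s)$, $bb\m=\alpha(t\m t)$. Using $E(A)\subseteq C(A)$ one has $(ab)(ab)\m=aa\m bb\m=\alpha(s\m s\,t\m t)$, so the element attached to $ab$ is the unique $u\in x$ with $u\m u=s\m s\,t\m t$, namely $u=st\m t$; this $u$ satisfies $u\le s$ and $u\le t$ (Remark~\ref{rem-s^(-1)s=t^(-1)t}), and, using $s\m t=t\m s$ once more, $uu\m=st\m ts\m=st\m=ss\m tt\m$ by the second fact. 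The first fact then gives $\lambda_u(a)=\alpha(uu\m)\lambda_s(a)$ and $\lambda_u(b)=\alpha(uu\m)\lambda_t(b)$, so $\0_x(ab)=\lambda_u(ab)=\lambda_u(a)\lambda_u(b)=\alpha(uu\m)\lambda_s(a)\lambda_t(b)$. Since $\lambda_s(a)\lambda_t(b)\in A_{\alpha(ss\m)}A_{\alpha(tt\m)}\subseteq A_{\alpha(ss\m tt\m)}=A_{\alpha(uu\m)}$ and $\alpha(uu\m)$ is the identity of that component, the idempotent factor disappears and $\0_x(ab)=\lambda_s(a)\lambda_t(b)=\0_x(a)\0_x(b)$.

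It remains to show $\0_x$ is a bijection and that (ii) of Definition~\ref{defn-tw_part_act} holds. For injectivity, $\0_x(a)=\0_x(a')$ forces the idempotents $\alpha(ss\m),\alpha(s's'\m)$ to coincide (the images lie in a single group component), hence $s=s'$ by Corollary~\ref{cor-s=t-in-E-unitary}, after which $\lambda_s$ being injective on $A_{\alpha(s\m s)}$ (Proposition~\ref{prop-iend<->I_ui}) yields $a=a'$; for surjectivity, each $c\in\cD_x$ lies in some $A_{\alpha(ss\m)}$ with $s\in x$, and $\lambda_s$ maps $A_{\alpha(s\m s)}\subseteq\cD_{x\m}$ isomorphically onto it, producing the required preimage. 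Finally $\cD_1=A$ by Lemma~\ref{lem-D_x-ideal}, and for $a\in A$ the attached element is the idempotent $\alpha\m(aa\m)$, so $\0_1(a)=\lambda_{\alpha\m(aa\m)}(a)=\alpha(\alpha\m(aa\m))a=a$ by (i) of Definition~\ref{defn-twisted_S-mod}, i.e. $\0_1=\id_A$. I expect the multiplicativity step to be the crux, since it is the one place where the normality condition (iv') and the $E$-unitarity of $S$ must be combined, through the relation $\lambda_u=\alpha(uu\m)\lambda_s$ and the identity $uu\m=ss\m tt\m$.
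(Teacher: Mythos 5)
Your proof is correct and follows essentially the same route as the paper: both reduce everything to the blockwise identification $\0_x|_{A_{\alpha(s\m s)}}=\lambda_s|_{A_{\alpha(s\m s)}}$ and handle multiplicativity by comparing $\lambda_u$ (for the element $u=st\m t=ts\m s$ attached to $ab$) with $\lambda_s$ and $\lambda_t$ via property (iii), the Sieben condition (iv'), and the $E$-unitary identity $st\m=ss\m tt\m$. The only difference is organizational — you extract the reusable fact $u\le s\impl\lambda_u(c)=\alpha(uu\m)\lambda_s(c)$, whereas the paper performs the equivalent computation inline through the two factorizations $ts\m s=(ts\m)s=t(s\m s)$.
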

\begin{proof}
 By definition $\cD_{x\m}=\bigsqcup_{s\in x\m} A_{\alpha(ss\m)}=\bigsqcup_{s\in x} A_{\alpha(s\m s)}$, so $\0_x(a)$ is well-defined.
 
 Since $\cD_{x\m}$ and $\cD_x$ are disjoint unions of the blocks $A_{\alpha(s\m s)}$ and $A_{\alpha(ss\m)}$, respectively ($s\in x$), and $\0_x|_{A_{\alpha(s\m s)}}=\lambda_s|_{A_{\alpha(s\m s)}}$ is an isomorphism $A_{\alpha(s\m s)}\to A_{\alpha(ss\m)}$ thanks to Remarks~\ref{lambda-iso} and~\ref{lambda-bar},  then $\0_x$ is a bijection $\cD_{x\m}\to \cD_x$. We shall prove that $\0_x$ is a homomorphism. 
 
 Given $a,b\in \cD_{x\m}$, we find a (unique) pair of $s,t\in x$, such that $a\in A_{\alpha(s\m s)}$ and $b\in A_{\alpha(t\m t)}$. Then $\0_x(a)\0_x(b)$ is by definition $\lambda_s(a)\lambda_t(b)$. To calculate $\0_x(ab)$, we observe that 
 $$
 ab\in A_{\alpha(s\m s)}A_{\alpha(t\m t)}\subseteq A_{\alpha(s\m st\m t)}=A_{\alpha((ts\m s)\m ts\m s)} 
 $$
 and $ts\m s\in x$, so $\0_x(ab)=\lambda_{ts\m s}(ab)$. Using (iii) of Definition~\ref{defn-twisted_S-mod}, we rewrite this as
 \begin{align}
  \lambda_{ts\m s}(ab)&=\lambda_{ts\m s}(a)\lambda_{ts\m s}(b)\notag\\
  &=(\xi_{f(ts\m,s)\m}\circ\lambda_{ts\m}\circ\lambda_s(a))(\xi_{f(t,s\m s)\m}\circ\lambda_t\circ\lambda_{s\m s}(b))\label{eq-lambda_(ts^(-1)s)}.
 \end{align}
 Since $(s,t)\in\sigma$ and $S$ is $E$-unitary, then $ts\m=st\m$ and $t\m s=s\m t$ are idempotents. This means first that $\lambda_{ts\m}$ is the multiplication by $\alpha(st\m)$ thanks to (i) of Definition~\ref{defn-twisted_S-mod}. Moreover,
 $$
  f(ts\m,s)=f(t,s\m s)=\alpha(ts\m st\m)=\alpha(st\m),
 $$
 in view of (iv'). Thus,~\eqref{eq-lambda_(ts^(-1)s)} reduces to
 \begin{align*}
 \alpha(st\m)\lambda_s(a)\lambda_t(\alpha(s\m s)b)&=\alpha(st\m)\lambda_s(a)\alpha(ts\m st\m)\lambda_t(b)\\
  &=\alpha(st\m)\lambda_s(a)\lambda_t(b).
 \end{align*}
 Here we applied (ii) of Definition~\ref{defn-twisted_S-mod}. It remains to notice that 
 $$
  \lambda_s(a)\lambda_t(b)\in A_{\alpha(ss\m)}A_{\alpha(tt\m)}\subseteq A_{\alpha(ss\m tt\m)}=A_{\alpha(s\cdot s\m t\cdot t\m)}=A_{\alpha(st\m)}.
  $$

  Thus, $\0_x$ is an isomorphism $\cD_{x\m}\to \cD_x$ with 
 $$
  \0_1(a)=\lambda_{\alpha\m(aa\m)}(a)=\alpha(\alpha\m(aa\m))a=aa\m\cdot a=a
 $$ 
 for all $a\in \cD_1=A$.
 \end{proof}
 
 \begin{lem}\label{lem-0_x(D_(x^(-1))D_y)}
  For all $x,y\in\cG S$ one has that
  \begin{align*}%\label{eq-0_x(D_(x^(-1))D_y)}
  \0_x(\cD_{x\m}\cD_y)\subseteq \cD_x\cD_{xy}. 
 \end{align*}
 \end{lem}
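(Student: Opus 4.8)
The plan is to argue elementwise, pinning down the group component to which an arbitrary element of $\cD_{x\m}\cD_y$ belongs and then tracking where $\0_x$ sends it. First I would invoke Remark~\ref{I-cap-J=IJ-in-inverse-S} to rewrite the product ideal $\cD_{x\m}\cD_y$ as the intersection $\cD_{x\m}\cap\cD_y$, so that a typical element $c$ lies simultaneously in $\cD_{x\m}$ and in $\cD_y$. By the definition~\eqref{eq-D_x-from-Lambda} of these ideals, $cc\m$ is then an idempotent of both, whence there are $s\in x$ and $t\in y$ with $cc\m=\alpha(s\m s)=\alpha(tt\m)$; since $\alpha$ is injective on idempotents, this produces the crucial relation $s\m s=tt\m$. (The elements $s$ and $t$ are in fact unique, by Corollary~\ref{cor-s=t-in-E-unitary} and Remark~\ref{rem-s^(-1)s=t^(-1)t}.)

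Next I would compute $\0_x(c)$. Because $s\in x$ satisfies $\alpha(s\m s)=cc\m$, the defining formula~\eqref{eq-0_x-from-Lambda} gives $\0_x(c)=\lambda_s(c)$, the element $s$ being the unique admissible choice by Remark~\ref{rem-s^(-1)s=t^(-1)t}. Since $c\in A_{\alpha(s\m s)}$ and $\lambda_s\in\iend A$ has $e_{\lambda_s}=\alpha(s\m s)$ by Remark~\ref{lambda-bar}, Remark~\ref{lambda-iso} together with (ii) of Definition~\ref{defn-twisted_S-mod} shows that $\lambda_s$ maps $A_{\alpha(s\m s)}$ isomorphically onto $A_{\alpha(ss\m)}$. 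Hence $\0_x(c)\in A_{\alpha(ss\m)}$, which is already contained in $\cD_x$ because $s\in x$.

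It then remains to see that $A_{\alpha(ss\m)}$ also sits inside $\cD_{xy}$, and this is the step where the relation $s\m s=tt\m$ does the real work. Setting $v=st$, one has $v\in xy$ and $vv\m=stt\m s\m=s(s\m s)s\m=ss\m$, so $\alpha(ss\m)=\alpha(vv\m)$ is an idempotent of $\cD_{xy}$ and therefore $A_{\alpha(ss\m)}\subseteq\cD_{xy}$. Combining the two inclusions gives $\0_x(c)\in A_{\alpha(ss\m)}\subseteq\cD_x\cap\cD_{xy}=\cD_x\cD_{xy}$, as required. The one genuinely non-routine point is recognising that $cc\m$ admits the two idempotent descriptions $\alpha(s\m s)$ and $\alpha(tt\m)$ with $s\in x$ and $t\in y$, which forces $s\m s=tt\m$; everything else is bookkeeping with group components, while the passage to $v=st$ is the trick that upgrades membership in $\cD_x$ to membership in $\cD_{xy}$ as well.
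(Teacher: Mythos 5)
Your argument is correct. The one point worth checking is your opening appeal to Remark~\ref{I-cap-J=IJ-in-inverse-S}: the $\cD_x$ of~\eqref{eq-D_x-from-Lambda} are ideals of the inverse semigroup $A$ by Lemma~\ref{lem-D_x-ideal}, so $\cD_{x\m}\cD_y=\cD_{x\m}\cap\cD_y$ and $\cD_x\cD_{xy}=\cD_x\cap\cD_{xy}$ do hold, and the rest of your bookkeeping (the forced identity $s\m s=tt\m$ from injectivity of $\alpha$, the fact that $\lambda_s$ carries $A_{\alpha(s\m s)}$ onto $A_{\alpha(ss\m)}$, and the computation $st(st)\m=ss\m$) is sound.

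The paper takes a mildly different route: it never invokes $I\cap J=IJ$, but instead writes a general element as a product $a=bc$ with $b\in A_{\alpha(s\m s)}$, $c\in A_{\alpha(tt\m)}$, places $a$ in $A_{\alpha((stt\m)\m stt\m)}$, and so computes $\0_x(a)=\lambda_{stt\m}(a)$ with the auxiliary element $stt\m\in x$; at the end it exhibits $\0_x(a)$ as a product of two factors each lying in $\cD_x\cap\cD_{xy}$, which puts the product in $\cD_x\cD_{xy}$ directly. Your version pre-normalizes so that $s\m s=tt\m$ from the start, which lets you work with $s$ itself rather than $stt\m$ and makes the conjugation step ($s(s\m s)s\m=ss\m$) essentially trivial; the price is the extra (but cheap and already-recorded) fact that products of ideals coincide with intersections. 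The two proofs are the same computation in slightly different coordinates, and yours is arguably the cleaner write-up.
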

 \begin{proof}
 Let $a=bc\in \cD_{x\m}\cD_y$, where $b\in \cD_{x\m}$ and $c\in \cD_y$. Choose $s\in x$ and $t\in y$, such that $b\in A_{\alpha(s\m s)}$ and $c\in A_{\alpha(tt\m)}$. Then
 $$
  a\in A_{\alpha(s\m s)}A_{\alpha(tt\m)}\subseteq A_{\alpha(s\m stt\m)}=A_{\alpha((stt\m)\m stt\m)}
 $$
 and consequently $\0_x(a)=\lambda_{stt\m}(bc)$. Notice that conjugating $s\m s$ and $tt\m$ by $stt\m$, we get the same idempotent $stt\m s\m$ of $S$. Therefore, both $\lambda_{stt\m}(b)$ and $\lambda_{stt\m}(c)$ belong to $A_{\alpha(stt\m s\m)}$.
 Since 
 $$
  stt\m s\m=stt\m(stt\m)\m=st(st)\m
 $$ 
 with $stt\m\in x$ and $st\in xy$, it follows that $A_{\alpha(stt\m s\m)}\subseteq \cD_x\cap \cD_{xy}$. Thus,
 $$
  \0_x(a)=\lambda_{stt\m}(b)\lambda_{stt\m}(c)\in \cD_x\cD_{xy}.
 $$
\end{proof}

\begin{lem}\label{lem-w_(x,y)-in-M(D_xD_xy)}
 Given $a\in \cD_x\cD_{xy}$, define 
 \begin{align}\label{eq-w_(x,y)-from-Lambda}
  w_{x,y}a=f(s,s\m t)a,\ \ aw_{x,y}=af(s,s\m t),
 \end{align}
  where $s\in x$ and $t\in xy$, such that $\alpha(ss\m)=\alpha(tt\m)=aa\m$.
 Then $w_{x,y}$ is a well-defined invertible multiplier of $\cD_x\cD_{xy}$.
\end{lem}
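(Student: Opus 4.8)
The plan is to view $w_{x,y}$ as a pair $(L,R)$ of self-maps of $\cD_x\cD_{xy}$, namely $L(a)=f(s,s\m t)a$ and $R(a)=af(s,s\m t)$, and to check successively that these are well defined with values in $\cD_x\cD_{xy}$, that $(L,R)$ is a linked pair of translations, and that $w_{x,y}$ is invertible in $\cM{\cD_x\cD_{xy}}$. For well-definedness, note first that by Remark~\ref{I-cap-J=IJ-in-inverse-S} we have $\cD_x\cD_{xy}=\cD_x\cap\cD_{xy}$, and that $E$-unitarity forces $s,t$ to be unique: by Corollary~\ref{cor-s=t-in-E-unitary} the assignment $s\mapsto ss\m$ is injective on each $\sigma$-class, so a given $a\in\cD_x\cap\cD_{xy}$ determines $s\in x$ and $t\in xy$ with $\alpha(ss\m)=\alpha(tt\m)=aa\m$ uniquely, and no genuine choice is involved. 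A short computation from the constraint $f(s,s\m t)\in A_{\alpha(s(s\m t)(s\m t)\m s\m)}$, using $ss\m=tt\m$, gives $f(s,s\m t)\in A_{\alpha(ss\m)}=A_{aa\m}$; hence $L(a),R(a)\in A_{aa\m}\subseteq\cD_x\cap\cD_{xy}=\cD_x\cD_{xy}$.

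The heart of the proof is a restriction formula for the values of $f$ that occur. Fix $a,b\in\cD_x\cD_{xy}$ with associated data $s_a,t_a$ and $s_b,t_b$, and put $e=s_as_a\m=t_at_a\m$ and $g=s_bs_b\m=t_bt_b\m$, so that $aa\m=\alpha(e)$, $bb\m=\alpha(g)$ and $(ab)(ab)\m=\alpha(eg)$. Using $E$-unitarity one identifies the data of $ab$ as $s_{ab}=gs_a=es_b$ and $t_{ab}=gt_a=et_b$; in particular $s_{ab}\le s_a,s_b$ and, the natural order being compatible with multiplication, $s_{ab}\m t_{ab}\le s_a\m t_a$ as well as $s_{ab}\m t_{ab}\le s_b\m t_b$. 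Applying Lemma~\ref{lem-f(s',t')} in both ways and reducing the resulting idempotent to $eg$ then yields the key identity
\begin{align*}
 f(s_{ab},s_{ab}\m t_{ab})=bb\m f(s_a,s_a\m t_a)=aa\m f(s_b,s_b\m t_b).
\end{align*}
This is the step I expect to be the main obstacle, since it requires the $E$-unitary bookkeeping that pins down $s_{ab},t_{ab}$ and the careful simplification of the idempotent $\alpha(s_{ab}(s_{ab}\m t_{ab})(s_{ab}\m t_{ab})\m s_{ab}\m)$ produced by Lemma~\ref{lem-f(s',t')}.

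Granting the identity and $E(A)\subseteq C(A)$, the three multiplier axioms become routine. For (i), $L(ab)=bb\m f(s_a,s_a\m t_a)\,ab=f(s_a,s_a\m t_a)a\cdot bb\m b=L(a)b$; axiom (ii) is the mirror computation using the second form of the identity. For (iii) I would write $R(a)b=af(s_a,s_a\m t_a)\,bb\m b=a\bigl[bb\m f(s_a,s_a\m t_a)\bigr]b=a\bigl[aa\m f(s_b,s_b\m t_b)\bigr]b=af(s_b,s_b\m t_b)b=aL(b)$, where both forms of the identity are used. Thus $(L,R)\in\cM{\cD_x\cD_{xy}}$.

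Finally, for invertibility I would introduce $(L',R')$ with $L'(a)=f(s,s\m t)\m a$ and $R'(a)=af(s,s\m t)\m$, the inverse taken in the group $A_{aa\m}$. Since multiplication by the central idempotents $\alpha(e),\alpha(g)$ is a group homomorphism, the inverses $f(s,s\m t)\m$ obey the same restriction identity, so the previous paragraph shows $(L',R')\in\cM{\cD_x\cD_{xy}}$ as well. As $f(s,s\m t)\m a$ lies in the same component $A_{aa\m}$ as $a$, it carries the same data $s,t$, whence $L\circ L'=L'\circ L=\id$ and $R\circ R'=R'\circ R=\id$. Therefore $(L',R')$ is a two-sided inverse of $w_{x,y}$ in the monoid $\cM{\cD_x\cD_{xy}}$, and consequently $w_{x,y}\in\cU{\cM{\cD_x\cD_{xy}}}$.
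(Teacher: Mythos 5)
Your proof is correct and follows essentially the same route as the paper: well-definedness via $\cD_x\cD_{xy}=\cD_x\cap\cD_{xy}$ and the uniqueness of $s,t$ from Corollary~\ref{cor-s=t-in-E-unitary}, the translation axioms via Lemma~\ref{lem-f(s',t')} applied to the common lower bounds $s_{ab}=s_bs_b\m s_a=s_as_a\m s_b$ (the paper's $ss\m u$), and the explicit inverse $f(s,s\m t)\m$. The only cosmetic difference is that you extract the linking axiom $(aw_{x,y})b=a(w_{x,y}b)$ from your unified ``key identity'' rather than citing Corollary~\ref{cor-alpha(st(s't')^(-1))f(s,t)}, but that corollary is itself proved by the same common-lower-bound computation, so the content is identical.
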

\begin{proof}
 To show that the definition of $w_{x,y}$ makes sense, observe that 
 $$
  \cD_x\cD_{xy}\subseteq \cD_x\cap \cD_{xy}=\left(\bigsqcup_{s\in x}A_{\alpha(ss\m)}\right)\cap\left(\bigsqcup_{t\in xy}A_{\alpha(tt\m)}\right),
 $$
 the latter being the union of those $A_{\alpha(e)}$, for which there exists a (unique) pair of $s\in x$ and $t\in xy$, such that $ss\m=tt\m=e$. Moreover, $w_{x,y}a$ and $aw_{x,y}$, thus defined, belong to $\cD_x\cD_{xy}$, as they are invertible with respect to
 $$
  aa\m f(s,s\m t)f(s,s\m t)\m=aa\m\alpha(ss\m tt\m)=\alpha(ss\m)=\alpha(tt\m).
 $$
 
 Let $a,b\in \cD_x\cD_{xy}$. We shall prove that $w_{x,y}(ab)=(w_{x,y}a)b$. Choose $s,u\in x$ and $t,v\in xy$ with
 \begin{align}
  aa\m&=\alpha(ss\m)=\alpha(tt\m),\label{eq-aa^(-1)=alpha(ss^(-1))=alpha(tt^(-1))}\\ 
  bb\m&=\alpha(uu\m)=\alpha(vv\m).\label{eq-bb^(-1)=alpha(uu^(-1))=alpha(vv^(-1))}
 \end{align}
 Notice that $ab(ab)\m=aa\m bb\m$ on the one hand equals
 $$
  \alpha(ss\m uu\m)=\alpha(ss\m u(ss\m u)\m),
 $$
 and on the other hand it is
 $$
  \alpha(tt\m vv\m)=\alpha(tt\m v(tt\m v)\m).
 $$
 Since $ss\m u\in x$ and $tt\m v\in xy$, it follows that
 \begin{align}\label{eq-w_(x,y)ab}
  w_{x,y}(ab)=f(ss\m u,(ss\m u)\m tt\m v)ab=f(s\cdot s\m u,u\m s\cdot s\m t\cdot t\m v)ab.
 \end{align}
 As $S$ is $E$-unitary and $(s,u),(t,v)\in\sigma$, we have that $s\m u=u\m s\in E(S)$ and $t\m v\in E(S)$. Hence $s\cdot s\m u\le s$ and $u\m s\cdot s\m t\cdot t\m v\le s\m t$. By Lemma~\ref{lem-f(s',t')}
 \begin{align*}%\label{eq-alpha(e)f(s,s^(-1)t)}
  f(s\cdot s\m u,u\m s\cdot s\m t\cdot t\m v)=\alpha(e)f(s,s\m t),
 \end{align*}
 where 
 \begin{align}\label{eq-e=ss^(-1)uu^(-1)}
  e=ss\m uu\m tt\m vv\m=ss\m uu\m
 \end{align}
 thanks to~\eqref{eq-aa^(-1)=alpha(ss^(-1))=alpha(tt^(-1))}--\eqref{eq-bb^(-1)=alpha(uu^(-1))=alpha(vv^(-1))}. Thus, one concludes from~\eqref{eq-w_(x,y)ab}--\eqref{eq-e=ss^(-1)uu^(-1)} that
 $$
  w_{x,y}(ab)=\alpha(ss\m uu\m)f(s,s\m t)ab,
 $$
 the latter being
 $$
  aa\m bb\m f(s,s\m t)ab=f(s,s\m t)ab=(w_{x,y}a)b
 $$
 in view of~\eqref{eq-aa^(-1)=alpha(ss^(-1))=alpha(tt^(-1))}--\eqref{eq-bb^(-1)=alpha(uu^(-1))=alpha(vv^(-1))}.
 
 The proof that $(ab)w_{x,y}=a(bw_{x,y})$ is symmetric.
 
 The property $(aw_{x,y})b=a(w_{x,y}b)$ is equivalent to 
 \begin{align}\label{eq-af(s,s^(-1)t)b=af(u,u^(-1)v)b}
  af(s,s\m t)b=af(u,u\m v)b.
 \end{align}
 Since $(s,u),(s\m t,u\m v)\in\sigma$, it follows from Corollary~\ref{cor-alpha(st(s't')^(-1))f(s,t)} that 
 \begin{align}\label{eq-alpha(ss^(-1)t(uu^(-1)v)^(-1))f(s,s^(-1)t)}
  \alpha(ss\m t(uu\m v)\m)f(s,s\m t)=\alpha(ss\m t(uu\m v)\m)f(u,u\m v).
 \end{align}
 Using~\eqref{eq-aa^(-1)=alpha(ss^(-1))=alpha(tt^(-1))}--\eqref{eq-bb^(-1)=alpha(uu^(-1))=alpha(vv^(-1))}, one has 
 $$
  ss\m t(uu\m v)\m=ss\m tv\m uu\m=tt\m tv\m vv\m=tv\m.
 $$
 As $tv\m=vt\m\in E(S)$, 
 $$
  tv\m=tv\m\cdot tv\m=t\cdot v\m t\cdot v\m=tt\m vv\m,
 $$
 therefore,
 $$
  \alpha(tv\m)=\alpha(tt\m)\alpha(vv\m)=aa\m bb\m
 $$
 by~\eqref{eq-aa^(-1)=alpha(ss^(-1))=alpha(tt^(-1))}--\eqref{eq-bb^(-1)=alpha(uu^(-1))=alpha(vv^(-1))}. Thus,~\eqref{eq-alpha(ss^(-1)t(uu^(-1)v)^(-1))f(s,s^(-1)t)} becomes
 $$
  aa\m bb\m f(s,s\m t)=aa\m bb\m f(u,u\m v),
 $$
 proving~\eqref{eq-af(s,s^(-1)t)b=af(u,u^(-1)v)b}.
 
 Thus, $w_{x,y}\in\cM{\cD_x\cD_{xy}}$. Clearly, it is invertible with $w\m_{x,y}a=f(s,s\m t)\m a$, $aw\m_{x,y}=af(s,s\m t)\m$, where $a$, $s$ and $t$ are as above.
\end{proof}
 
\begin{lem}\label{lem-0_x-circ-0_y}
 The pair $(\0,w)$ satisfies (iv) of Definition~\ref{defn-tw_part_act}.
\end{lem}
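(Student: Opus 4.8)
The plan is to verify (iv) of Definition~\ref{defn-tw_part_act} by bringing both sides to the common form $f(s,t)\,\0_{xy}(a)\,f(s,t)\m$ for one carefully chosen pair of representatives $s\in x$, $t\in y$. First I would fix $a\in\cD_{y\m}\cD_{y\m x\m}$ and check that both sides are defined. Since $\cD_{y\m}\cD_{y\m x\m}\subseteq\cD_{y\m}$ and $\cD_{y\m}=\bigsqcup_{t\in y}A_{\alpha(t\m t)}$, there is a unique $t\in y$ with $aa\m=\alpha(t\m t)$, so $\0_y(a)=\lambda_t(a)$ by~\eqref{eq-0_x-from-Lambda}. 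Applying Lemma~\ref{lem-0_x(D_(x^(-1))D_y)} with $x,y$ replaced by $y,y\m x\m$ gives $\0_y(a)\in\cD_y\cD_{x\m}\subseteq\cD_{x\m}$, the last inclusion because $\cD_{x\m}$ is an ideal (Lemma~\ref{lem-D_x-ideal}); hence $\0_x\circ\0_y(a)$ makes sense. As $\0_y(a)=\lambda_t(a)\in A_{\alpha(tt\m)}$ by Remark~\ref{lambda-iso}, the representative of $x$ needed to evaluate $\0_x$ on $\0_y(a)$ is the unique $s\in x$ with $\alpha(s\m s)=\alpha(tt\m)$, i.e.\ $s\m s=tt\m$.

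For the left-hand side, the defining formula~\eqref{eq-0_x-from-Lambda} gives $\0_x\circ\0_y(a)=\lambda_s\circ\lambda_t(a)$, and (iii) of Definition~\ref{defn-twisted_S-mod} rewrites this as $f(s,t)\lambda_{st}(a)f(s,t)\m$. Here $st\in xy$, and the inverse-semigroup computation $(st)\m(st)=t\m(s\m s)t=t\m(tt\m)t=t\m t$ shows $\alpha((st)\m(st))=aa\m$, so by the well-definedness of $\0_{xy}$ established in Lemma~\ref{lem-0_x-isom} we have $\lambda_{st}(a)=\0_{xy}(a)$. Thus the left-hand side equals $f(s,t)\,\0_{xy}(a)\,f(s,t)\m$.

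For the right-hand side I would evaluate $w_{x,y}\0_{xy}(a)w_{x,y}\m$ by Lemma~\ref{lem-w_(x,y)-in-M(D_xD_xy)}. Now $\0_{xy}(a)=\lambda_{st}(a)\in A_{\alpha(stt\m s\m)}$ by Remark~\ref{lambda-iso}, and the pair of representatives required to apply $w_{x,y}$ to this element is $s\in x$ and $st\in xy$: indeed $stt\m s\m=s(s\m s)s\m=ss\m=(st)(st)\m$, so both $ss\m$ and $(st)(st)\m$ equal the identity of the group component of $\0_{xy}(a)$. With these representatives,~\eqref{eq-w_(x,y)-from-Lambda} yields $w_{x,y}\0_{xy}(a)=f(s,s\m(st))\0_{xy}(a)=f(s,(s\m s)t)\0_{xy}(a)=f(s,t)\0_{xy}(a)$, using $(s\m s)t=tt\m t=t$, and symmetrically $\0_{xy}(a)w_{x,y}\m=\0_{xy}(a)f(s,t)\m$. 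Since $\cD_x\cD_{xy}$ is idempotent the product is unambiguous, so the right-hand side also equals $f(s,t)\,\0_{xy}(a)\,f(s,t)\m$, matching the left-hand side.

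I expect the main difficulty to be bookkeeping rather than anything conceptual: one must check that the single pair $(s,t)$ with $s\m s=tt\m$ simultaneously computes $\0_x\circ\0_y(a)$, identifies $\lambda_{st}(a)$ with $\0_{xy}(a)$, and serves as the correct pair of representatives for the multiplier $w_{x,y}$ acting on $\0_{xy}(a)$. The two idempotent identities $(st)\m(st)=t\m t$ and $stt\m s\m=ss\m$ (both immediate from $s\m s=tt\m$) are precisely what make these three roles compatible; granting them, the verification is a direct substitution into (iii) of Definition~\ref{defn-twisted_S-mod} and into~\eqref{eq-w_(x,y)-from-Lambda}.
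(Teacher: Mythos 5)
Your proof is correct and follows essentially the same route as the paper's: a direct verification from~\eqref{eq-0_x-from-Lambda}, \eqref{eq-w_(x,y)-from-Lambda} and (iii) of Definition~\ref{defn-twisted_S-mod}, differing only in the choice of $\sigma$-class representatives (you take $s\in x$, $t\in y$ with $s\m s=tt\m$, whereas the paper takes $s\in y$, $t\in xy$ and uses $ts\m$ as the representative of $x$). Your parameterization has the minor advantage that the two cocycle factors match exactly via $s\m(st)=t$, so you bypass the appeal to (iv') and Lemma~\ref{lem-f(s',t')} made at the end of the paper's argument.
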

\begin{proof}
 Given $a\in \cD_{y\m}\cD_{y\m x\m}$, find $s\in y$ and $t\in xy$, such that 
 \begin{align}\label{eq-aa^(-1)=alpha(s^(-1)s)=alpha(t^(-1)t)}
  aa\m=\alpha(s\m s)=\alpha(t\m t). 
 \end{align}
 Then $\0_y(a)=\lambda_s(a)$, and since $\lambda_s(a)\in A_{\alpha(st\m ts\m)}=A_{\alpha((ts\m)\m ts\m)}$ with $ts\m\in x$, we have
 \begin{align}\label{eq-0_x-circ-0_y}
  \0_x\circ\0_y(a)=\lambda_{ts\m}\circ\lambda_s(a).
 \end{align}
 In view of (i), (iii) and (iv') of Definition~\ref{defn-twisted_S-mod} and~\eqref{eq-aa^(-1)=alpha(s^(-1)s)=alpha(t^(-1)t)}, the composition~\eqref{eq-0_x-circ-0_y} equals
 \begin{align*}
  \xi_{f(ts\m,s)}\circ\lambda_{ts\m s}(a)&=\xi_{f(ts\m,s)f(t,s\m s)\m}\circ\lambda_t\circ\lambda_{s\m s}(a)\\
  &=\xi_{f(ts\m,s)\alpha(ts\m st\m)}\circ\lambda_t(\alpha(s\m s)a)\\
  &=\xi_{f(ts\m,s)}\circ\lambda_t(a).
 \end{align*}

Furthermore, $\0_{xy}(a)=\lambda_t(a)$ with $\lambda_t(a)\lambda_t(a)\m=\lambda_t(aa\m)$, which on the one hand equals $\alpha(tt\m)$, and on the other hand it is $\alpha(ts\m st\m)=\alpha(ts\m(ts\m)\m)$ by~\eqref{eq-aa^(-1)=alpha(s^(-1)s)=alpha(t^(-1)t)}. Since $ts\m\in x$ and $t\in xy$, one has
 $$
  w_{x,y}\0_{xy}(a)w\m_{x,y}=\xi_{f(ts\m,(ts\m)\m t)}\circ\lambda_t(a)=\xi_{f(ts\m,st\m t)}\circ\lambda_t(a).
 $$
 But $st\m t\le s$, hence by Lemma~\ref{lem-f(s',t')}
 $$
  f(ts\m,st\m t)=\alpha(ts\m st\m)f(ts\m,s),
 $$
 the latter being $f(ts\m,s)$, as $f(ts\m,s)\in A_{\alpha(ts\m st\m)}$.
\end{proof}

\begin{cor}\label{cor-0_x(D_(x^(-1))D_y)}
 The isomorphism $\0_x$ satisfies (iii) of Definition~\ref{defn-tw_part_act}.
\end{cor}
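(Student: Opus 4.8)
The plan is to deduce the missing inclusion $\cD_x\cD_{xy}\subseteq\0_x(\cD_{x\m}\cD_y)$ from the one already in hand, namely $\0_x(\cD_{x\m}\cD_y)\subseteq\cD_x\cD_{xy}$ of Lemma~\ref{lem-0_x(D_(x^(-1))D_y)}, by exploiting that $\0_x$ is an isomorphism whose ``inverse'' is $\0_{x\m}$ up to conjugation by $w_{x,x\m}$. So the half of (iii) that remains will come out as a purely formal consequence of Lemmas~\ref{lem-0_x(D_(x^(-1))D_y)} and~\ref{lem-0_x-circ-0_y} together with Remark~\ref{rem-wIw^(-1)}, with no fresh computation in $A$.

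First I would apply Lemma~\ref{lem-0_x(D_(x^(-1))D_y)} with $x\m$ in place of $x$ and $xy$ in place of $y$. Since $(x\m)\m=x$ and $x\m\cdot xy=y$ in $\cG S$, this reads $\0_{x\m}(\cD_x\cD_{xy})\subseteq\cD_{x\m}\cD_y$. As $\cD_x\cD_{xy}\subseteq\cD_x$ lies in the domain of $\0_{x\m}$, applying $\0_x$ to both sides gives $\0_x\circ\0_{x\m}(\cD_x\cD_{xy})\subseteq\0_x(\cD_{x\m}\cD_y)$, so it suffices to show that the left-hand side is all of $\cD_x\cD_{xy}$.

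Next I would identify $\0_x\circ\0_{x\m}$ on $\cD_x$ using the already established property (iv) of Definition~\ref{defn-tw_part_act} (Lemma~\ref{lem-0_x-circ-0_y}) with $y$ replaced by $x\m$. Its domain $\cD_{(x\m)\m}\cD_{(x\m)\m x\m}=\cD_x\cD_1=\cD_x$ is all of $\cD_x$, and since $xx\m=1$, condition (iv) together with (ii) of Definition~\ref{defn-tw_part_act} yields $\0_x\circ\0_{x\m}(a)=w_{x,x\m}\0_1(a)w\m_{x,x\m}=w_{x,x\m}aw\m_{x,x\m}$ for every $a\in\cD_x$. Thus $\0_x\circ\0_{x\m}$ is simply conjugation by the invertible multiplier $w_{x,x\m}\in\cU{\cM{\cD_x}}$, where $\cM{\cD_x\cD_{xx\m}}=\cM{\cD_x\cD_1}=\cM{\cD_x}$.

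Finally, the step I expect to require the most care is checking that this conjugation fixes the subideal $\cD_x\cD_{xy}$ setwise. For this I would invoke Remark~\ref{rem-wIw^(-1)} with the ambient semigroup taken to be $\cD_x$: one verifies that $\cD_x\cD_{xy}$ is an idempotent ideal of $\cD_x$ (being a product of commuting idempotent ideals of $A$ it is itself idempotent, and being contained in $\cD_x$ it is an ideal of $\cD_x$), whence $w_{x,x\m}(\cD_x\cD_{xy})w\m_{x,x\m}=\cD_x\cD_{xy}$. Combining this equality with the inclusion of the second paragraph gives $\cD_x\cD_{xy}\subseteq\0_x(\cD_{x\m}\cD_y)$, which together with Lemma~\ref{lem-0_x(D_(x^(-1))D_y)} establishes (iii).
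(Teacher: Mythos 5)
Your argument is correct and is essentially the paper's own proof: both deduce the reverse inclusion by applying Lemma~\ref{lem-0_x(D_(x^(-1))D_y)} to the pair $(x\m,xy)$, identifying $\0_x\circ\0_{x\m}$ on $\cD_x$ as conjugation by $w_{x,x\m}$ via Lemma~\ref{lem-0_x-circ-0_y}, and invoking Remark~\ref{rem-wIw^(-1)} to see that this conjugation fixes the idempotent ideal $\cD_x\cD_{xy}$ setwise. You merely spell out a few steps (the domain computation and the reduction to $\cM{\cD_x}$) that the paper leaves implicit.
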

\noindent The inclusion $\0_x(\cD_{x\m}\cD_y)\subseteq \cD_x\cD_{xy}$ is the statement of Lemma~\ref{lem-0_x(D_(x^(-1))D_y)}. Applying the same lemma to $x\m,xy\in\cG S$, we immediately get the inclusion $\0_{x\m}(\cD_x\cD_{xy})\subseteq \cD_{x\m}\cD_y$. Therefore, $\0_x\circ\0_{x\m}(\cD_x\cD_{xy})\subseteq\0_x(\cD_{x\m}\cD_y)$. By Lemmas~\ref{lem-0_x-isom} and~\ref{lem-0_x-circ-0_y} the left-hand side is $w_{x,x\m}\cD_x\cD_{xy}w\m_{x,x\m}$, which is $\cD_x\cD_{xy}$ thanks to Remark~\ref{rem-wIw^(-1)}.

\begin{lem}\label{lem-w-norm-twist}
 The pair $(\0,w)$ satisfies (v)--(vi) of Definition~\ref{defn-tw_part_act}.
\end{lem}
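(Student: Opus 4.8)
The plan is to verify (v) and (vi) separately, in both cases by unwinding the definitions of $\0$ and $w$ from Lemmas~\ref{lem-0_x-isom} and~\ref{lem-w_(x,y)-in-M(D_xD_xy)} in terms of the triple $(\alpha,\lambda,f)$, choosing suitable $\sigma$-class representatives, and reducing to the defining identities of a Sieben twisted $S$-module.

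For (v), I would fix $a\in\cD_x$ and write $e=aa\m$, $\bar e=\alpha\m(e)\in E(S)$. For $w_{1,x}$ the defining representatives are the idempotent $\bar e\in 1$ and the unique $t\in x$ with $tt\m=\bar e$; since $\bar e t=t$, the normality condition (iv') gives $f(\bar e,t)=\alpha(\bar e)=e$, whence $w_{1,x}a=ea=a$. For $w_{x,1}$ the representatives are the unique $s\in x$ with $ss\m=\bar e$ in both slots (they coincide by Corollary~\ref{cor-s=t-in-E-unitary}), and (iv') gives $f(s,s\m s)=\alpha(ss\m)=e$, so $w_{x,1}a=a$. Thus $w_{1,x}=w_{x,1}=\id_{\cD_x}$.

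The substantial part is (vi). First I would fix $s\in\cD_{x\m}\cD_y\cD_{yz}=\cD_{x\m}\cap\cD_y\cap\cD_{yz}$, put $e=ss\m$, $\bar e=\alpha\m(e)$, and extract from the three memberships the unique representatives $g\in x$, $h\in y$, $m\in yz$ determined by $g\m g=hh\m=mm\m=\bar e$. Setting $k=h\m m\in z$, a short idempotent computation gives $m=hk$ and $kk\m=h\m h$, and, writing $q=gm\in xyz$, the identities $qq\m=gg\m$, $g\m q=m$, $(gh)(gh)\m=gg\m$ and $g\m\cdot gh=h$. With these representatives in hand I would evaluate both sides. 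On the left, $sw_{y,z}=sf(h,k)$, so (as $\lambda_g\in\End A$) $\0_x(sw_{y,z})=\lambda_g(s)\lambda_g(f(h,k))$, and since $q$ is the representative selecting $w_{x,yz}$ one gets
$$\0_x(sw_{y,z})w_{x,yz}=\lambda_g(s)\lambda_g(f(h,k))f(g,m).$$
On the right, $\0_x(s)=\lambda_g(s)$, the pair $g,gh$ selects $w_{x,y}$ and the pair $gh,q$ selects $w_{xy,z}$, yielding
$$\0_x(s)w_{x,y}w_{xy,z}=\lambda_g(s)f(g,h)f(gh,k).$$
Cancelling the common left factor $\lambda_g(s)$ and using $m=hk$, the required equality becomes exactly
$$\lambda_g(f(h,k))f(g,hk)=f(g,h)f(gh,k),$$
which is property (v) of Definition~\ref{defn-twisted_S-mod} for the triple $(g,h,k)$.

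Every multiplier application above must be legitimate: using the extension of (iii) of Definition~\ref{defn-tw_part_act} to products of ideals (the remark following that definition) one has $\0_x(\cD_{x\m}\cap\cD_y\cap\cD_{yz})=\cD_x\cap\cD_{xy}\cap\cD_{xyz}$, so $\0_x(sw_{y,z})\in\cD_x\cD_{xyz}$ and $\0_x(s)w_{x,y}\in\cD_{xy}\cD_{xyz}$. I would also check that all the factors $\lambda_g(f(h,k))$, $f(g,m)$, $f(g,h)$, $f(gh,k)$ lie in the single group component $A_{\alpha(gg\m)}$ (via $g\bar e g\m=gg\m$), which both justifies cancelling $\lambda_g(s)$ and guarantees that the resulting equation is an identity inside a group. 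The main obstacle is precisely this bookkeeping of $\sigma$-class representatives and the verification of the idempotent identities relating $g,h,m,k,q$; once these are in place, the computation collapses onto the $2$-cocycle identity for $f$.
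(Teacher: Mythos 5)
Your proof is correct and follows essentially the same route as the paper's: for (v) you reduce $w_{x,1}$ and $w_{1,x}$ to $f(s,s\m s)$ and $f(\bar e,t)$ and invoke the normalization of $f$, and for (vi) you pick the unique $\sigma$-class representatives (your $g,h,m,k=h\m m,q=gm$ are the paper's $s,t,u,t\m u,su$), track the group components to identify which representatives each multiplier uses, and collapse everything onto identity (v) of Definition~\ref{defn-twisted_S-mod} for the triple $(g,h,k)$ using $hk=m$. The only differences are notational, plus your use of (iv') where the weaker (iv) already suffices for part (v).
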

\begin{proof}
 Let $a\in \cD_x=\cD_x^2=\cD_x\cD_{x\cdot 1}$, that is $a\in A_{\alpha(ss\m)}$, where $s\in x=x\cdot 1$. Then $w_{x,1}a=f(s,s\m s)a$ by~\eqref{eq-w_(x,y)-from-Lambda}, which is $\alpha(ss\m)a=a$ by (iv) of Definition~\ref{defn-twisted_S-mod}. Similarly $aw_{x,1}=a$. Since $ss\m\in 1$ and $a\in A_{\alpha(ss\m(ss\m)\m)}$, it follows that $w_{1,x}a=f(ss\m,(ss\m)\m s)a=f(ss\m,s)a=\alpha(ss\m)a=a$ and similarly $aw_{1,x}=a$. Thus, $w_{x,1}$ and $w_{1,x}$ are identity on $\cD_x$, which is (v).
 
 For (vi) take $a\in \cD_{x\m}\cD_y\cD_{yz}$. There is a unique triple $s,t,u\in S$, such that $s\in x$, $t\in y$, $u\in yz$ and 
\begin{align}\label{eq-aa^(-1)=alpha(s^(-1)s)=alpha(tt^(-1))=alpha(uu^(-1))}
 aa\m=\alpha(s\m s)=\alpha(tt\m)=\alpha(uu\m).
\end{align}
Then $aw_{y,z}=af(t,t\m u)$. Since by~\eqref{eq-aa^(-1)=alpha(s^(-1)s)=alpha(tt^(-1))=alpha(uu^(-1))}
\begin{align}\label{eq-af(t,t^(-1)u)}
 af(t,t\m u)(af(t,t\m u))\m=aa\m\alpha(tt\m uu\m)=aa\m,
\end{align}
it follows that $\0_x(aw_{y,z})=\lambda_s(af(t,t\m u))$. Using~\eqref{eq-aa^(-1)=alpha(s^(-1)s)=alpha(tt^(-1))=alpha(uu^(-1))}--\eqref{eq-af(t,t^(-1)u)} we observe that
$$
 \lambda_s(af(t,t\m u))\lambda_s(af(t,t\m u))\m=\lambda_s(aa\m)=\alpha(ss\m)=\alpha(su(su)\m)
$$
with $su\in xyz$. Hence
\begin{align}\label{eq-0_x(aw_(y,z))w_(x,yz)}
 \0_x(aw_{y,z})w_{x,yz}=\lambda_s(af(t,t\m u))f(s,s\m su)=\lambda_s(af(t,t\m u))f(s,u),
\end{align}
because $s\m s=uu\m$.

Now $\0_x(a)=\lambda_s(a)$ and
$$
 \lambda_s(a)\lambda_s(a)\m=\lambda_s(aa\m)=\alpha(ss\m)=\alpha(st(st)\m)
$$
by~\eqref{eq-aa^(-1)=alpha(s^(-1)s)=alpha(tt^(-1))=alpha(uu^(-1))}. As $st\in xy$, one has that $\0_x(a)w_{x,y}=\lambda_s(a)f(s,s\m st)=\lambda_s(a)f(s,t)$, due to the fact that $s\m s=tt\m$. Finally,
$$
 \lambda_s(a)f(s,t)(\lambda_s(a)f(s,t))\m=\lambda_s(aa\m)=\alpha(st(st)\m)=\alpha(su(su)\m),
$$
where $st\in xy$ and $su\in xyz$. Consequently, taking into account that $s\m s=tt\m$, we have
\begin{align}\label{eq-0_x(a)w_(x,y)w_(xy,z)}
 \0_x(a)w_{x,y}w_{xy,z}=\lambda_s(a)f(s,t)f(st,(st)\m su)=\lambda_s(a)f(s,t)f(st,t\m u).
\end{align}

To show that~\eqref{eq-0_x(aw_(y,z))w_(x,yz)} and~\eqref{eq-0_x(a)w_(x,y)w_(xy,z)} are equal, it remains to note that
$$
 \lambda_s(f(t,t\m u))f(s,tt\m u)=f(s,t)f(st,t\m u)
$$
by (v) of Definition~\ref{defn-twisted_S-mod} and that $tt\m u=u$ thanks to~\eqref{eq-aa^(-1)=alpha(s^(-1)s)=alpha(tt^(-1))=alpha(uu^(-1))}.
\end{proof}

\begin{prop}\label{prop-A*_Lambda-S-cong-A*_Theta-G(S)}
 The pair $\Theta=(\0,w)$ is a twisted partial action of $\cG S$ on $A$. Moreover, the crossed products $A*_\Lambda S$ and $A*_\Theta\cG S$ are equivalent as extensions of $A$ by $\cG S$.
\end{prop}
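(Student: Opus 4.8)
The first assertion needs no fresh computation; it only remains to collect what the preceding lemmas have established. Each $\cD_x$ is a non-empty idempotent ideal, the $\cD_x$ commute, and $\cD_1=A$ by Lemma~\ref{lem-D_x-ideal}; each $\0_x\colon\cD_{x\m}\to\cD_x$ is a well-defined isomorphism and (ii) holds by Lemma~\ref{lem-0_x-isom}; (iii) is Corollary~\ref{cor-0_x(D_(x^(-1))D_y)}; (iv) is Lemma~\ref{lem-0_x-circ-0_y}; the fact that $w_{x,y}\in\cU{\cM{\cD_x\cD_{xy}}}$ is Lemma~\ref{lem-w_(x,y)-in-M(D_xD_xy)}; and (v)--(vi) are Lemma~\ref{lem-w-norm-twist}. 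Thus $\Theta=(\0,w)$ meets every clause of Definition~\ref{defn-tw_part_act}.

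For the equivalence, observe first that $A*_\Lambda S$ becomes an extension of $A$ by $\cG S$ via $\sigma^\natural\circ j$, where $j(a\delta_s)=s$: since $S$ is $E$-unitary, $(\sigma^\natural)\m(1)=E(S)$, so the preimage of $1$ is $\{a\delta_s\mid s\in E(S)\}=i(A)$. The plan is to verify that, writing $x=\sigma^\natural(s)$,
$$
 \Phi\colon A*_\Lambda S\to A*_\Theta\cG S,\qquad\Phi(a\delta_s)=a\delta_x,
$$
is an isomorphism realizing the equivalence of Definition~\ref{defn-equiv-ext-of-A-by-G}. It is well defined, for $aa\m=\alpha(ss\m)$ places $a$ in $A_{\alpha(ss\m)}\subseteq\cD_x$. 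It is injective: if $\Phi(a\delta_s)=\Phi(b\delta_t)$ then $a=b$ and $\sigma^\natural(s)=\sigma^\natural(t)$, whence $\alpha(ss\m)=aa\m=\alpha(tt\m)$ forces $ss\m=tt\m$, and Corollary~\ref{cor-s=t-in-E-unitary} (available since $(s,t)\in\sigma$ and $S$ is $E$-unitary) gives $s=t$. It is surjective, because any $c\delta_x\in A*_\Theta\cG S$ has $c\in A_{\alpha(ss\m)}$ for a unique $s\in x$, so $\Phi(c\delta_s)=c\delta_x$. Finally the equivalence square commutes: $\Phi\circ i(a)=\Phi(a\delta_{\alpha\m(aa\m)})=a\delta_1=i'(a)$, as $\alpha\m(aa\m)\in E(S)$ lies in the class $1$, and both $\sigma^\natural\circ j$ and the composite of $\Phi$ with the projection $A*_\Theta\cG S\to\cG S$ send $a\delta_s$ to $x$.

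The heart of the matter is multiplicativity of $\Phi$, i.e. the identity
$$
 \0_x(\0\m_x(a)b)\,w_{x,y}=a\lambda_s(b)f(s,t),\qquad x=\sigma^\natural(s),\ y=\sigma^\natural(t),
$$
which identifies $\Phi(a\delta_s)\Phi(b\delta_t)$ with $\Phi(a\delta_s\cdot b\delta_t)$. I would evaluate the left side in two stages. Since $\0\m_x(a)=\overline{\lambda_s}(a)\in A_{\alpha(s\m s)}$ (Remarks~\ref{lambda-iso} and~\ref{lambda-bar}) and $\0\m_x(a)b\in A_{\alpha(s\m s tt\m)}$ with $stt\m\in x$, we have $\0_x(\0\m_x(a)b)=\lambda_{stt\m}(\0\m_x(a)b)$; the Sieben condition (iv') gives $f(s,tt\m)=\alpha(stt\m s\m)$, which via (iii) of Definition~\ref{defn-twisted_S-mod} collapses $\lambda_{stt\m}$ to $\lambda_s$ on $A_{\alpha(s\m s tt\m)}$, so this equals $\lambda_s(\overline{\lambda_s}(a))\lambda_s(b)=a\lambda_s(b)$ by Lemma~\ref{lem-lambda_s_satisfies_(i)}. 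For the second stage, $a\lambda_s(b)\in A_{\alpha(stt\m s\m)}=A_{\alpha(st(st)\m)}$ with $stt\m\in x$ and $st\in xy$, so by~\eqref{eq-w_(x,y)-from-Lambda} right multiplication by $w_{x,y}$ amounts to multiplying by $f(stt\m,(stt\m)\m st)$; since $stt\m\le s$ and $(stt\m)\m st\le t$, Lemma~\ref{lem-f(s',t')} turns this factor into $\alpha(stt\m s\m)f(s,t)=f(s,t)$, the conjugating idempotent simplifying to $stt\m s\m$ by a short computation with commuting idempotents. Combining the two stages yields the displayed identity. This computation is the main obstacle: it is precisely the Sieben normality (iv') and Lemma~\ref{lem-f(s',t')} that force the twisting $f$ and the multipliers $w$ to match, while well-definedness, bijectivity and commutativity of the square are routine.
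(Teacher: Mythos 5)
Your proposal is correct and follows essentially the same route as the paper: the first assertion is assembled from the same lemmas, and the equivalence is realized by the same bijection (you write it as $a\delta_s\mapsto a\delta_{\sigma^\natural(s)}$, the paper as its inverse $a\delta_x\mapsto a\delta_s$), with the multiplicativity identity $\0_x(\0\m_x(a)b)w_{x,y}=a\lambda_s(b)f(s,t)$ verified by the same chain of steps — identifying $\0\m_x(a)$ with $\overline{\lambda_s}(a)$, collapsing $\lambda_{stt\m}$ to $\alpha(stt\m s\m)\lambda_s$ via (iv') and (iii), and applying Lemma~\ref{lem-f(s',t')} to match the multiplier factor with $f(s,t)$. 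The only cosmetic difference is that you obtain $\0\m_x(a)=\overline{\lambda_s}(a)$ directly from Remarks~\ref{lambda-iso} and~\ref{lambda-bar} rather than through the explicit $w_{x\m,x}$-conjugation formula of Lemma~\ref{lem-0_x-inv}, which is a legitimate and slightly shorter justification of the same fact.
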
	
\begin{proof}
 The fact that $\Theta$ is a twisted partial action of $\cG S$ on $A$ follows from Lemmas~\ref{lem-D_x-ideal}, \ref{lem-0_x-isom}, \ref{lem-w_(x,y)-in-M(D_xD_xy)}, \ref{lem-0_x-circ-0_y}, \ref{lem-w-norm-twist} and Corollary~\ref{cor-0_x(D_(x^(-1))D_y)}.

 Let $a\delta_x\in A*_\Theta\cG S$. Since $a\in \cD_x$, there is a unique $s\in x$, such that $a\in A_{\alpha(ss\m)}$. Set $\varphi(a\delta_x)=a\delta_s$. This is a well-defined bijection $A*_\Theta\cG S\to A*_\Lambda S$ with $\varphi\m(a\delta_s)=a\delta_{\sigma^\natural(s)}$. 
 
 Recall that $A*_\Theta\cG S$ and $A*_\Lambda S$ can be regarded as extensions of $A$ by $\cG S$ with embeddings $i(a)=a\delta_1$, $i'(a)=a\delta_{\alpha\m(aa\m)}$ and projections $j(a\delta_x)=x$, $j'(a\delta_s)=\sigma^\natural(s)$, respectively. Observe that
 $$
  \varphi\circ i(a)=\varphi(a\delta_1)=a\delta_e,
 $$
 where $e\in E(S)$ with $\alpha(e)=aa\m$. It follows that $e=\alpha\m(aa\m)$, i.\,e. $\varphi\circ i(a)=i'(a)$. Moreover, for $a$, $x$ and $s$ as above:
 $$
  j'\circ\varphi(a\delta_x)=j'(a\delta_s)=\sigma^\natural(s),
 $$
 which is $x=j(a\delta_x)$, as $s\in x$.
 
 It remains to show that $\varphi$ is a homomorphism. Given $a\delta_x,b\delta_y\in A*_\Theta\cG S$, we immediately have
 $$
  \varphi(a\delta_x)\varphi(b\delta_y)=a\delta_s\cdot b\delta_t=a\lambda_s(b)f(s,t)\delta_{st},
 $$
 where $s\in x$, $t\in y$, $a\in A_{\alpha(ss\m)}$ and $b\in A_{\alpha(tt\m)}$. On the other hand
 $$
  \varphi(a\delta_x\cdot b\delta_y)=\varphi(\0_x(\0\m_x(a)b)w_{x,y}\delta_{xy})=\0_x(\0\m_x(a)b)w_{x,y}\delta_u
 $$
 with $u\in xy$, such that $\0_x(\0\m_x(a)b)w_{x,y}\in A_{\alpha(uu\m)}$. It is enough to prove that $\0_x(\0\m_x(a)b)w_{x,y}=a\lambda_s(b)f(s,t)$. Then automatically $u=st$, as $st\in xy$ and $a\lambda_s(b)f(s,t)\in A_{\alpha(st(st)\m)}$.
 
 By Lemma~\ref{lem-0_x-inv} one has $\0\m_x(a)=w\m_{x\m,x}\0_{x\m}(a)w_{x\m,x}$. Since $a\in A_{\alpha(ss\m)}$ and $s\m\in x\m$, then
 $$
  \0_{x\m}(a)=\lambda_{s\m}(a)\in A_{\alpha(s\m s)}=A_{\alpha((s\m s)\m s\m s)}.
 $$
 Therefore,
 \begin{align*}
  w\m_{x\m,x}\0_{x\m}(a)w_{x\m,x}&=f(s\m,(s\m)\m s\m s)\m\lambda_{s\m}(a)f(s\m,(s\m)\m s\m s)\\
  &=f(s\m,s)\m\lambda_{s\m}(a)f(s\m,s),
 \end{align*}
 which is $\overline{\lambda_s}(a)\in A_{\alpha(s\m s)}$ in view of Lemma~\ref{lem-lambda_s_satisfies_(i)}. It follows that
 $$
  \0\m_x(a)b=\overline{\lambda_s}(a)b\in A_{\alpha(s\m stt\m)}=A_{\alpha((stt\m)\m stt\m)},
 $$
 where $stt\m\in x$. Hence $\0_x(\0\m_x(a)b)=\lambda_{stt\m}(\overline{\lambda_s}(a)b)$. By (i)--(iii) and (iv') of Definition~\ref{defn-twisted_S-mod}
 $$
  \lambda_{stt\m}=\xi_{f(s,tt\m)\m}\circ\lambda_s\circ\lambda_{tt\m}=\alpha(stt\m s\m)\lambda_s.
 $$
 Thus, taking into account Lemma~\ref{lem-lambda_s_satisfies_(i)}
 $$
  \0_x(\0\m_x(a)b)=\alpha(stt\m s\m)\lambda_s(\overline{\lambda_s}(a)b)=\alpha(stt\m s\m)\alpha(ss\m)a\lambda_s(b)=a\lambda_s(b),
 $$
 as $\lambda_s(b)\in A_{\alpha(stt\m s\m)}$. Now observing that $stt\m s\m=stt\m(stt\m)\m=st(st)\m$ and $stt\m\in x$, $st\in xy$, we conclude that
 $$
  \0_x(\0\m_x(a)b)w_{x,y}=a\lambda_s(b)f(stt\m,tt\m s\m st)=a\lambda_s(b)f(stt\m,s\m st).
 $$
 By Lemma~\ref{lem-f(s',t')}
 $$
  f(stt\m,s\m st)=\alpha(stt\m s\m)f(s,t)=f(s,t),
 $$  as $stt\m\cdot s\m st=st.$
\end{proof}

\begin{thrm}\label{thrm-adm-ext-equiv-crossed-prod}
 Let $A$ be a semilattice of groups and $G$ a group. For any admissible extension $U$ of $A$ by $G$ there exists a twisted partial action $\Theta$ of $G$ on $A$, such that $U$ is equivalent to $A*_\Theta G$.
\end{thrm}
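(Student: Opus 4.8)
The plan is to assemble the theorem from the machinery already developed, passing through the refinement $A\to U\to S\to G$ supplied by admissibility and then converting the Sieben twisted $S$-module coming from an order-preserving transversal into a twisted partial action of $\cG S\cong G$. First I would invoke admissibility: there are an inverse semigroup $S$ and epimorphisms $\pi:U\to S$, $\kappa:S\to G$ as in Proposition~\ref{prop-ext-A-G-to-ext-A-S}, so that $A\overset{i}{\to}U\overset{\pi}{\to}S$ is an extension of $A$ by $S$, $S$ is $E$-unitary, $j=\kappa\circ\pi$ and $\ker\kappa=\sigma$; moreover $\pi$ admits an order-preserving transversal $\rho$. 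By Proposition~\ref{prop-order-pres-rho} the triple $\Lambda=(\alpha,\lambda,f)$ induced by $\rho$ is a \emph{Sieben} twisted $S$-module structure on $A$. Since $\ker\kappa=\sigma$, the map $\kappa$ factors through the minimum group congruence, yielding an isomorphism $\bar\kappa:\cG S\to G$ with $\bar\kappa\circ\sigma^\natural=\kappa$.

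Next I would produce the twisted partial action and record the relevant equivalences. As $S$ is $E$-unitary and $\Lambda$ is Sieben, Proposition~\ref{prop-A*_Lambda-S-cong-A*_Theta-G(S)} yields a twisted partial action $\Theta=(\0,w)$ of $\cG S$ on $A$ together with an equivalence $\varphi:A*_\Theta\cG S\to A*_\Lambda S$ of extensions of $A$ by $\cG S$. On the other hand, Remark~\ref{rem-A*_Lambda-S-equiv-U} gives the equivalence $\mu:A*_\Lambda S\to U$, $\mu(a\delta_s)=i(a)\rho(s)$, of extensions of $A$ by $S$; composing its projections with $\sigma^\natural$ shows that $\mu$ is simultaneously an equivalence of the corresponding extensions of $A$ by $\cG S$. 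Transporting $\Theta$ along the group isomorphism $\bar\kappa$ (set $\cD_{\bar\kappa(x)}:=\cD_x$, $\0_{\bar\kappa(x)}:=\0_x$ and $w_{\bar\kappa(x),\bar\kappa(y)}:=w_{x,y}$) produces a twisted partial action $\hat\Theta$ of $G$ on $A$; every axiom of Definition~\ref{defn-tw_part_act} survives this relabeling verbatim, and $\psi(a\delta_x)=a\delta_{\bar\kappa(x)}$ is an isomorphism $A*_\Theta\cG S\to A*_{\hat\Theta}G$ fixing $A$.

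Finally I would chain these maps into the isomorphism $\Psi=\mu\circ\varphi\circ\psi\m:A*_{\hat\Theta}G\to U$ and check that it is an equivalence of extensions of $A$ by $G$. For the embeddings, $a\delta_1\overset{\psi\m}{\mapsto}a\delta_1\overset{\varphi}{\mapsto}i'(a)=a\delta_{\alpha\m(aa\m)}\overset{\mu}{\mapsto}i(a)\rho(\alpha\m(aa\m))=i(a)$, using $\rho|_{E(S)}=(\pi|_{E(U)})\m$ and $\alpha=i\m\circ\rho|_{E(S)}$. For the projections, given $a\delta_g$ put $x=\bar\kappa\m(g)$ and pick $s\in x$ with $a\in A_{\alpha(ss\m)}$; then $\Psi(a\delta_g)=i(a)\rho(s)$ and $j(i(a)\rho(s))=\kappa(\pi(i(a))\,s)=\kappa(s)=\bar\kappa(\sigma^\natural(s))=\bar\kappa(x)=g$, since $\pi(i(a))\in E(S)$ and $\pi\circ\rho=\id_S$. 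Hence $U$ is equivalent to $A*_{\hat\Theta}G$. The only genuine work is this bookkeeping across the three bases $S$, $\cG S$ and $G$: the substantive content is carried entirely by Propositions~\ref{prop-order-pres-rho} and~\ref{prop-A*_Lambda-S-cong-A*_Theta-G(S)}, so the main obstacle is organizational, namely ensuring that each equivalence is read in the correct category and that the relabeling along $\bar\kappa$ is compatible with the projections, as verified above.
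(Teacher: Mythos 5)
Your proposal is correct and follows essentially the same route as the paper: refine via Proposition~\ref{prop-ext-A-G-to-ext-A-S}, obtain a Sieben twisted $S$-module structure from the order-preserving transversal (Proposition~\ref{prop-order-pres-rho}), and chain the equivalences $U\sim A*_\Lambda S$ (Remark~\ref{rem-A*_Lambda-S-equiv-U}) and $A*_\Lambda S\sim A*_\Theta\cG S$ (Proposition~\ref{prop-A*_Lambda-S-cong-A*_Theta-G(S)}) after identifying $G$ with $\cG S$. The only difference is that you make explicit the bookkeeping of the isomorphism $\bar\kappa:\cG S\to G$ and the composite equivalence, which the paper compresses into ``identifying $G$ with $\cG S$'' and ``by transitivity''.
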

\begin{proof}
 Let $A\overset{i}{\to}U\overset{j}{\to}G$ be an admissible extension. By Proposition~\ref{prop-ext-A-G-to-ext-A-S} there exist an $E$-unitary inverse semigroup $S$ and epimorphisms $\pi:U\to S$, $\kappa:S\to G$, such that $A\overset{i}{\to}U\overset{\pi}{\to}S$ is an extension (in the sense of Lausch) of $A$ by $S$, $j=\kappa\circ\pi$ and $\ker\kappa=\sigma$. Since $U$ is admissible, we may assume that $\pi$ has an order-preserving transversal $\rho$. 
 
 Denote by $\Lambda=(\alpha,\lambda,f)$ the twisted $S$-module structure on $A$ that comes from the extension $A\overset{i}{\to}U\overset{\pi}{\to}S$ and transversal $\rho$. By Remark~\ref{rem-A*_Lambda-S-equiv-U} the extensions $U$ and $A*_\Lambda S$ of $A$ by $S$ are equivalent. With the help of $\kappa$ they can be seen as equivalent extensions of $A$ by $G$.
 
 Observe that $\Lambda$ is a Sieben twisted $S$-module structure thanks to Proposition~\ref{prop-order-pres-rho}. Identifying $G$ with $\cG S$ and using Proposition~\ref{prop-A*_Lambda-S-cong-A*_Theta-G(S)}, we construct a twisted partial action $\Theta$ of $G$ on $A$, such that $A*_\Lambda S$, and hence by transitivity $U$, is equivalent to $A*_\Theta G$ as an extension of $A$ by $G$. 
\end{proof}

\begin{prop}\label{prop-equiv-Lambdas=>equiv-Thetas}
 Let $\Lambda=(\alpha,\lambda,f)$ and $\Lambda'=(\alpha',\lambda',f')$ be Sieben twisted $S$-module structures. If  $\Lambda$  is equivalent to $\Lambda' ,$  then the corresponding  $\Theta=(\0,w)$ and $\Theta'=(\0',w')$ are  equivalent.
\end{prop}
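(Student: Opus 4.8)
The plan is to produce the equivalence data $\{\e_x\}_{x\in\cG S}$ directly from the function $g\colon S\to A$ that witnesses the equivalence of $\Lambda$ and $\Lambda'$. Recall from Definition~\ref{defn-equiv-tw-S-mod} and Proposition~\ref{prop-equiv-tw-S-mod} that this $g$ satisfies $g(s)\in A_{\alpha(ss\m)}$, together with $\alpha'=\alpha$, $\lambda'_s=\xi_{g(s)}\circ\lambda_s$ and $f'(s,t)g(st)=g(s)\lambda_s(g(t))f(s,t)$.

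The first step is to establish a \emph{restriction property} for $g$: if $t\le s$, then $g(t)=\alpha(tt\m)g(s)$. Writing $t=es$ with $e\in E(S)$ and feeding the pair $(e,s)$ into the equivalence identity $f'(e,s)g(es)=g(e)\lambda_e(g(s))f(e,s)$, the Sieben normality (iv'), valid for both $\Lambda$ and $\Lambda'$, collapses $f(e,s)$ and $f'(e,s)$ to $\alpha(ess\m)$, while Remark~\ref{rem-g(e)} gives $g(e)=\alpha(e)$ and (i) of Definition~\ref{defn-twisted_S-mod} gives $\lambda_e(g(s))=\alpha(e)g(s)$. Since idempotents are central and $g(es)\in A_{\alpha(ess\m)}$, the identity reduces to $g(es)=\alpha(ess\m)g(s)=\alpha(tt\m)g(s)$. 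This is precisely the place where the Sieben (rather than merely Lausch) normality is needed.

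Next, for each $x\in\cG S$ I would define $\e_x$ on $\cD_x=\bigsqcup_{s\in x}A_{\alpha(ss\m)}$ by $\e_x a=g(s)a$ and $a\e_x=ag(s)$, where $s\in x$ is the unique element with $a\in A_{\alpha(ss\m)}$. The verification that $\e_x$ is a multiplier of $\cD_x$ mirrors the proof of Lemma~\ref{lem-w_(x,y)-in-M(D_xD_xy)}: for $a\in A_{\alpha(ss\m)}$ and $b\in A_{\alpha(uu\m)}$ with $s,u\in x$, the block of $ab$ is indexed by $ss\m u\in x$, and since $S$ is $E$-unitary one has $s\m u,u\m s\in E(S)$, whence $ss\m u\le s$ and $ss\m u\le u$; the restriction property then forces $g(ss\m u)=\alpha(ss\m uu\m)g(s)=\alpha(ss\m uu\m)g(u)$, which is exactly what is needed to check (i')--(iii') of a multiplier. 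Invertibility is immediate with $\e\m_x a=g(s)\m a$ and $a\e\m_x=ag(s)\m$, because $g(s)$ lies in the group $A_{\alpha(ss\m)}$; thus $\e_x\in\cU{\cM{\cD_x}}$.

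Finally I would check the three clauses of Definition~\ref{defn-equiv-tw-pact}. Clause (i), $\cD'_x=\cD_x$, is immediate from $\alpha'=\alpha$. For clause (ii), given $a\in\cD_{x\m}$ choose the unique $s\in x$ with $a\in A_{\alpha(s\m s)}$; then $\0_x(a)=\lambda_s(a)\in A_{\alpha(ss\m)}$, and using that $\cD_x$ is idempotent, $\e_x\0_x(a)\e\m_x=g(s)\lambda_s(a)g(s)\m=\xi_{g(s)}\circ\lambda_s(a)=\lambda'_s(a)=\0'_x(a)$. For clause (iii), given $a\in\cD_{x\m}\cD_y=\cD_{x\m}\cap\cD_y$ take the unique $s\in x$, $t\in y$ with $\alpha(s\m s)=\alpha(tt\m)=aa\m$; unwinding $\0,\0',w,w'$ and $\e$ in terms of this single pair $(s,t)$ (noting $s\m(st)=t$ and $st\in xy$), the left-hand side becomes $\lambda'_s(a)f'(s,t)g(st)$ and the right-hand side becomes $g(s)\lambda_s(ag(t))f(s,t)$; substituting $f'(s,t)g(st)=g(s)\lambda_s(g(t))f(s,t)$, then $\lambda'_s=\xi_{g(s)}\circ\lambda_s$ and $g(s)\m g(s)=\alpha(ss\m)$, collapses the left-hand side to $g(s)\lambda_s(a)\lambda_s(g(t))f(s,t)=g(s)\lambda_s(ag(t))f(s,t)$, which is the right-hand side. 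The main obstacle is the well-definedness of $\e_x$ as a multiplier: unlike the pointwise formulas for $\0_x$ and $w_{x,y}$, the block indexing a product $ab$ is generally not the one indexing $a$, so the multiplier axioms hold only by virtue of the restriction property of $g$; once that is in place, clauses (ii) and (iii) reduce to routine computations in the Clifford semigroup $A$.
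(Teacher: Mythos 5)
Your proposal is correct and follows essentially the same route as the paper's proof: you derive the same key identity $g(es)=\alpha(ess\m)g(s)$ from the equivalence relation for the pair $(e,s)$ together with the Sieben condition (iv'), define $\e_x$ by the same formula $\e_x a=g(s)a$, $a\e_x=ag(s)$, and verify the multiplier axioms and clauses (i)--(iii) of Definition~\ref{defn-equiv-tw-pact} by the same computations (your use of $s\m st=t$ to write $f'(s,t)$ in place of $f'(s,s\m st)$ is only a cosmetic simplification of the paper's calculation).
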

\begin{proof}
 Since $\alpha'=\alpha$, we see from~\eqref{eq-D_x-from-Lambda} that $\cD'_x=\cD_x$.
 
 Let $g:S\to A$ be the map from Proposition~\ref{prop-equiv-tw-S-mod}, which determines the equivalence of $\Lambda$ and $\Lambda'$. Writing (iii) of Proposition~\ref{prop-equiv-tw-S-mod} for the pair $(e,s)$, where $e\in E(S)$, and using (i), (iv') of Definition~\ref{defn-twisted_S-mod} and Remark~\ref{rem-g(e)}, we observe that 
 \begin{align}\label{eq-g(es)}
  g(es)=\alpha(e)g(s).
 \end{align}
 
 Given $a\in\cD_x$, set $\e_xa=g(s)a$ and $a\e_x=ag(s)$, where $s\in x$ with $\alpha(ss\m)=aa\m$. We first show that $\e_x\in\cM{\cD_x}$. If $b$ is an other element of $\cD_x$, $t\in x$ and $\alpha(tt\m)=bb\m$, then
 $$
  ab(ab)\m=bb\m aa\m=\alpha(tt\m ss\m)=\alpha(tt\m s(tt\m s)\m),
 $$
 where $tt\m s\in x$. Hence, using \eqref{eq-g(es)},
 $$
  \e_x(ab)=g(tt\m s)ab=\alpha(tt\m)g(s)ab=g(s)ab=(\e_xa)b.
 $$
 The equality $(ab)\e_x=a(b\e_x)$ is proved similarly. As to $(a\e_x)b=a(\e_xb)$, it is equivalent to $ag(s)b=ag(t)b$. Note that, since $S$ is $E$-unitary and $(s,t)\in\sigma$, one has $s\m t,st\m\in E(S)$, so
 $$
  ss\m t=s\cdot s\m t\cdot t\m t=s\cdot t\m t\cdot s\m t=st\m\cdot ts\m\cdot t=ts\m t=tt\m s.
 $$
Using this and~\eqref{eq-g(es)}, we get
\begin{align*}
 ag(s)b&=ag(s)bb\m b=abb\m g(s)b=a\alpha(tt\m)g(s)b=ag(tt\m s)b\\
 &=ag(ss\m t)b=a\alpha(ss\m)g(t)b=a\cdot aa\m g(t)b=ag(t)b. 
\end{align*}
Obviously, $\e_x$ is invertible with $\e\m_xa=g(s)\m a$ and $a\e\m_x=ag(s)\m$, where $\alpha(ss\m)=aa\m$.

Let $a\in\cD_{x\m}$ and $s\in x$ with $\alpha(s\m s)=aa\m$. Then by~\eqref{eq-0_x-from-Lambda} and (ii) of Proposition~\ref{prop-equiv-tw-S-mod}
$$
 \0'_x(a)=\lambda'_s(a)=g(s)\lambda_s(a)g(s)\m=\e_x\0_x(a)\e\m_x,
$$
as $\lambda_s(a)\lambda_s(a)\m=\lambda_s(aa\m)=\lambda_s(\alpha(s\m s))=\alpha(ss\m)=g(s)g(s)\m$.

Given $a\in\cD_{x\m}\cD_y$, choose $s\in x$ and $t\in y$ with $\alpha(s\m s)=\alpha(tt\m)=a a\m$. Then $\0'_x(a)=\lambda'_s(a)$. Since
$$
\lambda'_s(a)\lambda'_s(a)\m=\lambda'_s(aa\m)=\alpha(ss\m)=\alpha(stt\m s\m)
$$
with $s\in x$ and $st\in xy$, it follows from~\eqref{eq-w_(x,y)-from-Lambda} that $\0'_x(a)w'_{x,y}=\lambda'_s(a)f'(s,s\m st)$. The latter belongs to $A_{\alpha(stt\m s\m)}$, so in view of (iii) of Proposition~\ref{prop-equiv-tw-S-mod},~\eqref{eq-g(es)} and (ii) of Definition~\ref{defn-twisted_S-mod}
\begin{align*}
 \0'_x(a)w'_{x,y}\e_{xy}&=\lambda'_s(a)f'(s,s\m st)g(st)\\
 &=g(s)\lambda_s(a)g(s)\m g(s)\lambda_s(g(s\m st))f(s,s\m st)\\
 &=g(s)\lambda_s(a\alpha(s\m s)g(t))f(s,s\m st)\\
 &=g(s)\alpha(ss\m)\lambda_s(ag(t))f(s,s\m st)\\
 &=g(s)\lambda_s(ag(t))f(s,s\m st)\\
 &=\e_x\0_x(a\e_y)w_{x,y}.
\end{align*}
\end{proof}

\section{\texorpdfstring{Transversals of $j$ and the corresponding twisted partial actions}{Transversals of j and the corresponding twisted partial actions}}\label{sec-transversal}

Our next purpose is to find explicit formulas for $\Theta$ which can be obtained directly from an extension $U$ of $A$ by $G$ (and do not involve $S$ and $\Lambda$).

\begin{lem}\label{lem-ker-pi}
 Let $A\overset{i}{\to}U\overset{j}{\to}G$ be an extension, $\pi:U\to S$, $\kappa:S\to G$ the corresponding epimorphisms from Proposition~\ref{prop-ext-A-G-to-ext-A-S}. Then
 $$
  \pi(u)=\pi(v)\iff j(u)=j(v)\mbox{ and }uu\m=vv\m.
 $$
\end{lem}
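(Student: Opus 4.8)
The plan is to prove the two implications separately, exploiting the fact that $\ker\pi=\rho_{\mathcal A}$ by the very construction of $\pi$ in Proposition~\ref{prop-ext-A-G-to-ext-A-S}, where $\mathcal A=\{i(A)_e\}_{e\in E(U)}$. I would lean on the explicit description of the induced congruence recalled in Section~\ref{sec-inv-sem}: $(u,v)\in\rho_{\mathcal A}$ if and only if $v=wu$ for some $w\in i(A)_{uu\m}=i(A)_{vv\m}$.

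The forward implication is immediate from the properties of $\pi$ and $\kappa$. Assuming $\pi(u)=\pi(v)$, applying $\kappa$ and using $j=\kappa\circ\pi$ yields $j(u)=j(v)$. Since $\pi$ is idempotent-separating and $\pi(uu\m)=\pi(u)\pi(u)\m=\pi(v)\pi(v)\m=\pi(vv\m)$ with $uu\m,vv\m\in E(U)$, injectivity of $\pi$ on idempotents forces $uu\m=vv\m$.

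For the reverse implication I would assume $j(u)=j(v)$ and $uu\m=vv\m$ and aim to produce the element $w$ witnessing $(u,v)\in\rho_{\mathcal A}$. First, since $G$ is a group, $j(u\m v)=j(u)\m j(v)=1$, so $u\m v\in j\m(1)=i(A)$. The key step, which I expect to be the main (and only real) obstacle, is to upgrade the hypothesis $uu\m=vv\m$ to the equality $u\m u=v\m v$: because $i(A)\cong A$ is a semilattice of groups, its elements satisfy property (i), i.e. $aa\m=a\m a$, so applying this to $a=u\m v$ and substituting $uu\m=vv\m$ gives
$$ u\m u=u\m(vv\m)u=(u\m v)(u\m v)\m=(u\m v)\m(u\m v)=v\m(uu\m)v=v\m v. $$
Once both idempotent equalities are available, the witness is $w=vu\m$: one checks $j(w)=1$ so $w\in i(A)$, then $ww\m=v(u\m u)v\m=v(v\m v)v\m=vv\m$ so $w\in i(A)_{vv\m}=i(A)_{uu\m}$, and finally $wu=v(u\m u)=v(v\m v)=v$. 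By the description of $\rho_{\mathcal A}=\ker\pi$ from Section~\ref{sec-inv-sem}, the existence of such $w$ is exactly the statement $\pi(u)=\pi(v)$, completing the proof.
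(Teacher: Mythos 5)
Your proof is correct, and the forward implication is identical to the paper's. For the converse, however, you take a genuinely different (if closely parallel) route. The paper pushes everything down to $S$: it translates $j(u)=j(v)$ into $(\pi(u),\pi(v))\in\sigma$ (using $\ker\kappa=\sigma$) and $uu\m=vv\m$ into $\pi(u)\pi(u)\m=\pi(v)\pi(v)\m$ (using that $\pi$ is idempotent-separating), and then invokes Corollary~\ref{cor-s=t-in-E-unitary}, i.e.\ the $E$-unitarity of $S$ established in Proposition~\ref{prop-ext-A-G-to-ext-A-S}, to conclude $\pi(u)=\pi(v)$. You instead stay upstairs in $U$ and exhibit an explicit witness for the congruence $\rho_{\mathcal A}=\ker\pi$: from $u\m v\in j\m(1)=i(A)$ and the Clifford property $aa\m=a\m a$ of the semilattice of groups $i(A)$ you upgrade $uu\m=vv\m$ to $u\m u=v\m v$, and then check that $w=vu\m$ lies in $i(A)_{uu\m}$ with $wu=v$. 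Both arguments ultimately rest on the same algebraic fact (compare your computation with the proof of Lemma~\ref{lem-ss^(-1)=tt^(-1)}, where the idempotency of $s\m t$ plays the role your Clifford identity plays for $u\m v$). The paper's version buys brevity and reuse of the general $E$-unitary machinery already set up in Section~6; yours buys self-containedness and makes the kernel-normal-system description of $\ker\pi$ concrete, at the cost of redoing a small computation that the quotient argument packages into Corollary~\ref{cor-s=t-in-E-unitary}. All steps you use (the description of $\rho_{\mathcal N}$ from Section~\ref{sec-inv-sem}, $i(A)=j\m(1)$, and property (i) of semilattices of groups) are available at this point in the paper, so there is no gap.
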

\begin{proof}
 Note that 
 $$
  j(u)=j(v)\iff \kappa\circ\pi(u)=\kappa\circ\pi(v)\iff (\pi(u),\pi(v))\in\sigma,
 $$
 as $\ker\kappa=\sigma$. Since $\pi$ is idempotent-separating, then
 $$
  uu\m=vv\m\iff\pi(uu\m)=\pi(vv\m)\iff\pi(u)\pi(u)\m=\pi(v)\pi(v)\m.
 $$
 It remains to apply Corollary~\ref{cor-s=t-in-E-unitary}.
\end{proof}

Lemma~\ref{lem-ker-pi} shows that with each class of $\ker\pi$ one can associate a pair $(x,e)\in G\times E(U)$, where $x=j(u)$ and $e=uu\m$ for arbitrary element $u$ of the class. Conversely, given $(x,e)\in G\times E(U)$, the set of all $u\in U$, such that $j(u)=x$ and $uu\m=e$, is either empty, or forms a class of $\ker\pi$.

\begin{defn}\label{defn-tau(x,e)}
 Let $A\overset{i}{\to}U\overset{j}{\to}G$ be an extension. A {\it transversal} of $j$ is a partial map $\tau:G\times E(U)\to U$, such that
 \begin{enumerate}
  \item $\tau(x,e)$ is defined if and only if the set $U(x,e)=\{u\in U\mid j(u)=x,\ \ uu\m=e\}$ is nonempty;
  \item $\tau(x,e)\in U(x,e)$, whenever defined;
  \item $\tau(1,e)=e$.
 \end{enumerate}

\end{defn}

\begin{rem}\label{rem-dom-tau}
 Note that all the transversals of $j$ have the same domain $\{(x,e)\in G\times E(U)\mid U(x,e)\ne\emptyset\}$, which depends only on the extension.
\end{rem}

\begin{prop}\label{prop-rho-and-tau}
 Under the conditions of Lemma~\ref{lem-ker-pi} there is a one-to-one correspondence between transversals of $\pi$ and transversals of $j$. 
\end{prop}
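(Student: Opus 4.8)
The plan is to translate between the two notions of transversal through the bijection, furnished by Lemma~\ref{lem-ker-pi} together with the remark following it, between the elements of $S$ and the pairs $(x,e)\in G\times E(U)$ with $U(x,e)\ne\emptyset$. Explicitly, to $s\in S$ I associate the pair $(\kappa(s),e_s)$, where $e_s=uu\m$ for an arbitrary $u\in\pi\m(s)$; by Lemma~\ref{lem-ker-pi} all elements of the $\ker\pi$-class $\pi\m(s)$ share the same value of $uu\m$, so $e_s$ is well defined, while $\kappa(s)=j(u)$ is the $j$-image common to this class. Conversely, to a pair $(x,e)$ with $U(x,e)\ne\emptyset$ I associate $\pi(u)\in S$ for any $u\in U(x,e)$, which is again independent of the choice of $u$ by Lemma~\ref{lem-ker-pi}. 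These two assignments are mutually inverse, so choosing one element out of each $\ker\pi$-class is literally the same datum as choosing one element out of each nonempty $U(x,e)$.

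Given a transversal $\rho$ of $\pi$, I would define $\tau(x,e)=\rho(s)$, where $s\in S$ corresponds to $(x,e)$ under this bijection, for every $(x,e)$ in the common domain. Then $\pi(\rho(s))=s$ forces $j(\tau(x,e))=\kappa(s)=x$ and $\tau(x,e)\tau(x,e)\m=e_s=e$, so $\tau(x,e)\in U(x,e)$, giving (i)--(ii) of Definition~\ref{defn-tau(x,e)}. For (iii) I note that the pair $(1,e)$ with $e\in E(U)$ corresponds to $\pi(e)\in E(S)$; since $\rho$ maps $E(S)$ into $E(U)$ and restricts there to $(\pi|_{E(U)})\m$, one gets $\tau(1,e)=\rho(\pi(e))=e$. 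Conversely, from a transversal $\tau$ of $j$ I would set $\rho(s)=\tau(\kappa(s),e_s)$; the membership $\tau(\kappa(s),e_s)\in U(\kappa(s),e_s)$ yields $\pi(\rho(s))=s$, and for $s=e'\in E(S)$ the corresponding pair is $(1,e)$ with $e$ the unique idempotent of $U$ lying above $e'$, whence $\rho(e')=\tau(1,e)=e$ by Definition~\ref{defn-tau(x,e)}(iii). Thus $\rho$ sends idempotents to idempotents and satisfies $\rho|_{E(S)}=(\pi|_{E(U)})\m$, so it is a legitimate transversal of $\pi$.

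Finally I would observe that these two constructions are mutually inverse, which is immediate once the underlying bijection $s\leftrightarrow(x,e)$ is in place: each construction merely transports the representative-choosing data along that bijection without altering it. The only point requiring genuine care---and the step I expect to be the main, if modest, obstacle---is the matching of the two normalization constraints, namely that \emph{$\rho$ maps $E(S)$ into $E(U)$} corresponds exactly to \emph{$\tau(1,e)=e$}. This rests on the facts that $\kappa$ annihilates idempotents (since $\kappa(E(S))=\{1\}$), that $\pi$ is idempotent-separating so that each $e'\in E(S)$ has a unique idempotent preimage in $U$, and that the idempotents of $S$ are precisely the classes corresponding to pairs of the form $(1,e)$ with $e\in E(U)$.
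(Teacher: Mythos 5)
Your proof is correct and follows essentially the same route as the paper: the same formulas $\tau(x,e)=\rho(\pi(u))$ for $u\in U(x,e)$ and $\rho(s)=\tau(\kappa(s),e_s)$ with $e_s=\pi\m(ss\m)$, organized around the bijection between $\ker\pi$-classes and nonempty sets $U(x,e)$ that the paper records in the remark after Lemma~\ref{lem-ker-pi}. Your verification of $\tau(1,e)=e$ via the representative $u=e\in U(1,e)$ is a slight shortcut compared with the paper's appeal to $E$-unitarity and $\ker\kappa=\sigma$, but the argument is otherwise the same.
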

\begin{proof}
 Let $\rho$ be a transversal of $\pi$ and $u\in U(x,e)\ne\emptyset$. We shall prove that 
\begin{align}\label{eq-tau-from-rho}
 \tau(x,e)=\rho\circ\pi(u) 
\end{align}
is a transversal of $j$. First of all note that $\rho\circ\pi(u)$ does not depend on the choice of $u$ thanks to Lemma~\ref{lem-ker-pi}. Furthermore,
$$
 j\circ\tau(x,e)=j\circ\rho\circ\pi(u)=\kappa\circ\pi\circ\rho\circ\pi(u)=\kappa\circ\pi(u)=j(u)=x
$$
and by~\eqref{eq-rho(s)rho(s^(-1))}
$$
 \tau(x,e)\tau(x,e)\m=\rho(\pi(u))\rho(\pi(u))\m=\rho(\pi(uu\m))=\pi\m\circ\pi(uu\m)=uu\m,
$$
so $\tau(x, e)\in U(x,e)$. If $x=1$, then one has
$\kappa\circ\pi(u)=j(u)=1$ and hence $\pi(u)\in E(S)$, as $S$ is $E$-unitary and $\ker\kappa=\sigma$. Therefore
$$
 \tau(1,e)=\rho(\pi(u))=\rho(\pi(u)\pi(u)\m)=\pi\m\circ\pi(uu\m)=uu\m=e.
$$

Conversely, given a transversal $\tau$ of $j$, we show that 
\begin{align}\label{eq-rho-from-tau}
 \rho(s)=\tau(\kappa(s),\pi\m(ss\m)),
\end{align}
$s\in S$, is a transversal of $\pi$. Indeed, if $s=\pi(u)$, then
\begin{align}\label{eq-U(j(u),uu^(-1))}
 U(\kappa(s),\pi\m(ss\m))=U(\kappa\circ\pi(u),\pi\m(\pi(u)\pi(u)\m))=U(j(u),uu\m), 
\end{align}
which is nonempty, because $u\in U(j(u),uu\m)$. Hence, $\rho(s)$ is well-defined. Now
$$
 \kappa\circ\pi\circ\rho(s)=j\circ\tau(\kappa(s),\pi\m(ss\m))=\kappa(s),
$$ 
and since
$$
 \rho(s)\rho(s)\m=\tau(\kappa(s),\pi\m(ss\m))\tau(\kappa(s),\pi\m(ss\m))\m=\pi\m(ss\m),
$$
one has $\pi(\rho(s))\pi(\rho(s))\m=ss\m$. Then $\pi\circ\rho(s)=s$ in view of Corollary~\ref{cor-s=t-in-E-unitary} and the fact that $\ker\kappa=\sigma$. Finally, for $e\in E(S)$:
$$
 \rho(e)=\tau(1,\pi\m(e))=\pi\m(e)\in E(U).
$$

We now show that the maps $\rho\mapsto\tau$ and $\tau\mapsto\rho$ are mutually inverse. If $\rho\mapsto\tau\mapsto\rho'$, then by~\eqref{eq-tau-from-rho}--\eqref{eq-rho-from-tau}
$$
 \rho'(s)=\tau(\kappa(s),\pi\m(ss\m))=\rho\circ\pi(u),
$$
where $u\in U(\kappa(s),\pi\m(ss\m))$. Taking $v\in U$ with $\pi(v)=s$, we observe by~\eqref{eq-U(j(u),uu^(-1))} that
$U(\kappa(s),\pi\m(ss\m))=U(j(v),vv\m)$, so $j(u)=j(v)$ and $uu\m=vv\m$. Applying Lemma~\ref{lem-ker-pi}, we get $\pi(u)=\pi(v)=s$. Thus, $\rho'(s)=\rho(s)$.

Now let $\tau\mapsto\rho\mapsto\tau'$. According to~\eqref{eq-tau-from-rho}--\eqref{eq-rho-from-tau} $\tau'(x,e)$ equals
\begin{align}\label{eq-rho-circ-pi}
 \rho\circ\pi(u)=\tau(\kappa\circ\pi(u),\pi\m(\pi(u)\pi(u)\m))=\tau(j(u),uu\m), 
\end{align}
where $u\in U(x,e)$. But $j(u)=x$ and $uu\m=e$ by the definition of $U(x,e)$, yielding $\tau'(x,e)=\tau(x,e)$.
\end{proof}

\begin{rem}\label{rem-tau-preserves-order}
 The transversal $\rho$ of $\pi$ is order-preserving if and only if the corresponding transversal $\tau$ of $j$ satisfies
 \begin{align}\label{eq-tau-preserves-order}
  u\le v\impl\tau(j(u),uu\m)\le\tau(j(v),vv\m).
 \end{align}
\end{rem}
\noindent For it is easily seen that $\rho$ preserves the order, whenever $\rho\circ\pi$ does. It remains to use~\eqref{eq-rho-circ-pi}.

\begin{defn}
 A transversal $\tau$ of $j$, for which~\eqref{eq-tau-preserves-order} holds, will be called {\it order-preserving}.
\end{defn}

\begin{rem}\label{rem-tau-for-adm-ext}
 An extension $A\overset{i}{\to}U\overset{j}{\to}G$ is admissible if and only if $j$ has an order-preserving transversal.
\end{rem}

% \begin{rem}\label{rem-rho-and-rho'-from-tau}
%  Let $A\overset{i}{\to}U\overset{j}{\to}G$ be an extension and $\tau$ a transversal of $j$. For any two pairs of epimorphisms $\pi:U\to S$, $\kappa:S\to G$ and $\pi':U\to S'$, $\kappa':S'\to G$ from Proposition~\ref{prop-ext-A-G-to-ext-A-S} denote by $\rho$ and $\rho'$ the transversals of $\pi$ and $\pi'$, respectively, induced by $\tau$. Then $\rho=\rho'\circ\nu$, where $\nu:S\to S'$ is the isomorphism from Remark~\ref{rem-S-pi-kappa-are-unique-up-to-iso}.
% \end{rem}

\begin{prop}\label{prop-(0,w)-from-tau}
 Let $A\overset{i}{\to}U\overset{j}{\to}G$ be an admissible extension and $\tau$ an order-preserving transversal of $j$. Then the pair $(U,\tau)$ induces a twisted partial action $\Theta=(\0,w)$ of $G$ on $A$ by the formulas
 \begin{align}
  \cD_x&=\bigsqcup_{U(x,e)\ne\emptyset}A_{i\m(e)};\label{eq-D_x-from-tau}\\
  \0_x(a)&=i\m(\xi_{\tau(x,\tau(x\m,i(aa\m))\m\tau(x\m,i(aa\m)))}\circ i(a)),\ \ a\in\cD_{x\m};\label{eq-0_x-from-tau}\\
  w_{x,y}a&=\omega(x,y,a)a,\ \ aw_{x,y}=a\omega(x,y,a),\mbox{ where }a\in\cD_x\cD_{xy}\mbox{ and}\notag\\
  \omega(x,y,a)&=i\m(\tau(x,i(aa\m))\tau(y,\xi_{\tau(x,i(aa\m))\m}\circ i(aa\m))
  \tau(xy,i(aa\m))\m).\label{eq-w_(x,y)-from-tau}
 \end{align}
\end{prop}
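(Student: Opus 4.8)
The plan is to avoid checking the six axioms of Definition~\ref{defn-tw_part_act} directly and instead reduce everything to the construction of Section~\ref{sec-Theta^Lambda}. By Proposition~\ref{prop-rho-and-tau} the order-preserving transversal $\tau$ of $j$ corresponds to the transversal $\rho(s)=\tau(\kappa(s),\pi\m(ss\m))$ of $\pi$, which is order-preserving by Remark~\ref{rem-tau-preserves-order}. Hence the twisted $S$-module structure $\Lambda=(\alpha,\lambda,f)$ induced on $A$ by $(U,\rho)$ is Sieben (Proposition~\ref{prop-order-pres-rho}), and by Proposition~\ref{prop-A*_Lambda-S-cong-A*_Theta-G(S)} the pair $(\0,w)$ produced from $\Lambda$ through~\eqref{eq-D_x-from-Lambda},~\eqref{eq-0_x-from-Lambda} and~\eqref{eq-w_(x,y)-from-Lambda} is a twisted partial action of $\cG S$ on $A$. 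Identifying $\cG S$ with $G$ via the isomorphism induced by $\kappa$ (recall $\ker\kappa=\sigma$ from Proposition~\ref{prop-ext-A-G-to-ext-A-S}), the $\sigma$-class indexed by $x$ becomes $\kappa\m(g)$, $g\in G$. It then remains only to verify that~\eqref{eq-D_x-from-tau}--\eqref{eq-w_(x,y)-from-tau} reproduce~\eqref{eq-D_x-from-Lambda}--\eqref{eq-w_(x,y)-from-Lambda}.

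For the domains I would use that $s\mapsto\pi\m(ss\m)$ is a bijection from $\kappa\m(g)$ onto $\{e\in E(U)\mid U(g,e)\ne\emptyset\}$, with inverse $e\mapsto\pi(u)$ for any $u\in U(g,e)$ (well-defined and injective by Lemma~\ref{lem-ker-pi} and Corollary~\ref{cor-s=t-in-E-unitary}), together with $\alpha(ss\m)=i\m(\pi\m(ss\m))$; thus~\eqref{eq-D_x-from-Lambda} and~\eqref{eq-D_x-from-tau} list the same group components. For $\0_g$ with $a\in\cD_{g\m}$ I would set $v=\tau(g\m,i(aa\m))$ and $s=\pi(v\m)$, and check $\kappa(s)=g$ and $\alpha(s\m s)=aa\m$ (using $vv\m=i(aa\m)$ and idempotent-separation of $\pi$), so that $s$ is the element of $\kappa\m(g)$ occurring in~\eqref{eq-0_x-from-Lambda}. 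Since $\rho(s)=\tau(g,v\m v)$ and $v\m v=\tau(g\m,i(aa\m))\m\tau(g\m,i(aa\m))$, formulas~\eqref{eq-defn_of_nu_u}--\eqref{eq-lambda=nu-rho} give $\0_g(a)=\lambda_s(a)=i\m(\rho(s)i(a)\rho(s)\m)$, which is precisely~\eqref{eq-0_x-from-tau}.

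For $w_{g,h}$ with $a\in\cD_g\cD_{gh}$ I would put $e=i(aa\m)$, $p=\tau(g,e)$, $r=\tau(gh,e)$ and $s=\pi(p)$, $t=\pi(r)$; then $p=\rho(s)$, $r=\rho(t)$ and $ss\m=tt\m=\pi(e)$, so $s,t$ are the elements appearing in~\eqref{eq-w_(x,y)-from-Lambda}. As $\xi_{p\m}\circ i(aa\m)=p\m i(aa\m)p=p\m p$, the middle factor of~\eqref{eq-w_(x,y)-from-tau} is $\tau(h,p\m p)$, and one verifies $\tau(h,p\m p)=\rho(s\m t)$ since $s\m t\in\kappa\m(h)$ with $(s\m t)(s\m t)\m=s\m s$. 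Using~\eqref{eq-rho(s)rho(t)},~\eqref{eq-rho(s)rho(s^(-1))} and $ss\m t=t$ one obtains
$$
\tau(g,e)\tau(h,p\m p)\tau(gh,e)\m=\rho(s)\rho(s\m t)\rho(t)\m=i(f(s,s\m t))\rho(tt\m)=i(f(s,s\m t)),
$$
the last step because $\alpha(tt\m)=aa\m$ is the identity of the group $A_{aa\m}$ to which $f(s,s\m t)$ belongs. Hence $\omega(g,h,a)=f(s,s\m t)$, and~\eqref{eq-w_(x,y)-from-tau} agrees with~\eqref{eq-w_(x,y)-from-Lambda}.

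The main difficulty is not any individual axiom but the bookkeeping that identifies each transversal value $\tau(g,v\m v)$ and $\tau(h,p\m p)$ in the new formulas with the correct element ($s$, respectively $s\m t$) of the relevant $\sigma$-class, and that guarantees these values are defined, i.e. that the sets $U(g,\cdot)$ and $U(h,\cdot)$ involved are nonempty. Both points rest on the $E$-unitarity of $S$ through Corollary~\ref{cor-s=t-in-E-unitary} and Remark~\ref{rem-s^(-1)s=t^(-1)t}, which recover $s$ and $t$ uniquely from the idempotent data, together with $\pi$ being idempotent-separating so that $\pi\m\circ\pi$ acts as the identity on $E(U)$.
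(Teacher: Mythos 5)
Your proof is correct and follows essentially the same route as the paper: both reduce the verification of the axioms of Definition~\ref{defn-tw_part_act} to Proposition~\ref{prop-A*_Lambda-S-cong-A*_Theta-G(S)} applied to the Sieben twisted $S$-module $\Lambda$ arising from $(U,\rho)$ via Propositions~\ref{prop-ext-A-G-to-ext-A-S},~\ref{prop-rho-and-tau} and~\ref{prop-order-pres-rho}, and then check that \eqref{eq-D_x-from-tau}--\eqref{eq-w_(x,y)-from-tau} reproduce \eqref{eq-D_x-from-Lambda}--\eqref{eq-w_(x,y)-from-Lambda} by rewriting $\rho$ in terms of $\tau$ through \eqref{eq-rho-from-tau}. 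Your identifications of $s$, $s\m t$ and $t$ with the transversal values $\tau(g,v\m v)$, $\tau(h,p\m p)$ and $\tau(gh,e)$ coincide with the paper's computation.
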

\begin{proof}
 Find a pair of epimorphisms $\pi:U\to S$, $\kappa:S\to G$ as in Proposition~\ref{prop-ext-A-G-to-ext-A-S} and denote by $\rho$ the transversal of $\pi$ corresponding to $\tau$. Let $\Lambda=(\alpha,\lambda,f)$ be the twisted $S$-module structure on $A$ coming from $A\overset{i}{\to}U\overset{\pi}{\to}S$ and $\rho$. We shall also identify $x\in G$ with the $\sigma$-class $\kappa\m(x)$. 
 
 By~\eqref{eq-alpha=i^(-1)rho_E(S)} and~\eqref{eq-D_x-from-Lambda}
 $$
  \cD_x=\bigsqcup_{\kappa(s)=x}A_{\alpha(ss\m)}=\bigsqcup_{\kappa(s)=x}A_{i\m\circ\rho(ss\m)}=\bigsqcup_{\kappa(s)=x}A_{i\m\circ\pi\m(ss\m)}.
 $$
 For~\eqref{eq-D_x-from-tau} we need to show that 
 $$
  \{\pi\m(ss\m)\mid\kappa(s)=x\}=\{e\in E(U)\mid U(x,e)\ne\emptyset\}.
 $$
 Indeed, if $\kappa(s)=x$, then taking $u\in U$ with $\pi(u)=s$, we have $j(u)=\kappa\circ\pi(u)=\kappa(s)=x$ and $uu\m=\pi\m(ss\m)$, so $u\in U(x,\pi\m(ss\m))\ne\emptyset$. Conversely, let $u\in U(x,e)\ne\emptyset$. Then for $s=\pi(u)$ one has $\kappa(s)=\kappa\circ\pi(u)=j(u)=x$ and $\pi\m(ss\m)=uu\m=e$.
 
 Given $a\in\cD_{x\m}$, we have by~\eqref{eq-defn_of_nu_u}--\eqref{eq-lambda=nu-rho} and~\eqref{eq-0_x-from-Lambda}
 $$
  \0_x(a)=i\m(\xi_{\rho(s)}\circ i(a)),
 $$
 where $s$ is a unique element of $S$, satisfying $\kappa(s)=x$ and $\alpha(s\m s)=aa\m$. According to~\eqref{eq-rho-from-tau} and~\eqref{eq-alpha=i^(-1)rho_E(S)}--\eqref{eq-rho(s)rho(s^(-1))}
 \begin{align*}
  \rho(s)&=\tau(\kappa(s),\pi\m(ss\m))=\tau(x,\rho(ss\m))=\tau(x,\rho(s\m)\m\rho(s\m)),\\
  \rho(s\m)&=\tau(\kappa(s\m),\pi\m(s\m s))=\tau(x\m,\rho(s\m s))=\tau(x\m,i(aa\m)),
 \end{align*}
 proving~\eqref{eq-0_x-from-tau}.
 
 It follows from~\eqref{eq-rho(s)rho(t)} and~\eqref{eq-w_(x,y)-from-Lambda} that $w_{x,y}$ acts on $a\in\cD_x\cD_{xy}$ as the multiplication by
 $$
  f(s,s\m t)=i\m(\rho(s)\rho(s\m t)\rho(ss\m t)\m),
 $$
 where $\kappa(s)=x$, $\kappa(t)=xy$ and $\alpha(ss\m)=\alpha(tt\m)=aa\m$.  Notice that, given $s\in S$ and $e\in E(S)$, one has that $\rho(s\m es)=\rho(s)\m\rho(e)\rho(s),$ because the images of these idempotents under $\pi$ coincide. Using~\eqref{eq-alpha=i^(-1)rho_E(S)} and \eqref{eq-rho-from-tau}, we obtain
 \begin{align*}
  \rho(s)&=\tau(\kappa(s),\pi\m(ss\m))=\tau(x,\rho(ss\m))=\tau(x,i(aa\m)),\\
  \rho(s\m t)&=\tau(\kappa(s\m t),\pi\m(s\m tt\m s))=\tau(y,\rho(s\m tt\m s))\\
  &=\tau(y,\rho(s)\m\rho(tt\m)\rho(s))=\tau(y,\rho(s)\m i(aa\m)\rho(s)),\\
  \rho(ss\m t)&=\tau(\kappa(ss\m t),\pi\m(ss\m tt\m))=\tau(xy,\rho(ss\m tt\m))\\
  &=\tau(xy,\rho(ss\m)\rho(tt\m))=\tau(xy,i(aa\m)).
 \end{align*}
 This proves~\eqref{eq-w_(x,y)-from-tau}.
\end{proof}

\begin{prop}\label{prop-tau-for-A*_Theta-G}
 Let $\Theta=(\0,w)$ be a twisted partial action of $G$ on $A$ and $A\overset{i}{\to}A*_\Theta G\overset{j}{\to}G$ the corresponding (admissible) extension of $A$ by $G$. Then the partial map $\tau:G\times E(A*_\Theta G)\to A*_\Theta G$ with 
 \begin{align}
  \dom\tau&=\{(x,e\delta_1)\mid e\in\cD_x\},\label{eq-dom-tau-for-A*_Theta-G}\\
  \tau(x,e\delta_1)&=e\delta_x,\ \ (x,e\delta_1)\in\dom\tau,\label{eq-tau-for-A*_Theta-G}
 \end{align}
 is an order-preserving transversal of $j$. Moreover, the twisted partial action of $G$ on $A$ induced by $(A*_\Theta G,\tau)$ is $\Theta$.
\end{prop}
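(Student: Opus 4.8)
The plan is to verify first that $\tau$ is an order-preserving transversal of $j$, and then to identify the induced twisted partial action with $\Theta$ by routing through the intermediate crossed product $A*_\Lambda S$, rather than by grinding through the explicit formulas \eqref{eq-0_x-from-tau}--\eqref{eq-w_(x,y)-from-tau}. For the transversal axioms, recall from Lemma~\ref{lem-S*_Theta_G_is_inverse} that $E(A*_\Theta G)=\{e\delta_1\mid e\in E(A)\}$ and from Corollary~\ref{cor-sdelta_x(sdelta_x)^(-1)} that $s\delta_x(s\delta_x)\m=ss\m\delta_1$. Hence $U(x,e\delta_1)=\{s\delta_x\mid s\in\cD_x,\ ss\m=e\}$, and since $\cD_x$ is an idempotent ideal, i.e. a union of group components, this set is nonempty precisely when $e\in\cD_x$; this reproduces the domain \eqref{eq-dom-tau-for-A*_Theta-G}. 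Because $e\in\cD_x$ gives $e\delta_x\in U(x,e\delta_1)$, while $\tau(1,e\delta_1)=e\delta_1$, all three requirements of Definition~\ref{defn-tau(x,e)} hold.

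Next I would identify $\tau$ with the natural transversal of $\pi$. By Proposition~\ref{prop-A*_Theta-G-is-ext} we may take $S=E(A)*_\0 G$ with $\pi(a\delta_x)=aa\m\delta_x$, and, as observed after that proposition, $\rho=\id_S$ is an order-preserving transversal of $\pi$. Evaluating \eqref{eq-tau-from-rho} on $u=e\delta_x\in U(x,e\delta_1)$ gives $\rho\circ\pi(e\delta_x)=\rho(e\delta_x)=e\delta_x=\tau(x,e\delta_1)$, so $\tau$ is exactly the transversal of $j$ corresponding to $\rho=\id_S$, and order-preservation of $\tau$ then follows from Remark~\ref{rem-tau-preserves-order}. (Alternatively one checks directly that $a\delta_x\le b\delta_y$ forces $x=y$ and $aa\m\le bb\m$, whence $aa\m\delta_x\le bb\m\delta_x$.)

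Finally, let $\Lambda=(\alpha,\lambda,f)$ be the twisted $S$-module coming from $(A*_\Theta G,\rho=\id_S)$. By the construction in the proof of Proposition~\ref{prop-(0,w)-from-tau}, the twisted partial action $\Theta'=(\0',w')$ induced by $(A*_\Theta G,\tau)$ is precisely the one built from $\Lambda$ in Section~\ref{sec-Theta^Lambda}, so by Proposition~\ref{prop-A*_Lambda-S-cong-A*_Theta-G(S)} the map $\varphi(a\delta_x)=a\delta_s$, with $s=aa\m\delta_x\in S$, is an equivalence $A*_{\Theta'}\cG S\to A*_\Lambda S$; here I identify $\cG S$ with $G$ via $\kappa$. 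Composing with the equivalence $\mu(a\delta_s)=i(a)\rho(s)$ of Remark~\ref{rem-A*_Lambda-S-equiv-U} and using $\rho=\id_S$, $\0_1=\id$ and $w_{1,x}=\id_{\cD_x}$, I compute $\mu\circ\varphi(a\delta_x)=i(a)\rho(aa\m\delta_x)=(a\delta_1)(aa\m\delta_x)=a\delta_x$. Thus $\mu\circ\varphi$ is an isomorphism $A*_{\Theta'}G\to A*_\Theta G$ acting as $a\delta_x\mapsto a\delta_x$, and its inverse is the map $a\delta_x\mapsto a\delta_x$ of Lemma~\ref{lem-S*_Theta-G-cong-S*_Theta'-G}; that lemma then yields $\Theta=\Theta'$.

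The main obstacle is conceptual rather than computational: the explicit expressions \eqref{eq-0_x-from-tau}--\eqref{eq-w_(x,y)-from-tau} are awkward to simplify directly, since they involve nested conjugations by $\tau$-values, so the real work is to recognize that $\tau$ corresponds to the identity transversal $\rho=\id_S$ and that the composite $\mu\circ\varphi$ collapses to $a\delta_x\mapsto a\delta_x$; once this is in place, Lemma~\ref{lem-S*_Theta-G-cong-S*_Theta'-G} does the rest. Care must be taken with the identification between the $\sigma$-classes of $S=E(A)*_\0 G$ and the elements of $G$, and with checking that the relevant idempotents $aa\m$ indeed lie in $\cD_x$.
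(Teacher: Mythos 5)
Your proof is correct, but the second half takes a genuinely different route from the paper. For the transversal axioms and order-preservation your argument matches the paper's (the paper checks $U(x,e\delta_1)=A_e\delta_x$ via Corollary~\ref{cor-sdelta_x(sdelta_x)^(-1)} and verifies \eqref{eq-tau-preserves-order} directly; your identification of $\tau$ with the transversal corresponding to $\rho=\id_S$ under \eqref{eq-tau-from-rho} is an equally valid shortcut). For the identity $\Theta'=\Theta$, however, the paper works entirely inside the explicit formulas \eqref{eq-D_x-from-tau}--\eqref{eq-w_(x,y)-from-tau}: it computes $\tau(x\m,i(aa\m))\m\tau(x\m,i(aa\m))$, the conjugation $\xi_{\tau(\cdot)}\circ i(a)$, and $\omega(x,y,a)$ head-on, simplifying with \eqref{eq-0_x(sw_(x^(-1)x))}, Lemma~\ref{lem-0_x-inv} and Remark~\ref{rem-M(S)-for-comm-S} until $\0'_x(a)=\0_x(a)$ and $w'_{x,y}=w_{x,y}$ drop out. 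You instead observe that, by the proof of Proposition~\ref{prop-(0,w)-from-tau}, $\Theta'$ is the Section~\ref{sec-Theta^Lambda} construction applied to the $\Lambda$ arising from $\rho=\id_S$ (which is Sieben by Proposition~\ref{prop-order-pres-rho}), then compose the equivalence $\varphi$ of Proposition~\ref{prop-A*_Lambda-S-cong-A*_Theta-G(S)} with the equivalence $\mu$ of Remark~\ref{rem-A*_Lambda-S-equiv-U} to get an isomorphism $A*_{\Theta'}G\to A*_\Theta G$ acting as $a\delta'_x\mapsto a\delta_x$, and invoke the rigidity Lemma~\ref{lem-S*_Theta-G-cong-S*_Theta'-G}. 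This is precisely the strategy the paper itself deploys later for Proposition~\ref{prop-Theta^(Lambda^Theta)}, and since neither that proposition nor the lemmas you cite depend on the present statement, there is no circularity. What the paper's computation buys is an independent verification that the closed-form expressions \eqref{eq-0_x-from-tau}--\eqref{eq-w_(x,y)-from-tau} really collapse to $\0_x$ and $w_{x,y}$, which is a useful check on those formulas; what your route buys is brevity and a conceptual explanation of why the answer must be $\Theta$, at the cost of leaning on the uniqueness lemma rather than exhibiting the cancellation explicitly.
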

\begin{proof}
 For an arbitrary $a\delta_y\in A*_\Theta G$ we have $j(a\delta_y)=y$ and $a\delta_y(a\delta_y)\m=aa\m\delta_1$ by Corollary~\ref{cor-sdelta_x(sdelta_x)^(-1)}. Then
$$
 U(x,e\delta_1)=\{a\delta_y\in A*_\Theta G\mid y=x,\ \ aa\m=e\}=
 \begin{cases}
  A_e\delta_x, & e\in\cD_x,\\
  \emptyset, & e\not\in\cD_x,
 \end{cases}
$$
so~\eqref{eq-dom-tau-for-A*_Theta-G} is (i) of Definition~\ref{defn-tau(x,e)}. Given $(x,e\delta_1)\in\dom\tau$, observe from~\eqref{eq-tau-for-A*_Theta-G} that
\begin{align*}
 j\circ\tau(x,e\delta_1)&=j(e\delta_x)=x,\\
 \tau(x,e\delta_1)\tau(x,e\delta_1)\m&=e\delta_x(e\delta_x)\m=e\delta_1,
\end{align*}
consequently $\tau(x,e\delta_1)\in U(x,e\delta_1)$ proving (ii) of Definition~\ref{defn-tau(x,e)}. Since, obviously, $\tau(1,e\delta_1)=e\delta_1$, we conclude that $\tau$ satisfies (iii) of Definition~\ref{defn-tau(x,e)}, and hence it is a transversal of $j$. Moreover, $\tau$ is order-preserving, as $u=a\delta_x\le b\delta_y=v$ means that $x=y$ and $a\le b$, so
$$
 \tau(j(u),uu\m)=\tau(x,aa\m\delta_1)=aa\m\delta_x\le bb\m\delta_y=\tau(j(v),vv\m).
$$

Let $\Theta'=(\0',w')$ be the twisted partial action of $G$ on $A$ induced by $(A*_\Theta G,\tau)$ as in Proposition~\ref{prop-(0,w)-from-tau}. Observe using~\eqref{eq-D_x-from-tau} and~\eqref{eq-dom-tau-for-A*_Theta-G} that
$$
 \cD'_x=\bigsqcup_{U(x,e\delta_1)\ne\emptyset}A_{i\m(e\delta_1)}=\bigsqcup_{e\in\cD_x}A_e=\cD_x,
$$
as an ideal $I$ in a semilattice of groups is the (disjoint) union of the group components corresponding to $E(I)$. 

Take $a\in\cD_{x\m}$. We first calculate 
$$
 \tau(x\m,i(aa\m))=\tau(x\m,aa\m\delta_1)=aa\m\delta_{x\m}.
$$
Hence by Corollary~\ref{cor-sdelta_x(sdelta_x)^(-1)} and Remark~\ref{rem-part-act-on-E(S)}
$$
 \tau(x\m,i(aa\m))\m\tau(x\m,i(aa\m))=\0\m_{x\m}(aa\m)\delta_1=\0_x(aa\m)\delta_1.
$$
It follows that
$$
 \tau(x,\tau(x\m,i(aa\m))\m\tau(x\m,i(aa\m)))=\tau(x,\0_x(aa\m)\delta_1)=\0_x(aa\m)\delta_x.
$$
Since by~\eqref{eq-s-delta_x-inv} and Remarks~\ref{rem-part-act-on-E(S)} and~\ref{rem-M(S)-for-comm-S}
$$
 (\0_x(aa\m)\delta_x)\m=w\m_{x\m,x}\0_{x\m}\circ \0_x(aa\m)\delta_{x\m}=aa\m w\m_{x\m,x}\delta_{x\m},
$$
then $\xi_{\tau(x,\tau(x\m,i(aa\m))\m\tau(x\m,i(aa\m)))}\circ i(a)$ equals
\begin{align*}
 \0_x(aa\m)\delta_x\cdot a\delta_1\cdot aa\m w\m_{x\m,x}\delta_{x\m}&=\0_x(aa\m)\delta_x\cdot aw\m_{x\m,x}\delta_{x\m}\\
 &=\0_x(\0\m_x(\0_x(aa\m))aw\m_{x\m,x})w_{x,x\m}\delta_1\\
 &=\0_x(aw\m_{x\m,x})w_{x,x\m}\delta_1\\
 &=\0_x(a)w\m_{x,x\m} w_{x,x\m}\delta_1\\
 &=\0_x(a)\delta_1.
\end{align*}
Here we used~\eqref{eq-0_x(sw_(x^(-1)x))}. Thus, in view of~\eqref{eq-0_x-from-tau}
$$
 \0'_x(a)=i\m(\0_x(a)\delta_1)=\0_x(a).
$$

As to $w'_{x,y}$, we observe that $\tau(x,i(aa\m))=aa\m\delta_x$ and $\tau(xy,i(aa\m))=aa\m\delta_{xy}$, where $a\in\cD_x\cD_{xy}$. Using~\eqref{eq-s-delta_x-inv}, \eqref{eq-0_x(sw_(x^(-1)x))}, Lemma~\ref{lem-0_x-inv}, Remark~\ref{rem-M(S)-for-comm-S} and (iv) of Definition~\ref{defn-tw_part_act}, we have
\begin{align*}
 \xi_{\tau(x,i(aa\m))\m}\circ i(aa\m)&=w\m_{x\m,x}\0_{x\m}(aa\m)\delta_{x\m}\cdot aa\m\delta_1\cdot aa\m\delta_x\\
 &=w\m_{x\m,x}\0_{x\m}(aa\m)\delta_{x\m}\cdot aa\m\delta_x\\
 &=\0_{x\m}(\0\m_{x\m}(w\m_{x\m,x}\0_{x\m}(aa\m))aa\m)w_{x\m,x}\delta_1\\
 &=\0_{x\m}(w\m_{x,x\m}\0_x(\0_{x\m}(aa\m)w\m_{x\m,x})w_{x,x\m}aa\m)w_{x\m,x}\delta_1\\
 &=\0_{x\m}(w\m_{x,x\m}\0_x(\0_{x\m}(aa\m))aa\m)w_{x\m,x}\delta_1\\
 &=\0_{x\m}(aa\m w\m_{x,x\m})w_{x\m,x}\delta_1\\
 &=\0_{x\m}(aa\m)\delta_1.
\end{align*}
Therefore,
$$
 \tau(y,\xi_{\tau(x,i(aa\m))\m}\circ i(aa\m))=\0_{x\m}(aa\m)\delta_y.
$$
Since $\0\m_x(aa\m)=\0_{x\m}(aa\m)$ thanks to Remark~\ref{rem-part-act-on-E(S)}, then
\begin{align*}
 \tau(x,i(aa\m))\tau(y,\xi_{\tau(x,i(aa\m))\m}\circ i(aa\m))&=aa\m\delta_x\cdot\0_{x\m}(aa\m)\delta_y\\
 &=\0_x(\0\m_x(aa\m))w_{x,y}\delta_{x y}\\
 &=aa\m w_{x,y}\delta_{xy}\\
 &=aa\m w_{x,y}\delta_1\cdot aa\m\delta_{xy}.
\end{align*}
According to~\eqref{eq-w_(x,y)-from-tau} and Corollary~\ref{cor-sdelta_x(sdelta_x)^(-1)}
\begin{align*}
 \omega(x,y,a)&=i\m(aa\m w_{x,y}\delta_1\cdot aa\m\delta_{xy}\cdot(aa\m\delta_{xy})\m)\\
 &=i\m(aa\m w_{x,y}\delta_1\cdot aa\m\delta_1)\\
 &=i\m(aa\m w_{x,y}\delta_1)\\
 &=aa\m w_{x,y}.
\end{align*}
Hence, in view of Remark~\ref{rem-M(S)-for-comm-S}
$$
 w'_{x,y}a=\omega(x,y,a)a=aa\m w_{x,y}a=w_{x,y}aa\m a=w_{x,y}a.
$$
Similarly
$$
 aw'_{x,y}=a\omega(x,y,a)=a\cdot aa\m w_{x,y}=aw_{x,y}.
$$
\end{proof}

\section{From twisted partial actions to Sieben twisted modules}\label{sec-Lambda^Theta}

Throughout this section $G$ is a group, $A$ is a semilattice of groups and $\Theta=(\0,w)$ is a twisted partial action of $G$ on $A$. 

As one knows from Remark~\ref{rem-part-act-on-E(S)}, $\0$ restricts to a partial action of $G$ on $E(A)$, and moreover by Proposition~\ref{prop-A*_Theta-G-is-ext} the crossed product $A*_\Theta G$ is an extension of $A$ by the $E$-unitary semigroup $S=E(A)*_\0 G$. Here $i(a)=a\delta_1$ and $j(a\delta_x)=aa\m\delta_x$. Recall that $j$ has the trivial order-preserving transversal $\rho(e\delta_x)=e\delta_x$. We shall also use the epimorphism $\kappa:S\to G$ mapping $a\delta_x$ to $x$ and satisfying $\ker\kappa=\sigma$.

Denote by $\Lambda=(\alpha,\lambda,f)$ the twisted $S$-module structure on $A$ that comes from the extension $A\overset{i}{\to}A*_\Theta G\overset{j}{\to}S$ and transversal $\rho$. In the next lemma we give precise formulas for $\Lambda$ in terms of $\Theta$.

\begin{lem}\label{lem-Lambda^Theta}
 For $e\delta_1\in E(S)$, $s=e'\delta_x,t=e''\delta_y\in S$ and $a\in A$ one has
 \begin{align}
  \alpha(e\delta_1)&=e,\label{eq-alpha(edelta_1)}\\
  \lambda_s(a)&=\0_x(\0\m_x(e')a)=\0_{\kappa(s)}(\alpha(s\m s)a),\label{eq-lambda_s(a)}\\
  f(s,t)&=\0_x(\0\m_x(e')e'')w_{x,y}=\alpha(stt\m s\m)w_{\kappa(s),\kappa(t)}.\label{eq-f(s,t)}
 \end{align}
\end{lem}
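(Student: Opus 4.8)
The plan is to substitute $\rho(s)=e'\delta_x$, $\rho(t)=e''\delta_y$ and $i(a)=a\delta_1$ into the formulas \eqref{eq-alpha=i^(-1)rho_E(S)}, \eqref{eq-defn_of_nu_u}--\eqref{eq-lambda=nu-rho} and \eqref{eq-rho(s)rho(t)} defining the triple $(\alpha,\lambda,f)$ attached to the transversal $\rho$, and to carry out the resulting products inside $A*_\Theta G$. Formula \eqref{eq-alpha(edelta_1)} is immediate from \eqref{eq-alpha=i^(-1)rho_E(S)}, since $\alpha(e\delta_1)=i\m(\rho(e\delta_1))=i\m(e\delta_1)=e$. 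Throughout I will use that the coefficients $e',e''$ of the group elements in $S=E(A)*_\0 G$ lie in $E(A)\subseteq C(A)$, so they are central idempotents and commute with multipliers by Remark~\ref{rem-M(S)-for-comm-S}; I will also identify $\kappa(s)=x$ and $\kappa(t)=y$.

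For $f$ I would compute $\rho(s)\rho(t)=e'\delta_x\cdot e''\delta_y=\0_x(\0\m_x(e')e'')w_{x,y}\delta_{xy}$, while the trivially twisted multiplication of $S$ gives $st=\0_x(\0\m_x(e')e'')\delta_{xy}$, so that $\rho(st)=\0_x(\0\m_x(e')e'')\delta_{xy}$. Writing $h=\0_x(\0\m_x(e')e'')\in E(A)$, the multiplier axioms together with centrality of $h$ yield $(hw_{x,y})h=hw_{x,y}$, whence $i(hw_{x,y})\rho(st)=hw_{x,y}\delta_{xy}=\rho(s)\rho(t)$; the uniqueness part of Lemma~\ref{lem-u=i(a)rho(s)} then identifies $f(s,t)=hw_{x,y}$, which is the first expression in \eqref{eq-f(s,t)}. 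To match the second, I would compute $stt\m s\m$ in $S$ using Corollary~\ref{cor-sdelta_x(sdelta_x)^(-1)}, obtaining $stt\m s\m=h\delta_1$ and hence $\alpha(stt\m s\m)=h$ by \eqref{eq-alpha(edelta_1)}.

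The substantial step is $\lambda_s$. By \eqref{eq-defn_of_nu_u}--\eqref{eq-lambda=nu-rho} one has $\lambda_s(a)=i\m(\rho(s)i(a)\rho(s)\m)$, where $\rho(s)\m=w\m_{x\m,x}\0_{x\m}(e')\delta_{x\m}$ by \eqref{eq-s-delta_x-inv}. I would first get $\rho(s)i(a)=\0_x(\0\m_x(e')a)\delta_x$, and then multiply by $\rho(s)\m$, which lands in $i(A)$. The delicate part is the multiplier bookkeeping: rewriting Lemma~\ref{lem-0_x-inv} as $\0_{x\m}(e')=w_{x\m,x}\0\m_x(e')w\m_{x\m,x}$ gives $w\m_{x\m,x}\0_{x\m}(e')=\0\m_x(e')w\m_{x\m,x}$, and then the identity $(st)w=s(tw)$ together with centrality of $\0\m_x(e')$ collapses the coefficient inside $\0_x$ to $(\0\m_x(e')a)w\m_{x\m,x}$. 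Thus $\lambda_s(a)=\0_x((\0\m_x(e')a)w\m_{x\m,x})w_{x,x\m}$, and applying \eqref{eq-0_x(sw_(x^(-1)x))} to $(\0\m_x(e')a)w\m_{x\m,x}\in\cD_{x\m}$, using $w\m_{x\m,x}w_{x\m,x}=\id$, reduces this to $\0_x(\0\m_x(e')a)$, the first expression in \eqref{eq-lambda_s(a)}. Its equality with $\0_{\kappa(s)}(\alpha(s\m s)a)$ follows since $\kappa(s)=x$ and, by Corollary~\ref{cor-sdelta_x(sdelta_x)^(-1)} and \eqref{eq-alpha(edelta_1)}, $\alpha(s\m s)=\0\m_x(e')$.

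The main obstacle is precisely this multiplier manipulation in the $\lambda_s$ computation: one must track which translation of which multiplier is acting, justify the regroupings through \cite[Proposition~2.5]{DE} and Lemma~\ref{lem-(ws)^(-1)}, and carefully avoid confusing $\0\m_x$ with $\0_{x\m}$, which differ by conjugation by $w_{x\m,x}$. Once these are handled, the remaining identifications of the two forms of $f$ and $\lambda_s$ are routine applications of Corollary~\ref{cor-sdelta_x(sdelta_x)^(-1)} and \eqref{eq-alpha(edelta_1)}.
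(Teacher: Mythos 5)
Your proposal is correct and follows essentially the same route as the paper's proof: direct substitution of $\rho(s)=e'\delta_x$ into \eqref{eq-alpha=i^(-1)rho_E(S)}, \eqref{eq-defn_of_nu_u}--\eqref{eq-lambda=nu-rho} and \eqref{eq-rho(s)rho(t)}, with the same key tools (\eqref{eq-s-delta_x-inv}, \eqref{eq-0_x(sw_(x^(-1)x))}, Lemma~\ref{lem-0_x-inv}, Remark~\ref{rem-M(S)-for-comm-S}, Corollary~\ref{cor-sdelta_x(sdelta_x)^(-1)}). The only deviations are cosmetic: you associate the triple product $\rho(s)i(a)\rho(s)\m$ from the left where the paper groups $i(a)\rho(s)\m$ first, and you commute $w\m_{x\m,x}$ past $\0_{x\m}(e')$ via Lemma~\ref{lem-0_x-inv} where the paper invokes Remark~\ref{rem-part-act-on-E(S)} and centrality at the end.
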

\begin{proof}
 In view of~\eqref{eq-alpha=i^(-1)rho_E(S)}
 $$
  \alpha(e\delta_1)=i\m\circ\rho(e\delta_1)=i\m(e\delta_1)=e,
 $$
 whence~\eqref{eq-alpha(edelta_1)}. 
 
 To prove~\eqref{eq-lambda_s(a)}, use~\eqref{eq-defn_of_nu_u}--\eqref{eq-lambda=nu-rho}, \eqref{eq-s-delta_x-inv} and Remark~\ref{rem-part-act-on-E(S)}:
 \begin{align*}
  \lambda_s(a)&=i\m(\rho(s)i(a)\rho(s)\m)\\
  &=i\m(e'\delta_x\cdot a\delta_1\cdot(e'\delta_x)\m)\\
  &=i\m(e'\delta_x\cdot a\delta_1\cdot w\m_{x\m,x}\0_{x\m}(e')\delta_{x\m})\\
  &=i\m(e'\delta_x\cdot a\0_{x\m}(e')w\m_{x\m,x}\delta_{x\m})\\
  &=i\m(\0_x(a\0_{x\m}(e')w\m_{x\m,x})w_{x,x\m}\delta_1)\\
  &=\0_x(a\0_{x\m}(e')w\m_{x\m,x})w_{x,x\m}.
 \end{align*}
 Thanks to~\eqref{eq-0_x(sw_(x^(-1)x))}
 $$
  \0_x(a\0_{x\m}(e')w\m_{x\m,x})w_{x,x\m}=\0_x(a\0_{x\m}(e'))w\m_{x,x\m}w_{x,x\m}=\0_x(a\0_{x\m}(e')),
 $$
 the latter being $\0_x(\0\m_x(e')a)$ by  Remark~\ref{rem-part-act-on-E(S)} 
 and the fact that $E(A)\subseteq C(A)$. For the second equality of~\eqref{eq-lambda_s(a)} we use Corollary~\ref{cor-sdelta_x(sdelta_x)^(-1)}.
 
 According to~\eqref{eq-rho(s)rho(t)}, for~\eqref{eq-f(s,t)} we need to multiply $\rho(s)$ by $\rho(t)$:
 \begin{align*}
  \rho(s)\rho(t)&=e'\delta_x\cdot e''\delta_y\\
  &=\0_x(\0\m_x(e')e'')w_{x,y}\delta_{xy}\\
  &=\0_x(\0\m_x(e')e'')w_{x,y}\delta_1\cdot\0_x(\0\m_x(e')e'')\delta_{xy}\\
  &=i(\0_x(\0\m_x(e')e'')w_{x,y})\rho(e'\delta_x\cdot e''\delta_y)\\
  &=i(\0_x(\0\m_x(e')e'')w_{x,y})\rho(st).
 \end{align*}
 Here in the first line $e'\delta_x\cdot e''\delta_y$ is the product in $A*_\Theta G$, and in the forth line the same two factors are multiplied in $E(A)*_\theta G=S$. Since $st=\0_x(\0\m_x(e')e'')\delta_{xy}$, it follows from Corollary~\ref{cor-sdelta_x(sdelta_x)^(-1)} that $\alpha(stt\m s\m)=\0_x(\0\m_x(e')e'')$, so 
 $$
  \0_x(\0\m_x(e')e'')w_{x,y}=\alpha(stt\m s\m)w_{x,y}=\alpha(stt\m s\m)w_{\kappa(s),\kappa(t)}
 $$
 belongs to $A_{\alpha(stt\m s\m)}$ and hence coincides with $f(s,t)$. 
\end{proof}

Observe that whenever $\Theta$ is equivalent to $\Theta'=(\0',w')$, then $E(A)*_\0 G=E(A)*_{\0'} G$ by Remark~\ref{rem-eq-tw-pact-rest-to-E(A)}. So, $\Lambda'=(\alpha',\lambda',f')$ corresponding to $\Theta'$ is a twisted module structure on $A$ over the same $E$-unitary inverse semigroup.
\begin{prop}\label{prop-equiv-Thetas=>equiv-Lambdas}
 If $\Theta$ is equivalent to $\Theta'$, then $\Lambda$ is equivalent to $\Lambda'$.
\end{prop}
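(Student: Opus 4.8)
The plan is to produce an explicit function $g\colon S\to A$ that witnesses the equivalence of $\Lambda$ and $\Lambda'$ in the sense of Proposition~\ref{prop-equiv-tw-S-mod}. Note first that, since $\Theta$ and $\Theta'$ are equivalent, Remark~\ref{rem-eq-tw-pact-rest-to-E(A)} gives $E(A)*_\0 G=E(A)*_{\0'}G$, so $\Lambda$ and $\Lambda'$ are twisted module structures over one and the same $E$-unitary semigroup $S$, and~\eqref{eq-alpha(edelta_1)} yields $\alpha'=\alpha$ at once, establishing (i) of Proposition~\ref{prop-equiv-tw-S-mod}. Let $\{\e_x\}_{x\in G}$ be the family of invertible multipliers from Definition~\ref{defn-equiv-tw-pact}. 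For $s=e'\delta_x\in S$, a short computation in $E(A)*_\0 G$ gives $ss\m=e'\delta_1$, hence $\alpha(ss\m)=e'$; I would then define $g(s)=\e_xe'$. Because $E(A)\subseteq C(A)$, the idempotent $e'$ is central in $\cD_x$, so by Remark~\ref{rem-M(S)-for-comm-S} one has $\e_xe'=e'\e_x$ and $\e_xe'\e\m_x=e'$; since $A$ is a Clifford semigroup this already forces $g(s)\in A_{e'}=A_{\alpha(ss\m)}$, as required.

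For (ii) I would use that, by Remark~\ref{rem-eq-tw-pact-rest-to-E(A)}, $\0'_x$ and $\0_x$ agree on $E(A)$, whence $(\0'_x)\m(e')=\0\m_x(e')$. Combining the formula~\eqref{eq-lambda_s(a)} for $\lambda'_s$ with $\0'_x(b)=\e_x\0_x(b)\e\m_x$ from Definition~\ref{defn-equiv-tw-pact}(ii) gives $\lambda'_s(a)=\e_x\lambda_s(a)\e\m_x$. The key observation is that $\lambda_s(a)=\0_x(\0\m_x(e')a)$ satisfies $\lambda_s(a)\lambda_s(a)\m\le e'$, so $e'\lambda_s(a)=\lambda_s(a)$; inserting $e'$ and its commuting copy and migrating them through the multiplier axioms (i$'$)--(iii$'$) converts $\e_x\lambda_s(a)\e\m_x$ into $(\e_xe')\lambda_s(a)(e'\e\m_x)=g(s)\lambda_s(a)g(s)\m=\xi_{g(s)}\circ\lambda_s(a)$.

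The cocycle identity (iii) is the heart of the matter, and I would derive it from Definition~\ref{defn-equiv-tw-pact}(iii) applied to the idempotent $b=\0\m_x(e')e''\in\cD_{x\m}\cD_y$, where $s=e'\delta_x$ and $t=e''\delta_y$. Writing $k=\0_x(\0\m_x(e')e'')=\alpha(stt\m s\m)$, one has $st=k\delta_{xy}$, $g(st)=\e_{xy}k$, and by~\eqref{eq-f(s,t)} $f(s,t)=kw_{x,y}$, $f'(s,t)=kw'_{x,y}$. The left-hand side $\0'_x(b)w'_{x,y}\e_{xy}=kw'_{x,y}\e_{xy}$ of Definition~\ref{defn-equiv-tw-pact}(iii) I would identify with $f'(s,t)g(st)=(kw'_{x,y})(\e_{xy}k)$ after checking $f'(s,t)\in A_k$ and absorbing the trailing idempotent $k$. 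Its right-hand side $\e_x\0_x(b\e_y)w_{x,y}$ equals $\e_x\lambda_s(g(t))w_{x,y}$, since $b\e_y=\0\m_x(e')g(t)$ and hence $\0_x(b\e_y)=\lambda_s(g(t))$; I would identify this with $g(s)\lambda_s(g(t))f(s,t)$ by absorbing the idempotents $e'$ and $k$, both of which dominate the identity $k$ of $\lambda_s(g(t))\in A_k$. Equating the two sides through Definition~\ref{defn-equiv-tw-pact}(iii) then yields (iii) of Proposition~\ref{prop-equiv-tw-S-mod}, completing the verification that $g$ is an equivalence.

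The main obstacle I anticipate is exactly this last bookkeeping: $\e_x,\e_{xy}$ are multipliers of $\cD_x,\cD_{xy}$, whereas $w_{x,y},w'_{x,y}$ live on the smaller ideal $\cD_x\cD_{xy}$, so the needed identities $\e_x(cw_{x,y})=(\e_xc)w_{x,y}$ (for $c=\lambda_s(g(t))\in A_k$) and $\bigl(f'(s,t)\e_{xy}\bigr)k=f'(s,t)\e_{xy}$ cannot be read off from a single multiplier axiom. I would handle them by first factoring $c=ck$ and $f'(s,t)=f'(s,t)k$ through the central idempotent $k$, and then applying (i$'$)--(iii$'$) together with the centrality of $k$ (Remark~\ref{rem-M(S)-for-comm-S}) to push $k$ across the multiplier actions, thereby reducing each expression to a genuine product of elements of $A$, where associativity presents no difficulty.
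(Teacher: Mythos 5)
Your proposal is correct and follows essentially the same route as the paper: the same witness $g(s)=\alpha(ss\m)\e_{\kappa(s)}$ (your $\e_xe'$, which agrees by centrality of the idempotent), the same use of~\eqref{eq-alpha(edelta_1)}--\eqref{eq-f(s,t)} for (i) and (ii), and the same reduction of (iii) of Proposition~\ref{prop-equiv-tw-S-mod} to (iii) of Definition~\ref{defn-equiv-tw-pact} evaluated at the idempotent $\0\m_x(e')e''=\alpha(s\m stt\m)$. The multiplier bookkeeping you flag is exactly what the paper's displayed computations carry out, and your proposed handling of it (absorbing the central idempotent $k$ via the axioms (i$'$)--(iii$'$)) is sound.
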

\begin{proof}
 We need to check (i)--(iii) of Proposition~\ref{prop-equiv-tw-S-mod}. Item (i) immediately follows from~\eqref{eq-alpha(edelta_1)}.
 
 Let $s=e\delta_x\in S$. Then, $\alpha(ss\m)=e\in\cD_{\kappa(s)}$ and $\alpha(s\m s)=\0\m_x(e)\in\cD_{\kappa(s)\m}$ by Corollary~\ref{cor-sdelta_x(sdelta_x)^(-1)}. Set 
\begin{align}\label{eq-g(s)=eps_x-alpha(ss^(-1))}
 g(s)=\alpha(ss\m)\e_{\kappa(s)},
\end{align}
where $\e_x\in\cU{\cM{\cD_x}}$ determines the equivalence of $\Theta$ and $\Theta'$. Using (ii) of Definition~\ref{defn-equiv-tw-pact}, \eqref{eq-lambda_s(a)}, \eqref{eq-g(s)=eps_x-alpha(ss^(-1))}, \eqref{eq-lambda_s(e_lambda_s)}, Remark~\ref{rem-M(S)-for-comm-S} and Lemma~\ref{lem-(ws)^(-1)}, one has
$$
 \lambda'_s(a)=\0'_{\kappa(s)}(\alpha(s\m s)a)=\e_{\kappa(s)}\0_{\kappa(s)}(\alpha(s\m s)a)\e\m_{\kappa(s)}=g(s)\lambda_s(a)g(s)\m,
$$
which is (ii) of Proposition~\ref{prop-equiv-tw-S-mod}.

According to~\eqref{eq-lambda_s(a)}--\eqref{eq-g(s)=eps_x-alpha(ss^(-1))}, (ii) of Definition~\ref{defn-twisted_S-mod} and Remarks~\ref{rem-M(S)-for-comm-S} and~\ref{rem-eq-tw-pact-rest-to-E(A)}
\begin{align*}
 f'(s,t)g(st)&=\alpha(stt\m s\m)w'_{\kappa(s),\kappa(t)}\alpha(stt\m s\m)\e_{\kappa(st)}\\
 &=\alpha(stt\m s\m)w'_{\kappa(s),\kappa(t)}\e_{\kappa(st)}\\
 &=\lambda_s(\alpha(tt\m))w'_{\kappa(s),\kappa(t)}\e_{\kappa(st)}\\
 &=\0_{\kappa(s)}(\alpha(s\m stt\m))w'_{\kappa(s),\kappa(t)}\e_{\kappa(st)}\\
&={\0}'_{\kappa(s)}(\alpha(s\m stt\m))w'_{\kappa(s),\kappa(t)}\e_{\kappa(st)}.
\end{align*}
Here $\alpha(s\m stt\m)=\alpha(s\m s)\alpha(tt\m)\in\cD_{\kappa(s)\m}\cD_{\kappa(t)}$. Using the same facts again, we see that
\begin{align*}
 g(s)\lambda_s(g(t))f(s,t)&=\alpha(ss\m)\e_{\kappa(s)}\lambda_s(\alpha(tt\m)\e_{\kappa(t)})\alpha(stt\m s\m)w_{\kappa(s),\kappa(t)}\\
 &=\lambda_s(\alpha(s\m s))\e_{\kappa(s)}\lambda_s(\alpha(tt\m)\e_{\kappa(t)})\lambda_s(\alpha(tt\m))w_{\kappa(s),\kappa(t)}\\
 &=\e_{\kappa(s)}\lambda_s(\alpha(s\m stt\m)\e_{\kappa(t)})w_{\kappa(s),\kappa(t)}\\
 &=\e_{\kappa(s)}\0_{\kappa(s)}(\alpha(s\m stt\m)\e_{\kappa(t)})w_{\kappa(s),\kappa(t)}.
\end{align*}
Now (iii) of Proposition~\ref{prop-equiv-tw-S-mod} follows from (iii) of Definition~\ref{defn-equiv-tw-pact}.
\end{proof}

 \section{The correspondence between twisted partial actions and Sieben twisted modules}\label{sec-Theta<->Lambda}
\begin{prop}\label{prop-Lambda^(Theta^Lambda)}
 Let $\Lambda=(\alpha,\lambda,f)$ be a Sieben twisted $S$-module structure on $A$, $\Theta=(\theta,w)$ the twisted partial action of $\cG S$ on $A$ as in Proposition~\ref{prop-A*_Lambda-S-cong-A*_Theta-G(S)} and $\Lambda'=(\alpha',\lambda',f')$ the corresponding twisted $E(A)*_\0\cG S$-module structure on $A$. Then there exists an isomorphism $\nu:S\to E(A)*_\0\cG S$, such that $\Lambda=\Lambda'\circ\nu$ in the sense of~\eqref{Lambda=Lambda'-circ-nu}.
\end{prop}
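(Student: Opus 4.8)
The plan is to produce the isomorphism explicitly. Writing $\sigma^\natural\colon S\to\cG S$ for the quotient map, I would define
$$
 \nu(s)=\alpha(ss\m)\delta_{\sigma^\natural(s)}.
$$
This lands in $E(A)*_\0\cG S$ because, for $s$ in the $\sigma$-class $x$, one has $\alpha(ss\m)\in E(\cD_x)$ by the definition~\eqref{eq-D_x-from-Lambda} of $\cD_x$. First I would check that $\nu$ is a bijection. Injectivity is Corollary~\ref{cor-s=t-in-E-unitary}: if $\nu(s)=\nu(t)$ then $(s,t)\in\sigma$ and $\alpha(ss\m)=\alpha(tt\m)$, whence $ss\m=tt\m$ (as $\alpha$ is injective on $E(S)$) and therefore $s=t$. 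Surjectivity follows from the fact that the union in~\eqref{eq-D_x-from-Lambda} is disjoint, so the idempotents of $\cD_x$ are exactly the $\alpha(ss\m)$ with $s\in x$, each occurring once; thus every element $e\delta_x$ of $E(A)*_\0\cG S$ is of the form $\nu(s)$ for a unique $s\in x$.

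The computational heart of the argument is the identity
$$
 \0_x(\0\m_x(\alpha(ss\m))\alpha(tt\m))=\alpha(stt\m s\m),\quad x=\sigma^\natural(s),\ y=\sigma^\natural(t).
$$
I would prove it by unwinding the block definition of $\0$ from Lemma~\ref{lem-0_x-isom}: property~(ii) of Definition~\ref{defn-twisted_S-mod} gives $\0\m_x(\alpha(ss\m))=\alpha(s\m s)$, so the inner argument is $\alpha(s\m stt\m)$, which lives on the block indexed by $stt\m\in x$; applying $\0_x=\lambda_{stt\m}$ there and using property~(ii) once more yields $\alpha(stt\m s\m)$. Granting this, the homomorphism property $\nu(s)\nu(t)=\nu(st)$ is immediate from the multiplication rule of $E(A)*_\0\cG S$, in which the twisting cancels on idempotents (Remark~\ref{rem-part-act-on-E(S)}). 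Hence $\nu$ is an isomorphism.

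It remains to verify the three equalities in~\eqref{Lambda=Lambda'-circ-nu} using the explicit formulas of Lemma~\ref{lem-Lambda^Theta}. Since $\nu(e)=\alpha(e)\delta_1$ for $e\in E(S)$, formula~\eqref{eq-alpha(edelta_1)} gives $\alpha'\circ\nu(e)=\alpha(e)$, settling $\alpha=\alpha'\circ\nu|_{E(S)}$. For $\lambda$, formula~\eqref{eq-lambda_s(a)} and the computation $\0\m_x(\alpha(ss\m))=\alpha(s\m s)$ give $\lambda'_{\nu(s)}(a)=\0_x(\alpha(s\m s)a)$; writing $b=\alpha(s\m s)a$ and noting $\lambda_s(a)=\lambda_s(b)$ by the relative-identity property (Remark~\ref{lambda-bar} and (ii) of Definition~\ref{defn-rel_inv}), I would identify $\0_x(b)=\lambda_s(b)$ by choosing the representative $s'=s\alpha\m(bb\m)\le s$ with $\0_x(b)=\lambda_{s'}(b)$ and collapsing $\lambda_{s'}(b)=\lambda_s(b)$ via (i), (iii) of Definition~\ref{defn-twisted_S-mod} and the Sieben condition~(iv'). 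For $f$, the homomorphism property gives $f'(\nu(s),\nu(t))=\alpha(stt\m s\m)w_{x,y}$ from~\eqref{eq-f(s,t)}; evaluating the multiplier on the idempotent $\alpha(stt\m s\m)$ through~\eqref{eq-w_(x,y)-from-Lambda} (with representatives $stt\m\in x$ and $st\in xy$) produces $\alpha(stt\m s\m)f(stt\m,s\m st)$, and Lemma~\ref{lem-f(s',t')} applied to $stt\m\le s$, $s\m st\le t$ reduces this to $\alpha(stt\m s\m)f(s,t)=f(s,t)$.

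The main obstacle I anticipate is bookkeeping rather than conceptual difficulty: because $\0$ and $w$ are themselves built block-by-block out of $\lambda$ and $f$, passing to $\Lambda'$ through Lemma~\ref{lem-Lambda^Theta} and back is a round trip, and at each stage one must select the correct $\sigma$-class representative (e.g.\ $stt\m$, $s\m st$, $s\alpha\m(bb\m)$) and repeatedly invoke the Sieben normalization~(iv') together with Lemma~\ref{lem-f(s',t')} to fold expressions such as $f(stt\m,s\m st)$ back to $f(s,t)$. Keeping track of which idempotent each element sits over is where all the care lies; conceptually, $\nu$ merely undoes the passage $S\rightsquigarrow\cG S\rightsquigarrow E(A)*_\0\cG S$.
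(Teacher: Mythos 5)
Your proposal is correct, and the isomorphism you write down, $\nu(s)=\alpha(ss\m)\delta_{\sigma^\natural(s)}$, is exactly the one the paper uses; but you arrive at the conclusion by a genuinely different route. The paper never verifies by hand that $\nu$ is a bijective homomorphism or that the three identities of~\eqref{Lambda=Lambda'-circ-nu} hold: it observes that $\mu(a\delta_s)=a\delta_{\sigma^\natural(s)}$ is an equivalence of the extensions $A\to A*_\Lambda S\to\cG S$ and $A\to A*_\Theta\cG S\to\cG S$ (Proposition~\ref{prop-A*_Lambda-S-cong-A*_Theta-G(S)}), extracts $\nu$ from Proposition~\ref{prop-eq-ext-A-G-to-eq-ext-A-S} applied to the two refinements through $S$ and $E(A)*_\0\cG S$, checks the one-line compatibility $\mu\circ\rho=\rho'\circ\nu$ for the standard transversals, and then invokes Corollary~\ref{cor-equiv-ext-by-isomorphic-S} together with Corollary~\ref{cor-Lambda-for-A*_Lambda-S} (the latter is where the Sieben hypothesis enters, guaranteeing that $(A*_\Lambda S,\rho)$ reproduces $\Lambda$ itself). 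You instead verify everything directly: bijectivity via Corollary~\ref{cor-s=t-in-E-unitary} and the disjointness of~\eqref{eq-D_x-from-Lambda}, the homomorphism property via the identity $\0_x(\0\m_x(\alpha(ss\m))\alpha(tt\m))=\alpha(stt\m s\m)$, and the three equalities by unwinding Lemma~\ref{lem-Lambda^Theta} against the block definitions~\eqref{eq-0_x-from-Lambda} and~\eqref{eq-w_(x,y)-from-Lambda}, folding representatives back with (iv') and Lemma~\ref{lem-f(s',t')}; all of these computations check out (the reduction $\alpha(stt\m s\m)f(stt\m,s\m st)=f(s,t)$ in particular already appears inside the proof of Proposition~\ref{prop-A*_Lambda-S-cong-A*_Theta-G(S)}). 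What the paper's approach buys is economy and a clear structural reason for the statement; what yours buys is a self-contained, formula-level proof that makes the Sieben normalization's role visible at each step, at the cost of the representative-tracking bookkeeping you yourself flag.
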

\begin{proof}
 By Proposition~\ref{prop-A*_Lambda-S-cong-A*_Theta-G(S)} the extensions $A\overset{i}{\to}A*_\Lambda S\overset{j}{\to}\cG S$ and $A\overset{i'}{\to}A*_\Theta\cG S\overset{j'}{\to}\cG S$ are equivalent. Recall that here $i(a)=a\delta_{\alpha\m(aa\m)}$, $j(a\delta_s)=\sigma^\natural(s)$, $i'(a)=a\delta_1$, $j'(a\delta_x)=x$. Moreover, $\mu:A*_\Lambda S\to A*_\Theta\cG S$, $\mu(a\delta_s)=a\delta_{\sigma^\natural(s)}$, is an isomorphism defining the equivalence. 
 
 Representing $A*_\Lambda S\overset{j}{\to}\cG S$ as $A*_\Lambda S\overset{\pi}{\to}S\overset{\kappa}{\to}\cG S$ and $A*_\Theta\cG S\overset{j'}{\to}\cG S$ as $A*_\Theta\cG S\overset{\pi'}{\to}E(A)*_\0\cG S\overset{\kappa'}{\to}\cG S$ in accordance with Proposition~\ref{prop-ext-A-G-to-ext-A-S} (see also  Proposition~\ref{prop-A*_Theta-G-is-ext}), one finds by Proposition~\ref{prop-eq-ext-A-G-to-eq-ext-A-S} an isomorphism $\nu:S\to E(A)*_\0\cG S$, such that  
 $$
      	\begin{tikzpicture}[node distance=1.5cm, auto]
      		\node (A) {$A$};
      		\node (U) [right = 2cm of A] {$A*_\Lambda S$};
      		\node (S) [right = 2cm of U] {$S$};
      		\node (A') [below of=A]{$A$};
      		\node (U') [below of=U] {$A*_\Theta\cG S$};
      		\node (S') [below of=S] {$E(A)*_\0\cG S$};
      		\draw[->] (A) to node {$i$} (U);
      		\draw[->] (U) to node {$\pi$} (S);
      		\draw[->] (A') to node {$i'$} (U');
      		\draw[->] (U') to node {$\pi'$} (S');
      		\draw[-,double distance=2pt] (A) to node {} (A');
      		\draw[->] (U) to node {$\mu$} (U');
      		\draw[->] (S) to node {$\nu$} (S');
      	\end{tikzpicture}
 $$
 commutes. Here $\pi(a\delta_s)=s$, $\pi'(a\delta_x)=aa\m\delta_x$ and $\nu(s)=\alpha(ss\m)\delta_{\sigma^\natural(s)}$.  
 
 Let $\rho$ and $\rho'$ denote the standard transversals of $\pi$ and $\pi'$, respectively, i.\,e. $\rho(s)=\alpha(ss\m)\delta_s$ and $\rho'(e\delta_x)=e\delta_x$. Observe that $\mu\circ\rho=\rho'\circ\nu$. Indeed,
 \begin{align*}
  \mu\circ\rho(s)&=\mu(\alpha(ss\m)\delta_s)=\alpha(ss\m)\delta_{\sigma^\natural(s)},\\
  \rho'\circ\nu(s)&=\rho'(\alpha(ss\m)\delta_{\sigma^\natural(s)})=\alpha(ss\m)\delta_{\sigma^\natural(s)}.
 \end{align*}
 The twisted $S$-module structure coming from $(A*_\Lambda S,\rho)$ is $\Lambda$ itself thanks to Corollary~\ref{cor-Lambda-for-A*_Lambda-S}. The pair $(A*_\Theta\cG S,\rho')$ induces $\Lambda'$ by construction (see Section~\ref{sec-Lambda^Theta}). Using Corollary~\ref{cor-equiv-ext-by-isomorphic-S} one concludes that $\Lambda=\Lambda'\circ\nu$.
\end{proof}

\begin{prop}\label{prop-Theta^(Lambda^Theta)}
 Let $\Theta=(\0,w)$ be a twisted partial action of $G$ on $A$, $\Lambda=(\alpha,\lambda,f)$ the corresponding twisted $E(A)*_\0 G$-module structure on $A$ and $\Theta'=(\0',w')$ the twisted partial action of $\cG{E(A)*_\0 G}$ on $A$ coming from $\Lambda$ as in Proposition~\ref{prop-A*_Lambda-S-cong-A*_Theta-G(S)}. Then there is an isomorphism $\nu:G\to\cG{E(A)*_\0 G}$, such that $\Theta=\Theta'\circ\nu$ (that is $\0_x=\0'_{\nu(x)}$ and $w_{x,y}=w'_{\nu(x),\nu(y)}$).
\end{prop}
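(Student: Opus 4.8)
The plan is to first identify $\nu$ and then reduce the entire statement to the uniqueness result of Lemma~\ref{lem-S*_Theta-G-cong-S*_Theta'-G}. Since $\ker\kappa=\sigma$, the epimorphism $\kappa\colon S\to G$ (with $S=E(A)*_\0 G$) factors as $\kappa=\bar\kappa\circ\sigma^\natural$ through an isomorphism $\bar\kappa\colon\cG S\to G$, so I set $\nu=\bar\kappa\m$; explicitly $\nu(x)$ is the $\sigma$-class $\kappa\m(x)=\{e\delta_x\mid e\in E(\cD_x)\}$. Because $\nu$ is a group isomorphism, the pullback $\Theta'\circ\nu=(\0'\circ\nu,\,w'\circ(\nu\times\nu))$ is again a twisted partial action of $G$ on $A$, and $A*_{\Theta'\circ\nu}G$ is isomorphic to $A*_{\Theta'}\cG S$ by the routine relabelling $a\delta_x\mapsto a\delta_{\nu(x)}$. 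A first check, via~\eqref{eq-D_x-from-Lambda}, \eqref{eq-alpha(edelta_1)} and Corollary~\ref{cor-sdelta_x(sdelta_x)^(-1)}, shows the domains already agree: for $s=e\delta_x\in\nu(x)$ one has $\alpha(ss\m)=e$, hence $\cD'_{\nu(x)}=\bigsqcup_{e\in E(\cD_x)}A_e=\cD_x$. By Lemma~\ref{lem-S*_Theta-G-cong-S*_Theta'-G} it then suffices to produce a well-defined isomorphism $A*_\Theta G\to A*_{\Theta'\circ\nu}G$ of the form $a\delta_x\mapsto a\delta'_x$; this immediately yields $\Theta=\Theta'\circ\nu$, which is exactly $\0_x=\0'_{\nu(x)}$ and $w_{x,y}=w'_{\nu(x),\nu(y)}$.

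To build that isomorphism I would chain three canonical maps. First, Remark~\ref{rem-A*_Lambda-S-equiv-U} provides the equivalence $\mu\colon A*_\Lambda S\to A*_\Theta G$, $\mu(a\delta_s)=i(a)\rho(s)$, where $\rho(e\delta_x)=e\delta_x$ is the order-preserving transversal defining $\Lambda$. Second, since $\Lambda$ is Sieben by Proposition~\ref{prop-order-pres-rho}, Proposition~\ref{prop-A*_Lambda-S-cong-A*_Theta-G(S)} provides the equivalence $\varphi\colon A*_{\Theta'}\cG S\to A*_\Lambda S$, $\varphi(a\delta_{\mathfrak x})=a\delta_s$ with $s\in\mathfrak x$ and $a\in A_{\alpha(ss\m)}$. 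Third comes the relabelling $A*_{\Theta'\circ\nu}G\to A*_{\Theta'}\cG S$. Composing these gives an isomorphism $\Phi\colon A*_{\Theta'\circ\nu}G\to A*_\Theta G$, and I would compute $\Phi$ on a generic $a\delta_x$: the relabelling sends it to $a\delta_{\nu(x)}$; at the $\varphi$-step the condition $a\in A_{\alpha(ss\m)}$ forces the representative $s=aa\m\delta_x$, giving $a\delta_{(aa\m\delta_x)}$; and $\mu$ then sends this to $i(a)\rho(aa\m\delta_x)=a\delta_1\cdot aa\m\delta_x$, which collapses to $a\delta_x$ in $A*_\Theta G$ because $\0_1=\id$ and $w_{1,x}=\id_{\cD_x}$. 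Thus $\Phi(a\delta_x)=a\delta_x$, so $\Phi\m$ is precisely the map $a\delta_x\mapsto a\delta'_x$ required by Lemma~\ref{lem-S*_Theta-G-cong-S*_Theta'-G}.

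The main obstacle is the bookkeeping across the three ``levels'' $S$, $\cG S$ and $G$, and in particular pinning down at the $\varphi$-step that the chosen representative is exactly $s=aa\m\delta_x$, so that the composite visibly has the identity shape. As an alternative that avoids the chaining, one can verify $\Theta=\Theta'\circ\nu$ directly from the explicit formulas of Lemma~\ref{lem-Lambda^Theta}: the equality $\0_x=\0'_{\nu(x)}$ follows at once from~\eqref{eq-lambda_s(a)} (taking $s=\0_x(aa\m)\delta_x$, for which $\alpha(s\m s)=aa\m$), whereas $w_{x,y}=w'_{\nu(x),\nu(y)}$ reduces, via~\eqref{eq-f(s,t)} together with the multiplier identities of Lemma~\ref{lem-(ws)^(-1)} and Remark~\ref{rem-M(S)-for-comm-S}, to the computation $f(aa\m\delta_x,\,\0\m_x(aa\m)\delta_y)=aa\m w_{x,y}$, followed by the simplification $(aa\m w_{x,y})a=w_{x,y}a$. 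This last multiplier manipulation, resting on the $E$-unitarity of $S$ and the triviality of the cocycle on idempotents, is the most delicate point of either route.
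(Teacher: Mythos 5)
Your proposal is correct and follows essentially the same route as the paper: the same $\nu$ (induced by $\kappa$ with $\ker\kappa=\sigma$), the same chain of isomorphisms through $A*_\Lambda(E(A)*_\0 G)$ and $A*_{\Theta'}\cG{E(A)*_\0 G}$ collapsing to $a\delta_x\mapsto a\delta''_x$, and the same final appeal to Lemma~\ref{lem-S*_Theta-G-cong-S*_Theta'-G}. The only difference is that you compose the inverse maps and then invert, and you sketch a direct verification via Lemma~\ref{lem-Lambda^Theta} as a fallback, neither of which changes the substance.
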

\begin{proof}
 Since $\Lambda$ is the twisted $E(A)*_\0 G$-module structure defined by the extension $A\overset{i}{\to}A*_\Theta G\overset{j}{\to}E(A)*_\0 G$ and the transversal $\rho(e\delta_x)=e\delta_x$ of $j$, by Remark~\ref{rem-A*_Lambda-S-equiv-U} the crossed products $A*_\Theta G$ and $A*_\Lambda(E(A)*_\0 G)$ are isomorphic. An arbitrary element of $A*_\Lambda(E(A)*_\0 G)$ has the form $a\delta_{e\delta_x}$, where $aa\m=\alpha(e\delta_x(e\delta_x)\m)=\alpha(e\delta_1)=e$, that is $a\delta_{e\delta_x}=a\delta_{aa\m\delta_x}$. According to Remark~\ref{rem-A*_Lambda-S-equiv-U} the isomorphism $A*_\Lambda(E(A)*_\0 G)\to A*_\Theta G$ identifies $a\delta_{aa\m\delta_x}$ with
 $$
  i(a)\rho(aa\m\delta_x)=a\delta_1\cdot aa\m\delta_x=aa\m a\delta_x=a\delta_x.
 $$
 Denote by $\mu$ the inverse isomorphism $A*_\Theta G\to A*_\Lambda(E(A)*_\0 G)$, $\mu(a\delta_x)=a\delta_{aa\m\delta_x}$.
 
 By Proposition~\ref{prop-A*_Lambda-S-cong-A*_Theta-G(S)} the semigroup $A*_\Lambda(E(A)*_\0 G)$ is isomorphic to $A*_{\Theta'}\cG{E(A)*_\0 G}$ by means of $\mu'$ mapping $a\delta_{aa\m\delta_x}\in A*_\Lambda(E(A)*_\0 G)$ to $a\delta'_{\sigma^\natural(aa\m\delta_x)}\in A*_{\Theta'}\cG{E(A)*_\0 G}$. Then the composition $\mu'\circ\mu$ is an isomorphism $A*_\Theta G\to A*_{\Theta'}\cG{E(A)*_\0 G}$, such that
 $$
  \mu'\circ\mu(a\delta_x)=a\delta'_{\sigma^\natural(aa\m\delta_x)}.
 $$
 
 Observe by Proposition~\ref{prop-A*_Theta-G-is-ext} that the map $aa\m\delta_x\mapsto x$ is an epimorphism $E(A)*_\0 G\to G$ whose kernel is $\sigma$. Hence, there is an isomorphism $\nu:G\to\cG{E(A)*_\0 G}$, identifying $x\in G$ with $\sigma^\natural(e\delta_x)\in\cG{E(A)*_\0 G}$, where $e$ is an arbitrary idempotent of $\cD_x$. It induces the isomorphism $\mu'':A*_{\Theta'}\cG{E(A)*_\0 G}\to A*_{\Theta'\circ\nu}G$, which maps $a\delta_{\sigma^\natural(e\delta_x)}$ to $a\delta''_x$. Then $\mu''\circ\mu'\circ\mu$ is an isomorphism between the crossed products $A*_\Theta G$ and $A*_{\Theta'\circ\nu}G$ sending $a\delta_x$ to $a\delta''_x$. By Lemma~\ref{lem-S*_Theta-G-cong-S*_Theta'-G} one has $\Theta=\Theta'\circ\nu$.
\end{proof}

\begin{thrm}\label{thrm-Theta<->Lambda}
 Let $A$ be a semilattice of groups. Up to identification of isomorphic groups and semigroups, there is a one-to-one correspondence between twisted partial actions of groups on $A$ and Sieben twisted module structures over $E$-unitary inverse semigroups on $A$. Moreover, equivalent twisted partial actions correspond to equivalent Sieben twisted modules.
\end{thrm}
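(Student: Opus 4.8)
The plan is to package the preceding results into two mutually inverse assignments. Let $\Phi$ denote the construction of Proposition~\ref{prop-A*_Lambda-S-cong-A*_Theta-G(S)}, which sends a Sieben twisted $S$-module structure $\Lambda=(\alpha,\lambda,f)$ on $A$, with $S$ an $E$-unitary inverse semigroup, to the twisted partial action $\Theta=(\0,w)$ of $\cG S$ on $A$; and let $\Psi$ denote the construction of Lemma~\ref{lem-Lambda^Theta}, which sends a twisted partial action $\Theta=(\0,w)$ of a group $G$ on $A$ to the triple $\Lambda=(\alpha,\lambda,f)$ determined by the extension $A\to A*_\Theta G\to E(A)*_\0 G$ and its trivial transversal $\rho(e\delta_x)=e\delta_x$. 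First I would check that $\Psi$ really lands among Sieben modules over $E$-unitary semigroups: the semigroup $E(A)*_\0 G$ is $E$-unitary by Proposition~\ref{prop-A*_Theta-G-is-ext}, and since $\rho$ is order-preserving, Proposition~\ref{prop-order-pres-rho} guarantees that $\Psi(\Theta)$ is a Sieben twisted module.

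Next I would show that $\Phi$ and $\Psi$ are mutually inverse up to isomorphism of the ambient group and semigroup, which is precisely what ``up to identification of isomorphic groups and semigroups'' is meant to encode. Proposition~\ref{prop-Lambda^(Theta^Lambda)} produces an isomorphism $\nu: S\to E(A)*_\0\cG S$ with $\Lambda=(\Psi\circ\Phi)(\Lambda)\circ\nu$ in the sense of~\eqref{Lambda=Lambda'-circ-nu}, so $\Psi\circ\Phi$ is the identity once $S$ is identified with $E(A)*_\0\cG S$; dually, Proposition~\ref{prop-Theta^(Lambda^Theta)} produces a group isomorphism $\nu: G\to\cG{E(A)*_\0 G}$ with $\Theta=(\Phi\circ\Psi)(\Theta)\circ\nu$, so $\Phi\circ\Psi$ is the identity once $G$ is identified with $\cG{E(A)*_\0 G}$. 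To upgrade these two round-trip identities to a genuine bijection of isomorphism classes I would record that $\Phi$ and $\Psi$ both respect isomorphisms, a direct verification from the explicit formulas~\eqref{eq-D_x-from-Lambda}--\eqref{eq-w_(x,y)-from-Lambda} and~\eqref{eq-alpha(edelta_1)}--\eqref{eq-f(s,t)}: an isomorphism of the base carries $\sigma$-classes to $\sigma$-classes and intertwines $\alpha,\lambda,f$ on the one side and $\0,w$ on the other. Together with the two round-trip isomorphisms, this makes $\Phi$ and $\Psi$ descend to mutually inverse bijections between the isomorphism classes of twisted partial actions of groups on $A$ and of Sieben twisted module structures over $E$-unitary inverse semigroups on $A$.

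For the final assertion I would invoke the equivalence-preservation results: Proposition~\ref{prop-equiv-Lambdas=>equiv-Thetas} shows that $\Phi$ carries equivalent Sieben modules to equivalent twisted partial actions, and Proposition~\ref{prop-equiv-Thetas=>equiv-Lambdas} shows that $\Psi$ carries equivalent twisted partial actions to equivalent Sieben modules; combined with the round-trip isomorphisms, this shows the correspondence matches equivalence classes on both sides. The genuine difficulty has in fact already been absorbed by the preparatory propositions — above all the chain of three isomorphisms in Proposition~\ref{prop-Theta^(Lambda^Theta)}, reduced to $\Theta=\Theta'\circ\nu$ through Lemma~\ref{lem-S*_Theta-G-cong-S*_Theta'-G}, and the recovery isomorphism of Proposition~\ref{prop-Lambda^(Theta^Lambda)} coming from Corollary~\ref{cor-equiv-ext-by-isomorphic-S}. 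Within the theorem itself the only delicate point is making ``up to identification'' precise and confirming the naturality (well-definedness on isomorphism classes) just indicated; the rest is assembly.
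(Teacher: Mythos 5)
Your proposal is correct and follows essentially the same route as the paper, whose proof of this theorem is just the citation of Propositions~\ref{prop-equiv-Lambdas=>equiv-Thetas}, \ref{prop-equiv-Thetas=>equiv-Lambdas}, \ref{prop-Lambda^(Theta^Lambda)} and~\ref{prop-Theta^(Lambda^Theta)}; your extra remarks (that $\Psi(\Theta)$ is Sieben over an $E$-unitary semigroup via Propositions~\ref{prop-A*_Theta-G-is-ext} and~\ref{prop-order-pres-rho}, and that the constructions descend to isomorphism classes) only make explicit what the paper leaves implicit.
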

\begin{proof}
 This follows from Propositions~\ref{prop-equiv-Lambdas=>equiv-Thetas},~\ref{prop-equiv-Thetas=>equiv-Lambdas},~\ref{prop-Lambda^(Theta^Lambda)} and~\ref{prop-Theta^(Lambda^Theta)}
\end{proof}

\section*{Acknowledgements}
We thank the anonymous referee for his/her careful reading of the manuscript and many helpful comments.

\bibliography{bibl-pact}{}
\bibliographystyle{acm}

\end{document}